 \newtheorem{theorem}{Theorem}[section]
 \newtheorem{corollary}[theorem]{Corollary}
 \newtheorem{lemma}[theorem]{Lemma}
 \newtheorem{proposition}[theorem]{Proposition}
\theoremstyle{definition}
\newtheorem{definition}[theorem]{Definition}
 \theoremstyle{remark}
\numberwithin{equation}{section}
 \newcommand{\eps}{\varepsilon} 
\newcommand{\abs}[1]{\left\vert#1\right\vert}
\newcommand{\set}[1]{\left\{#1\right\}}
\newcommand{\norm}[1]{\Vert#1\Vert}
\newcommand{\normm}[1]{| \! | \! | #1| \! | \! | }
\newcommand{\inner}[1]{\left(#1\right)}
\newcommand{\comi}[1]{\left<#1\right>}
\newcommand{\com}[1]{\left[#1\right]}
\newcommand{\CG}{\mathcal{G}}
\newcommand{\CT}{\mathcal{T}}
\begin{document}

\title[Gevrey regularity  for the  non-cutoff  Boltzmann equation]
 {Gevrey regularity of mild solutions to the non-cutoff Boltzmann equation}

%----------Author 1
\author[R.J. Duan ]{Renjun Duan}

\address[R.J. Duan ]{Department of Mathematics
	The Chinese University of Hong Kong
	Shatin, Hong Kong,
	P.R. CHINA}

\email{rjduan@math.cuhk.edu.hk}

%----------Author 2
\author[W.-X. Li]{Wei-Xi Li}
\address[W.-X. Li]{School of Mathematics and Statistics, and Computational Science Hubei Key Laboratory, Wuhan
	University, 430072 Wuhan, China}
\email{wei-xi.li@whu.edu.cn}

\author[L.Q. Liu]{Lvqiao Liu}
\address[L.Q. Liu]{School of Mathematics and Statistics,   Anhui Normal University,  241000 Wuhu, China}
\email{lvqiaoliu@whu.edu.cn}

%----------classification, keywords, date
\subjclass[2020]{35Q20, 35B65; 35B20, 35H10}

\keywords{Boltzmann equation,  non-cutoff, Gevrey regularity, hypoelliptic estimate, symbolic calculus}

\date{}

\begin{abstract}
In the paper, for the Cauchy problem on the non-cutoff Boltzmann equation in torus, we establish the global-in-time Gevrey smoothness in velocity and space variables for a class of low-regularity mild solutions near Maxwellians with the Gevrey index depending only on the angular singularity. This together with  \cite{MR4230064} provides a self-contained well-posedness theory for both existence and regularity of global solutions for initial data of low regularity in the framework of perturbations. For the proof we treat in a subtle way  the commutator between the regularization operators and the Boltzmann collision operator involving rough coefficients, and this enables us to combine the classical  H\"ormander's hypoelliptic techniques together with the global symbolic calculus established for the linearized Boltzmann operator so as to improve the regularity of solutions at positive time.
\end{abstract}

%%% ----------------------------------------------------------------------
\maketitle

%%% ----------------------------------------------------------------------
\tableofcontents

\section{Introduction}
The global well-posedness, such as existence, uniqueness and regularity, for the nonlinear Boltzmann equation in the spatially inhomogeneous setting, is a fundamental mathematical problem in collisional kinetic theory. On one hand, for general initial data with finite mass, energy and entropy, it is well known that the global existence of appropriate renormalized weak solutions in $L^1$ was established first by DiPerna-Lions \cite{MR1014927} under the Grad's angular cutoff assumption and later by Alexandre-Villani \cite{MR1857879} for the non-cutoff collision kernel with physically realistic long-range interactions even including the Coulomb potential, while both uniqueness and regularity of such general global solutions have remained largely open. 

On the other hand, whenever initial data are  close enough to Maxwellians in a certain sense, the theory of unique existence of global classical solutions is well investigated from different aspects. In the cutoff case, it was  started first by Ukai \cite{MR363332} using the spectral analysis and later by Liu-Yu \cite{MR2044894}, Liu-Yang-Yu \cite{LYY} and Guo \cite{MR2095473, MR1946444} using the energy method. In the non-cutoff case,  AMUXY \cite{MR2793203,MR2795331,MR2863853} and Gressman-Strain \cite{MR2784329} independently constructed the unique global classical solutions near global Maxwellians in the whole space and in the torus domain, respectively. In those aforementioned works, in order to treat  the nonlinearity in space variables, the high-order Sobolev space $H^{\ell}$ with $\ell$ large enough, for instance $\ell>3/2$ in case of three space dimensions, was used to control the $L^\infty$ norm of the solutions through the Sobolev embedding. In the meantime, those theories are focused on the perturbation with the Gaussian tail in large velocity.  Thus it has been a challenging task to look for an enlarged function space of  the perturbed solutions either with the lower regularity in space and velocity variables or with the slower large-velocity decay such that the existence  with the uniqueness principle still can be achieved. Indeed, a lot of great progresses have been made in these two directions for the cutoff collision kernel. In particular, we would mention that 
\begin{itemize}
	\item Guo \cite{MR2679358} developed a robust $L^2-L^\infty$ interplay approach for the cutoff Boltzmann equation in general bounded domains. Since then, there have existed extensive studies of formulation of singularity and regularity of such solutions induced by the boundary, see \cite{MR3592757} and references therein. Note that an $L^2\cap L^\infty_\beta$ solution for the case of the whole space was also constructed in \cite{MR2239407} by the multiple Duhamel iterations.
	\item Gualdani-Mischler-Mouhot \cite{MR3779780} developed a new general perturbation theory with the perturbation having the only polynomial tail in large velocity for the cutoff Boltzmann equation in torus, and it also produced many applications in kinetic theory, for instance, \cite{MR4201411, MR3625186,MR3483898} and references therein.
	%  \item 
\end{itemize}
%\noindent 

It is extremely hard to extend those results in \cite{MR3779780,MR2679358} to the non-cutoff Boltzmann equation due to the difficulty arising from the angular singularity of the Boltzmann collision integral, whereas we may refer to \cite{MR4076068,MR4171912} and \cite{MR3625186,MR3483898} for the corresponding development in the context of the Landau equation with the explicit velocity diffusion property. Recently, for the non-cutoff Boltzmann equation in torus, Alonso-Morimoto-Sun-Yang \cite{MR4201411} extent  the result in \cite{MR3779780} to construct the classical solutions with polynomial tails; see also the independent works \cite{2017arXiv171000315H} and \cite{MR4107942}. Moreover, the same authors \cite{2020arXiv201010065A} established the unique existence of solutions in the $L^2\cap L^\infty$ setting via the De Giorgi type argument (cf.~\cite{MR3923847}) with the help of a strong averaging lemma. 

In the current work, we are devoted to studying the smoothness of a class of unique low-regularity solutions to the non-cutoff Boltzmann equation near global Maxwellians in torus. In fact, instead of directly using the $L^\infty$ space, motivated by the early works \cite{MR3461360,MR3815145,MR3532066}, the construction of solutions can be carried out in the Wiener algebra that is the space of all integrable functions on the torus whose Fourier series are absolutely convergent, cf.~\cite{MR4230064}. The goal of this work is to further establish the Gevrey smoothness of the solution in both space and velocity variables uniformly for all positive time with the Gevrey index depending only on the angular singularity. This then provides a complete well-posedness Boltzmann theory for existence, unqueness and regularity of global solutions with low-regularity initial data. The further review with focus on the regularity issue for the non-cutoff Boltzmann equation will be provided later on. 

\subsection{Boltzmann equation} 
The spatially inhomogeneous Boltzmann equation in torus reads as 
 \begin{equation}\label{1}
 \begin{array}{ll}
 \partial_{t}F+v\cdot\nabla_{x}F=Q(F,F). 
 %\quad F|_{t=0}=F_0,
 \end{array}
 \end{equation}
Here, the unknown $F(t,x,v)\geq 0$ stands for the density distribution function of gas particles with position $x=(x_1,x_2,x_3) \in \mathbb{T}^3$  and velocity $v=(v_1, v_2, v_3)\in \mathbb{R}^3$ at time $t>0$. The Boltzmann collision operator on the right hand side of \eqref{1} is bilinear and acts only on velocity variables, taking the form of
 \begin{equation*}%\label{collis}
 Q(G,F)(v)=\int_{\mathbb{R}^3}\int_{\mathbb S^2}B(v-v_*,\sigma)[G(v_\ast')F(v')-G(v_\ast)F(v)] \,d \sigma d v_\ast.
 \end{equation*}
 In the above integrand the velocity pairs  $(v^{\prime},v^{\prime}_\ast) $ and $(v,v_\ast)$  are given by the relation
 \begin{equation*}
 \left\{
 \begin{aligned}
 &v'  = \frac{v+v_*}{2} + \frac{ |v-v_*|}{2} \sigma, \\
 &v'_* = \frac{v+v_*}{2} - \frac{ |v-v_*|}{2} \sigma,
 \end{aligned}
 \right. 
 \end{equation*}
 with $\sigma\in \mathbb{S}^2$, according to conservations of molecular momentum and energy before and after an elastic collision
 \begin{equation*}
 v^{\prime}+v^{\prime}_{*}=v+v_{*} , ~~|v^{\prime}|^{2}+|v^{\prime}_{*}|^{2}=|v|^{2}+|v_{*}|^{2}.
 \end{equation*}
 Moreover, the cross section $B(v-v_*,\sigma)$ depends only on the relative speed $\abs{v-v_*}$ and the  deviation angle $\theta$  with 
% \begin{equation*}
 $\cos \theta =   \langle \frac{ v-v_*}{|v-v_*|},  \sigma\rangle$.
 %\end{equation*}
 We assume
that $B(v-v_\ast,\sigma)$ is supported without loss of generality on $0\leq \theta\leq \pi/2$ such that $\cos\theta\geq 0$ and also assume that it takes the specific form
 \begin{equation}\label{kern}
 { B}(v-v_*, \sigma) =  |v-v_*|^\gamma  b (\cos \theta),
 \end{equation}
where $|v-v_*|^\gamma$ is called the kinetic part with   $-3<\gamma\leq 1$, and $b (\cos \theta)$ is called the angular part satisfying that there are $C_b>1$ and $0<s<1$ such that 
\begin{equation}\label{angu}
 \frac{1}{C_b \theta^{1+2s}}\leq \sin \theta   b(\cos \theta) \leq \frac{C_b}{ \theta^{1+2s}}, \qquad \forall\,\theta \in (0, \frac{\pi}{2}],
\end{equation}
%For the angular non-cutoff Boltzmann operator, it is also customary to use the terms
%“hard potentials” when $\gamma\geq 0$, “moderately soft potentials” when  $-2s \leq \gamma <0$, and “very soft potentials” when $ -3\leq \gamma <-2s$.

 %  Motivated by  the lower and upper bounds of the non-isotropic norm introduced  by    Alexandre-Morimoto-Ukai-Xu -Yang  \cite{AMUXY1},   the third author \cite{MR3193940} considered the global model 
%  \begin{eqnarray*}
%  	\partial_t+v\partial_x+c(v) \big(-\tilde \Delta_v\big)^s,
%  \end{eqnarray*}
%with $\comi{v}^\gamma\lesssim c(v) \lesssim \comi{v}^{\gamma+2s}$ for  $\gamma\in ]-3, 1]$  so that $c(v)$ may go to 0 as $\abs{v}\rightarrow +\infty.$   Here and throughout the paper
%\begin{eqnarray*}
%	\comi{\cdot}\stackrel{\textrm{def}}{ =} \inner{1+\abs{\cdot}^2}^{1/2}.
%\end{eqnarray*} 
%The maximal regularity was given in \cite{MR3193940}, using the global pseudo-differential calculus.  
 
We are concerned with the solution to the Boltzmann equation \eqref{1} around  the normalized global Maxwellian
%\begin{equation*}
      $\mu=\mu(v)=(2\pi)^{-3/2}e^{-|v|^{2}/2}$.
%\end{equation*}
Thus, let $F(t,x,v)=\mu+\sqrt{\mu}f(t,x,v)$, then the reformulated unknown $f=f(t,x,v)$ satisfies that
\begin{eqnarray}\label{eqforper}
 \partial_{t}f+v\cdot\nabla_{x}f-\mathcal{L} f
      =\Gamma(f,f),
\end{eqnarray}
with the   linearized collision operator $\mathcal L$ and the nonlinear collision operator $\Gamma(\cdot,\cdot)$ respectively given  by
\begin{equation}\label{linearbol}
 \mathcal L f=\mu^{-1/2}Q(\mu,\sqrt{\mu} f)+\mu^{-1/2}Q(\sqrt{\mu}f,\mu), 
 \end{equation}
 and  
 \begin{equation}
 \label{def.nlt}
 	\Gamma(g,h)=\mu^{-1/2}Q(\sqrt{\mu}g,\sqrt{\mu}h).
 \end{equation}

 Due to the fact that the Boltzmann collision term $Q(F,F)$ admits five collision invariants $1,v$ and $|v|^2$, a solution of \eqref{1} with suitable regularity and integrability in velocity has conservations of total mass, momentum and energy, so that for simplicity we 
 always assume that $f(t,x, v) $ satisfies
\begin{equation}
\label{con.cl}
\int_{ {\mathbb T^3}} \int_{\mathbb R^3}(1,v,|v|^2)\sqrt{\mu}f(t, x, v) \,d v d x=0
\end{equation}
% \begin{equation}
%\label{con.cl}
%\left\{\begin{aligned}
%\int_{ {\mathbb T^3}} \int_{\mathbb R^3}\sqrt{\mu}f(t, x, v) \,d v d x&=0,\\
%\int_{{\mathbb T^3}} \int_{\mathbb R^3} v_i\sqrt{\mu}f(t, x, v) \,d vd x&=0, \qquad  i=1,2,3,\\
%\int_{{\mathbb T^3}} \int_{\mathbb R^3}|v|^2\sqrt{\mu}f(t, x, v) \,d vd x&=0,
%\end{aligned}\right.
%\end{equation}
for any $t\geq 0$. In particular, \eqref{con.cl} should be satisfied for all $t>0$ if it holds true initially.

\subsection{Norms, spaces and results}
The linearized operator $\mathcal{L}$ is self-adjoint and non-positive definite on $L^2_v$, satisfying that there is a constant $c>0$ such that 
\begin{equation}
\label{coer}
-(\mathcal{L} f,f)_{L^2_v}\geq c|f|_{D}^2
\end{equation}
for any $f$ in $(\ker \mathcal{L})^{\perp}$. Here, the dissipative norm $|\cdot|_D$ can be characterized in two kinds of ways  by either the triple norm $\normm{\cdot}$ in \cite{MR2863853} or the anisotropic norm  $ |\cdot|_{N^{\gamma,s}}$ in \cite{MR2784329}, respectively defined as 
\begin{align*}
 \normm f^2:&= \int_{\mathbb R^3}\int_{\mathbb R^3}\int_{\mathbb{S}^2} B(v-v_*,\sigma) \mu_*
  \inner{f-f'}^2\,d\sigma d vd v_\ast  \\
  &\quad+\int_{\mathbb R^3}\int_{\mathbb R^3}\int_{\mathbb{S}^2}  B(v-v_*,\sigma) f_*^2 \inner{\sqrt{\mu'}-
  	\sqrt{\mu}}^2\,d\sigma d vd v_\ast, 
	%\label{tor1}
\end{align*}
and
\begin{equation*}%\label{trnor}
  | f|_{N^{\gamma,s}}^2:= \norm{\comi{v}^{s+\frac{\gamma}{2}} f}_{L^2_v}^2+\int_{\mathbb R^3} \int_{\mathbb R^3} (\comi{v}\comi{v'})^{\frac{\gamma+2s+1}{2}} \frac{(f'-f)^2}{d(v,v')^{3+2s}} {\bf 1}_{d(v,v')\leq1}\,d vd v',
\end{equation*}
where we have used the standard notations $f'=f(v')$, $f_\ast=f(v_\ast)$,  $\mu'=\mu(v')$, $\mu_\ast=\mu(v_\ast)$ for shorthand, $\comi{\cdot}=(1+\abs{\cdot}^2)^{1/2}$, and the anisotropic metric
$
d(v,v')=\{|v-v'|^2+\frac{1}{4} (|v|^2-|v'|^2)^2\}^{1/2}.
$
The third way to characterize $|\cdot|_D$, recently introduced in \cite{MR3950012}, is to use the norm $\|(a^{1/2})^w f\|_{L^2_v}$, where $(a^{1/2})^w$ stands for the Weyl quantization with symbol $a^{1/2}.$   The definition of $a^{1/2}$ as well as some basic facts  on the symbolic calculus will be given in Section  \ref{sec2} later on. Moreover, in terms of  \cite[(2.13)-(2.15)]{MR2784329}, \cite[Proposition 2.1]{MR2863853} and \cite[Theorem 1.2]{MR3950012}, one has  the equivalence of those norms as
  \begin{equation}\label{rela}
 |f|_{D}^2\sim \normm f^2  \sim  | f|_{N^{\gamma,s}}^2\sim \norm{(a^{1/2})^w f}_{L^2 _v}^2\sim -\inner{\mathcal L f, f}_{L^2 _v} + \norm{\comi v^{\ell}f}_{L^2 _v}^2   
  \end{equation}
  for any suitable function $f$ and for any $\ell \in \mathbb{R}$.
  
Throughout the paper we denote the Fourier transform of $f(t,x,v)$ with respect to space variable $x\in \mathbb{T}^3$ by
\begin{equation*}%\label{def.ft}
\hat f(t, k, v)=\mathscr{F}_x f(t,k,v) =\int_{\mathbb{T}^3}e^{-ik\cdot x} f(t,x,v)\,d x, \quad k\in \mathbb{Z}^3.
\end{equation*}
Then, to look for a solution $f=f(t,x,v)$, we define the mixed Lebesgue space $L^p_k L^{ q}_{T} L^r_v$  with the norm
\begin{equation*}
\norm{f}_{L^p_k L^{ q}_{ T } L^r_v}:=
\left\{
\begin{aligned}
 &\left(\int_{\mathbb{Z}^3}\left(\int_0^T   \norm{\hat f(t, k, \cdot)}_{L^r_v}^qd t  \right)^{\frac{p}{q}} \,  d \Sigma(k)\right)^{\frac{1}{p}}, \quad q<\infty,\\
&\left(\int_{\mathbb{Z}^3 }\left(\sup_{0< t< T}  \norm{\hat f(t, k, \cdot)}_{L^r_v}\right)^p\,  d \Sigma(k)\right)^{\frac{1}{p}},  \quad\,\, q=\infty,
\end{aligned}
\right.
\end{equation*}
for $1\leq p,r<\infty$ and $1\leq q\leq \infty$, where
for convenience we have used $  d \Sigma(k)$ through the paper to denote the discrete measure on $\mathbb{Z}^3$, meaning that 
%\begin{equation*}
$\int_{{\mathbb T^3}} g(k)\,  d \Sigma(k)=\sum_{k\in \mathbb{Z}^3}g(k)$
%\end{equation*}
for any summable function $g=g(k)$ on $\mathbb{Z}^3$. For simplicity we also denote the corresponding space $L^p_k L^{ q}_{\tau,T} L^r_v$ with $0\leq \tau \leq T$ whenever functions are restricted only to the time interval $\tau< t< T$ and hence we may write $L^p_k L^{ q}_{T} L^r_v=L^p_k L^{ q}_{0,T} L^r_v$ for $\tau=0$.  
Correspondingly, for an initial datum $f_0=f_0(x,v)$ that does not involve time variable, we define the space $L^p_k   L^r_v$ with the norm
\begin{equation*}
\norm{f_0}_{L^p_k   L^r_v}:=
\left(\int_{\mathbb{Z}^3}  \norm{\hat f_0(k, \cdot)}_{L^r_v}^p    \,  d \Sigma(k)\right)^{\frac{1}{p}},
\end{equation*}
and the norm of higher order in space variables 
\begin{equation*}
\norm{f_0}_{L^p_{k,m}   L^r_v}:=
\left(\int_{\mathbb{Z}^3} \left(\comi{k}^{m} \norm{\hat f_0(k, \cdot)}_{L^r_v}\right)^p   \,  d \Sigma(k)\right)^{\frac{1}{p}},
\end{equation*}
for $1\leq p,r<\infty$ and $m\geq 0$. 

Finally we also introduce the Gevrey space under consideration. We say that  $f=f(x,v)\in \CG^r(\mathbb T^3_x\times \mathbb R^3_v)$  of index $r\geq1$ if $f\in C^\infty (\mathbb T^3_x\times \mathbb R^3_v)$ and there is a constant $C$ such that 
\begin{eqnarray*}
\norm{ \partial_x^\alpha \partial_v^\beta f}_{ L_{x,v}^2}\leq C^{\abs\alpha+\abs\beta+1}\com{ (|\alpha|+|\beta|) !}^{r},\quad  \forall\ \alpha,\beta\in\mathbb Z_+^3.
\end{eqnarray*}
 
With the preparation of notations above, the main result of the paper   is stated as follows. 

\begin{theorem}\label{maith}
Assume \eqref{kern} and \eqref{angu} with $\gamma\geq 0$ and $0 < s < 1$. There are $\varepsilon_0>0$ and $C>0$ such that if the initial datum $F_0$ for \eqref{1} has the form of  $F_0(x,v)=\mu+\sqrt{\mu}f_0(x,v)\geq 0$  with $ f_0\in L^1_kL^2_v$  satisfying  \eqref{con.cl} and 
\begin{equation}\label{123}
 \norm{f_0}_{  L^1_kL^2_v} \leq  
 \varepsilon_0,
\end{equation}
then  the Cauchy problem on the non-cutoff Boltzmann equation \eqref{1} or  \eqref{eqforper} with initial data $F|_{t=0}=F_0$ admits a unique global smooth  solution $F(t,x,v)=\mu+\sqrt{\mu}f(t,x,v)\geq 0$ with $f\in L^1_kL^\infty_TL^2_v$ for any $T>0$ and $f(t,\cdot,\cdot)\in \CG^{\frac{1+2s}{2s}}(\mathbb T^3_x\times \mathbb R^3_v)$ for any $t>0$ such that the following estimate holds true: 
\begin{equation}\label{thm.gest}
 %\begin{split}
 \int_{\mathbb Z^3}  \Big( \sup_{t>0} \phi(t)^{\frac{1+2s}{2s} (m+|\beta|)}  \comi{k}^{m}\norm{ {\partial_v^\beta\hat f (t,k,\cdot)}}_{L^2_v} \Big) \,  d \Sigma(k)  
 \leq   C^{m+|\beta|+1}\com{ (m+|\beta|) !}^{\frac{1+2s}{2s}},
 %\end{split}
 \end{equation}  
for any $m\in\mathbb Z_+$ and $\beta\in\mathbb Z_+^3$,  where $\phi(t):=\min\big\{t,  1\big\}$. In particular, for any $t>0$,    it also holds that $F(t,\cdot,\cdot)\in \CG^{\frac{1+2s}{2s}}(\mathbb T^3_x\times \mathbb R^3_v)$.
 \end{theorem}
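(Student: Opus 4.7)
The plan is to upgrade the low-regularity mild solution $f\in L^1_k L^\infty_T L^2_v$ already provided by \cite{MR4230064} to Gevrey-$\frac{1+2s}{2s}$ smoothness at every positive time. I would work on the Fourier side in $x$, where \eqref{eqforper} decouples into
\begin{equation*}
\partial_t \hat f(t,k,v)+i(k\cdot v)\hat f -\mathcal L \hat f = \widehat{\Gamma(f,f)}(t,k,v),\qquad k\in\mathbb Z^3,
\end{equation*}
with coupling across modes only through the nonlinear convolution in $k$. Three ingredients drive the argument: the coercivity \eqref{coer} together with the norm equivalence \eqref{rela}, which lets $-\mathcal L$ be treated as $(a^{1/2})^w(a^{1/2})^w$ modulo a lower-order piece and opens the door to the symbolic calculus of \cite{MR3950012}; H\"ormander's hypoelliptic principle, through which the commutator $[i(k\cdot v),\partial_v]=ik$ converts $2s$-regularity in $v$ into $\frac{2s}{1+2s}$-regularity in $k$, so that "$\comi k^m$" and "$\partial_v^m$" cost the same with the Gevrey exponent $r:=\frac{1+2s}{2s}$; and the Wiener-algebra structure of $L^1_k$, which handles the Fourier convolutions from the bilinear term without loss.

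Concretely I would induct on $N:=m+|\beta|$ by deriving, for $h:=\comi k^m\partial_v^\beta\hat f$, a weighted $L^2_v$-energy inequality of schematic form
\begin{equation*}
\tfrac{d}{dt}\bigl(\phi^{2rN}\norm{h}_{L^2_v}^2\bigr)+c\,\phi^{2rN}\norm{(a^{1/2})^w h}_{L^2_v}^2 \leq rN\phi'\phi^{2rN-1}\norm{h}_{L^2_v}^2+ \mathrm{(comm.)} + \mathrm{(nonlin.)}.
\end{equation*}
The time-weight term on the right is reabsorbed by the inductive bound at generation $N-1$. The transport commutator $[\partial_v^\beta, i(k\cdot v)]$ produces lower-order terms with a prefactor $|k|\cdot|\beta|$ which, by interpolation with exponent $\frac{2s}{1+2s}$ between dissipation in $v$ and $\comi k^{m+1}$-control at a lower generation, is absorbed at the price of exactly the time factor $\phi^{r}$ — this is the H\"ormander gain reinterpreted as a time weight. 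The commutator $[\partial_v^\beta,\mathcal L]$ is expanded via the symbolic calculus for the symbol $a^{1/2}$: $v$-derivatives of $a^{1/2}$ produce Weyl operators of strictly lower order whose norms carry factorial constants, and these are consistent with the Gevrey-$r$ bookkeeping thanks to the subelliptic order of $-\mathcal L$.

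The principal obstacle is the nonlinear term, where Leibniz and Fourier convolution give
\begin{equation*}
\partial_v^\beta \widehat{\Gamma(f,f)}(k)=\sum_{k_1+k_2=k}\sum_{\beta_1+\beta_2=\beta}\binom{\beta}{\beta_1}\Gamma\bigl(\partial_v^{\beta_1}\hat f(k_1),\, \partial_v^{\beta_2}\hat f(k_2)\bigr).
\end{equation*}
Closing the induction requires, for every piece of this double sum, to allocate one argument to the dissipative norm (absorbed by the coercivity after Cauchy--Schwarz) and the other to an $L^\infty_T L^2_v$-type norm controlled either by the inductive hypothesis at a strictly smaller generation or by the smallness \eqref{123}. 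The combinatorial task is to ensure that the binomial $\binom{\beta}{\beta_1}$, the factor produced by distributing $\comi k^m\leq 2^m(\comi{k_1}^m+\comi{k_2}^m)$, and the time weights combine, after Wiener-algebra summation in $k$, to reproduce exactly $C^{N+1}(N!)^r$ — this is the subtle "commutator between regularization operator and rough coefficients" referenced in the abstract, and it is the main difficulty of the argument. Once it closes, iterating in $N$, integrating in $t$, and summing over $k$ yields \eqref{thm.gest}, and the Gevrey regularity of $F=\mu+\sqrt{\mu}f$ follows immediately.
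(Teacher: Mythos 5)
Your outline assembles the right ingredients (coercivity plus the symbolic calculus of \cite{MR3950012}, the hypoelliptic trade-off between $v$- and $x$-regularity, the Wiener-algebra convolution structure), but there are two places where the argument as written would not close. First, the single induction on $N=m+|\beta|$ fails at the transport commutator: $[\partial_v^\beta, i(k\cdot v)]$ trades one velocity derivative for one full factor of $k$, so with the weight $\comi k^m$ it produces $\comi k^{m+1}\partial_v^{\beta-e_j}$-type terms, which sit at the \emph{same} total generation $N$, not a lower one as you claim. The paper resolves this by decoupling the two inductions: Section 3 first establishes the spatial Gevrey estimate $\comi k^m\phi(t)^{\varsigma m}$ for \emph{all} $m$ (where no such commutator arises, since $\comi k^m$ commutes with $iv\cdot k$), and only then does Section 4 run the induction on $|\beta|$ alone, using the already-known full spatial regularity to absorb the commutator term via Young's inequality $\comi k\comi\eta^{m-1}\leq\comi k^m+\comi\eta^m$ (see \eqref{comest}); the mixed estimate \eqref{thm.gest} is then recovered at the end by Cauchy--Schwarz, $\comi k^m\norm{\partial_v^\beta\hat f}\leq\comi k^m\norm{\hat f}^{1/2}\norm{\partial_v^{2\beta}\hat f}^{1/2}$. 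Your scheme could be repaired by adding a secondary descending induction on $|\beta|$ within each generation, but as stated the inductive hypothesis does not control the commutator.

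Second, you perform the energy estimate directly on $h=\comi k^m\partial_v^\beta\hat f$, but the solution is only known to lie in $L^1_kL^\infty_TL^2_v$, so neither $\partial_v^\beta\hat f$ nor the pairing $\langle\partial_t h,h\rangle$ is a priori meaningful. The paper's entire technical apparatus -- the regularizations $(1+\delta|k|^2)^{-1/2}$, $(1-\delta\partial_{v_1}^2)^{-1}$ and $\Lambda_{\delta_1}$ in \eqref{fmdel}, \eqref{Fmd}, \eqref{regoper}, the elementary inequality of Lemma \ref{est}, and the identity \eqref{leib} for commuting $(1-\delta\partial_{v_1}^2)^{-1}$ past $\Gamma$ -- exists precisely to make this step rigorous, and these commutators (not only the nonlinear combinatorics) are the ``subtle commutator with rough coefficients'' of the abstract. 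Relatedly, the reabsorption of the singular factor $Nt^{-1}\norm{h}^2$ coming from differentiating the time weight is not achieved by the inductive bound alone: it requires the subelliptic gain $\comi k^{2s/(1+2s)}$ in $x$, which the paper extracts from the transport term via the Wick-quantized multiplier $\lambda_k^{\rm Wick}$ (Lemma \ref{sd}) before interpolating down to generation $N-1$. Your proposal invokes the hypoelliptic principle but never supplies the mechanism that produces this gain for the low-regularity solution.
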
 

%\begin{remark}
We remark that the global existence and uniqueness of mild solutions $f(t,x,v)$ in the low-regularity space $L^1_kL^\infty_TL^2_v$ have been obtained by \cite{MR4230064}.  This paper aims to establish  without imposing any  additional assumption on initial data  the global smoothness of such low-regularity solutions in the sense that $f(t,\cdot,\cdot)\in \CG^{(1+2s)/2s}$ for any $t>0$ and the quantitative estimate \eqref{thm.gest} is satisfied globally in time.
%\end{remark}

%\newpage
\subsection{Related literature on regularity}
For the Boltzmann collision operator without angular cutoff, the grazing collisions may induce the velocity diffusion similar to the case of the Landau operator.  Due to this, the solution to the non-cutoff Boltzmann equation has a smoothing effect in velocity variables. The rigorous mathematical proof was first given by Desvillettes in \cite{MR1324404} and  \cite{MR1475459} for the non-cutoff Kac equation and for the non-cutoff spatially homogeneous Boltzmann equation with Maxwell molecule potentials in two dimensions, respectively, where $C^\infty$ regularization is proved.  

For general collision kernels with an angular singularity, the fundamental important work Alexandre-Desvillettes-Villani-Wennberg \cite{MR1765272} found out that the Boltzmann operator behaves as the fractional Laplacian operator in velocity variables in the sense that
\begin{equation}
\label{diff}
F\mapsto -Q(G,F) \sim C_G (-\Delta_v)^s F +\text{\rm lower order terms},
\end{equation}  
where the fractional order diffusion property is local for any finite velocity $|v|<R$ with $R<\infty$ so that $C_G$ may also depend on $R$. For \eqref{diff}, the global nonlinear sharp version was resolved by Gressman-Strain \cite{MR2784329}, and the global linearized version  was recently obtained by Alexandre-Li-H\'erau \cite{MR3950012}. Note that the proof of \cite{MR3950012} is based on the  multiplier method and the Wick quantization together with the careful analysis of the symbolic properties of the Weyl symbol of the Boltzmann collision operator.

Motivated by \eqref{diff}, similarly for treating the heat equation with the fractional Laplacian $(-\Delta_v)^s$, it has been expected that any weak solution of the fully nonlinear spatially homogeneous Boltzmann equation belongs to the Gevrey class $\CG^{1/2s}(\mathbb R^3_v)$ at any positive time. The answer was confirmed by Barbaroux-Hundertmark-Ried-Vugalter \cite{MR3665667} for the Maxwell molecule model. Readers may refer to \cite{MR3665667,MR3485915,MR3121714,MR3680949,MR2679746,MR2476686} and references therein for an almost complete list of literature with focus on the smoothness effect for the spatially homogeneous Boltzmann equation. 

In the spatially inhomogeneous case, it is more difficult to treat the regularity problem due to the presence of the transport term so that the equation is degenerate in space variables. Series of works have been done by AMUXY \cite{MR2679369,MR2847536,MR2462585}  in perturbation framework under suitably strong assumptions on initial data in terms of the generalized uncertainty principle and the hypoelliptic regularisation basing on the complex multiplier estimates. Regarding the Gevrey regularity of solutions, inspired by \cite{MR2557895,MR2763329,MR3193940,MR2523694}, it also can be conjectured that any finite-regularity solution of the spatially inhomogeneous Boltzmann equation belongs to the Gevrey class $\CG^{1/2s}(\mathbb R^3_x\times \mathbb R^3_v)$ at any positive time. The conjecture was recently justified by Morimoto-Xu \cite{MR4147427} with $s=1$ for the case of the Landau equation with the Maxwell molecule potentials. For the Boltzmann case, Chen-Hu-Li-Zhan \cite{2018arXiv180512543C} obtained the Gevrey regularity in $\CG^{(1+2s)/2s}(\mathbb R^3_x\times \mathbb R^3_v)$  provided that the initial data belong to $H^{\ell}_m(\mathbb R^3_x\times \mathbb R^3_v)$ with the Sobolev exponent $\ell\geq 6$ and the order of velocity moments $m$ large enough. It is still a problem to prove even the same Gevrey regularity as in \cite{2018arXiv180512543C} for the lower-regularity global solution near global Maxwellians. 

In the end, we also mention extensive studies of the conditional regularity of solutions to the spatially inhomogeneous Boltzmann equation for general initial data in \cite{MR4033752,IS21,MR4229202,MR4049224,MR3551261} by Silvestre together with his collaborators. It would be interesting to develop a self-contained theory of both existence and regularity without any extra condition on solutions to the spatially inhomogeneous non-cutoff Boltzmann equation with initial data allowing to have possibly large oscillations in space variable, for instance, see \cite{MR3634029,2020arXiv201101503D} in the cutoff case where the $L^\infty$ norm can be arbitrarily large but the relative entropy is small enough.

\subsection{Strategy of the proof}
To clarify the argument roughly, we first explain how to use the time-weighted energy method as well as the derivative iteration technique to capture the Gevrey regularity basing on the following linear toy model with diffusions in both velocity and space variables
\begin{eqnarray*}
 \Big(	\partial_t+v\cdot\nabla_x+  (-\Delta_x )^{\frac{2s}{1+2s}}+(-\Delta_v )^ {s} \Big)f=0,\qquad f|_{t=0}=f_0,
\end{eqnarray*}
where the velocity diffusion operator is consistent with that of the linearized Boltzmann operator as in \eqref{coer} and \eqref{rela} while the space diffusion operator is inspired by \cite{MR3950012}.
Let $f$ be a smooth solution to the above Cauchy problem. We then may  perform the energy estimates of $\nabla_x^m f$ for any integer $m\geq 0$.  
Since we are concerned with the regularity of solutions, it is natural to introduce an auxilliary function of $t$  that vanishes at $t=0$ in order to overcome the singularity  of $\nabla_x^m f|_{t=0}$.  Choosing $t\mapsto t^{\frac{1+2s}{2s} m}$ as the desired time weight function, an informal computation gives that 
\begin{eqnarray*}
\begin{aligned}
	& \frac{1}{2}\frac{d}{dt}\Big(t^{\frac{1+2s}{s} m}\norm{\nabla_x^m f}_{L^2}^2\Big)+t^{\frac{1+2s}{ s} m}\norm{(-\Delta_x)^{\frac{s}{1+2s}}\nabla_x^{m} f}_{L^2}^2+t^{\frac{1+2s}{ s} m}\norm{(-\Delta_v)^{\frac{s}{2}}\nabla_x^{m} f}_{L^2}^2\\
	&=  \frac{1+2s}{2s} mt^{\frac{1+2s}{s} m-1}\norm{\nabla_x^m f}_{L^2}^2 \\
	& \leq  \eps  t^{\frac{1+2s}{ s} m}\norm{(-\Delta_x)^{\frac{s}{1+2s}}\nabla_x^{m} f}_{L^2}^2+C_\eps m^{\frac{1+2s}{s}} t^{\frac{1+2s}{s}(m-1)}  \norm{\nabla_x^{m-1} f}_{L^2}^2,
	\end{aligned}
\end{eqnarray*}
for an arbitrary constant $\eps>0$,  
where the last inequality follows from the interpolation inequality. This ensures to conclude  the Gevrey regularity in $x$ by induction on $m$ and then the Gevrey regularity in $v$ in a similar way. Note that one can not expect a  Gevrey index with respect to $v$ variable better than $x$ variable because the spatial derivatives have to be induced when making estimates on the commutator between the transport operator and $\partial_v^m$.  We remark that the rigorous counterpart of the above informal calculation can be achieved by introducing  some kind of  regularization operators  that commute with the diffusion parts.  
   
Back to the nonlinear Boltzmann equation with non-cutoff  potentials, we follow the similar strategy as for the above linear toy model. However, new difficulties arise from the non-trivial treatment of commutators between  the collision part and the regularization operators
   %\begin{eqnarray*}
   	\begin{center}
$(1-\delta\Delta_x)^{-1/2}$ and $(1-\delta\Delta_v)^{-1}$
\end{center} 
for $\delta>0$ suitably small.
  % \end{eqnarray*}  
To overcome these difficulties  we will make use of the symbolic calculus developed in \cite{MR3950012}. The argument here will be more subtle since the regularity iteration procedure begins with the solutions of quite low regularity from the existence theory in \cite{MR4230064} corresponding to \eqref{thm.gest} with $m=|\beta|=0$. To  deal with the commutator between the collision operator and the regularization  $(1-\delta\Delta_x)^{-1/2}$ or its Fourier counterpart $(1+\delta\abs k^2 )^{-1/2}$,  we give in Lemma \ref{est} a new elementary inequality
\begin{equation*} 
\frac{\comi{k}^{m}}{{(1+\delta|k|^2)^{1/2}}} 
\leq \sum_{1\leq j\leq m-1} \begin{pmatrix}
      m    \\
      j  
\end{pmatrix} \comi{k-\ell}^j\comi{\ell}^{m-j}  \\
+  \frac{2\comi{k-\ell}^m}{{(1+\delta|k-\ell|^2)^{1/2}}}  +  \frac{2\comi{\ell}^{m}}{{(1+\delta|\ell|^2)^{1/2}}},
\end{equation*}
which is crucially used for iteration estimates on such low regularity solution.    Moreover, to estimate the commutator 
 between the collision operator and  the regularization operator $(1-\delta\Delta_v)^{-1}$,  one main view is to write  
 \begin{equation*}
 \begin{split}
  (1-\delta\Delta_v) ^{-1} \Gamma( f,\ \partial_{v }^{\beta} f)=  (1-\delta\Delta_v) ^{-1} \Gamma(   f,\  (1-\delta\Delta_v)(1-\delta\Delta_v) ^{-1}  \partial_{v }^{\beta}  f),
 \end{split}
 \end{equation*} 
see \eqref{leib} in the proof of Lemma \ref{lemke}, for instance. 
Such view is useful for dealing with the commutator  by the Leibniz formula without involving any pseudo-differential calculus.

\subsection{Arrangement of the paper} 
The rest of this paper is arranged as follows. In Section  \ref{sec2}, we will list a few preliminary facts that will be used throughout the proof of Theorem \ref{maith}.     Section \ref{sec3} and Section \ref{sec4} are the key parts, devoted to proving the   Gevrey regularity  in space variables and  velocity variables,  respectively. In Section \ref{sec5}, we will complete the proof of Theorem \ref{maith}.   In the appendix Section \ref{secapp} , we will give some  basic facts on  the Weyl and Wick quantizations of  symbol class.  

 \section{Preliminaries}\label{sec2}

We list here a few preliminary facts that will be used throughout the paper.   
In the following discussion we denote by $\comi{D_x}^{\theta}$ for $\theta\in\mathbb R$ the Fourier multiplier in $x$ variable,  that is, 
\begin{eqnarray*}
	\mathscr F_x(\comi{D_x}^{\theta}h)(k)=\comi k^\theta \mathscr F_x h(k).
\end{eqnarray*}
Recall  that $\comi{\cdot}=(1+\abs{\cdot}^2)^{1/2} $ and  $\mathscr F_x  $    stands for the partial Fourier transform   in $x$ variable.   Similarly, letting   $\mathscr F_v  $  be   the partial Fourier transform   in $v$ variable,  
  \begin{eqnarray*}
	\mathscr F_v(\comi{D_v}^{\theta}h)(\eta)=\comi \eta^\theta \mathscr F_v h(\eta),
\end{eqnarray*}
 with $\eta\in\mathbb R^3$ being the Fourier dual variable of $v$.

We first recall  the global symbolic calculus for the linearized Boltzmann operator that was established by \cite{MR3950012}.  Denote by $b^w$ and $b^{\rm Wick}$, respectively,  the Weyl and Wick  quantizations of a symbol $b=b(v,\eta)$, with $\eta$ the Fourier dual variable of $v$.  Note the symbols considered in this work  are  independent of  $(x,k)$ variables and thus the corresponding Weyl or Wick quantizations are pseudo-differential operators  acting only on $v$ variable.     The basic properties of the quantization of symbols  are listed in Appendix \ref{secapp}, and  one may refer  to \cite{MR781536,MR2599384}  for extensive discussions.  We list two  classes of symbols under consideration. One is  
\begin{eqnarray*}
	S\big(1, \abs{dv}^2+\abs{d\eta}^2\big),
\end{eqnarray*} 
and the other is
\begin{eqnarray*}
	S\big(\tilde a, \abs{dv}^2+\abs{d\eta}^2\big).
\end{eqnarray*}
Here and below $\abs{dv}^2+\abs{d\eta}^2$ stands for the flat metric and 
 \begin{equation}\label{defofa}
 \tilde a= \tilde{a}(v,\eta) :=\comi{v}^{\gamma} (1+\abs{v}^{2} + |v\wedge \eta|^{2} +|\eta|^{2}  )^s, \quad (v,\eta) \in \mathbb R^6,
  \end{equation}
with $ \gamma, s$  the numbers given in \eqref{kern} and \eqref{angu}, and $v\wedge \eta$  the cross product, that is,
\begin{eqnarray*}
	v\wedge \eta=(v_2\eta_3-v_3\eta_2, v_3\eta_1-v_1\eta_3,  v_1\eta_2-v_2\eta_1).
\end{eqnarray*} 
Recall that we say $b\in  S\big(\tilde a, \abs{dv}^2+\abs{d\eta}^2\big)$ if 
\begin{eqnarray*}
	|\partial_v^\alpha\partial_\eta^\beta b(v,\eta)|\leq C_{\alpha,\beta} \tilde a(v,\eta),\quad \forall\  \alpha,\beta\in\mathbb Z_+^3,
\end{eqnarray*}
with $C_{\alpha,\beta}$ constants depending on $\alpha$ and $\beta$. Furthermore, by  $b\in  S\big(\tilde a, \abs{dv}^2+\abs{d\eta}^2\big)$ uniformly with respect to a parameter $\tau$, it means that the constants $C_{\alpha,\beta}$ in the above estimate are independent of $\tau$.  Similar things hold for the definition of $ S\big(1, \abs{dv}^2+\abs{d\eta}^2\big)$. An elementary property to be frequently used is the $L^2$ continuity  theorem in the class $S(1,~\abs{dv}^2+\abs{d\eta}^2)$, saying (cf. \cite[Theorem 2.5.1]{MR2599384} for instance) that  if $b\in S(1,~\abs{dv}^2+\abs{d\eta}^2)$ then there exists a constant $C$   
such that 
\begin{equation}\label{bdness}
   \norm{ b^w h}_{L_v^2}\leq C  \norm{h}_{L_v^2},\quad  \forall ~h\in L_v^2.
\end{equation}
Note that if one further assumes $b\in S(1,~\abs{dv}^2+\abs{d\eta}^2)$ uniformly with respect to a parameter $\tau$ then the constant $C$ in  \eqref{bdness} will be independent of $\tau.$
Let us also recall here the composition formula of the Weyl
quantization. Let $M_j, j=1,2,$ be two admissible weights for the flat metric $\abs{dv}^2+\abs{d\eta}^2$, see Section \ref{secapp} for the definition of admissible weights.   If $b_j\in S(M_j, \abs{dv}^2+\abs{d\eta}^2)$ then
$b_1b_2\in S(M_1M_2, \abs{dv}^2+\abs{d\eta}^2)$ and we have the following  composition formula for the Weyl
quantization: 
\begin{equation}\label{symbc}
	b_1^wb_2^w =(b_1b_2)^w+q^w,  \quad q\in S(  M_1 M_2, \ \abs{dv}^2+\abs{d\eta}^2).
\end{equation}
Moreover, if  it additionally holds that $\partial_v^\alpha\partial_\eta^\beta b_j\in S(\tilde M_j, \abs{dv}^2+\abs{d\eta}^2) \subset   S(M_j, \abs{dv}^2+\abs{d\eta}^2)
$ for any $\alpha,\beta\in\mathbb Z_+^3$ with $\abs\alpha+\abs\beta=1$, then the symbol $q$ in \eqref{symbc} satisfies 
\begin{eqnarray*}
  q\in S(\tilde M_1 \tilde M_2,\  \abs{dv}^2+\abs{d\eta}^2).
\end{eqnarray*}
 This yields that the commutator between $b_1^w$ and $b_2^w$, denoted by $[b_1^w, \ b_2^w]$,  is also a Weyl quantization of some symbol, that is, 
\begin{equation}\label{comsym}
	[b_1^w, \ b_2^w]=\tilde q^w,\quad \tilde q\in S(\tilde M_1\tilde M_2,\ \abs{dv}^2+\abs{d\eta}^2),
\end{equation}
provided that $\partial_v^\alpha\partial_\eta^\beta b_j\in S(\tilde M_j, \abs{dv}^2+\abs{d\eta}^2)
$   for any $\alpha,\beta\in\mathbb Z_+^3$ with $\abs\alpha+\abs\beta=1$. 
Recall that the commutator  $[\CT_1, \CT_2]$ between two operators $\CT_1$ and $\CT_2$ is defined by 
\begin{equation*}
	[\CT_1, \CT_2]=\CT_1\CT_2-\CT_2\CT_1.
\end{equation*}
Now we are ready to state the symbolic calculus established by Alexandre-H\'erau-Li \cite{MR3950012}. 

\begin{proposition}[Proposition 1.4 and Lemma 4.3 of  \cite{MR3950012}] \label{estaa} Suppose that the non-cutoff Boltzmann collision kernel satisfies  \eqref{kern} and \eqref{angu}  with $0 < s < 1$  and $\gamma >-3.$    Then   the linearized collision operator  $\mathcal L$  defined by \eqref{linearbol} can be written as 
$$
\mathcal L = -a^w - \mathcal R,
$$ 
  such that the following properties  are fulfilled by $a$ and  $\mathcal R$.  
\begin{enumerate}[(i)]
\item
There exists a  positive constant $C\geq 1$ such that
$$
C^{-1} \tilde{a}(v,\eta) \leq a(v, \eta) \leq  C \tilde{a}(v,\eta),\quad \forall \, (v,\eta)\in\mathbb R^6,
$$ 
with $\tilde a$ defined by \eqref{defofa}. Moreover  $ a \in S( \tilde{a} , \abs{dv}^2+\abs{d\eta}^2).$ 

\item As for the operator $\mathcal R$   we have  for any $\eps>0$,
 \begin{eqnarray*}
\norm{ \mathcal R  h}_{L_v^2}\leq \eps \norm{a^w h}_{L_v^2}+C_\eps \norm{\comi v^{2s+\gamma} h}_{L_v^2},\quad \forall\,  h\in \mathscr S(\mathbb R_v^3),
\end{eqnarray*}
 with   $C_\eps$   a constant depending on $\eps$. Here and below   $\mathscr S(\mathbb R_v^3)$ stands for the  Schwartz space  in $\mathbb R_v^3$.   
  \item  The operators $ a^w$ and $\big(a^{1/2}\big)^w$  are invertible on $L^2_v$ and  their inverses can be respectively written as
\[
    \inner{a^w}^{-1} =H_1 \inner{a^{-1}}^w=\inner{a^{-1}}^wH_2
\]
and 
\[
   \big[\big(a^{1/2}\big)^w\big]^{-1}= G_1
   \big(a^{-1/2}\big)^w= \big(a^{-1/2}\big)^wG_2,
\]
with  $H_j, G_j$  being  bounded operators on $L^2_v$.
   \end{enumerate}
   \end{proposition}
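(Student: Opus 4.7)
The plan is to identify the Weyl symbol of the linearized collision operator via Bobylev's Fourier-side identity and then verify the claims by standard pseudodifferential calculus in the H\"ormander metric $\abs{dv}^2+\abs{d\eta}^2$. First I would split $\mathcal L f=\mu^{-1/2}Q(\mu,\sqrt\mu f)+\mu^{-1/2}Q(\sqrt\mu f,\mu)$; the first term acts linearly on $f$ against the singular kernel $b(\cos\theta)\abs{v-v_*}^\gamma$, and applying Bobylev's identity on the Fourier side in $v$ together with the cancellation lemma expresses it as a pseudodifferential operator whose full Weyl symbol, after isolating the part dominating near the angular singularity, provides a candidate $a(v,\eta)$. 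The exchange term $\mu^{-1/2}Q(\sqrt\mu f,\mu)$ is Schwartz-regularizing in velocity thanks to the $\sqrt\mu$ weight and will be absorbed into the lower-order remainder $\mathcal R$ together with the sub-principal part of the first term.

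Second, I would establish the two-sided comparison $a(v,\eta)\sim\tilde a(v,\eta)$ defined by \eqref{defofa} and membership $a\in S(\tilde a,\abs{dv}^2+\abs{d\eta}^2)$. The upper bound and the derivative estimates $\abs{\partial_v^\alpha\partial_\eta^\beta a}\le C_{\alpha,\beta}\tilde a$ follow by direct estimation in polar coordinates around the angular singularity, using \eqref{angu}. The lower bound is the delicate point: one must separate the frequencies whose projection on $v$ is comparable to $\abs\eta$ (contributing $\abs v^{2s}$, together with the refined anisotropic companion $\abs{v\wedge\eta}^{2s}$) from those roughly orthogonal to $v$ (contributing $\abs\eta^{2s}$), since grazing collisions only transfer momentum orthogonal to the relative velocity. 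This can be carried out by a dyadic angular decomposition combined with a quantitative version of the Gressman--Strain coercivity already recorded in \eqref{coer}--\eqref{rela}. With (i) in hand, the bound in (ii) reduces to controlling $\mathcal R$ by $\eps a^w+C_\eps\comi v^{2s+\gamma}$, which follows by weighted Cauchy--Schwarz once one checks that the sub-principal symbol of $\mathcal L+a^w$ lies in $S(\tilde a\,\comi v^{-\delta},\abs{dv}^2+\abs{d\eta}^2)$ for some $\delta>0$ and that the Schwartz contribution from $Q(\sqrt\mu f,\mu)$ is dominated by the polynomial velocity weight.

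Third, for (iii), since $a\ge C^{-1}\tilde a\ge C^{-1}>0$, the reciprocals $a^{-1}$ and $a^{-1/2}$ lie in $S(\tilde a^{-1},\abs{dv}^2+\abs{d\eta}^2)$ and $S(\tilde a^{-1/2},\abs{dv}^2+\abs{d\eta}^2)$ respectively, so the composition formulas \eqref{symbc}--\eqref{comsym} produce $(a^{-1})^w a^w=I+q^w$ and $a^w(a^{-1})^w=I+\tilde q^w$ with $q,\tilde q\in S(1,\abs{dv}^2+\abs{d\eta}^2)$ carrying an extra smallness factor from the symbolic gain; the operators $I+q^w$ and $I+\tilde q^w$ are then $L^2_v$-invertible (directly when the gain is small, otherwise by combining it with the finite-dimensional range of a compact error), which lets us set $H_1=(I+q^w)^{-1}$ and $H_2=(I+\tilde q^w)^{-1}$, and the same argument with $a^{1/2}$ in place of $a$ delivers $G_1$ and $G_2$. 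The main obstacle throughout is the sharp lower bound on $a$ incorporating the anisotropic factor $\abs{v\wedge\eta}^2$: standard hypoelliptic heuristics recover only the isotropic weight $(1+\abs v^2+\abs\eta^2)^s$, and capturing the directional term $\abs{v\wedge\eta}^{2s}$---which is essential for the equivalence with $\abs{\cdot}_{N^{\gamma,s}}$ in \eqref{rela} and reflects the rotational geometry of the collision kernel---is precisely what requires the detailed spherical analysis carried out in \cite{MR3950012}.
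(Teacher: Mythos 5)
This proposition is not proved in the paper at all: it is imported verbatim from Alexandre--H\'erau--Li \cite{MR3950012} (their Proposition 1.4 and Lemma 4.3), so there is no internal argument to compare yours against. Your sketch is, in outline, a faithful reconstruction of the strategy of that reference: Bobylev's identity plus the cancellation lemma to extract a Weyl symbol from $\mu^{-1/2}Q(\mu,\sqrt\mu\,\cdot)$, absorption of the regularizing exchange term $\mu^{-1/2}Q(\sqrt\mu\,\cdot,\mu)$ and the sub-principal part into $\mathcal R$, and you correctly single out the genuinely hard step, namely the anisotropic lower bound $a\gtrsim\comi v^\gamma(1+\abs v^2+\abs{v\wedge\eta}^2+\abs\eta^2)^s$, which does require the detailed spherical analysis of \cite{MR3950012} and cannot be obtained from isotropic hypoelliptic heuristics.

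One step of your outline is thinner than you acknowledge. For part (iii), the composition formula \eqref{symbc} in the flat metric $\abs{dv}^2+\abs{d\eta}^2$ gives $(a^{-1})^w a^w = I + q^w$ with $q\in S(1,\abs{dv}^2+\abs{d\eta}^2)$ only; there is \emph{no automatic smallness or compactness} of $q^w$, because the flat calculus has no gain (the remainder lives in $S(M_1M_2)$, improved to $S(\tilde M_1\tilde M_2)$ only when first derivatives of the symbols genuinely decay relative to the weights, which for $a\in S(\tilde a)$ yields at best a fractional gain in $\tilde a$, not a small norm). So ``$I+q^w$ is invertible when the gain is small, otherwise by a compact error'' does not close as stated: one cannot invoke Fredholm theory without first knowing $q^w$ is compact, and one cannot invoke a Neumann series without a quantitative bound $\norm{q^w}_{\mathcal L(L^2_v)}<1$. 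In \cite{MR3950012} the invertibility of $a^w$ and $(a^{1/2})^w$ rests on a genuine coercivity estimate (positivity of $a^w$ from below by a multiple of $(\tilde a)^{\rm Wick}$, obtained through the Wick quantization and the lower bound in (i)), combined with the parametrix identity to convert invertibility into the stated factorizations $H_1(a^{-1})^w=(a^{-1})^wH_2$. If you supply that coercivity input, the rest of your argument for (iii) goes through.
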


 \begin{lemma}\label{lemcomest}
	Let $b^w$ be the Weyl quantization of   symbol
	\begin{eqnarray*}
		b=b(v,\eta)\in S(1, \abs{dv}^2+\abs{d\eta}^2). 
	\end{eqnarray*}
	Then, for any $h\in L_v^2$ with $(a^{1/2})^wh\in L_v^2$,  it holds that
	\begin{eqnarray*}
		\norm{b^w(a^{1/2})^wh}_{L_v^2}+\norm{(a^{1/2})^wb^wh}_{L_v^2}+\norm{[(a^{1/2})^w,\ b^w]h}_{L_v^2}\leq C\norm{(a^{1/2})^wh}_{L_v^2}.
	\end{eqnarray*}
\end{lemma}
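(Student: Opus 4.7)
The plan is to reduce each of the three estimates to the $L^2_v$-continuity \eqref{bdness} for symbols in $S(1,\abs{dv}^2+\abs{d\eta}^2)$, combined with the invertibility statement in Proposition \ref{estaa}(iii). A preliminary step I would carry out first is to verify that $a^{\pm 1/2}\in S(\tilde a^{\pm 1/2},\abs{dv}^2+\abs{d\eta}^2)$. Indeed, by Proposition \ref{estaa}(i) the symbol $a$ is pointwise comparable to $\tilde a\geq 1$, so $a$ never vanishes and $a^{\pm 1/2}$ is smooth; the chain rule expresses every derivative of $a^{\pm 1/2}$ as a finite linear combination of products of the form $a^{\pm 1/2-N}\prod_{i}\partial^{\alpha_{i}}a$, and these are dominated by $\tilde a^{\pm 1/2}$ since each $\partial^{\alpha_{i}}a\in S(\tilde a,\abs{dv}^2+\abs{d\eta}^2)$.

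For the first term, the estimate $\norm{b^w(a^{1/2})^w h}_{L^2_v}\leq C\norm{(a^{1/2})^w h}_{L^2_v}$ is immediate from \eqref{bdness} applied to $b\in S(1,\abs{dv}^2+\abs{d\eta}^2)$ with argument $(a^{1/2})^w h$. For the commutator $[(a^{1/2})^w, b^w]$, I would invoke \eqref{comsym} with $b_1=a^{1/2}$ and $b_2=b$: both symbols together with all their derivatives stay in $S(\tilde a^{1/2},\abs{dv}^2+\abs{d\eta}^2)$ and $S(1,\abs{dv}^2+\abs{d\eta}^2)$ respectively, so $[(a^{1/2})^w, b^w]=\tilde q^w$ with $\tilde q\in S(\tilde a^{1/2},\abs{dv}^2+\abs{d\eta}^2)$. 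Using the second form of Proposition \ref{estaa}(iii) to express the identity as $(a^{-1/2})^w G_2(a^{1/2})^w$ on a suitable dense subspace, I would then write
\begin{eqnarray*}
[(a^{1/2})^w, b^w] h=\tilde q^w (a^{-1/2})^w G_2(a^{1/2})^w h.
\end{eqnarray*}
The composition formula \eqref{symbc} gives $\tilde q^w(a^{-1/2})^w=(\tilde q\, a^{-1/2})^w+r^w$, where the weight cancellation $\tilde a^{1/2}\cdot\tilde a^{-1/2}=1$ places both $\tilde q\, a^{-1/2}$ and the remainder $r$ in $S(1,\abs{dv}^2+\abs{d\eta}^2)$; together with the $L^2_v$-boundedness of $G_2$ from Proposition \ref{estaa}(iii) and a final application of \eqref{bdness}, this yields the commutator estimate. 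The middle term is then handled by the operator identity $(a^{1/2})^w b^w=b^w(a^{1/2})^w+[(a^{1/2})^w, b^w]$ and the two bounds already established.

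The main obstacle I expect is the symbol-class bookkeeping: carefully tracking the derivatives of $a^{\pm 1/2}$ to establish the preliminary step, and verifying that the remainder $r$ produced by the composition formula indeed belongs to $S(1,\abs{dv}^2+\abs{d\eta}^2)$ rather than just a larger class. Once these verifications are in place, the lemma follows by combining the $L^2_v$-continuity \eqref{bdness}, the composition and commutator formulas \eqref{symbc} and \eqref{comsym}, and the invertibility in Proposition \ref{estaa}(iii).
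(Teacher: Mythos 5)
Your proposal is correct and follows essentially the same route as the paper: the first term via the $L^2_v$-continuity \eqref{bdness}, the commutator by writing $[(a^{1/2})^w,b^w]=\tilde q^w$ with $\tilde q\in S(\tilde a^{1/2},\abs{dv}^2+\abs{d\eta}^2)$ and composing with $\big[(a^{1/2})^w\big]^{-1}(a^{1/2})^w$ using Proposition \ref{estaa}(iii) and \eqref{symbc}, and the middle term by the triangle inequality. Your explicit unpacking of $\big[(a^{1/2})^w\big]^{-1}=(a^{-1/2})^wG_2$ and the preliminary check that $a^{\pm1/2}\in S(\tilde a^{\pm1/2},\abs{dv}^2+\abs{d\eta}^2)$ merely spell out what the paper leaves implicit.
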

   
\begin{proof}
It follows from \eqref{bdness} that 
\begin{eqnarray*}
\norm{b^w(a^{1/2})^wh}_{L_v^2}\leq C\norm{(a^{1/2})^wh}_{L_v^2}.
\end{eqnarray*}
Observe 
\begin{eqnarray*}
\norm{(a^{1/2})^wb^w h}_{L_v^2}\leq\norm{b^w(a^{1/2})^wh}_{L_v^2}+	\norm{[(a^{1/2})^w,\ b^w]h}_{L_v^2}.
\end{eqnarray*} 
Then it remains to control the commutator term. In view of \eqref{comsym}, we can write $[(a^{1/2})^w,\ b^w]=\tilde q^w$ for some $\tilde q\in S(\tilde a^{1/2}, \abs{dv}^2+\abs{d\eta}^2)$.  Thus, using the composition formula \eqref{symbc} of Weyl quantization and the assertion (iii) in Proposition \ref{estaa}, we conclude that $\tilde q^w \big[(a^{1/2})^w\big]^{-1}$ is bounded on $L_v^2$. Consequently, writing that
\begin{eqnarray*}
[(a^{1/2})^w,\ b^w]=\underbrace{ [(a^{1/2})^w,\ b^w] \big[(a^{1/2})^w\big]^{-1} }_{\textrm{bounded on } L_v^2}(a^{1/2})^w,     	
\end{eqnarray*}
one has
\begin{eqnarray*}
\norm{[(a^{1/2})^w,\ b^w]h}_{L_v^2}\leq C\norm{(a^{1/2})^wh}_{L_v^2}.
\end{eqnarray*}
The proof of Lemma \ref{lemcomest} is thus completed. 
\end{proof}

The second fact is concerned with the trilinear estimate,  which says (cf. \cite[theorem 1.2]{MR3177640} or  \cite[theorem 2.1]{MR2784329}) that there is a constant $C$ such that
\begin{equation}\label{trin}
\big|\big(  \Gamma ( f, g),  \  h\big)_{L^2_v}\big|\leq C\norm{f}_{L_v^2}\norm{(a^{1/2})^wg}_{L_v^2}\norm{(a^{1/2})^wh}_{L_v^2},\quad \forall\, f,g,h\in\mathscr S (\mathbb R_v^3),
\end{equation}
where the equivalence in \eqref{rela} has been used and we also recall that $\mathscr S(\mathbb R_v^3)$ stands for the  Schwartz space  in $\mathbb R_v^3$. Furthermore, we mainly employ the counterpart of the above estimate after performing the partial Fourier transform in $x$ variable.   Precisely, 
recall that the Fourier transform in $x$ variable for the nonlinear term $\Gamma(f,g)$ in \eqref{def.nlt} is given by
\begin{equation}\label{def.GaF}
\hat{\Gamma}(\hat{f},\hat{g})(k,v)
=\int_{\mathbb{R}^3}\int_{\mathbb{S}^{2}} B(v-u,\sigma) [\mu(v)\mu(u)]^{1/2}\left([\hat{f}(u')*\hat{g}(v')](k)-[\hat{f}(u)*\hat{g}(v)](k)\right) d \sigma d u,
\end{equation}
where the convolutions are taken with respect to the Fourier variable $k\in\mathbb Z^3$:
\begin{equation*}
%\begin{aligned}
%{[\hat{f}(u')*\hat{g}(v')]}(k) :& =  \int_{\mathbb Z^3_\ell} \hat{f}(u',k-\ell)\hat{g}(v',\ell)\,d\Sigma(\ell),\\
{[\hat{f}(u)*\hat{g}(v)]}(k):  =  \int_{\mathbb Z^3_\ell}  \hat{f}(k-\ell,u)\hat{g}(\ell,v)\,d\Sigma(\ell),
%\end{aligned}
\end{equation*}
for any velocities $u,v\in {\mathbb R}^3$. Then, by \cite[Lemma 3.2]{MR4230064}),  the following estimate   
 \begin{equation*}
  	\big|\big( \hat \Gamma{(\hat f(k), \hat g(k))}, \hat h(k)\big)_{L^2_v}\big |\leq C\int_{\mathbb Z^3 } \norm{\hat f(k-\ell)}_{L^2_v} |\hat g(\ell)|_{D}|\hat h(k)|_D \,d\Sigma(\ell),
  \end{equation*}
  or equivalently
  \begin{equation}\label{orup}
\big|\big( \hat \Gamma{(\hat f(k), \hat g(k))}, \hat h(k)\big)_{L^2_v}\big|\leq  C \int_{\mathbb Z^3 }\norm{
	\hat f(k-\ell)}_{L^{2}_v}\norm{ (a^{1/2})^w \hat g(\ell)}_{L^{2}_v}\norm{(a^{1/2})^w \hat h(k)}_{L^{2}_v}\,d\Sigma(\ell),
\end{equation}
holds true for any $k\in\mathbb Z^3$ and for   any  $f,g,h\in L_k^1(\mathscr S(\mathbb R_v^3))$.

The following lemma and corollary will be used in the proof later on.
 
\begin{lemma} \label{lemtrip}
For any $f,g\in  L_v^2$ with $(a^{1/2})^wg  \in L_v^2$, it holds that $(a^{-1/2})^w   \Gamma ( f, g)\in L_v^2$ with
\begin{eqnarray*}
\norm{(a^{-1/2})^w   \Gamma ( f, g)}_{L_v^2}\leq C    \norm{
	f}_{L^{2}_v}\norm{ (a^{1/2})^w g}_{L^{2}_v}.
\end{eqnarray*}
Moreover, for any  $f,g\in L_k^1L_v^2$ such that $(a^{1/2})^wg\in L_k^1L_v^2 $, it holds that 
  \begin{eqnarray*}
  	\norm{(a^{-1/2})^w \hat \Gamma{(\hat f(k), \hat g(k))}}_{L_v^2}\leq C   \int_{\mathbb Z^3 }\norm{
	\hat f(k-\ell)}_{L^{2}_v}\norm{ (a^{1/2})^w \hat g(\ell)}_{L^{2}_v} d\Sigma(\ell),
  \end{eqnarray*} 
for any $k\in\mathbb Z^3$, where $\hat \Gamma{(\hat f(k), \hat g(k))}$ given in \eqref{def.GaF} stands for the Fourier transform of $\Gamma(f,g)$  in space variables. 
  \end{lemma}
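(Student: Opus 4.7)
The plan is to prove both estimates by duality against the trilinear bounds \eqref{trin} and \eqref{orup}, after establishing one auxiliary symbolic fact: the operator $(a^{1/2})^w(a^{-1/2})^w$ is bounded on $L^2_v$. Because $a\geq C^{-1}\tilde a>0$ by Proposition \ref{estaa}(i), the symbol $a^{-1/2}$ is real-valued, so $(a^{-1/2})^w$ is formally self-adjoint, and for any $h\in\mathscr S(\mathbb R^3_v)$ one has the pairing identity
\begin{equation*}
\bigl((a^{-1/2})^w \Gamma(f,g),\, h\bigr)_{L^2_v} = \bigl(\Gamma(f,g),\, (a^{-1/2})^w h\bigr)_{L^2_v}.
\end{equation*}

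The auxiliary bound would be obtained from the composition rule \eqref{symbc}. Proposition \ref{estaa}(i) gives $a^{1/2}\in S(\tilde a^{1/2},|dv|^2+|d\eta|^2)$ and $a^{-1/2}\in S(\tilde a^{-1/2},|dv|^2+|d\eta|^2)$, with all first-order derivatives remaining in the same classes, so that
\begin{equation*}
(a^{1/2})^w(a^{-1/2})^w \,=\, (a^{1/2}\!\cdot a^{-1/2})^w + q^w \,=\, I + q^w,\qquad q\in S\bigl(1,|dv|^2+|d\eta|^2\bigr),
\end{equation*}
and the $L^2_v$-continuity \eqref{bdness} makes $q^w$ bounded. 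Inserting this into \eqref{trin} applied to the right-hand side of the pairing identity yields
\begin{equation*}
\bigl|\bigl((a^{-1/2})^w \Gamma(f,g),h\bigr)_{L^2_v}\bigr|\leq C\norm{f}_{L^2_v}\norm{(a^{1/2})^w g}_{L^2_v}\norm{(a^{1/2})^w(a^{-1/2})^w h}_{L^2_v}\leq C'\norm{f}_{L^2_v}\norm{(a^{1/2})^wg}_{L^2_v}\norm{h}_{L^2_v},
\end{equation*}
and taking the supremum over $h$ in the unit ball of $L^2_v$ (extending from Schwartz by density) gives the first estimate.

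The second claim would follow by exactly the same scheme, replacing \eqref{trin} with the Fourier-side version \eqref{orup}: at each fixed $k\in\mathbb Z^3$ we pair $(a^{-1/2})^w\hat\Gamma(\hat f(k),\hat g(k))$ against $h\in L^2_v$, move $(a^{-1/2})^w$ to the test function by self-adjointness, apply \eqref{orup}, and factor out the supremum of $\norm{(a^{1/2})^w(a^{-1/2})^w h}_{L^2_v}$ using the auxiliary bound; the $\ell$-integral is unaffected by this manipulation and appears on the right of the final inequality. The only delicate point I anticipate, although still routine, is verifying within the symbolic-calculus step that one-derivative improvements $\tilde M_j$ in \eqref{symbc} are simply $M_j$ themselves for $b_j=a^{\pm 1/2}$ (i.e.\ derivatives do not gain decay), so that the residual symbol $q$ lands in $S(\tilde M_1\tilde M_2,|dv|^2+|d\eta|^2)=S(1,|dv|^2+|d\eta|^2)$ rather than something weaker; this reduces to the fact, itself inherited from Proposition \ref{estaa}(i), that $\partial^\alpha a\in S(\tilde a,|dv|^2+|d\eta|^2)$ for every $\alpha$, from which the chain rule yields $\partial^\alpha a^{\pm 1/2}\in S(\tilde a^{\pm 1/2},|dv|^2+|d\eta|^2)$. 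A short density argument extends the pairing identity and the application of \eqref{trin}/\eqref{orup} from Schwartz data to the $L^2_v$-level data of the statement.
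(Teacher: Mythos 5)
Your proposal is correct and follows essentially the same route as the paper: self-adjointness of $(a^{-1/2})^w$, the composition formula \eqref{symbc} with \eqref{bdness} and Proposition \ref{estaa} to control $(a^{1/2})^w(a^{-1/2})^w$, the trilinear bounds \eqref{trin}/\eqref{orup}, and a density argument. Be aware, though, that the "short density argument" you defer to is where the paper spends most of its effort: for $f\in L^2_v$ and $g$ with only $(a^{1/2})^wg\in L^2_v$ the object $(a^{-1/2})^w\Gamma(f,g)$ must first be \emph{constructed} as an $L^2_v$-limit (approximating $f$, then approximating $g$ via $g_n=[(a^{1/2})^w]^{-1}G_n$, and identifying the limit by dominated convergence), since the well-definedness claim is part of the lemma's statement.
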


Before giving the proof of Lemma \ref{lemtrip}, we notice that $\mathcal Lh=\Gamma(\sqrt\mu, h)+\Gamma(h, \sqrt\mu)$. Then, as an immediate consequence of the first assertion in  Lemma \ref{lemtrip}, we have the following 

\begin{corollary}\label{corupp}
For any $h\in  L_v^2$ such that $(a^{1/2})^wh \in L_v^2$, it holds that
  	    \begin{eqnarray*}
    			\norm{(a^{-1/2})^w   \mathcal Lh}_{L_v^2}\leq C  \norm{ (a^{1/2})^w h}_{L^{2}_v}.
    	\end{eqnarray*}
  \end{corollary}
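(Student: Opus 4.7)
The plan is to deduce Corollary \ref{corupp} directly from Lemma \ref{lemtrip} via the identity $\mathcal{L}h=\Gamma(\sqrt\mu,h)+\Gamma(h,\sqrt\mu)$ flagged in the paragraph before the statement. Splitting $(a^{-1/2})^w\mathcal{L}h$ into the two corresponding pieces, I would estimate each by the first assertion of Lemma \ref{lemtrip} and show both are controlled by $\norm{(a^{1/2})^w h}_{L^2_v}$.

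For the first piece, apply Lemma \ref{lemtrip} with $f=\sqrt\mu$ and $g=h$ to obtain
\begin{equation*}
\norm{(a^{-1/2})^w\Gamma(\sqrt\mu,h)}_{L^2_v}\leq C\norm{\sqrt\mu}_{L^2_v}\norm{(a^{1/2})^w h}_{L^2_v}\leq C'\norm{(a^{1/2})^w h}_{L^2_v},
\end{equation*}
since $\norm{\sqrt\mu}_{L^2_v}$ is a fixed finite constant. For the second piece, apply the same lemma with $f=h$ and $g=\sqrt\mu$ to get
\begin{equation*}
\norm{(a^{-1/2})^w\Gamma(h,\sqrt\mu)}_{L^2_v}\leq C\norm{h}_{L^2_v}\norm{(a^{1/2})^w\sqrt\mu}_{L^2_v}.
\end{equation*}
Here $\norm{(a^{1/2})^w\sqrt\mu}_{L^2_v}$ is again a fixed constant, because $\sqrt\mu$ lies in $\mathscr{S}(\mathbb{R}^3_v)$ and $a^{1/2}\in S(\tilde a^{1/2},|dv|^2+|d\eta|^2)$ so that $(a^{1/2})^w\sqrt\mu$ remains in $\mathscr{S}$ and hence in $L^2_v$.

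The one step that requires a little care is replacing $\norm{h}_{L^2_v}$ by $\norm{(a^{1/2})^w h}_{L^2_v}$ in the second piece. For this I would invoke Proposition \ref{estaa}(iii), which guarantees the bounded inverse $[(a^{1/2})^w]^{-1}=G_1(a^{-1/2})^w$ on $L^2_v$. Writing $h=[(a^{1/2})^w]^{-1}(a^{1/2})^w h$ yields $\norm{h}_{L^2_v}\leq C\norm{(a^{1/2})^w h}_{L^2_v}$ (equivalently, one can invoke the symbolic calculus identity $(a^{-1/2})^w(a^{1/2})^w=\mathrm{Id}+q^w$ with $q\in S(1,|dv|^2+|d\eta|^2)$ and appeal to the $L^2_v$-continuity \eqref{bdness}). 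Summing the two bounds yields the claimed estimate. The argument is entirely soft — no hypoelliptic or commutator gymnastics are needed — and the only mild subtlety is the just-mentioned use of invertibility of $(a^{1/2})^w$, which is precisely the hypothesis $(a^{1/2})^w h\in L^2_v$ underwriting the statement.
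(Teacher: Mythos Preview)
Your proof is correct and follows exactly the approach the paper indicates: split $\mathcal{L}h=\Gamma(\sqrt\mu,h)+\Gamma(h,\sqrt\mu)$ and apply the first assertion of Lemma \ref{lemtrip} to each piece. The paper simply declares the corollary an immediate consequence of Lemma \ref{lemtrip} without writing out the details you supplied (in particular the step $\norm{h}_{L^2_v}\leq C\norm{(a^{1/2})^w h}_{L^2_v}$ via Proposition \ref{estaa}(iii)), so your write-up is a faithful expansion of the same argument.
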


  \begin{proof}[Proof of Lemma \ref{lemtrip}]
  
(a) We first claim that $(a^{-1/2})^w  \Gamma (f, g)\in L^2_v$  for any $f,g \in   \mathscr S(\mathbb R_v^3)$ with the estimate 
   \begin{equation}\label{upb}
 \norm{(a^{-1/2})^w   \Gamma ( f, g)}_{L_v^2}\leq C    \norm{
	f}_{L^{2}_v}\norm{ (a^{1/2})^w g}_{L^{2}_v},\quad  \forall\, 	f,g \in   \mathscr S(\mathbb R_v^3).
   \end{equation}
In fact, observe that $(a^{-1/2})^w$ is self-adjoint on $L_v^2$. Then, for any $h\in\mathscr S(\mathbb R_v^3)$, it follows from \eqref{trin} that
  \begin{multline*}
   \big|\big( (a^{-1/2})^w   \Gamma ( f, g),  \ h\big)_{L^2_v}\big| = \big|\big(    \Gamma ( f, g),  \ (a^{-1/2})^w h\big)_{L^2_v}\big| \\
  	 \leq   C  \norm{
	f}_{L^{2}_v}\norm{ (a^{1/2})^w g}_{L^{2}_v}   \norm{(a^{1/2})^w(a^{-1/2})^w h }_{L^{2}_v} 
  	 \leq   C    \norm{
	f}_{L^{2}_v}\norm{ (a^{1/2})^w g}_{L^{2}_v}\norm{  h }_{L^{2}_v},
	\end{multline*} 
where we have used \eqref{symbc} and \eqref{bdness} as well as Proposition \ref{estaa} in the last inequality.  This together with the fact that the Schwartz space $\mathscr S(\mathbb R_v^3)$ is dense in $L_v^2$ give the desired claim.  
  
 (b) We then consider the case of  $f\in   L_v^2$ and $ g\in\mathscr S(\mathbb R_v^3)$. In fact, using again the fact that $\mathscr S(\mathbb R_v^3)$ is dense in $L_v^2$, we can find a sequence of smooth functions $f_n\in  \mathscr S(\mathbb R_v^3)$ such that   $\norm{f_n-f}_{L^{2}_v}  \rightarrow 0$ as $n\rightarrow+\infty.$   
 This together with  \eqref{upb} imply that
 %\begin{eqnarray*}
 $(a^{-1/2})^w   \Gamma ( f_n, g)$  
 %\end{eqnarray*}
 is a Cauchy sequence in $L_v^2$ with a limit denoted as $\mathfrak m\in L_v^2$.  Next, we show that
 \begin{equation}\label{eq}
 \mathfrak m=	 (a^{-1/2})^w   \Gamma ( f, g) \textrm{ in } L_v^2.
 \end{equation}
Indeed, since it holds that $ (a^{-1/2})^w   \Gamma ( f_n, g)\rightarrow \mathfrak m$ and $f_n\rightarrow f$ in $L_v^2$-norm, we are able to extract  a subsequence $\{f_{n_j}\}_{j\geq 1}$ of $f_n$,  such that $ (a^{-1/2})^w   \Gamma ( f_{n_j}, g)\rightarrow \mathfrak m$ and 
 $f_{n_j}\rightarrow f$ pointwise a.e.  in $\mathbb R_v^3$.   As a result, by the Dominated Convergence Theorem, one has
 \begin{eqnarray*}
 	 (a^{-1/2})^w   \Gamma ( f_{n_j}, g)\rightarrow   (a^{-1/2})^w   \Gamma ( f, g) \textrm{ pointwise a.e. in }\mathbb R_v^3, 
 \end{eqnarray*}
which proves \eqref{eq}. Thus, $ (a^{-1/2})^w   \Gamma ( f, g)$ is the limit of  $ (a^{-1/2})^w   \Gamma ( f_n, g)   $ in $L_v^2$. Moreover, it follows from \eqref{upb} that 
 \begin{equation}\label{upb-low}
   		\norm{(a^{-1/2})^w   \Gamma ( f, g)}_{L_v^2}\leq C    \norm{
	f}_{L^{2}_v}\norm{ (a^{1/2})^w g}_{L^{2}_v},
   \end{equation}
for any $f\in   L_v^2$ and $ g\in\mathscr S(\mathbb R_v^3)$.
    
   (c) Finally we suppose   $f\in   L_v^2$ and $ (a^{1/2})^w g\in L_v^2$.   Then, using the density argument again, we can find a sequence of functions $G_n\in  \mathscr S(\mathbb R_v^3)$ such that   $\norm{G_n-(a^{1/2})^w g}_{L^{2}_v}  \rightarrow 0$ as $n\rightarrow+\infty.$ 
   Define 
   \begin{eqnarray*}
   	g_n:= \big[ (a^{1/2})^w \big]^{-1} G_n\in \mathscr S(\mathbb R_v^3),\quad n\geq 1.
   \end{eqnarray*}
   Then, by virtue of \eqref{symbc} and \eqref{bdness}, it follows from \eqref{upb-low} that   
\begin{eqnarray*}
   \begin{aligned}
   		\norm{(a^{-1/2})^w   \Gamma ( f, g_n-g_m)}_{L_v^2} \leq C   \norm{
	f}_{L^{2}_v}\norm{ (a^{1/2})^w(g_n-g_m)}_{L^{2}_v}\leq  C\norm{
	f}_{L^{2}_v}\norm{ G_n-G_m}_{L^{2}_v}.
	\end{aligned}
   \end{eqnarray*}
     This shows that $(a^{-1/2})^w   \Gamma ( f, g_n)$ is a Cauchy sequence in $L_v^2$ with $(a^{-1/2})^w   \Gamma ( f, g)$ as its limit in $L_v^2$ by using the same argument as above along with the fact that 
     \begin{eqnarray*}
     	\norm{g_n-g}_{L^{2}_v}=\big\|\big[ (a^{1/2})^w \big]^{-1}\big(G_n-(a^{1/2})^wg\big)\big\|_{L^{2}_v}\leq C\norm{G_n-(a^{1/2})^w g}_{L^{2}_v}  \rightarrow 0.
     \end{eqnarray*} 
     Moreover, it holds that 
   \begin{eqnarray*}
  		\norm{(a^{-1/2})^w   \Gamma ( f, g)}_{L_v^2}\leq C    \norm{
	f}_{L^{2}_v}\norm{ (a^{1/2})^w g}_{L^{2}_v}.
	   \end{eqnarray*} 
This has proved the first assertion in Lemma \ref{lemtrip}. The second one  for the counterpart after performing the Fourier transform in space variables can be treated in the same way via \eqref{orup} instead of \eqref{trin}.
 The proof of Lemma \ref{lemtrip} is thus completed.
 \end{proof}

 The following technical   lemma will be frequently used in treating estimates on $\Gamma(g,h)$.
 
 \begin{lemma}\label{lemtl} 
For an arbitrarily given integer $j_0\geq 1$, it holds that
 		 \begin{multline}
 \int_{\mathbb Z^3}\bigg[	\int_0^T \bigg(\int_{\mathbb Z^3 } \sum_{1\leq j\leq j_0} \norm{
	\hat f_j(t,k-\ell)}_{L^{2}_v}\norm{ (a^{1/2})^w \hat g_j(t,\ell)}_{L^{2}_v} d\Sigma(\ell)\bigg)^2dt\bigg]^{1/2}d\Sigma(k)\\
 \leq \sum_{1\leq j\leq j_0}\Big(\int_{\mathbb Z^3} 	 \sup_{0<t<T}\norm{
	\hat f_j(t,k)}_{L^{2}_v}  d\Sigma(k) \Big)\int_{\mathbb Z^3 } \Big(\int_0^T   \norm{ (a^{1/2})^w \hat g_j(t,k)}_{L^{2}_v}^2 dt\Big)^{1/2}d\Sigma(k),
\label{lemtlde}
	\end{multline}
for any 
 	$f_j\in L_k^1L_T^\infty L_v^2$ and any $g_j $ such that  $(a^{1/2})^wg_j\in L_k^1L_T^2L_v^2$ with $1\leq j\leq j_0$. 
 \end{lemma}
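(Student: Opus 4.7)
The plan is to chain together standard integral inequalities, reducing at the outset to the case $j_0=1$. By the triangle inequality in the outer $L^2_t$ norm and in the $d\Sigma(k)$ integral, the left-hand side of \eqref{lemtlde} is bounded by the sum over $j$ of the same quantity with a single index $j$; the general bound then follows by summing, so I fix $j$ and concentrate on one term. Denote the inner convolution by
\[
H_j(t,k):=\int_{\mathbb Z^3}\norm{\hat f_j(t,k-\ell)}_{L^2_v}\norm{(a^{1/2})^w \hat g_j(t,\ell)}_{L^2_v}\,d\Sigma(\ell).
\]

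The key step is Minkowski's integral inequality with exponent $2$: applied with $\ell$ viewed as the outer integration variable to be moved past the $L^2_T$ norm, it yields
\[
\bigg(\int_0^T H_j(t,k)^2\,dt\bigg)^{1/2}\leq \int_{\mathbb Z^3}\bigg(\int_0^T \norm{\hat f_j(t,k-\ell)}_{L^2_v}^2\norm{(a^{1/2})^w \hat g_j(t,\ell)}_{L^2_v}^2\,dt\bigg)^{1/2}d\Sigma(\ell).
\]
Then I pull the $\hat f_j$ factor out of the time integral using the pointwise bound $\norm{\hat f_j(t,k-\ell)}_{L^2_v}\leq \sup_{0<t<T}\norm{\hat f_j(t,k-\ell)}_{L^2_v}$, which produces
\[
\int_{\mathbb Z^3}\sup_{0<t<T}\norm{\hat f_j(t,k-\ell)}_{L^2_v}\bigg(\int_0^T\norm{(a^{1/2})^w \hat g_j(t,\ell)}_{L^2_v}^2\,dt\bigg)^{1/2}d\Sigma(\ell).
\]

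To finish, I integrate in $k$ and invoke Fubini's theorem to exchange the two discrete integrations. The $d\Sigma(k)$ integral of $\sup_{0<t<T}\norm{\hat f_j(t,k-\ell)}_{L^2_v}$ is, by translation invariance of the counting measure on $\mathbb Z^3$ under $k\mapsto k+\ell$, independent of $\ell$ and equal to $\int_{\mathbb Z^3}\sup_{0<t<T}\norm{\hat f_j(t,k)}_{L^2_v}\,d\Sigma(k)$. This decouples the integrand into the product form on the right-hand side of \eqref{lemtlde}, and summing over $j=1,\dots,j_0$ gives the lemma.

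The argument contains no analytic subtlety; it is entirely an exercise in applying Minkowski and Fubini in the correct order. The only mild bookkeeping point — which I would flag as the one place where a misstep changes the inequality — is that Minkowski must be applied \emph{before} the supremum bound, so that it is the $\hat f_j$ factor (rather than $(a^{1/2})^w\hat g_j$) whose $L^2_t$-in-time is converted to a sup-in-time. Doing it in the reverse order would leave a sup-norm on $(a^{1/2})^w\hat g_j$, which is neither what \eqref{lemtlde} claims nor finite under the stated hypothesis $(a^{1/2})^w g_j\in L^1_kL^2_TL^2_v$.
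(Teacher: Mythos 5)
Your proof is correct and follows essentially the same route as the paper: reduction to $j_0=1$ by the triangle inequality in $L^2(0,T)$, then Minkowski's integral inequality, the pointwise sup-in-time bound on the $\hat f_j$ factor, and finally Fubini with translation invariance of the counting measure. No gaps.
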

 
 \begin{proof}
Using the triangle inequality that $\norm{\sum_{j=1}^{j_0} A_j}_{L^2(0,T)}\leq \sum_{j=1}^{j_0} \norm{A_j}_{L^2(0,T)}$ for a sequence of functions $A_j$ in $L^2(0,T)$, it suffices to prove that the desired estimate \eqref{lemtlde} is satisfied for $j_0=1$. We then write $f$ for $f_1$ and likewise $g$ for $g_1$. Direct computations give that   
 %\begin{equation}
 \begin{align}
  &\int_{\mathbb Z^3}\bigg(	\int_0^T    \Big(\int_{\mathbb Z^3 }\norm{
	\hat f (t,k-\ell)}_{L^{2}_v}\norm{ (a^{1/2})^w \hat g(t,\ell)}_{L^{2}_v} d\Sigma(\ell)\Big)^2dt\bigg)^{1/2}d\Sigma(k)\notag\\
  &\leq \int_{\mathbb Z^3} 	\bigg[\int_{\mathbb Z^3 }\Big(\int_0^T   \norm{
	\hat f(t,k-\ell)}_{L^{2}_v}^2\norm{ (a^{1/2})^w \hat g(t,\ell)}_{L^{2}_v}^2 dt\Big)^{1/2}d\Sigma(\ell) \bigg]d\Sigma(k)\notag\\
  &\leq \int_{\mathbb Z^3} 	\bigg[\int_{\mathbb Z^3 }\sup_{0<t<T}\norm{
	\hat f(t,k-\ell)}_{L^{2}_v} \Big(\int_0^T   \norm{ (a^{1/2})^w \hat g(t,\ell)}_{L^{2}_v}^2 dt\Big)^{1/2}d\Sigma(\ell) \bigg]d\Sigma(k)\notag\\
  &= \Big(\int_{\mathbb Z^3} 	 \sup_{0<t<T}\norm{
	\hat f(t,k)}_{L^{2}_v}  d\Sigma(k) \Big)\int_{\mathbb Z^3 } \Big(\int_0^T   \norm{ (a^{1/2})^w \hat g(t,k)}_{L^{2}_v}^2 dt\Big)^{1/2}d\Sigma(k),
\label{dica}
\end{align}
%\end{equation}
     where we have used 
  	Minkowski's inequality and Fubini's theorem in the first and  last inequalities, respectively.  The proof of Lemma \ref{lemtl} is completed.   
  	   \end{proof}
  
The following Lemma gives an elementary inequality that will be essentially adopted to treat the iterative estimates for obtaining the Gevrey regularity in space variables.

\begin{lemma}\label{est}
There is a generic constant $C>0$ such that for any $m\geq 1$  the following estimate  
\begin{equation}\label{lem.ei} 
  \frac{\comi{k}^{m}}{{(1+\delta|k|^2)^{1/2}}} 
  \leq \sum_{1\leq j\leq m-1} \begin{pmatrix}
      m    \\
      j 
\end{pmatrix} \comi{k-\ell}^j\comi{\ell}^{m-j}   
  +  \frac{2\comi{k-\ell}^m}{{(1+\delta|k-\ell|^2)^{1/2}}}  +  \frac{2\comi{\ell}^{m}}{{(1+\delta|\ell|^2)^{1/2}}} 
\end{equation}
holds for any $k, \ell  \in \mathbb{Z}^3$ and any $0<\delta <1$, with the convention that the summation term over $1\leq j\leq m-1$ on the right hand side disappears when  $m=1$.  
\end{lemma}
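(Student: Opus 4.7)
My plan is to combine the subadditivity $\comi{k}\leq \comi{k-\ell}+\comi{\ell}$ with the monotonicity of $\comi{k}^{m}/(1+\delta|k|^{2})^{1/2}$ as a function of $|k|$, and then split the argument according to whether $|k|$ is dominated by $\max\{|k-\ell|,|\ell|\}$ or strictly dominates it.

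First, I would establish two preliminary facts. The \emph{triangle inequality for the Japanese bracket}, $\comi{k}\leq \comi{k-\ell}+\comi{\ell}$, follows by squaring both sides and using $\comi{\cdot}\geq |\cdot|$, which reduces the claim to the elementary bound $1+2\comi{k-\ell}\comi{\ell}\geq 2|k-\ell||\ell|$. The \emph{monotonicity} statement is that for $0<\delta<1$ and every integer $m\geq 1$, the one-variable function $g(t)=(1+t)^{m}/(1+\delta t)$ on $[0,\infty)$ has derivative
$$
g'(t)=\frac{(1+t)^{m-1}\bigl[(m-\delta)+\delta(m-1)t\bigr]}{(1+\delta t)^{2}}>0,
$$
so that $f_{m}(k):=\comi{k}^{m}/(1+\delta|k|^{2})^{1/2}=\sqrt{g(|k|^{2})}$ depends on $k$ only through $|k|$ and is strictly increasing in $|k|$.

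Equipped with these, I would argue by cases. If $|k|\leq|k-\ell|$, monotonicity of $f_{m}$ gives $f_{m}(k)\leq f_{m}(k-\ell)=\comi{k-\ell}^{m}/(1+\delta|k-\ell|^{2})^{1/2}$, which is already dominated by the second endpoint term on the right hand side of \eqref{lem.ei}; symmetrically, $|k|\leq|\ell|$ is absorbed by the third endpoint term. In the remaining case $|k|>\max\{|k-\ell|,|\ell|\}$, the denominator $(1+\delta|k|^{2})^{1/2}$ is the largest of the three and in particular $\geq 1$. Expanding
$$
\comi{k}^{m}\leq\bigl(\comi{k-\ell}+\comi{\ell}\bigr)^{m}=\comi{k-\ell}^{m}+\comi{\ell}^{m}+\sum_{1\leq j\leq m-1}\binom{m}{j}\comi{k-\ell}^{j}\comi{\ell}^{m-j}
$$
via the binomial theorem and dividing by $(1+\delta|k|^{2})^{1/2}$, I would control each piece separately: the two endpoint terms use the comparisons $(1+\delta|k|^{2})^{1/2}\geq (1+\delta|k-\ell|^{2})^{1/2}$ and $(1+\delta|k|^{2})^{1/2}\geq (1+\delta|\ell|^{2})^{1/2}$, losing only a harmless factor of $2$, while the middle terms use $(1+\delta|k|^{2})^{1/2}\geq 1$ to drop the denominator entirely.

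The main obstacle is to recognise why a single uniform argument based on the binomial expansion alone cannot work. Applying that expansion for all $k,\ell$ and then dividing by $(1+\delta|k|^{2})^{1/2}$ fails precisely when $|k|$ is much smaller than both $|k-\ell|$ and $|\ell|$, say $k=0$ with $|\ell|$ very large, because then $(1+\delta|k|^{2})^{1/2}=1$ while the right hand side denominators $(1+\delta|k-\ell|^{2})^{1/2}$ and $(1+\delta|\ell|^{2})^{1/2}$ can be arbitrarily large, so that the endpoint contributions $\comi{k-\ell}^{m}/(1+\delta|k|^{2})^{1/2}$ and $\comi{\ell}^{m}/(1+\delta|k|^{2})^{1/2}$ cannot be absorbed by $2\comi{k-\ell}^{m}/(1+\delta|k-\ell|^{2})^{1/2}+2\comi{\ell}^{m}/(1+\delta|\ell|^{2})^{1/2}$. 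The case split is exactly what circumvents this: monotonicity of $f_{m}$ handles the small-$|k|$ regime by a direct comparison with a single endpoint term, and the binomial expansion is reserved for the complementary regime in which the denominator comparisons actually cooperate with the inequality to be proved.
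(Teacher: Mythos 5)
Your proof is correct, and it uses the same two ingredients as the paper (monotonicity of $|k|\mapsto \comi{k}^{m}/(1+\delta|k|^{2})^{1/2}$ for $0<\delta<1$, $m\geq 1$, plus the binomial expansion of $(\comi{k-\ell}+\comi{\ell})^{m}$), but your case decomposition is organized differently and is in fact slightly cleaner. The paper splits only on $|k|\leq|\ell|$ versus $|k|>|\ell|$, and in the second case must still prove the denominator comparison \eqref{lem.eip1} for the $\comi{k-\ell}^{m}$ endpoint term, which requires the further subcases $|k|\geq 2|\ell|$ and $|\ell|\leq|k|\leq 2|\ell|$ together with the observations $|k-\ell|\leq\frac32|k|$ resp.\ $|k-\ell|\leq 2|k|$; this is exactly where the factor $2$ on the right-hand side of \eqref{lem.ei} is consumed. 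By carving out the additional case $|k|\leq|k-\ell|$ and disposing of it by monotonicity alone, you arrange that in the remaining binomial-expansion case $|k|$ strictly dominates both $|k-\ell|$ and $|\ell|$, so the two endpoint comparisons $(1+\delta|k|^{2})^{1/2}\geq(1+\delta|k-\ell|^{2})^{1/2}$ and $(1+\delta|k|^{2})^{1/2}\geq(1+\delta|\ell|^{2})^{1/2}$ are immediate and the factor $2$ is not even needed (your phrase ``losing only a harmless factor of $2$'' is overly cautious there). Both arguments are complete and elementary; yours trades the paper's subcase analysis for one extra top-level case and thereby shortens the verification.
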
 

\begin{proof}
First note that the function $|k|\mapsto \langle k\rangle^m/(1+\delta |k|^2)^{1/2}$ is nondecreasing in $|k|$ when $0<\delta<1$ and $m\geq 1$. Then, in case of  $|k|\leq |\ell|$, it is direct to see
\begin{equation*}
\frac{\comi{k}^{m}}{{(1+\delta|k|^2)^{1/2}}}\leq 
\frac{\comi{\ell}^{m}}{{(1+\delta|\ell|^2)^{1/2}}},
 \end{equation*}
so \eqref{lem.ei} holds true.  In case of $|k|> |\ell|$, using $\comi{k} \leq \comi{k-\ell}+\comi{\ell}$, it follows that
\begin{equation*}
%\label{ }
\comi{k}^m\leq (\comi{k-\ell}+\comi{\ell})^m=\sum_{0\leq j\leq m} \begin{pmatrix}
      m    \\
      j  
\end{pmatrix} \comi{k-\ell}^j\comi{\ell}^{m-j},
\end{equation*}
so one has
\begin{equation*} 
  	 \begin{split}
  	 \frac{\comi{k}^{m}}{{(1+\delta|k|^2)^{1/2}}} 
  	 \leq& \sum_{1\leq j\leq m-1} \begin{pmatrix}
      m    \\
      j 
\end{pmatrix} \frac{\comi{k-\ell}^j\comi{\ell}^{m-j}}{(1+\delta|k|^2)^{1/2}}   +   \frac{\comi{k-\ell}^m}{{(1+\delta|k |^2)^{1/2}}}  +   \frac{\comi{\ell}^{m}}{{(1+\delta|k|^2)^{1/2}}}.
  	 \end{split}
  	 \end{equation*}
Then, to show \eqref{lem.ei} it suffices to verify that  
\begin{equation}
\label{lem.eip1}
\frac{\comi{k-\ell}^m}{{(1+\delta|k |^2)^{1/2}}}  +   \frac{\comi{\ell}^{m}}{{(1+\delta|k|^2)^{1/2}}}\leq  \frac{2\comi{k-\ell}^m}{{(1+\delta|k-\ell|^2)^{1/2}}}  +  \frac{2\comi{\ell}^{m}}{{(1+\delta|\ell|^2)^{1/2}}}
\end{equation}
for any $k$ and $\ell$ with $|k|> |\ell|$. Indeed, we consider two cases $|k|\geq 2|\ell|$ and $ |\ell| \leq |k| \leq 2|\ell|$ as follows. For  $|k|\geq 2|\ell|$,
it holds that \begin{equation*}
  	  \frac{\comi{\ell}^{m}}{{(1+\delta|k|^2)^{1/2}}}\leq  \frac{\comi{\ell}^{m}}{{(1+4\delta|\ell|^2)^{1/2}}} \leq  \frac{\comi{\ell}^{m}}{{(1+\delta|\ell|^2)^{1/2}}},
\end{equation*}
and since $|k|\geq 2|\ell|$ implies $|k-\ell| \leq |k|+|\ell| \leq |k|+\frac{1}{2}|k|= \frac32 |k|$ as well, similarly one has  
\begin{equation*}
  	 \frac{\comi{k-\ell}^m}{{(1+\delta|k |^2)^{1/2}}}\leq \frac{\comi{k-\ell}^m}{{(1+\frac{4}{9}\delta|k-\ell |^2)^{1/2}}}  \leq  \frac{\frac{3}{2}\comi{k-\ell}^m}{{(1+\delta|k-\ell |^2)^{1/2}}},
\end{equation*}
so \eqref{lem.eip1} is satisfied in case of $|k|\geq 2|\ell|$.  Similarly, for $ |\ell| \leq |k| \leq 2|\ell|$ that gives $|k-\ell|\leq |k|+|\ell|\leq 2|k|$, one has
  	  \begin{equation*}
  	  \frac{\comi{\ell}^{m}}{{(1+\delta|k|^2)^{1/2}}} \leq \frac{\comi{\ell}^{m}}{{(1+\delta|\ell|^2)^{1/2}}},\quad
  	  \frac{\comi{k-\ell}^m}{{(1+\delta|k |^2)^{1/2}}}  \leq  \frac{2 \comi{k-\ell}^m}{{(1+\delta|k-\ell |^2)^{1/2}}},
  	  \end{equation*}
that yield \eqref{lem.eip1}  as well. Therefore this shows \eqref{lem.eip1} and completes the proof of Lemma \ref{est}.   
\end{proof}

\section{Gevrey  smoothing effect in spatial variable}\label{sec3}

In this section we start to study the nonlinear Cauchy problem on the reformulated equation \eqref{eqforper} supplemented with $f|_{t=0}=f_0$. To the end, for convenience we always assume $\gamma\geq 0$ and $0 < s < 1$ for the collision kernel \eqref{kern} and \eqref{angu}. First of all, we state the existence result established in \cite{MR4230064}.

\begin{proposition}\label{prop.exis}
There are $\varepsilon_0>0$ and $C_0>0$ such that if the initial datum $f_0$ is chosen such that $F_0(x,v)=\mu+\sqrt{\mu}f_0(x,v)\geq 0$  with $ f_0\in L^1_kL^2_v$  satisfying  \eqref{con.cl} and 
%\begin{equation}
 $\norm{f_0}_{  L^1_kL^2_v} \leq  \varepsilon_0$,
%\end{equation}
then  the Cauchy problem on the non-cutoff Boltzmann equation \eqref{eqforper} with $f|_{t=0}=f_0$ admits a unique global mild solution $f(t,x,v)$ such that $F(t,x,v)=\mu+\sqrt{\mu}f(t,x,v)\geq 0$ with $f\in L^1_kL^\infty_TL^2_v$ for any $T>0$ satisfying the estimate  
\begin{equation}\label{+1234}	
 	  \norm{f }_{L_k^1L^\infty_TL_v^2}+   \norm{(a^{1/2})^w  f}_{L_k^1L^2_TL_v^2} \leq 
 	C_0  \norm{f_0}_{  L^1_kL^2_v}.
\end{equation} 
\end{proposition}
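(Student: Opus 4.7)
The plan is to prove Proposition \ref{prop.exis} by a contraction-mapping argument for the mild formulation of \eqref{eqforper} in the Wiener-algebra space
\[
  X_T := \set{ f :\ \norm{f}_{L^1_k L^\infty_T L^2_v} + \norm{(a^{1/2})^w f}_{L^1_k L^2_T L^2_v} < +\infty } .
\]
Passing to the Fourier series in $x$ turns \eqref{eqforper} into the family of equations $\partial_t \hat f(t,k) + i k\cdot v\, \hat f(t,k) - \mathcal L \hat f(t,k) = \hat \Gamma(\hat f,\hat f)(t,k)$ indexed by $k\in \mathbb Z^3$, coupled only through the convolution on the right.

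First I would derive the basic $L^2_v$ energy inequality for each fixed $k$. The transport term $ik\cdot v$ is antisymmetric on $L^2_v$ and drops after taking the real part, while the coercive estimate \eqref{coer} together with the equivalence \eqref{rela} yields
\[
  \tfrac12 \tfrac{d}{dt}\norm{\hat f(t,k)}_{L^2_v}^2 + c\,\norm{(a^{1/2})^w \hat f(t,k)}_{L^2_v}^2 \leq \bigl| \bigl(\hat\Gamma(\hat f,\hat f)(t,k),\ \hat f(t,k)\bigr)_{L^2_v} \bigr| + \text{(hydrodynamic contribution)}.
\]
The hydrodynamic term, arising from $\ker\mathcal L$ where $\mathcal L$ loses coercivity, is handled by the conservation law \eqref{con.cl} at $k=0$ (which forces the macroscopic projection of $\hat f(t,0)$ to vanish) and, for $k\neq 0$, by a standard micro-macro decomposition in which the transport $ik\cdot v$ together with a Poincar\'e-type gain on $\mathbb T^3$ restores dissipation for the hydrodynamic modes via a suitably chosen modifier functional added to the energy.

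Integrating in $t$, taking square roots, and then integrating against the discrete measure $d\Sigma(k)$, the linearised bound becomes
\[
  \norm{f}_{X_T} \leq C\norm{f_0}_{L^1_k L^2_v} + C \int_{\mathbb Z^3} \Big(\int_0^T \norm{(a^{-1/2})^w \hat\Gamma(\hat f,\hat f)(\tau,k)}_{L^2_v}^2 \,d\tau\Big)^{1/2} d\Sigma(k).
\]
By Lemma \ref{lemtrip} the last integrand is controlled pointwise in $k$ by $\int_{\mathbb Z^3}\norm{\hat f(\tau,k-\ell)}_{L^2_v}\norm{(a^{1/2})^w \hat f(\tau,\ell)}_{L^2_v}\,d\Sigma(\ell)$, and Lemma \ref{lemtl} with $j_0=1$ then factors this into $\norm{f}_{L^1_k L^\infty_T L^2_v}\cdot\norm{(a^{1/2})^w f}_{L^1_k L^2_T L^2_v}\leq \norm{f}_{X_T}^2$. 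Hence $\norm{f}_{X_T}\leq C\norm{f_0}_{L^1_k L^2_v} + C_\star \norm{f}_{X_T}^2$, which closes via a standard continuity argument once $\varepsilon_0$ is small enough and yields \eqref{+1234}. The same estimate applied to the difference of two solutions gives uniqueness and the contraction property, while the nonnegativity $F\geq 0$ is propagated by approximating with angular-cutoff kernels (see \cite{MR1014927,MR1857879}) and passing to the limit using the uniqueness just established.

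The main obstacle is that the $L^1_k$ Wiener-algebra norm is not built from an $\ell^2$-sum over $k$, so the usual hypocoercivity estimates of Guo--Strain type (which multiply each mode-wise $L^2_v$ estimate by $k$-weights and square-sum) cannot be applied directly; one must keep the inequality sharp at the level of each single Fourier mode and use only operations that commute with the $\ell^1$-sum in $k$. Producing the hydrodynamic dissipation uniformly in $k$ within this rigid framework, and matching it against the convolution control of the nonlinearity provided by Lemma \ref{lemtl}, is the most delicate ingredient.
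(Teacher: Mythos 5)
The paper does not prove this proposition at all: it is quoted verbatim from \cite{MR4230064}, so there is no in-paper argument to compare against. Your outline is essentially the strategy of that reference — mode-by-mode $L^2_v$ energy estimates after Fourier transforming in $x$, coercivity of $\mathcal L$ on $(\ker\mathcal L)^\perp$ via \eqref{coer}--\eqref{rela}, recovery of the macroscopic dissipation through \eqref{con.cl} and a per-mode interaction functional, and closure of the nonlinearity by exactly the combination of Lemma \ref{lemtrip} and Lemma \ref{lemtl} (which the present paper restates precisely because they are the Wiener-algebra trilinear tools of \cite{MR4230064}). The one place where your sketch stops short of a proof is the step you yourself flag: the "suitably chosen modifier functional" must be constructed for each fixed $k\in\mathbb Z^3$ so that the modified energy remains uniformly equivalent to $\norm{\hat f(t,k)}_{L^2_v}^2$ — otherwise one cannot take the square root of the mode-wise inequality before integrating against $d\Sigma(k)$, and the whole $L^1_k$ framework collapses. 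In \cite{MR4230064} this is done by a thirteen-moment macro estimate with $k$-dependent weights of the form $\comi{k}^{-1}k$ acting on the moment system, yielding macro dissipation with constants independent of $k$; asserting that a "standard micro-macro decomposition" does this is the gap between your proposal and a complete argument. The positivity claim via cutoff approximation is likewise standard but requires the approximating solutions to converge in a norm strong enough to invoke the uniqueness class, which deserves a sentence of justification.
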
 

The main goal of this section is to further prove the Gevrey smoothness in space variable $x$ for the obtained solution $f(t,x,v)$.
 
\begin{theorem}\label{thm3.1}
Let $\varepsilon_0>0$ be further small, then there is a  constant $\tilde C_0>0$,  depending only on $s,\gamma,\eps_0$ and the constant $C_0$ above, such that  for any $0<T<\infty$  and any integer $m\in\mathbb Z_+$, the solution $f(t,x,v)$ obtained in Proposition \ref{prop.exis} satisfies 
\begin{equation}
\label{thm3.1Reg}
\nabla_x^m f\in L^1_kL^\infty_{\tau,T}L^2_v,\quad \nabla_x^{m+\frac{s}{1+2s}}f,\ (a^{1/2})^w\nabla_x^mf\in L^1_kL^2_{\tau,T}L^2_v
\end{equation}
for any small $\tau>0$, with the quantitative estimate
%\begin{eqnarray*}
\begin{align*}
    &\int_{{\mathbb Z}^3}  \comi{k}^{m}\Big( \sup_{0<t< T} \phi(t)^{ \varsigma m }  \norm{ {\hat f (t,k )}}_{L^2_v}\Big) d \Sigma(k)+\int_{{\mathbb Z}^3}  \comi{k}^{m+\frac{s}{1+2s}}\Big( \int_0^T \phi(t)^{ 2\varsigma m }  \norm{ \hat f (t,k )}_{L^2_v}^2dt\Big)^{\frac{1}{2}} d \Sigma(k)  \\
    & \quad +\int_{\mathbb Z^3}  \comi{k}^{m}\Big( \int_0^T \phi(t)^{ 2\varsigma m }  \norm{(a^{1/2})^w {\hat f (t,k )}}_{L^2_v}^2dt\Big)^{1/2} d \Sigma(k)  
  \leq    {\tilde C}_0^{m+1} (m !) ^{\frac{1+2s}{2s}}.
  %\norm{f_0}_{ L^1_kL^2_v}.
\end{align*}
%\end{eqnarray*}
Here and below we have denoted $\phi(t)=\min\{t,  1\}$ and
 %\begin{equation}
	$\varsigma=\frac{1+2s}{2s}$  
%\end{equation}
 with $s$ the parameter given in \eqref{angu}.
\end{theorem}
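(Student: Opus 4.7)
The plan is to proceed by induction on $m\geq 0$. The base case $m=0$ is Proposition \ref{prop.exis}, augmented by a one-time hypoelliptic estimate producing the $\comi{k}^{s/(1+2s)}$ gain in the $L^1_k L^2_T L^2_v$ term on the left. For the inductive step, I introduce the Fourier-side regularization weight $w_{\delta,m}(k):=\comi{k}^m/(1+\delta|k|^2)^{1/2}$ with $\delta\in(0,1)$, which has strict order $m-1$ in $k$ for each fixed $\delta>0$ (so that the energy estimate at level $m$ is automatically well-defined once the inductive hypothesis at level $m-1$ is assumed) and increases monotonically to $\comi{k}^m$ as $\delta\to 0^+$. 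The goal is to control the regularized analogue $\mathcal N_{m,\delta}(T)$ of the norm in Theorem \ref{thm3.1} (obtained by replacing $\comi{k}^m$ throughout by $w_{\delta,m}(k)$) uniformly in $\delta$, and then send $\delta\to 0^+$ via monotone convergence.

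First I would apply $\mathscr F_x$ to \eqref{eqforper} to get $\partial_t\hat f+ik\cdot v\,\hat f=\mathcal L\hat f+\hat\Gamma(\hat f,\hat f)$, and test against $\phi(t)^{2\varsigma m}w_{\delta,m}(k)^2\hat f(t,k,\cdot)$ in $L^2_v$. The transport term vanishes by skew-adjointness in $v$; the coercivity \eqref{coer}--\eqref{rela} gives the velocity dissipation $c\,\phi^{2\varsigma m}w_{\delta,m}(k)^2\norm{(a^{1/2})^w\hat f}_{L^2_v}^2$; and the time-derivative boundary term $\varsigma m\phi^{2\varsigma m-1}w_{\delta,m}(k)^2\norm{\hat f}_{L^2_v}^2$ is absorbed by Young's inequality against the level $m-1$ bound with an $m^\varsigma$ loss, exactly what a Gevrey-$\varsigma$ bound can accommodate. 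To produce the $\comi{k}^{s/(1+2s)}$ spatial gain, I would couple this estimate with a H\"ormander-type auxiliary functional of the form $\alpha(k)\,\mathrm{Im}\inner{\nabla_v\hat f,k\hat f}_{L^2_v}$ (or a variant adapted to the symbolic calculus of $a^w$ in Proposition \ref{estaa}) whose time derivative, via the commutator $[ik\cdot v,\nabla_v]=ik$, produces a positive $|k|^2\norm{\hat f}_{L^2_v}^2$ contribution; interpolating this with the velocity dissipation of order $2s$ gives precisely $\comi{k}^{2s/(1+2s)}\norm{\hat f}_{L^2_v}^2$, uniformly in $\delta$.

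The nonlinear term is estimated via the Fourier trilinear bound \eqref{orup} combined with Lemma \ref{est}: the elementary inequality splits $w_{\delta,m}(k)$ acting on the convolution $\hat\Gamma(\hat f(k-\ell),\hat f(\ell))$ into three pieces. The two endpoint terms (full regularized weight on a single factor) are handled by Lemma \ref{lemtl}, with the other factor providing the smallness $\norm{f}_{L^1_k L^\infty_T L^2_v}\leq C\eps_0$ from Proposition \ref{prop.exis}, so the resulting contribution is absorbed into the left-hand side dissipation; the binomial sum involves only strictly lower-order weights and is bounded by $C\sum_{j=1}^{m-1}\binom{m}{j}\mathcal N_j(T)\mathcal N_{m-j}(T)$ via the inductive hypothesis. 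Collecting all estimates and applying Minkowski in $L^1_k$ yields
\[\mathcal N_{m,\delta}(T)\leq C+Cm^\varsigma\mathcal N_{m-1}(T)+C\eps_0\mathcal N_{m,\delta}(T)+C\sum_{j=1}^{m-1}\binom{m}{j}\mathcal N_j(T)\mathcal N_{m-j}(T),\]
uniformly in $\delta$. For $\eps_0$ small the third term is absorbed on the left, and the combinatorial identity $\sum_{j=1}^{m-1}\binom{m}{j}(j!)^\varsigma((m-j)!)^\varsigma\leq C_\varsigma(m!)^\varsigma$ (valid for $\varsigma>1$) closes the induction with a suitable choice of $\tilde C_0$; letting $\delta\to 0^+$ then gives the theorem.

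The main obstacle will be the construction of the H\"ormander-type multiplier producing the $\comi{k}^{s/(1+2s)}$ dissipation in a manner compatible with the regularization weight $w_{\delta,m}$ and with the nonlinear splitting of Lemma \ref{est}. Its error terms must all be controllable either by the linear dissipation itself or by the inductive bound at level $m-1$, and these controls must be uniform in $m$, in $\delta$, and in the Fourier parameter $k$; commutators between the hypoelliptic multiplier and $w_{\delta,m}$, as well as with the symbol $a^w$ via Proposition \ref{estaa}, must be shown to produce only such controllable errors.
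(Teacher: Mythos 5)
Your strategy is essentially the paper's: the same Fourier-side weight $\comi{k}^m(1+\delta|k|^2)^{-1/2}$ and time weight $\phi(t)^{\varsigma m}$ (this is exactly the regularization $\hat f_{m,\delta}$ of \eqref{fmdel}), an energy estimate coupled with a hypoelliptic multiplier to gain $\comi{k}^{s/(1+2s)}$, the splitting of Lemma \ref{est} together with \eqref{orup} and Lemma \ref{lemtl} for the nonlinearity, a Young/interpolation inequality in $k$ to trade the $mt^{-1}$ singularity against the subelliptic gain and the level $m-1$ norm, and induction on $m$. Two points, however, deserve attention. First, the piece you flag as the main obstacle is indeed the technical heart: the naive functional $\alpha(k)\,\mathrm{Im}(\nabla_v\hat f,k\hat f)_{L^2_v}$ costs a full $v$-derivative, which the $s$-order dissipation cannot pay for; the paper's resolution (Lemma \ref{sd}) is the Wick quantization $\lambda_k^{\rm Wick}$ of a \emph{bounded} symbol $\lambda_k=d_k\,\tilde a(v,k)^{-2s/(1+2s)}\chi(\cdot)$ whose Poisson bracket with $v\cdot k$ produces $\tilde a(v,k)^{1/(1+2s)}\gtrsim\comi{k}^{2s/(1+2s)}$ modulo terms dominated by $\tilde a(v,\eta)$, i.e.\ by the dissipation itself; positivity of the Wick quantization and the inverse of $(a^{1/2})^w$ from Proposition \ref{estaa} make all error terms controllable uniformly in $k$, $\delta$, $\delta_1$. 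A related omission is that a velocity regularization $\Lambda_{\delta_1}$ is also needed before any pairing is legitimate, since a priori $\mathcal L\hat f_{m,\delta}$ and $(v\cdot k)\hat f_{m,\delta}$ need not lie in $L^2_v$ (cf.\ Lemma \ref{verf}).

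Second, a genuine (though standard and repairable) flaw: with the normalization $\mathcal N_n\leq\tilde C_0^{\,n+1}(n!)^{\varsigma}$ your convolution term gives $C\sum_j\binom{m}{j}\mathcal N_j\mathcal N_{m-j}\leq C\,C_\varsigma\,\tilde C_0^{\,m+2}(m!)^{\varsigma}$, which exceeds the target $\tilde C_0^{\,m+1}(m!)^{\varsigma}$ by a factor $C\,C_\varsigma\,\tilde C_0$, so the induction as written does not close (your combinatorial inequality itself is fine, since $\sum_j\binom{m}{j}^{1-\varsigma}\leq C_\varsigma$ for $\varsigma>1$). The paper avoids this by running the induction on the shifted quantity $\tilde C_0^{\,n-1}[(n-1)!]^{\varsigma}$ for $n\geq 2$ (see \eqref{inassump} and the computation \eqref{tecalp1}): the product of two inductive constants then yields $\tilde C_0^{\,m-2}$, one power \emph{below} the target $\tilde C_0^{\,m-1}$, and the theorem's bound $\tilde C_0^{\,m+1}(m!)^{\varsigma}$ is recovered a posteriori. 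You would need the analogous shift (or small low-order norms entering every product) to make your recursion close.
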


Theorem  \ref{thm3.1} is just an immediate consequence of the  following two  propositions, by using induction on $m$.

\begin{proposition}[Initial step for $m=0,1$]\label{prplow}
Under the same assumption as in Theorem \ref{thm3.1}, there is  a constant $C_1$, depending only on $s,\gamma$ and the number  $  C_0$ in \eqref{+1234},  such that \eqref{thm3.1Reg} holds true for $m=0,1$ and the following estimate
%\begin{eqnarray*}
\begin{align*}
	  	 &\int_{\mathbb Z^3}  \comi{k}^{m}\Big( \sup_{0<t<T} \phi(t)^{ \varsigma m }  \norm{ {\hat f (t,k )}}_{L^2_v}\Big) d \Sigma(k) +  \int_{\mathbb Z^3}  \comi{k}^{m+\frac{s}{1+2s}}\Big( \int_0^T \phi(t)^{ 2\varsigma m }  \norm{ \hat f (t,k )}_{L^2_v}^2dt\Big)^{1/2} d \Sigma(k)\\
 &\qquad\qquad+ \int_{\mathbb Z^3}  \comi{k}^{m}\Big( \int_0^T \phi(t)^{ 2\varsigma m }  \norm{(a^{1/2})^w {\hat f (t,k )}}_{L^2_v}^2dt\Big)^{1/2} d \Sigma(k) \leq C_1 \norm{f_0}_{  L^1_kL^2_v},
\end{align*}
%\end{eqnarray*}
is satisfied for $m=0,1$.  
    %$m\in\mathbb Z_+$ with $0\leq m\leq 1.$
  \end{proposition}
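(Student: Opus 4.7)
Proposition~\ref{prplow} is handled case by case. For $m=0$ the time weight is trivial and the first and third quantities in the claimed bound coincide with $\norm{f}_{L^1_kL^\infty_TL^2_v}$ and $\norm{(a^{1/2})^wf}_{L^1_kL^2_TL^2_v}$, both delivered directly by Proposition~\ref{prop.exis} (up to choosing $\eps_0$ small). The second quantity, which requires the hypoelliptic gain $\comi{k}^{s/(1+2s)}$ in spatial frequency, is the only new ingredient at $m=0$. I would derive it from the Fourier-transformed equation
\begin{equation*}
(\partial_t+ik\cdot v)\hat f=\mathcal L\hat f+\hat\Gamma(\hat f,\hat f)
\end{equation*}
by a H\"ormander-type argument performed at each fixed $k$: testing against a $k$-dependent pseudo-differential multiplier in $v$ built from $a^w$ via Proposition~\ref{estaa}, the commutator with the transport $ik\cdot v$ produces, after integration in $t$, a bound of the schematic form
\begin{equation*}
\comi{k}^{\frac{2s}{1+2s}}\int_0^T\norm{\hat f(t,k)}_{L^2_v}^2\,dt\leq C\norm{\hat f_0(k)}_{L^2_v}^2+C\int_0^T\bigl(\norm{(a^{1/2})^w\hat f}_{L^2_v}^2+\norm{(a^{-1/2})^w\hat\Gamma(\hat f,\hat f)}_{L^2_v}^2\bigr)\,dt,
\end{equation*}
where Lemma~\ref{lemtrip} bounds the nonlinear term. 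Taking the square root, summing in $k$ with $\ell^1$-measure, and using Lemma~\ref{lemtl}, the nonlinear contribution is controlled by $C\norm{f}_{L^1_kL^\infty_TL^2_v}\norm{(a^{1/2})^wf}_{L^1_kL^2_TL^2_v}\leq CC_0^2\eps_0\norm{f_0}_{L^1_kL^2_v}$, absorbable for small $\eps_0$.

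For $m=1$ I would differentiate~\eqref{eqforper} in $x$ (equivalently, multiply $\hat f$ by $k$) and carry out the time-weighted $L^2_v$ energy estimate against $\phi(t)^{2\varsigma}k\hat f$, yielding
\begin{equation*}
\tfrac12\tfrac{d}{dt}\bigl(\phi^{2\varsigma}\norm{k\hat f}_{L^2_v}^2\bigr)+c\phi^{2\varsigma}\norm{(a^{1/2})^wk\hat f}_{L^2_v}^2\leq\varsigma\phi^{2\varsigma-1}\phi'\norm{k\hat f}_{L^2_v}^2+\phi^{2\varsigma}|k|\,\bigl|\bigl(\hat\Gamma(\hat f,\hat f),k\hat f\bigr)_{L^2_v}\bigr|.
\end{equation*}
The critical term is $\phi^{2\varsigma-1}\norm{k\hat f}_{L^2_v}^2$: since $2\varsigma-1=1/s$, Young's inequality matches exactly the hypoelliptic gain applied at the $m=1$ level,
\begin{equation*}
\phi^{2\varsigma-1}\norm{k\hat f}_{L^2_v}^2\leq\eps\,\phi^{2\varsigma}\comi{k}^{\frac{2s}{1+2s}}\norm{k\hat f}_{L^2_v}^2+C_\eps\norm{\hat f}_{L^2_v}^2,
\end{equation*}
so the first piece is absorbed by the dissipation together with the hypoelliptic gain (for $k\hat f$, exactly as at $m=0$) and the second is integrable in time thanks to the $m=0$ estimate already established. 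The nonlinear term is then treated by~\eqref{orup} and Lemma~\ref{lemtl}: after integrating in time and summing in $k$ in $\ell^1$, it produces a contribution of the form $C\norm{f}_{L^1_kL^\infty_TL^2_v}$ times the quantities to be controlled, absorbable by further smallness of $\eps_0$. Combining these ingredients yields simultaneous control over the three $m=1$ norms.

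I expect the main obstacle to be the hypoelliptic estimate itself. The Weyl calculus of Proposition~\ref{estaa} is designed for the velocity variable alone, whereas the gain $\comi{k}^{s/(1+2s)}$ couples the Fourier variable $k$ of $x$ with the velocity symbol $a$; one has to design a $k$-dependent multiplier whose symbol lies in an admissible class \emph{uniformly} in $k$, so that the composition and commutator formulas~\eqref{symbc}--\eqref{comsym} furnish constants independent of $k$. Only then can the $\ell^1_k$-summation be carried out while retaining the fractional gain. A secondary subtlety, intrinsic to the $L^1_k$ (rather than $L^2_k$) nature of the norms in the statement, is that Lemma~\ref{lemtl} must be applied with the correct pairing so that the bootstrapping constant stays independent of $T$, which is precisely why the nonlinear contribution splits asymmetrically as $\norm{f}_{L^1_kL^\infty_TL^2_v}$ times a dissipation-type $L^1_kL^2_TL^2_v$ factor and the smallness of $\eps_0$ is used exactly once per estimate.
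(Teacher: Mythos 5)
Your plan follows the paper's own route: a $k$-dependent velocity multiplier whose Poisson bracket with $v\cdot k$ produces the gain $\comi{k}^{2s/(1+2s)}$ in $L^2_t$ (the paper realizes this with the Wick quantization of $\lambda_k= \chi(\cdot)\, d_k/\tilde a(v,k)^{2s/(1+2s)}$, lying in $S(1,|dv|^2+|d\eta|^2)$ uniformly in $k$), coupled with a time-weighted $L^2_v$ energy estimate in which the singular factor $t^{-1}$ coming from the weight is absorbed by Young's inequality against the spatial gain, and with Lemma~\ref{lemtl} plus smallness of $\eps_0$ for the nonlinearity. Two concrete points would, however, make the argument fail as written.

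First, you cannot "differentiate \eqref{eqforper} in $x$" and test against $\phi(t)^{2\varsigma}k\hat f$ directly: the solution of Proposition~\ref{prop.exis} is only a mild solution in $L^1_kL^\infty_TL^2_v$, so quantities like $\norm{k\hat f}_{L^2_v}$ and the pairings in your energy identity are not known a priori to be finite, and the identity itself is only formal. The paper inserts the regularizations $(1+\delta|k|^2)^{-1/2}$ and $\Lambda_{\delta_1}=(1+\delta_1|v|^2)^{-1-\gamma}(1+\delta_1|k|^2-\delta_1\Delta_v)^{-1}$, verifies in Lemma~\ref{verf} that every term of the regularized equation lies in the correct space, proves all estimates uniformly in $\delta,\delta_1$, and removes the regularization by Fatou and dominated convergence (including showing that the commutator $[\mathcal L,\Lambda_{\delta_1}]$ contribution vanishes as $\delta_1\to0$). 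Some such approximation step is indispensable here and is absent from your plan.

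Second, your displayed interpolation at $m=1$ is false: $2\varsigma-1=(1+s)/s$, not $1/s$, and more importantly no inequality of the form $t^{2\varsigma-1}|k|^2\le\eps\, t^{2\varsigma}\comi{k}^{2s/(1+2s)}|k|^2+C_\eps$ can hold uniformly (take $|k|=K\to\infty$ and $t=K^{-\alpha}$ with $2s/(1+2s)<\alpha<2s/(1+s)$: the left side blows up while the first right-hand term becomes negligible). The correct version, used in the paper, is $t^{-1}\le\eps^2\comi{k}^{2s/(1+2s)}+C_\eps\, t^{-2\varsigma}\comi{k}^{-2+2s/(1+2s)}$, so the lower-order term carries the weight $\comi{k}^{2s/(1+2s)}\norm{\hat f}^2_{L^2_v}$ rather than $\norm{\hat f}^2_{L^2_v}$. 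This is recoverable — that extra $\comi{k}^{s/(1+2s)}$ in $L^1_kL^2_TL^2_v$ is exactly what your $m=0$ subelliptic estimate supplies, which is how the paper closes the loop between Propositions~\ref{prpsub} and~\ref{prpeng} — but the absorption step as you wrote it does not go through.
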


\begin{proposition}[Inductive regularity]\label{prpind}
Let $f(t,x,v)$ satisfy the same conditions as in Theorem \ref{thm3.1} and let $C_1$ be the constant constructed in the previous Proposition \ref{prplow}. Then there is a constant $\tilde C_0\geq C_1$, depending only on $s,\gamma$ and the number  $ C_0$ in \eqref{+1234}, such that for an  integer $m\geq 2$,  we have \eqref{thm3.1Reg} as well as 
%\begin{eqnarray*}
\begin{align*}
  &\int_{\mathbb Z^3}  \comi{k}^{m}\Big( \sup_{0<t<T} \phi(t)^{ \varsigma m }  \norm{ {\hat f (t,k )}}_{L^2_v}\Big) d \Sigma(k)  + \int_{\mathbb Z^3}  \comi{k}^{m+\frac{s}{1+2s}}\Big( \int_0^T \phi(t)^{ 2\varsigma m }  \norm{ \hat f (t,k )}_{L^2_v}^2dt\Big)^{\frac{1}{2}} d \Sigma(k)   \\
 &\qquad+ \int_{\mathbb Z^3}  \comi{k}^{m}\Big( \int_0^T \phi(t)^{ 2\varsigma m }  \norm{(a^{1/2})^w {\hat f (t,k )}}_{L^2_v}^2dt\Big)^{1/2} d \Sigma(k)   
  \leq  \tilde C_0^{m-1}\big[(m-1)!\big]^{\frac{1+2s}{2s}},
  %\norm{f_0}_{  L^1_kL^2_v}, 
\end{align*}
%\end{eqnarray*} 
provided that \eqref{thm3.1Reg} together with the following estimate 
 %\begin{equation}\label{inassump}
 \begin{align}
	  	 &\int_{\mathbb Z^3}  \comi{k}^{n}\Big( \sup_{0<t<T} \phi(t)^{ \varsigma n }  \norm{ {\hat f (t,k )}}_{L^2_v}\Big) d \Sigma(k)+  \int_{\mathbb Z^3}  \comi{k}^{n+\frac{s}{1+2s}}\Big( \int_0^T \phi(t)^{ 2\varsigma n }  \norm{ \hat f (t,k )}_{L^2_v}^2dt\Big)^{\frac{1}{2}} d \Sigma(k) \notag\\
 &\qquad\qquad\qquad+ \int_{\mathbb Z^3}  \comi{k}^{n}\Big( \int_0^T \phi(t)^{ 2\varsigma n }  \norm{(a^{1/2})^w {\hat f (t,k )}}_{L^2_v}^2dt\Big)^{1/2} d \Sigma(k)\notag\\
 &\leq 
 \left\{
 \begin{aligned}
&  \tilde C_0, \quad  {\rm if }\  n\leq 1,\\
& \tilde C_0^{n-1}\big[(n-1)!\big]^{\frac{1+2s}{2s}},  \quad   {\rm if } \ n\geq 2,
 \end{aligned}
 	\right.\label{inassump}
	  	\end{align}
%\end{equation}
hold true for any integer $n$ with $0\leq n \leq m-1$.
	 	
\end{proposition}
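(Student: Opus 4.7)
The plan is to run a time-weighted energy estimate on the partial Fourier transform in $x$ of \eqref{eqforper}, using the bounded regularizing weight $\omega_{m,\delta}(k):=\langle k\rangle^m(1+\delta|k|^2)^{-1/2}$ with $\delta\in(0,1)$, so that every quantity is a priori finite; the parameter $\delta$ is removed at the end by Fatou. The transformed equation reads $\partial_t\hat f+iv\cdot k\,\hat f-\mathcal L\hat f=\hat\Gamma(\hat f,\hat f)$, and taking the $L^2_v$ inner product against $\phi(t)^{2\varsigma m}\omega_{m,\delta}(k)^2\hat f(k)$ and integrating in $t$ produces, by the coercivity \eqref{coer}--\eqref{rela} of $-\mathcal L$, the dissipation $\int_0^T\phi^{2\varsigma m}\|(a^{1/2})^w\omega_{m,\delta}\hat f(k)\|_{L^2_v}^2\,dt$ on the left (up to a lower-order $\langle v\rangle^\ell$-compensator which is absorbed by \eqref{+1234}). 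Coupling this dissipation with the antisymmetric transport $iv\cdot k$ through a H\"ormander-type commutator multiplier yields the spatial-regularity gain $\int_0^T\phi^{2\varsigma m}\langle k\rangle^{2s/(1+2s)}\|\omega_{m,\delta}\hat f(k)\|_{L^2_v}^2\,dt$, which is the third quantity on the left of the claimed estimate.

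The time-derivative of the weight contributes $2\varsigma m\phi^{2\varsigma m-1}\phi'\|\omega_{m,\delta}\hat f\|_{L^2_v}^2$. Writing $\omega_{m,\delta}=\langle k\rangle\omega_{m-1,\delta}$ and interpolating $\langle k\rangle$ between $\langle k\rangle^{1+s/(1+2s)}$ and $\langle k\rangle^0$, a small multiple of the hypoelliptic gain can be absorbed on the left, leaving a strictly lower-order piece of the form $\phi^{2\varsigma(m-1)}\|\langle k\rangle^{m-1+s/(1+2s)}\hat f\|^2_{L^2_v}$ (up to powers of $m$) which is controlled by the inductive hypothesis \eqref{inassump} at level $n=m-1$ through its $\langle k\rangle^{n+s/(1+2s)}$ term. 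The crucial step is to estimate the nonlinear contribution $\int_{\mathbb Z^3}\bigl(\int_0^T\phi^{2\varsigma m}\omega_{m,\delta}(k)^2|(\hat\Gamma(\hat f,\hat f)(k),\hat f(k))_{L^2_v}|\,dt\bigr)^{1/2}d\Sigma(k)$. Applying the trilinear bound \eqref{orup} reduces the integrand to a convolution in $\ell$ of $\|\hat f(k-\ell)\|_{L^2_v}\|(a^{1/2})^w\hat f(\ell)\|_{L^2_v}$, after which the weight $\omega_{m,\delta}(k)$ is split using Lemma \ref{est}.

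Lemma \ref{est} produces two \emph{boundary pieces}, in which the full regularized weight stays attached to a single factor, and a sum of \emph{interior pieces} $\binom{m}{j}\langle k-\ell\rangle^j\langle\ell\rangle^{m-j}$ for $1\leq j\leq m-1$. For each piece the time weight is split as $\phi^{\varsigma m}=\phi^{\varsigma j}\phi^{\varsigma(m-j)}$ and distributed to the two convolution variables, and Lemma \ref{lemtl} recasts the $\ell$-convolution as a product of an $L^1_kL^\infty_TL^2_v$ norm on one side and an $L^1_kL^2_TL^2_v$ norm of $(a^{1/2})^w$ on the other. The interior pieces are then bounded by the inductive hypothesis \eqref{inassump} applied at the two levels $n=j$ and $n=m-j$ (with the $\tilde C_0$ convention when $n\leq 1$), producing $\tilde C_0^{m-2}\sum_{j=1}^{m-1}\binom{m}{j}[(j-1)!]^{\varsigma}[(m-j-1)!]^{\varsigma}$; a direct combinatorial check using $\varsigma=(1+2s)/(2s)>1$ shows this sum is bounded by $C[(m-1)!]^{\varsigma}$ uniformly in $m$. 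The two boundary pieces, by contrast, have only the base-level smallness \eqref{+1234} available on the partner factor, so the smallness of $\varepsilon_0$ allows them to be absorbed on the left via Young's inequality applied against the velocity dissipation $\phi^{2\varsigma m}\|(a^{1/2})^w\omega_{m,\delta}\hat f\|_{L^2_v}^2$.

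The main obstacle will be calibrating $\tilde C_0$ and $\varepsilon_0$ simultaneously across all induction levels: $\varepsilon_0$ must be small enough that the two boundary pieces (in which the full weight remains attached to a factor whose partner is controlled only by the solution's smallness) can be reabsorbed at every $m$, while $\tilde C_0$ must simultaneously dominate the combinatorial constant from the interior pieces, the constant $C_1$ from Proposition \ref{prplow}, and the auxiliary constants produced by the time-derivative interpolation and the kernel compensator. A secondary technical point is the passage $\delta\to 0$; since every estimate above is uniform in $\delta\in(0,1)$, Fatou's lemma in the $d\Sigma(k)$-integral and monotone convergence in $\delta$ deliver the final bound with the regularizer removed, completing the inductive step.
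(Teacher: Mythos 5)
Your proposal follows essentially the same route as the paper: the regularized weight $\langle k\rangle^m(1+\delta|k|^2)^{-1/2}$, a time-weighted energy estimate coupled with a H\"ormander-type hypoelliptic multiplier to extract the $\langle k\rangle^{2s/(1+2s)}$ gain, Lemma \ref{est} together with Lemma \ref{lemtl} to split the nonlinear convolution into interior pieces (closed by the induction hypothesis \eqref{inassump} and the combinatorial bound $\sum_j\binom{m}{j}[(j-1)!]^{\varsigma}[(m-j-1)!]^{\varsigma}\leq C[(m-1)!]^{\varsigma}$) and boundary pieces (absorbed by the smallness of $\varepsilon_0$), the interpolation of $m\phi(t)^{-1}$ against the hypoelliptic gain with the remainder pushed down to level $m-1$, and Fatou to remove $\delta$. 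The only ingredient you omit is the paper's additional velocity mollifier $\Lambda_{\delta_1}$ (and the verification that $[\mathcal L,\Lambda_{\delta_1}]$ contributes nothing as $\delta_1\to 0$), which is needed to justify the formal pairings on the low-regularity solution but does not alter the structure of the argument.
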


The rest part of this section is devoted to proving  Propositions \ref{prplow} and \ref{prpind}.   We remark that it suffices to consider only the case of $0<t\leq 1$,  where  the main difficulty is to control the terms involving  the large factor $t^{-1}$ that arises from the auxilliary function $\phi(t)$ introduced to overcome the singularity of  $\comi k^m \hat f(t,k)$ at $t=0$.  Once the regularity   is achieved for $0<t\leq 1$, the counterpart over $1\leq t< T$  can be treated in a similar way as in the case of   $0<t\leq 1$ but with the simpler argument,  since it is essentially the propagation of  regularity from  $t=1$ to $1\leq t< T$.  As to be seen below, we will combine the subelliptic estimates with energy estimates to deal with the large factor $t^{-1}$ for $0<t\leq 1$.
  
As discussed above    we will focus on in the following discussion the case of $0<t\leq 1$ and hence we choose $\phi(t)=t.$  For simplicity we will use  the capital letter $C$ to denote some generic constants, that may vary from line to line and depend only on $\gamma,s$ and the number $C_0$ in \eqref{+1234}, and moreover use  $C_\eps$ to denote  some generic constants depending on a given number $0<\eps\ll 1$ additionally.   Note these generic  constants $C$ and $C_\eps$ as below  are  independent of  the derivative order related to $m$.

\subsection{Regularization operators and  uniform estimates} 
 Note that we can  not directly perform  estimates for $\comi k^m t^{\varsigma m}\hat f(t,k,v)$ due to its low regularity.  So to begin with we  introduce its  regularization defined by
	\begin{eqnarray}\label{fmdel}
		   \hat f_{m,\delta}(t,k,v)=t^{\varsigma m}  (1+\delta\abs k^2)^{-1/2} \comi{k}^m \hat f(t,k,v), 	\quad 0<\delta\ll 1,\quad m\geq 0.
	\end{eqnarray}
 Moreover, with each $k\in\mathbb Z^3$  
  we associate an operator  
   \begin{eqnarray}\label{regoper}
\Lambda_{\delta_1}=\Lambda_{\delta_1,k}= \big  (1+\delta_1  \abs v^2 \big)^{-1-\gamma} \big(1+\delta_1 \abs k^2-\delta_1\Delta_v\big)^{-1}, \quad \delta_1\ll 1,
  \end{eqnarray} 
with $\gamma$ given in  \eqref{kern}. Here and below we have written $\Lambda_{\delta_1,k}$ as $\Lambda_{\delta_1}$ to omit the dependence on $k$ for brevity.  
  
 If $m\geq 2$, then
 by the induction assumption \eqref{inassump} we see 
 $$
 \sup_{0<t\leq 1}t^{\varsigma (m-1)}  \comi{k}^{m-1}\norm{\hat f(t,k)}_{L_v^2}\in L_k^1.
$$
Note that $L^1_k$ is the set of absolutely convergent series. This yields 
  \begin{eqnarray*}
  \sup_{k\in\mathbb Z^3}\Big(	\sup_{0<t\leq 1}t^{\varsigma (m-1)}  \comi{k}^{m-1}\norm{\hat f(t,k)}_{L_v^2} \Big) \leq C<+\infty.
    \end{eqnarray*}
 As a result, it holds that
   \begin{equation}\label{vans}
 \norm{\hat f_{m,\delta}(t,k)}_{L_v^2} \leq C_\delta t^{\varsigma}  t^{\varsigma (m-1)}  \comi{k}^{m-1}\norm{\hat f(t,k)}_{L_v^2}\leq   C_\delta t^{\varsigma},\quad \forall\ k \in\mathbb Z^3,\ \forall\ 0< t\leq 1,
  \end{equation}
for some constant  $C_\delta$    depending  on $\delta$.   Similarly, one has 
   \begin{equation}\label{L2conv}
  \Big(\int_0^1 \norm{(a^{1/2})^w\hat f_{m,\delta}(t,k)}_{L_v^2}^2dt \Big)^{\frac{1}{2}}\leq     C_\delta,\quad \forall\  k\in\mathbb Z^3.
  \end{equation} 
Note that the assertions \eqref{vans} and \eqref{L2conv} are also true for $0\leq m\leq 1$ due to the condition \eqref{+1234}.

 Direct computation shows  
  \begin{equation*}
  \begin{aligned}
\left\{
  \begin{aligned}
  	&\big|\partial_{v}^\alpha   \big  (1+\delta_1  \abs v^2 \big)^{-1-\gamma}  \big|  \leq C_{\alpha}  \big  (1+\delta_1  \abs v^2 \big)^{-1-\gamma},\\  	 
  &	\big| \partial_{\eta}^\beta   \big(1+\delta_1 \abs k^2+\delta_1\abs\eta^2\big)^{-1} \big|  \leq C_{\beta}   \big(1+ \delta_1\abs\eta^2\big)^{-1},
  	\end{aligned}
  	\right.\qquad\forall\ \alpha,\beta\in\mathbb Z_+^3,
  	\end{aligned}
  \end{equation*}
   and moreover
   \begin{equation*} 
 \big|\partial_{v}^\alpha   \big  (1+\delta_1  \abs v^2 \big)^{-1-\gamma}  \big|  + \big| \partial_{\eta}^\alpha   \big(1+\delta_1 \abs k^2+\delta_1\abs\eta^2\big)^{-1} \big| \leq C_{\alpha}  \delta_1^{1/2}, 
  \quad	\forall\ \alpha\in\mathbb Z_+^3 \textrm{ with } \abs\alpha\geq 1,
  \end{equation*} 
 where  $C_{\alpha},C_{\beta}$ are   constants  depending only on $\alpha$ and $\beta$ respectively  but not on $k,\delta_1$.   
  Thus, by \eqref{symbc} we can write $\Lambda_{\delta_1}$ defined by \eqref{regoper} as 
 \begin{equation}\label{ew+}
 	\Lambda_{\delta_1}=q^w   
 \end{equation}
with  $q\in S\big(   (1+\delta_1  \abs v^2  )^{-1-\gamma} (1+ \delta_1\abs\eta^2 )^{-1}, \   \abs{dv}^2+\abs{d\eta}^2\big)$, 
and moreover for any $\alpha,\beta\in\mathbb Z_+^3$ with $\abs\alpha+\abs\beta=1$ we have
\begin{equation}\label{ew}
   q \in  S(1, \ \abs{dv}^2+\abs{d\eta}^2) \textrm{ and }\delta_1^{-1/2}\partial_v^\alpha\partial_\eta^\beta q \in  S(1, \ \abs{dv}^2+\abs{d\eta}^2)   \textrm{ uniformly w.r.t. }  \delta_1 \textrm{ and  } k.
 \end{equation}
 This with assertions (i) and (iii) in Proposition \ref{estaa} as well as  \eqref{symbc} give that
 \begin{equation}
 	\label{bd}
	\norm{(a^{1/2})^w\Lambda_{\delta_1} h}_{L_v}+\norm{\Lambda_{\delta_1} (a^{1/2})^w h}_{L_v}
 \leq C_{\delta_1}\norm{h}_{L_v^2},\quad \forall\, h\in L_v^2.
 \end{equation}
Now  we list some uniform estimates for the regularization operator $\Lambda_{\delta_1}$ to  be used frequently later.  By uniform it means that the estimates presented below hold with constants independent of $\delta_1$ and $k$.  It is clear to see that
 \begin{equation}\label{ubdo}
 	\norm{\Lambda_{\delta_1}   h}_{L_v^2}\leq \norm{h}_{L_v^2},\quad \forall\, h\in L_v^2.
 \end{equation}
Moreover, combining \eqref{ew+} with \eqref{ew}, we apply   Lemma \ref{lemcomest}   to conclude that for any $h\in  L_v^2$ 
with  $(a^{1/2})^w h \in L_v^2$, 
  \begin{equation*}%\label{ce}
		\norm{\Lambda_{\delta_1} (a^{1/2})^wh}_{L_v^2}+\norm{(a^{1/2})^w\Lambda_{\delta_1} h}_{L_v^2} +\norm{[\Lambda_{\delta_1},\  (a^{1/2})^w]h}_{L_v^2}\leq C\norm{(a^{1/2})^wh}_{L_v^2},
\end{equation*}
with $C$ independent of $\delta_1$ and $k$.  Meanwhile, by \eqref{comsym}  and the second assertion in \eqref{ew}, one has
\begin{equation}\label{smacomest}
	\norm{[\Lambda_{\delta_1},\  (a^{1/2})^w]h}_{L_v^2}\leq C\delta_1^{1/2}\norm{(a^{1/2})^wh}_{L_v^2}.
\end{equation}
 Note that the above estimate \eqref{smacomest} still holds with $\Lambda_{\delta_1}$ replaced by its adjoint $\Lambda_{\delta_1}^*$ on $L_v^2$. 

Next, we will perform estimates for the regularization  $\Lambda_{\delta_1} \hat f_{m,\delta}$.     To begin with we derive the equations solved by $\Lambda_{\delta_1} \hat f_{m,\delta}$. Observe 
	\begin{equation}\label{eqfour}
		\big(\partial_t+iv\cdot k-\mathcal L\big) \hat f =  \hat\Gamma \big({\hat f},~{\hat f}\big),
\end{equation}
and  			thus  
		\begin{multline}\label{equreg}
		\big(\partial_t+iv\cdot k\big) \Lambda_{\delta_1} \hat f_{m,\delta} -\Lambda_{\delta_1} \mathcal  L \hat f_{m,\delta}=\Lambda_{\delta_1}t^{\varsigma m}  (1+\delta\abs k^2)^{-1/2} \comi{k}^m\hat\Gamma \big({\hat f},~{\hat f}\big) \\ 
		+\varsigma mt^{-1} \Lambda_{\delta_1}\hat f_{m,\delta}+i[v\cdot k,\ \Lambda_{\delta_1}] \hat f_{m,\delta}.
			\end{multline}

\begin{lemma}[Regularization]\label{verf}
Let $f$ satisfy the condition \eqref{+1234}. Then for any $0<t\leq T\leq 1$, it holds that 
	\begin{eqnarray*}
	  t^{-1} \Lambda_{\delta_1}\hat f_{m,\delta},\  [v\cdot k,\  \Lambda_{\delta_1}] \hat f_{m,\delta},\ 	(v\cdot k) \Lambda_{\delta_1} \hat f_{m,\delta}	\in  L_k^1L_T^\infty L_v^2
		\end{eqnarray*}
		and
		\begin{eqnarray*}
		\partial_t \Lambda_{\delta_1} \hat f_{m,\delta},\  \Lambda_{\delta_1} \mathcal  L \hat f_{m,\delta}, \	\Lambda_{\delta_1}t^{\varsigma m}  (1+\delta\abs k^2)^{-1/2} \comi{k}^m\hat\Gamma \big({\hat f},~{\hat f}\big)\in  L_k^1L_T^2 L_v^2
		\end{eqnarray*} 
for $0\leq m\leq 1$,  and the above assertion still holds  for $m\geq 2$  
	provided that the induction assumption \eqref{inassump} is fulfilled.
\end{lemma}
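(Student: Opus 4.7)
The plan is to verify each claimed membership directly by bounding the $L^2_v$-norm of the relevant operator applied to $\hat f_{m,\delta}$ in terms of $\|\hat f\|_{L^2_v}$ or $\|(a^{1/2})^w\hat f\|_{L^2_v}$, with $(\delta,\delta_1)$-dependent but $k$-keeping constants, and then integrating over $k$ using either \eqref{+1234} (when $m\leq 1$) or the induction hypothesis \eqref{inassump} at order $m-1$ (when $m\geq 2$). The key preliminary step is to refine \eqref{vans}--\eqref{L2conv} so that the $k$-dependence is retained: since $(1+\delta|k|^2)^{-1/2}\comi{k}^m\leq C\delta^{-1/2}\comi{k}^{m-1}$ whenever $m\geq 1$, one gets $\|\hat f_{m,\delta}(t,k)\|_{L^2_v}\leq C\delta^{-1/2}\,t^{\varsigma m}\comi{k}^{m-1}\|\hat f(t,k)\|_{L^2_v}$ and the analogous bound with $(a^{1/2})^w$ applied on both sides. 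This cuts the effective order of regularity by one, which is exactly what makes the induction hypothesis (at order $m-1$) applicable; for $m=0$ one uses \eqref{+1234} without any reduction.

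Next I would handle the transport contributions using only operator bounds for $\Lambda_{\delta_1}$ read off from its explicit form \eqref{regoper}. For $\gamma\geq 0$, the weight $(1+\delta_1|v|^2)^{-1-\gamma}$ yields $\||v|\cdot(1+\delta_1|v|^2)^{-1-\gamma}\|_{L^\infty_v}\leq C\delta_1^{-1/2}$, and the Fourier multiplier $(1+\delta_1|k|^2+\delta_1|\eta|^2)^{-1}$ yields $\||k|\,(1+\delta_1|k|^2-\delta_1\Delta_v)^{-1}\|_{L^2_v\to L^2_v}\leq C\delta_1^{-1/2}$, hence $\|(v\cdot k)\Lambda_{\delta_1}\|_{L^2_v\to L^2_v}\leq C\delta_1^{-1}$ uniformly in $k$; combined with the baseline bound above, $(v\cdot k)\Lambda_{\delta_1}\hat f_{m,\delta}\in L^1_kL^\infty_TL^2_v$. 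The commutator is controlled via the symbolic calculus: \eqref{ew} together with \eqref{comsym} identifies $[v_j,\Lambda_{\delta_1}]$ with $\frac{1}{i}(\partial_{\eta_j}q)^w$ where $\partial_{\eta_j}q\in\delta_1^{1/2}S(1,|dv|^2+|d\eta|^2)$, giving $\|[v\cdot k,\Lambda_{\delta_1}]\|_{L^2_v\to L^2_v}\leq C\delta_1^{1/2}|k|$, and the extra $|k|$ is absorbed by the bound $|k|(1+\delta|k|^2)^{-1/2}\leq\delta^{-1/2}$ already built into $\hat f_{m,\delta}$. The term $t^{-1}\Lambda_{\delta_1}\hat f_{m,\delta}$ is then immediate from $\|\Lambda_{\delta_1}\|_{L^2\to L^2}\leq 1$ and $t^{\varsigma m-1}\leq 1$ on $(0,1]$, which is genuine for $m\geq 1$ (the $m=0$ instance carries no such factor in \eqref{equreg}).

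For the collision contributions I would factor out $(a^{1/2})^w$ on the left, writing $\mathcal L=(a^{1/2})^w\circ(a^{-1/2})^w\mathcal L$ and analogously inserting $(a^{1/2})^w(a^{-1/2})^w$ inside $\hat\Gamma$. By Corollary \ref{corupp}, $\|(a^{-1/2})^w\mathcal L\hat f_{m,\delta}\|_{L^2_v}\leq C\|(a^{1/2})^w\hat f_{m,\delta}\|_{L^2_v}$; by Lemma \ref{lemtrip}, $\|(a^{-1/2})^w\hat\Gamma(\hat f,\hat f)(k)\|_{L^2_v}$ is controlled by the convolution $\int\|\hat f(k-\ell)\|_{L^2_v}\|(a^{1/2})^w\hat f(\ell)\|_{L^2_v}\,d\Sigma(\ell)$. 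Combining this with \eqref{bd}, which gives $\|\Lambda_{\delta_1}(a^{1/2})^w\|_{L^2_v\to L^2_v}\leq C_{\delta_1}$, yields $\|\Lambda_{\delta_1}\mathcal L\hat f_{m,\delta}\|_{L^2_v}\leq C_{\delta_1}\|(a^{1/2})^w\hat f_{m,\delta}\|_{L^2_v}$, which lies in $L^1_kL^2_TL^2_v$ by the refined baseline bound. The nonlinear piece $\Lambda_{\delta_1}t^{\varsigma m}(1+\delta|k|^2)^{-1/2}\comi{k}^m\hat\Gamma(\hat f,\hat f)$ is pushed through Lemma \ref{lemtl}, with the factor $\comi{k}^m(1+\delta|k|^2)^{-1/2}$ redistributed across $\hat f(k-\ell)$ and $\hat f(\ell)$ via Lemma \ref{est}; the resulting convolutions are integrable by \eqref{+1234} and, when $m\geq 2$, the induction hypothesis.

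Finally, $\partial_t\Lambda_{\delta_1}\hat f_{m,\delta}$ is read off from the equation \eqref{equreg} in the distributional sense, exploiting that $\hat f$ satisfies \eqref{eqfour}: every other term on the right-hand side has been placed in $L^1_kL^2_TL^2_v$ (noting $L^1_kL^\infty_TL^2_v\hookrightarrow L^1_kL^2_TL^2_v$ on the bounded interval $(0,T]\subset(0,1]$), so $\partial_t\Lambda_{\delta_1}\hat f_{m,\delta}$ inherits the same integrability. The main obstacle is the bookkeeping rather than any individual estimate: every upper bound must be expressible through quantities at order $\leq m-1$ so that \eqref{inassump} is usable, which is precisely what the regularization factor $(1+\delta|k|^2)^{-1/2}$ engineered into $\hat f_{m,\delta}$ delivers. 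The constants produced will blow up as $\delta,\delta_1\to 0$, which is harmless here since the lemma only claims membership; the $\delta$-independent estimates are the subject of the subsequent energy argument.
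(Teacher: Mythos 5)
Your overall strategy coincides with the paper's: reduce the effective order by one via $(1+\delta|k|^2)^{-1/2}\comi{k}^m\leq C\delta^{-1/2}\comi{k}^{m-1}$, invoke \eqref{+1234} or the induction assumption \eqref{inassump} at order $m-1$, handle the collision terms through Proposition \ref{estaa} (iii), Corollary \ref{corupp}, Lemma \ref{lemtrip} and Lemma \ref{lemtl}, and read off $\partial_t\Lambda_{\delta_1}\hat f_{m,\delta}$ from the equation. However, your treatment of the commutator term $[v\cdot k,\ \Lambda_{\delta_1}]\hat f_{m,\delta}$ does not close. The generic bound from \eqref{comsym} and \eqref{ew}, namely $\norm{[v\cdot k,\ \Lambda_{\delta_1}]}_{L^2_v\to L^2_v}\leq C\delta_1^{1/2}|k|$, necessarily carries the extra factor $|k|$ (since $\partial_v(v\cdot k)=k$, the weight $\tilde M_1$ in \eqref{comsym} is $|k|$), and you propose to absorb it with $|k|(1+\delta|k|^2)^{-1/2}\leq\delta^{-1/2}$. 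But the single regularization factor $(1+\delta|k|^2)^{-1/2}$ built into $\hat f_{m,\delta}$ can only be spent once: if it absorbs the commutator's $|k|$, you are left with $\delta^{-1/2}\delta_1^{1/2}\,t^{\varsigma m}\comi{k}^{m}\norm{\hat f(t,k)}_{L^2_v}$, whose $L^1_kL^\infty_T$ bound is exactly the order-$m$ statement under proof and is not provided by \eqref{inassump}, which stops at $n\leq m-1$. For this one term your argument is circular.

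The fix requires the specific structure of $\Lambda_{\delta_1}$ rather than the generic calculus: since $v\cdot k$ acts by multiplication, only the Fourier-multiplier factor of $\Lambda_{\delta_1}$ contributes, and one computes explicitly (this is \eqref{comtes}) that $[v\cdot k,\ \Lambda_{\delta_1}]=-2(1+\delta_1|v|^2)^{-1-\gamma}(1+\delta_1|k|^2-\delta_1\Delta_v)^{-2}\delta_1 k\cdot\nabla_v$, whose multiplier contains $\delta_1(k\cdot\eta)(1+\delta_1|k|^2+\delta_1|\eta|^2)^{-2}$, bounded by a universal constant uniformly in $k$ and $\delta_1$ because $\delta_1|k||\eta|\leq\tfrac12(1+\delta_1|k|^2+\delta_1|\eta|^2)$. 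Hence the commutator is bounded on $L^2_v$ uniformly in $k$ with no extra $|k|$, and the same order-$(m-1)$ estimate you use for $(v\cdot k)\Lambda_{\delta_1}\hat f_{m,\delta}$ and $t^{-1}\Lambda_{\delta_1}\hat f_{m,\delta}$ applies. With this replacement the rest of your argument is sound and matches the paper's proof; a minor cosmetic point is that $(a^{1/2})^w(a^{-1/2})^w$ is not the identity, so one should insert $\big[(a^{1/2})^w\big]^{-1}=G_1(a^{-1/2})^w$ from Proposition \ref{estaa} (iii) rather than writing $\mathcal L=(a^{1/2})^w(a^{-1/2})^w\mathcal L$.
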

 
 \begin{proof} We first consider the case $m\geq 2$.   Direct verification shows 
 \begin{eqnarray*}
 	\norm{(v\cdot k) \Lambda_{\delta_1} \hat f_{m,\delta}(t,k)}_{L_v^2}\leq C_{\delta_1}\norm{\hat f_{m,\delta}(t,k)}_{L_v^2}\leq C_{\delta_1}C_\delta  t^{\varsigma (m-1)}\comi k^{m-1} \norm{\hat f(t,k)}_{L_v^2}.
 \end{eqnarray*} 
 This with the induction assumption \eqref{inassump} give
 \begin{eqnarray*}
 	(v\cdot k) \Lambda_{\delta_1} \hat f_{m,\delta}	\in  L_k^1L_T^\infty L_v^2,
 \end{eqnarray*} 
 and likewise for $ t^{-1} \Lambda_{\delta_1}\hat f_{m,\delta}$ and $[v\cdot k,\ \Lambda_{\delta_1}] \hat f_{m,\delta}$ by observing the facts that   
 \begin{eqnarray*}
 	 t^{-1} \norm{\Lambda_{\delta_1}\hat f_{m,\delta}(t,k)}_{L_v^2}\leq C_\delta  t^{\varsigma m-1}\comi k^{m-1} \norm{\hat f(t,k)}_{L_v^2}\leq C_\delta  t^{\varsigma (m-1)}\comi k^{m-1} \norm{\hat f(t,k)}_{L_v^2}
 \end{eqnarray*}
 and that the commutator
 \begin{equation}\label{comtes}
 [v\cdot k,\ \Lambda_{\delta_1}]=-2\big  (1+\delta_1  \abs v^2 \big)^{-1-\gamma} \big(1+\delta_1 \abs k^2-\delta_1\Delta_v\big)^{-2} \delta_1 k\cdot \nabla_v
 \end{equation}
 is uniformly bounded on $L_v^2$ with respect to $k$ and $\delta_1$.  We have proved the first assertion for $m\geq 2$.  For the second one    
  we apply Lemma \ref{lemtrip}   
 to obtain,  by virtue of  \eqref{bd} as well as the assertion (iii) in Proposition \ref{estaa}, 
 \begin{eqnarray*}
 \begin{aligned}
 &	\norm{\Lambda_{\delta_1}t^{\varsigma m}  (1+\delta\abs k^2)^{-1/2} \comi{k}^m\hat\Gamma \big({\hat f},~{\hat f}\big) }_{L^2}\leq C_{\delta_1}C_\delta t^{\varsigma m}\comi{k}^{m-1}	\norm{ (a^{-1/2})^w\hat\Gamma \big({\hat f},~{\hat f}\big) }_{L^2}\\
 	&\leq C_mC_{\delta_1}C_\delta t^{\varsigma  m}  \int_{\mathbb Z^3 } \comi {k-\ell}^{m-1}  \norm{
	\hat f(k-\ell)}_{L^{2}_v} \Big( \norm{ (a^{1/2})^w \hat f(\ell)}_{L^{2}_v} \Big)d\Sigma(\ell)\\
	&\quad +C_mC_{\delta_1}C_\delta t^{\varsigma  m}  \int_{\mathbb Z^3 }   \norm{
	\hat f(k-\ell)}_{L^{2}_v} \Big( \comi {\ell}^{m-1} \norm{ (a^{1/2})^w \hat f(\ell)}_{L^{2}_v} \Big)d\Sigma(\ell).
 	\end{aligned}
 \end{eqnarray*} 
 This, along with the induction assumption \eqref{inassump},  enable us to repeat the calculation in \eqref{dica}  to conclude  
 \begin{eqnarray*}
 	\Lambda_{\delta_1}t^{\varsigma m}  (1+\delta\abs k^2)^{-1/2} \comi{k}^m\hat\Gamma \big({\hat f},~{\hat f}\big)\in  L_k^1L_T^2 L_v^2,
 \end{eqnarray*}
  and likewise for $\Lambda_{\delta_1} \mathcal  L \hat f_{m,\delta}$.  Combining  the above assertions with \eqref{equreg} gives $\partial_t\Lambda_{\delta_1}\hat f_{m,\delta}\in  L_k^1L_T^2 L_v^2$.  We have proved 
  the conclusion as desired in Lemma  \ref{verf} for $m\geq 2$,  and  the treatment for $0\leq m\leq 1$ is straightforward by following the above argument.   The proof is thus completed. 
 \end{proof}

\subsection{Subelliptic estimate for regularized solutions} \label{secsub}

In this subsection we will derive  a subelliptic estimate  for $\Lambda_{\delta_1} \hat{f}_{m,\delta}$ that is defined by \eqref{fmdel} and \eqref{regoper}. 
Since the conditions for  $m\leq 1$ and  $m\geq 2$ may be different in the following argument,  we introduce a uniform  Assumption ${\mathcal H}_m$ for $m\geq 0$ that is defined as below.

\begin{definition}[Assumption $\mathcal H_m$] \label{defas}
 Let $m\geq \mathbb Z_+$ and let  $f(t,x,v)$ be the global mild solution to \eqref{eqforper}.   We say that $f$ satisfies the Assumption  $\mathcal H_m$ if
 $f$ satisfies the  estimate \eqref{+1234} when $m\leq 1$,  and satisfies additionally the induction assumption \eqref{inassump} when $m\geq 2$. 
\end{definition}

\begin{proposition}\label{prpsub} Let $f$ satisfy Assumption $\mathcal H_m$ above. Then the following estimates hold.

\noindent (i) It holds that  
\begin{eqnarray*}
	 \int_{\mathbb Z^3} \comi k^{\frac{s}{1+2s}} 	\Big(\int_{0}^{1}\norm{ \hat f(t,k)}_{L_v^2} ^2dt\Big)^{1/2} d\Sigma(k) \leq  C  \norm{f_0}_{  L^1_kL^2_v}.
\end{eqnarray*}

\noindent (ii) For $m=1$, it holds that 
 \begin{multline*}
	  \int_{\mathbb Z^3} \comi k^{\frac{s}{1+2s}} 	\Big(\int_{0}^{1}\norm{ \hat f_{m,\delta}(t,k)}_{L_v^2} ^2dt\Big)^{1/2} d\Sigma(k) 
	 \leq  C \norm{f_0}_{  L^1_kL^2_v}\\+ C\int_{\mathbb Z^3} \sup_{0<t\leq 1} \norm{      \hat f_{m,\delta} (t,k)}_{L_v^2}   d\Sigma(k)
		+C \int_{\mathbb Z^3} \left(\int_0^1    \norm{(a^{1/2})^w\hat f_{m,\delta}(t,k) }^2_{L_v^2}  d t\right)^{1/2}d \Sigma(k).
		\end{multline*}

\noindent (iii)  For $m\geq2$, it holds that  	
\begin{multline*}
				 \int_{\mathbb Z^3} \comi k^{\frac{s}{1+2s}} 	\Big(\int_{0}^{1}\norm{ \hat f_{m,\delta}(t,k)}_{L_v^2} ^2dt\Big)^{1/2} d\Sigma(k) \leq  C  \tilde C_0^{m-2}  \com{(m-1)!} ^{\frac{1+2s}{2s}}\\
				 +C \int_{\mathbb Z^3} \sup_{0<t\leq 1}     \norm{ \hat f_{m,\delta}(t,k )}_{L^2_v}    d \Sigma(k) +C \int_{\mathbb Z^3} \left(\int_0^1    \norm{(a^{1/2})^w\hat f_{m,\delta}(t,k) }^2_{L_v^2}  d t\right)^{1/2}d \Sigma(k).
		\end{multline*}	 
		Here $\tilde C_0$ is the constant in the assumption \eqref{inassump}. 
\end{proposition}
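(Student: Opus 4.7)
The plan is to establish the subelliptic estimates by exploiting the classical H\"ormander-type regularity gain generated when the anisotropic velocity dissipation carried by $\mathcal{L}$ interacts with the free transport $iv\cdot k$ in the Fourier-in-$x$ picture. The core is a multiplier-type inequality saying that for any $u=u(t,k,v)$ obeying $\partial_t u + iv\cdot k\, u - \mathcal{L}u = F$ on $0<t\leq 1$, the spatially weighted $L^1_k L^2_t L^2_v$ norm of $u$ with weight $\comi{k}^{s/(1+2s)}$ is controlled by the $L^1_k L^2_t L^2_v$ norm of $(a^{1/2})^w u$, the $L^1_k L^\infty_t L^2_v$ norm of $u$, and a suitable $L^1_k L^2_t$ norm of the forcing $F$; the exponent $s/(1+2s)$ is the classical H\"ormander gain produced by order-$2s$ velocity dissipation competing with transport. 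Such an estimate is derived via a Bouchut-type multiplier method using a Fourier multiplier in $v$ adapted to each $k\in\mathbb Z^3$ at scale $\comi{k}^{1/(1+2s)}$, combined with the dissipation equivalence in \eqref{rela} and the symbolic calculus of Proposition \ref{estaa}.

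For part (i), this machinery is applied directly to $u=\hat f$ solving \eqref{eqfour} with $F=\hat\Gamma(\hat f,\hat f)$. The $\sup_t$- and $(a^{1/2})^w$-terms on the RHS are bounded by $\norm{f_0}_{L^1_kL^2_v}$ via \eqref{+1234}; the nonlinear forcing is handled by the trilinear inequality \eqref{orup} together with Lemma \ref{lemtrip} and Lemma \ref{lemtl}, and the resulting factor $\norm{(a^{1/2})^w\hat f}_{L^1_kL^2_tL^2_v}$ is absorbed into the LHS using the smallness of $\norm{f}_{L^1_kL^\infty_TL^2_v}$ also provided by \eqref{+1234}. This yields (i). For part (ii) the same machinery is applied to $u=\Lambda_{\delta_1}\hat f_{1,\delta}$ satisfying \eqref{equreg}. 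Three extra forcing terms must be treated: the singular-in-time term $\varsigma t^{-1}\Lambda_{\delta_1}\hat f_{1,\delta}$, pointwise controlled by the $\sup_t$-quantity on the RHS since $\hat f_{1,\delta}$ carries the weight $t^{\varsigma}$ with $\varsigma-1>0$; the transport commutator $i[v\cdot k,\Lambda_{\delta_1}]\hat f_{1,\delta}$, uniformly $L^2_v$-bounded by the explicit formula \eqref{comtes}; and the regularized nonlinearity, treated as in (i) since $m=1$. A benign commutator $[\Lambda_{\delta_1},\mathcal{L}]\hat f_{1,\delta}$ produced when $\Lambda_{\delta_1}$ is pulled past $\mathcal L$ inside the subelliptic pairing is dominated by $\delta_1^{1/2}\norm{(a^{1/2})^w\hat f_{1,\delta}}_{L^2_v}$ through \eqref{smacomest} and is absorbable for $\delta_1$ small.

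The main obstacle is part (iii), and within it the regularized nonlinear forcing $\Lambda_{\delta_1}t^{\varsigma m}(1+\delta|k|^2)^{-1/2}\comi{k}^m\hat\Gamma(\hat f,\hat f)$ for $m\geq 2$. I redistribute its spatial weight through the elementary inequality \eqref{lem.ei} of Lemma \ref{est}, splitting $\comi{k}^m/(1+\delta|k|^2)^{1/2}$ into an interior sum $\sum_{j=1}^{m-1}\binom{m}{j}\comi{k-\ell}^j\comi{\ell}^{m-j}$ and two boundary terms carrying the full regularized weight on either $k-\ell$ or $\ell$. The boundary pieces produce $\hat f_{m,\delta}(k-\ell)$ or $(a^{1/2})^w\hat f_{m,\delta}(\ell)$ and are absorbed into the second and third terms on the RHS of (iii) via \eqref{+1234} applied to the low-order partner. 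The interior sum, combined with Lemma \ref{lemtrip}, Lemma \ref{lemtl}, and the induction hypothesis \eqref{inassump} applied to $n=j$ and $n=m-j$, is bounded by
\[
\tilde C_0^{m-2}\sum_{j=1}^{m-1}\binom{m}{j}\bigl[(j-1)!\bigr]^{\varsigma}\bigl[(m-j-1)!\bigr]^{\varsigma} \leq C\,\tilde C_0^{m-2}\bigl[(m-1)!\bigr]^{\varsigma},
\]
where the combinatorial inequality follows from $\binom{m}{j}(j-1)!(m-j-1)!=m!/[j(m-j)]$ together with $\sum_{j=1}^{m-1}[j(m-j)]^{-\varsigma}\lesssim m^{-\varsigma}$ for $\varsigma=(1+2s)/(2s)>1$; the corner cases $j\in\{1,m-1\}$ in which one partner is covered only by \eqref{+1234} rather than by the induction give strictly smaller contributions absorbed in the same manner.

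The hardest step will be the careful bookkeeping of this combinatorial argument together with the verification that every estimate remains uniform in the two regularization parameters $\delta$ and $\delta_1$, so that the subsequent passage $\delta,\delta_1\to 0$ (carried out in a later section) is legitimate. The role of \eqref{lem.ei} is delicate: placing the regularization factor $(1+\delta|\cdot|^2)^{-1/2}$ precisely on the factor carrying the top-order weight in the two boundary terms is what allows the induction to close without losing a spatial derivative, and is the reason Lemma \ref{est} was introduced in the first place.
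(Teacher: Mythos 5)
Your overall architecture matches the paper's: the paper also proves Proposition \ref{prpsub} by a H\"ormander/Bouchut-type multiplier argument, pairing equation \eqref{equreg} with the Wick quantization $\lambda_k^{\rm Wick}$ of a symbol built from $d_k(v,\eta)/\tilde a(v,k)^{2s/(1+2s)}$ at scale $\tilde a(v,k)^{1/(1+2s)}$, extracting the gain $\comi k^{s/(1+2s)}$ from the Poisson bracket $\{d_k, v\cdot k\}$, and treating the nonlinear forcing exactly as you describe via Lemma \ref{est}, Lemmas \ref{lemtrip}--\ref{lemtl}, and the combinatorial estimate $\binom{m}{j}[(j-1)!]^{\varsigma}[(m-j-1)!]^{\varsigma}$ summed against $[j(m-j)]^{-1}$ (your bookkeeping of that sum is correct and coincides with \eqref{tecalp1}). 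The only structural difference is that the paper's multiplier is a genuine pseudodifferential symbol in $(v,\eta)$, not a Fourier multiplier in $v$ alone, which is needed to see the anisotropic weight $\tilde a$; this is a presentational rather than substantive difference.

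There is, however, a genuine gap in your treatment of the singular-in-time forcing $\varsigma m t^{-1}\Lambda_{\delta_1}\hat f_{m,\delta}$, which the paper identifies as the main difficulty of the whole scheme. Your claim that for $m=1$ this term is ``pointwise controlled by the $\sup_t$-quantity on the RHS since $\hat f_{1,\delta}$ carries the weight $t^{\varsigma}$'' does not work: writing $t^{-1}\|\hat f_{1,\delta}(t)\|_{L^2_v}^2=t^{2\varsigma-1}(1+\delta|k|^2)^{-1}\comi k^{2}\|\hat f(t)\|_{L^2_v}^2$, the factor $t^{2\varsigma-1}\le 1$ can indeed be dropped, but what remains carries a full extra power $\comi k^{2}(1+\delta|k|^2)^{-1}$ that is neither uniform in $\delta$ nor controlled by $\sup_t\|\hat f_{1,\delta}\|_{L^2_v}$ (whose $t^{\varsigma}$ damping you have already spent). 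The correct mechanism, used in Lemma \ref{lemindc}, is the Young-inequality splitting $mt^{-1}\le \tilde\eps^{2}\comi k^{2s/(1+2s)}+C_{\tilde\eps}\, m^{(1+2s)/s}\,t^{-2\varsigma}\comi k^{-2+2s/(1+2s)}$ applied inside the square $\int_0^1 t^{-1}\|\Lambda_{\delta_1}\hat f_{m,\delta}\|^2\,dt$: the first piece is absorbed into the subelliptic left-hand side (this is precisely why the $\comi k^{s/(1+2s)}$ gain must be produced before anything else), and the second piece equals $\comi k^{2s/(1+2s)}\|\Lambda_{\delta_1}\hat f_{m-1,\delta}\|^2$ up to constants, controlled by assertion (i) when $m=1$ and by the $\comi k^{(m-1)+s/(1+2s)}$-weighted middle term of the induction hypothesis \eqref{inassump} when $m\ge 2$ --- a term of \eqref{inassump} your proposal never invokes. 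Without this interpolation, neither (ii) nor (iii) closes, and the same omission recurs silently in your part (iii), where you discuss only the nonlinear forcing.
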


 We will follow the argument presented in \cite{MR3950012}  where the standard subelliptic estimate was established in $L^2(\mathbb T_x^3)$. Here we will derive the estimates in the setting of $L^1_k(\mathbb T_x^3)$ instead.    
Let  $\lambda^{\rm Wick}_k$ be the Wick quantization of symbol $\lambda_k$ (see   Appendix \ref{secapp} for the definition of Wick quantization),  which is defined by, recalling $\tilde a $
 is given in \eqref{defofa},      
\begin{equation*}%\label{cutcut}
\lambda_{k}(v,\eta)=\frac{  d_k(v,\eta)}{ \tilde a(v,k)^{\frac{2s}{1+2s}}}\chi\bigg( \frac{  \tilde a(v,\eta)}{  \tilde a(v,k)^{\frac{1}{1+2s}}}\bigg),
\end{equation*}
with
 \begin{eqnarray*}
	d_k(v,\eta) = \comi v^{\gamma}\inner{1+\abs v^2+\abs k^2+\abs{v\wedge k}^2}
	^{s-1}\Big(k\cdot\eta+(v\wedge k)\cdot(v\wedge \eta)\Big) 
\end{eqnarray*}
and    $\chi\in C_0^\infty(\mathbb R;~[0,1])$ a given  cut-off function such that
$\chi=1$ on
$[-1,1]$ and supp~$\chi \subset[-2,2]$.  
To obtain the subelliptic estimate  it will rely on the following property linking $d_k$ and $\tilde a$ that   
 \begin{equation*}
 	\big\{d_k(v,\eta), \ v\cdot k\big\}=\tilde a(v,k)-\comi v^{2+\gamma}\inner{1+\abs v^2+\abs k^2+\abs{v\wedge k}^2}
	^{s-1},
 \end{equation*}
 where $\{\cdot, \cdot\}$ is the Poisson bracket defined in \eqref{11051505}.
Observe  by direct calculations that 
\begin{eqnarray*}\abs{\partial_v^\alpha\partial_\eta^\beta d_k(v,\eta)}\leq C_{\alpha,\beta} \tilde a(v,k)^{\frac{2s-1}{2s}} \tilde a(v,\eta)^{\frac{1}{2s}},\quad \forall\ \alpha, \beta	\in\mathbb Z_+^3,
\end{eqnarray*}
with $C_{\alpha,\beta}$ constants depending only on $\alpha,\beta$ but independent of $k$, 
and it is clear to see that $\tilde a(v,\eta)\leq 2 \tilde a(v,k)^{\frac{1}{ 1+2s}}$ on the support of  $$(v,\eta)\mapsto \chi\bigg( \frac{  \tilde a(v,\eta)}{  \tilde a(v,k)^{\frac{1}{1+2s}}}\bigg).$$ 
Thus we can verify directly  that 
\begin{eqnarray}\label{45327} \abs{\partial_v^\alpha\partial_\eta^\beta\lambda_k (v,\eta)} \leq C_{\alpha,\beta},\quad \forall \ \alpha,\beta\in\mathbb Z_+^3, \ \forall\  (v,\eta)\in\mathbb R^3\times\mathbb R^3,
\end{eqnarray}
with $C_{\alpha,\beta}$   being constants  depending only on  $\alpha, \beta$ but independent of $k$. This gives
\begin{equation*}%\label{unlamb}
	 \lambda_k\in S(1, \abs{dv}^2+\abs{d\eta}^2)    \textrm{ uniformly for } k.
\end{equation*} 
Here by uniformly for $k$ we mean that the constants in \eqref{45327} are independent of $k.$ Using the relationship \eqref{ww} between the Wick and Weyl quantizations  we can write 
\begin{equation}\label{wwyl}
	\lambda_k^{\rm Wick}=\tilde\lambda_k ^w
\end{equation}
for some real-valued symbol $\tilde\lambda_k\in S(1, \abs{dv}^2+\abs{d\eta}^2) $ uniformly for $k$.  As a result, since any quantization $b^w$ of real-valued symbol $b$ is   self-adjoint on  $L_v^2$  (cf.~Appendix \ref{secapp} ),  so is  $\lambda_k^{\rm Wick}$ in particular.  Moreover, by \eqref{bdness} and Lemma \ref{lemcomest},  there exists a   constant $C_{\gamma,s}$ that depends only on $\gamma$ and $s$ but is independent of $k,$  such that  for any $   h\in L^2(\mathbb R_v^3),$
\begin{equation}\label{ube}
  	\norm{\lambda^{\rm Wick}_k h}_{L_v^2}\leq  C_{\gamma,s} \norm{ h}_{L_v^2},
\end{equation}  
and moreover, with  $(a^{1/2})^wh\in L_v^2$ additionally, 
	\begin{equation*}
 	\big\|\big[ \lambda^{\rm Wick}_k, \  (a^{1/2})^w\big]h \big \|_{L_v^2}\leq C_{s,\gamma} \norm{(a^{1/2})^w  h}_{L_v^2}.
	\end{equation*}
	   The main reason that we use the Wick quantization rather than the classical Weyl quantization  is  due to its positivity of the former; see \eqref{posit} in Appendix \ref{secapp}. 

In view of \eqref{ubdo}, \eqref{ube} and Assumption $\mathcal H_m$ in Definition \ref{defas}, we see  $\lambda_{k}^{\rm Wick} \Lambda_{\delta_1} \hat f_{m,\delta}\in L_k^1L_T^\infty L_v^2$, and thus combining this with Lemma \ref{verf}, we are able to take the scalar $L_v^2$-product on both sides of \eqref{equreg} with $\lambda_{k}^{\rm Wick} \Lambda_{\delta_1} \hat f_{m,\delta} $,  and then  integrate the real parts of the resulting equation over $[t_1,t_2]$;   this gives 
\begin{equation}\label{+t1t2}
	\int_{t_1}^{t_2} {\rm Re}\big( i(v\cdot k) \Lambda_{\delta_1} \hat f_{m,\delta},\  \lambda^{\rm Wick} \Lambda_{\delta_1}   \hat f_{m,\delta}\big)_{L_v^2} dt=\sum_{1\leq p\leq 4} J_{m,p} 
\end{equation}
with
%\begin{eqnarray*}
\begin{align*}
	J_{m,1}&=\frac{1}{2} \big(  \Lambda_{\delta_1} \hat f_{m,\delta}(t_1,k),\   \lambda^{\rm Wick}_k\Lambda_{\delta_1} \hat f_{m,\delta}(t_1,k)\big)_{{L_v^2  }}-\frac{1}{2}\big(  \Lambda_{\delta_1} \hat f_{m,\delta}(t_2,k),\   \lambda^{\rm Wick}_k\Lambda_{\delta_1} \hat f_{m,\delta}(t_2,k)\big)_{{L_v^2  }},\\
	J_{m,2}&=\int_{t_1}^{t_2} {\rm Re}\big(\Lambda_{\delta_1} \mathcal L \hat f_{m,\delta},\ \lambda^{\rm Wick}_k\Lambda_{\delta_1}   \hat f_{m,\delta}\big)_{L_v^2} dt,\\
	J_{m,3}&=\int_{t_1}^{t_2} {\rm Re}\Big(\Lambda_{\delta_1}t^{\varsigma m}  (1+\delta\abs k^2)^{-1/2} \comi{k}^m\hat\Gamma \big({\hat f},~{\hat f}\big),\ \lambda^{\rm Wick}_k\Lambda_{\delta_1}   \hat f_{m,\delta}\Big)_{L_v^2} dt,\\
	J_{m,4}&=\int_{t_1}^{t_2} {\rm Re}\Big(
 \varsigma mt^{-1} \Lambda_{\delta_1}\hat f_{m,\delta}+i[v\cdot k,\ \Lambda_{\delta_1}] \hat f_{m,\delta},\ \lambda^{\rm Wick}_k\Lambda_{\delta_1}   \hat f_{m,\delta}\Big)_{L_v^2} dt.
\end{align*}
%\end{eqnarray*}
Here  we have used  the relation 
\begin{eqnarray*}
	{\rm Re}  \big( \partial_t \Lambda_{\delta_1} \hat f_{m,\delta},\   \lambda^{\rm Wick}_k \Lambda_{\delta_1} \hat f_{m,\delta} \big)_{{L_v^2  }}=\frac{1}{2}\frac{d}{dt}\big(  \Lambda_{\delta_1} \hat f_{m,\delta},\   \lambda^{\rm Wick}_k\Lambda_{\delta_1} \hat f_{m,\delta}\big)_{{L_v^2  }}
\end{eqnarray*}
due to the fact that  $\lambda^{\rm Wick}_k$ is self-adjoint on $L_v^2$.  As a result, 
\begin{equation}\label{sumj}
\int_{\mathbb Z^3}	\Big|\int_{t_1}^{t_2} {\rm Re}\big( i(v\cdot k) \Lambda_{\delta_1} \hat f_{m,\delta},\  \lambda^{\rm Wick} \Lambda_{\delta_1}   \hat f_{m,\delta}\big)_{L_v^2} dt\Big|^{\frac{1}{2}} d\Sigma(k) \leq C \sum_{1\leq p\leq 4} \int_{\mathbb Z^3}\abs{J_{m,p}}^{\frac{1}{2}} d\Sigma(k).
\end{equation}

We will proceed through the following lemmas to derive the lower and upper bounds respectively  for the terms on the left and  right hand sides of \eqref{sumj}.

\begin{lemma}\label{sd}
	Let $m\in\mathbb Z_+$. Under Assumption $\mathcal H_m$ given in Definition \ref{defas},  it holds that, for any $0<t_1<t_2\leq 1$,
	\begin{multline*}
	\int_{\mathbb Z^3} \comi k^{\frac{s}{1+2s}} 	\Big(\int_{t_1}^{t_2}\norm{\Lambda_{\delta_1}   \hat f_{m,\delta}(t,k)}_{L_v^2} ^2dt\Big)^{1/2} d\Sigma(k)\\
	  \leq  C	\int_{\mathbb Z^3} \Big|\int_{t_1}^{t_2} {\rm Re}\big( i(v\cdot k) \Lambda_{\delta_1} \hat f_{m,\delta},\  \lambda^{\rm Wick} \Lambda_{\delta_1}   \hat f_{m,\delta}\big)_{L_v^2} dt\Big|^{1/2} d\Sigma(k)\\ +C\int_{\mathbb Z^3} \Big(\int_{0}^{1} \norm{ (a^{1/2})^w \hat f_{m,\delta}(t,k)}_{L_v^2}^2 dt\Big)^{1/2}d\Sigma(k).
	\end{multline*}
\end{lemma}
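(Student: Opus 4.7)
The plan is to transform the left-hand side into a commutator scalar product and then extract a subelliptic gain via the Poisson-bracket identity that was recalled just before this lemma. Throughout, write $u(t,k,\cdot):=\Lambda_{\delta_1}\hat f_{m,\delta}(t,k,\cdot)$; by Lemma~\ref{verf} (guaranteed by Assumption $\mathcal H_m$), $u$ and $(v\cdot k)u$ lie in appropriate spaces so all manipulations below are rigorous. Since the multiplication operator $v\cdot k$ and the Wick quantization $\lambda_k^{\rm Wick}$ are both self-adjoint on $L^2_v$, a direct computation yields
\begin{equation*}
2\,{\rm Re}\bigl(i(v\cdot k)u,\ \lambda_k^{\rm Wick} u\bigr)_{L^2_v}
= \bigl(i[v\cdot k,\ \lambda_k^{\rm Wick}]u,\ u\bigr)_{L^2_v}.
\end{equation*}
So the time-integrated cross term on the right-hand side of the lemma controls $\int_{t_1}^{t_2}(i[v\cdot k,\lambda_k^{\rm Wick}]u,u)_{L^2_v}\,dt$.

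Next, I would pass to Weyl via \eqref{wwyl}, writing $\lambda_k^{\rm Wick}=\tilde\lambda_k^w$ with $\tilde\lambda_k\in S(1,|dv|^2+|d\eta|^2)$ uniformly in $k$, and sharing the same principal symbol as $\lambda_k$. Applying the composition formula \eqref{symbc} together with \eqref{comsym} gives
\begin{equation*}
i[v\cdot k,\ \lambda_k^{\rm Wick}] = \{v\cdot k,\ \lambda_k\}^w + r_k^w, \qquad r_k\in S(1,|dv|^2+|d\eta|^2)\text{ uniformly in }k,
\end{equation*}
so the remainder contributes a term bounded by $C\|u\|_{L^2_v}^2\leq C\|(a^{1/2})^w u\|_{L^2_v}^2$ via \eqref{bdness} and Proposition~\ref{estaa}. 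The essential identity recalled before the lemma,
$\{d_k,v\cdot k\}=\tilde a(v,k)-\comi v^{2+\gamma}(1+|v|^2+|k|^2+|v\wedge k|^2)^{s-1}$, together with the fact that $\tilde a(v,k)$ is independent of $\eta$ so it Poisson-commutes with $v\cdot k$, gives the principal symbol
\begin{equation*}
\{v\cdot k,\ \lambda_k\} = -\,\tilde a(v,k)^{\frac{1}{1+2s}}\chi\!\bigl(\tilde a(v,\eta)/\tilde a(v,k)^{\frac{1}{1+2s}}\bigr) + \rho_k,
\end{equation*}
where $\rho_k\in S(1,|dv|^2+|d\eta|^2)$ uniformly in $k$ (using $\tilde a(v,k)\geq\comi v^\gamma$ to absorb the secondary term, and $k\cdot\partial_\eta\chi$ being supported where $\tilde a(v,\eta)\sim\tilde a(v,k)^{1/(1+2s)}$).

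The subelliptic gain then comes by splitting $\norm{u}_{L^2_v}^2$ into low- and high-frequency pieces relative to the cutoff. On the support of $\chi$ one has $\tilde a(v,\eta)\leq 2\tilde a(v,k)^{1/(1+2s)}$ and Wick positivity \eqref{posit} converts the main part of $(i[v\cdot k,\lambda_k^{\rm Wick}]u,u)$ into a lower bound for $(\tilde a(v,k)^{1/(1+2s)}\chi)^{\rm Wick}u$ paired with $u$; using $\tilde a(v,k)\geq(1+|k|^2)^s$ this gives a contribution $\geq\comi k^{2s/(1+2s)}\|\chi^{\rm Wick}u\|_{L^2_v}^2$ up to bounded errors. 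On the complementary region $\tilde a(v,\eta)\geq\tilde a(v,k)^{1/(1+2s)}$, so $\tilde a(v,\eta)^{1/2}\geq\comi k^{s/(1+2s)}$, and the equivalence $\|(a^{1/2})^w\cdot\|_{L^2_v}\sim\|\tilde a^{1/2}(\cdot)\|$-type bound of Proposition~\ref{estaa} gives $\comi k^{2s/(1+2s)}\|((1-\chi)^{\rm Wick})u\|_{L^2_v}^2\leq C\|(a^{1/2})^w u\|_{L^2_v}^2$. Combined, for each $k$ and a.e.\ $t$,
\begin{equation*}
\comi k^{\frac{2s}{1+2s}}\,\norm{u(t,k)}_{L^2_v}^2 \leq C\,\bigl|(i[v\cdot k,\lambda_k^{\rm Wick}]u,u)_{L^2_v}\bigr| + C\,\norm{(a^{1/2})^w u(t,k)}_{L^2_v}^2.
\end{equation*}

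Finally I would integrate this inequality in $t$ over $[t_1,t_2]\subset(0,1]$, replace $(a^{1/2})^w u=(a^{1/2})^w\Lambda_{\delta_1}\hat f_{m,\delta}$ by $(a^{1/2})^w\hat f_{m,\delta}$ up to a harmless error using the commutator bound \eqref{smacomest} and \eqref{bd} (which enlarges the integration to $[0,1]$), then take square roots using $\sqrt{A+B}\leq\sqrt A+\sqrt B$, and integrate against $d\Sigma(k)$ to obtain the stated inequality. The main obstacle is the careful symbolic computation of the principal and subprincipal symbols of $i[v\cdot k,\lambda_k^{\rm Wick}]$ and the verification that all remainders are uniform in both $k\in\mathbb Z^3$ and $\delta_1\in(0,1)$; once that is in place, the low/high frequency split with Wick positivity produces the subelliptic weight $\comi k^{s/(1+2s)}$, and Minkowski's inequality handles the outer $L^1_k$–$L^2_t$ structure.
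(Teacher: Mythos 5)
Your plan follows essentially the same route as the paper: convert the cross term into the Wick quantization of the Poisson bracket $\{\lambda_k, v\cdot k\}$, use Wick positivity to extract the weight $\tilde a(v,k)^{1/(1+2s)}\gtrsim\comi k^{2s/(1+2s)}$ on the support of the cutoff, and dump the complementary region and the lower-order symbols into $C\|(a^{1/2})^w\hat f_{m,\delta}\|_{L^2_v}^2$ via $\tilde a(v,\eta)$. Two of your intermediate symbolic claims are however false or unjustified as written, even though the final pointwise-in-$t$ inequality you state is correct. First, you cannot invoke \eqref{symbc}--\eqref{comsym} to conclude that $i[v\cdot k,\lambda_k^{\rm Wick}]-\{v\cdot k,\lambda_k\}^w$ has a remainder in $S(1,|dv|^2+|d\eta|^2)$ uniformly in $k$: the symbol $v\cdot k$ is not bounded and its $v$-gradient equals $k$, so the generic remainder estimate is $O(|k|)$, which would destroy the gain $\comi k^{2s/(1+2s)}$. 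What saves the argument is that $v\cdot k$ is linear, so the composition is exact with no remainder; this is precisely what the paper's Wick composition formula \eqref{11082406} provides, and you should use it (or the exactness of the Weyl product for degree-one symbols) rather than the general calculus. Second, the term $\rho_k$ containing $\comi v^{\gamma+2}(1+|v|^2+|k|^2+|v\wedge k|^2)^{s-1}\tilde a(v,k)^{-2s/(1+2s)}\psi$ is \emph{not} in $S(1)$ uniformly: it grows like $\comi v^{2s+\gamma}$ in $v$ for $\gamma\ge 0$. It must instead be dominated pointwise by $C\tilde a(v,\eta)$ and controlled through Wick positivity and $\bigl((\tilde a(v,\eta))^{\rm Wick}u,u\bigr)_{L^2_v}\le C\|(a^{1/2})^w u\|_{L^2_v}^2$ — which your allowed error term accommodates, so the fix is local. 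Finally, note that Wick positivity yields the pairing $\comi k^{2s/(1+2s)}(\chi^{\rm Wick}u,u)_{L^2_v}$ rather than $\comi k^{2s/(1+2s)}\|\chi^{\rm Wick}u\|_{L^2_v}^2$; since $\chi^{\rm Wick}+(1-\chi)^{\rm Wick}=\mathrm{Id}$, the low/high recombination should be done at the level of these pairings (or, more simply as in the paper, by lower-bounding the multiplication operator $(\tilde a(v,k)^{1/(1+2s)})^{\rm Wick}\ge C^{-1}\comi k^{2s/(1+2s)}$ directly).
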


 \begin{proof}
 	We  follow the argument presented in \cite{MR3950012}.   Observe $ v\cdot k =\inner{v\cdot k}^{\rm Wick}$ by \eqref{ww}. 
 	Then, by \eqref{11082406} in Appendix \ref{secapp},
  \begin{equation}\label{11040808}
    {\rm Re}~\big(i \inner{v\cdot k} \Lambda_{\delta_1} \hat f_{m,\delta},~\lambda_{k}^{\rm Wick} \Lambda_{\delta_1} \hat f_{m,\delta}\big)_{L_v^2}=  \big(\big\{\lambda_k,~v\cdot k
    \big\}^{\rm Wick}\Lambda_{\delta_1} \hat f_{m,\delta},~\Lambda_{\delta_1} \hat f_{m,\delta}\big)_{L_v^2},
  \end{equation}
  with  the Poisson
bracket $\{\cdot, \cdot\}$  defined by
 \eqref{11051505}.
 Moreover, recalling 
\begin{equation*}%\label{cutcut}
\lambda_{k}(v,\eta)=\frac{  d_k(v,\eta)}{ \tilde a(v,k)^{\frac{2s}{1+2s}}}\psi(v,\eta), 
\end{equation*}
with 
\begin{equation}\label{psf}
\psi(v,\eta)=\chi\bigg( \frac{  \tilde a(v,\eta)}{  \tilde a(v,k)^{\frac{1}{1+2s}}}\bigg)
\end{equation}
and 
 \begin{eqnarray*}
	d_k(v,\eta) = \comi v^{\gamma}\big(1+\abs v^2+\abs k^2+\abs{v\wedge k}^2\big)
	^{s-1}\big(k\cdot\eta+(v\wedge k)\cdot(v\wedge \eta)\big),
\end{eqnarray*}
we compute directly 
\begin{eqnarray*}
\begin{aligned}
     & \big\{\lambda_k,~v\cdot k\big\}= \tilde a(v, k)^{\frac{1}{1+2s}}\psi-\frac{\comi
v^{\gamma+2}\big(1+\abs v^2+\abs k^2+\abs{v\wedge
     k}^2\big)^{s-1}}{\tilde a(v, k)^{\frac{2s}{1+2s}}}\psi+\frac{  d_k(v,\eta)}
{\tilde a(v, k)^{\frac{2s}{1+2s}}} k\cdot\partial_\eta\psi\\  
&=  \tilde a(v,
 k)^{\frac{1}{1+2s}}-\tilde a(v, k)^{\frac{1}{1+2s}}\inner{1-\psi}- \comi 
v^{\gamma+2}\big(1+\abs v^2+\abs k^2+\abs{v\wedge
     k}^2\big)^{s-1} \tilde a(v, k)^{-\frac{2s}{1+2s}}\psi\\
 &\quad+  d_k(v,\eta)\tilde a(v, k)^{-\frac{2s}{1+2s}} k\cdot\partial_\eta
\psi.
\end{aligned}
    \end{eqnarray*}
   This     with \eqref{11040808}  yield that 
%\begin{equation}
 \begin{align}
 &  \Big( \big(\tilde a(v, k)^{\frac{1}{1+2s}}\big)^{\rm Wick} \Lambda_{\delta_1} \hat f_{m,\delta},~\Lambda_{\delta_1} \hat f_{m,\delta}\Big)_{L_v^2}
=  {\rm Re}~\big(i \inner{v\cdot k} \Lambda_{\delta_1} \hat f_{m,\delta},~\lambda_{k}^{\rm Wick} \Lambda_{\delta_1} \hat f_{m,\delta}\big)_{L_v^2}\notag \\
&\quad +\Big(\big(\tilde
      a(v, k)^{\frac{1}{1+2s}}\inner{1-\psi}\big)^{\rm Wick}
   \Lambda_{\delta_1} \hat f_{m,\delta},~~\Lambda_{\delta_1} \hat f_{m,\delta}\Big)_{L_v^2}\notag \\
   &\quad+\Big(\big(\comi
v^{\gamma+2}\big(1+\abs v^2+\abs k^2+\abs{v\wedge
     k}^2\big)^{s-1}\tilde a(v, k)^{-\frac{2s}{1+2s}}\psi\big)^{\rm Wick} \Lambda_{\delta_1} \hat f_{m,\delta},~~\Lambda_{\delta_1} \hat f_{m,\delta}\Big)_{L_v^2}\notag \\
     &\quad+\Big(\big(-  d_k(v,\eta)\tilde a(v, k)^{-\frac{2s}{1+2s}}
 k\cdot\partial_\eta\psi\big)^{\rm Wick} \Lambda_{\delta_1} \hat f_{m,\delta},~~\Lambda_{\delta_1} \hat f_{m,\delta}\Big)_{L_v^2}.\label{11052920}
\end{align}
%\end{equation}
 Note that
$\tilde a(v, k)^{\frac{1}{1+2s}} \leq \tilde a(v,\eta)$
on the support of $ 1-\psi$ with $\psi$ defined by \eqref{psf}, and thus
\[
   \tilde a(v, k)^{\frac{1}{1+2s}}(1-\psi) \leq  \tilde a(v, \eta).
\]
Then the  positivity \eqref{posit} of the Wick quantization gives
\begin{equation}\label{+11052921}
  \Big(\big(\tilde
      a(v, k)^{\frac{1}{1+2s}}\inner{1-\psi}\big)^{\rm Wick}
   \Lambda_{\delta_1} \hat f_{m,\delta},~~\Lambda_{\delta_1} \hat f_{m,\delta}\Big)_{L_v^2}\leq \Big(\big( \tilde a(v, \eta)\big)^{\rm Wick}
   \Lambda_{\delta_1} \hat f_{m,\delta},~~\Lambda_{\delta_1} \hat f_{m,\delta}\Big)_{L_v^2}.
\end{equation}
Similarly, observing \[
    \comi
v^{\gamma+2}\inner{1+\abs v^2+\abs k^2+\abs{v\wedge
     k}^2}^{s-1}\tilde a(v, k)^{-\frac{2s}{1+2s}}\psi\leq \comi{ v}^{2s+
\gamma}\leq C\tilde a(v,\eta),
\]
and
\[
   - d_k(v,\eta)\tilde
     a(v, k)^{-\frac{2s}{1+2s}} k\cdot\partial_\eta\psi\leq C
   \tilde a(v,\eta),
   \]
we have
\begin{multline*}
  \Big(\big(\comi
v^{\gamma+2}\big(1+\abs v^2+\abs k^2+\abs{v\wedge
     k}^2\big)^{s-1}\tilde a(v, k)^{-\frac{2s}{1+2s}}\psi\big)^{\rm Wick} \Lambda_{\delta_1} \hat f_{m,\delta},~~\Lambda_{\delta_1} \hat f_{m,\delta}\Big)_{L_v^2}\\
     +\Big(\big(-  d_k(v,\eta)\tilde a(v, k)^{-\frac{2s}{1+2s}}
 k\cdot\partial_\eta\psi\big)^{\rm Wick} \Lambda_{\delta_1} \hat f_{m,\delta},~~\Lambda_{\delta_1} \hat f_{m,\delta}\Big)_{L_v^2}\\
      \leq C \Big(\big( \tilde a(v,\eta)\big)^{\rm Wick}
   \Lambda_{\delta_1} \hat f_{m,\delta},~~\Lambda_{\delta_1} \hat f_{m,\delta}\Big)_{L_v^2}.
\end{multline*}
We combine the above estimate with \eqref{+11052921} and \eqref{11052920} to conclude
\begin{multline}\label{laet}
    \Big( \big(\tilde a(v, k)^{\frac{1}{1+2s}}\big)^{\rm Wick} \Lambda_{\delta_1} \hat f_{m,\delta},~\Lambda_{\delta_1} \hat f_{m,\delta}\Big)_{L_v^2}
\leq   {\rm Re}~\big(i \inner{v\cdot k} \Lambda_{\delta_1} \hat f_{m,\delta},~\lambda_{k}^{\rm Wick} \Lambda_{\delta_1} \hat f_{m,\delta}\big)_{L_v^2}\\
  +   C \Big(\big( \tilde a(v,\eta)\big)^{\rm Wick}
   \Lambda_{\delta_1} \hat f_{m,\delta},~~\Lambda_{\delta_1} \hat f_{m,\delta}\Big)_{L_v^2} .
  \end{multline}
For the  term on the left hand side, we have, by  \eqref{ww},
\begin{equation*}
%\label{ }
\big(\tilde a(v, k)^{\frac{1}{1+2s}}\big)^{\rm Wick}=8\int \tilde
     a(v-\tilde v, k)^{\frac{1}{1+2s}} e^{- 2\pi(|\tilde v|^2+|\tilde\eta|^2)}  
     d\tilde v   d\tilde \eta \geq C^{-1} \tilde a(v, k)^{1/\inner{1+2s}} \geq C^{-1}\comi k^{\frac{2s}{1+2s}},
\end{equation*}
where the first inequality follows from direct calculations (cf. \cite[p.61]{MR3950012}), and the last inequality holds because of the definition \eqref{defofa} of $\tilde a$ by observing   $\gamma\geq 0$.   On the other hand, for the last term on the right hand side of \eqref{laet} we write
\begin{eqnarray*}
	\big( \tilde a(v,\eta)\big)^{\rm Wick}=(a^{1/2})^w \underbrace{ \big[(a^{1/2})^w\big]^{-1}\big( \tilde a(v,\eta)\big)^{\rm Wick} \big[(a^{1/2})^w\big]^{-1}}_{\textrm{bounded on } L_v^2 } (a^{1/2})^w,
\end{eqnarray*}
where the boundedness on $L_v^2$ follows from the assertion (iii) in Proposition \ref{estaa} as well as the composition formula \eqref{symbc}  and the relationship \eqref{ww} between the Wick and Weyl quantizations.    
This gives that  
\begin{eqnarray*}
	\Big(\big( \tilde a(v,\eta)\big)^{\rm Wick}
   \Lambda_{\delta_1} \hat f_{m,\delta},~~\Lambda_{\delta_1} \hat f_{m,\delta}\Big)_{L_v^2}\leq C\norm{(a^{1/2})^w\Lambda_{\delta_1} \hat f_{m,\delta}}_{L_v^2}^2\leq C\norm{(a^{1/2})^w  \hat f_{m,\delta}}_{L_v^2}^2,
\end{eqnarray*}
where in the last inequality we have used Lemma \ref{lemcomest} in view of \eqref{ew+} and \eqref{ew}. 
  As  a result, combining the above inequalities with \eqref{laet}  we have  
 \begin{multline*}
	 \comi k^{\frac{2s}{1+2s}} 	\int_{t_1}^{t_2}\norm{\Lambda_{\delta_1}   \hat f_{m,\delta}(t,k)}_{L_v^2} ^2dt\\
	  \leq  C	\int_{t_1}^{t_2} {\rm Re}\big( i(v\cdot k) \Lambda_{\delta_1} \hat f_{m,\delta},\  \lambda^{\rm Wick} \Lambda_{\delta_1}   \hat f_{m,\delta}\big)_{L_v^2} dt+C\int_{t_1}^{t_2} \norm{ (a^{1/2})^w \hat f_{m,\delta}(t,k)}_{L_v^2}^2 dt,
	\end{multline*}
which yields the desired estimate in Lemma \ref{sd}. The proof  is completed.  
   \end{proof}
 
 \begin{lemma}
Let $J_{m,1}$ be defined in terms of  \eqref{+t1t2}, that is 
\begin{equation*}
		J_{m,1} =\frac{1}{2} \big(  \Lambda_{\delta_1} \hat f_{m,\delta}(t_1,k),\   \lambda^{\rm Wick}_k\Lambda_{\delta_1} \hat f_{m,\delta}(t_1,k)\big)_{{L_v^2  }}-\frac{1}{2}\big(  \Lambda_{\delta_1} \hat f_{m,\delta}(t_2,k),\   \lambda^{\rm Wick}_k\Lambda_{\delta_1} \hat f_{m,\delta}(t_2,k)\big)_{{L_v^2  }}. 
\end{equation*}	
Then, for any $0<t_1<t_2\leq 1$ and  any $m\geq 0$, it holds that  
\begin{eqnarray*}
	\int_{\mathbb Z^3}  \abs{J_{m,1}}^{1/2} d\Sigma(k) 	  \leq  C  \int_{\mathbb Z^3}  \norm{ \hat f_{m,\delta} (t_1,k)}_{L_v^2}    d\Sigma(k)+ C\int_{\mathbb Z^3}  \norm{      \hat f_{m,\delta} (t_2,k)}_{L_v^2}    d\Sigma(k). 	
\end{eqnarray*}
 
\end{lemma}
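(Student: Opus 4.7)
The proof reduces to a pointwise-in-$k$ bound followed by integration against the discrete measure $d\Sigma(k)$, and I do not expect any real difficulty. The plan is to estimate each of the two inner products defining $J_{m,1}$ separately by $C\|\hat f_{m,\delta}(t_j,k)\|_{L^2_v}^2$, take square roots, and then integrate in $k$.

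First, I would apply Cauchy--Schwarz in $L^2_v$ to write
\begin{equation*}
\bigl|\bigl(\Lambda_{\delta_1}\hat f_{m,\delta}(t,k),\;\lambda_k^{\rm Wick}\Lambda_{\delta_1}\hat f_{m,\delta}(t,k)\bigr)_{L^2_v}\bigr|
\leq \bigl\|\Lambda_{\delta_1}\hat f_{m,\delta}(t,k)\bigr\|_{L^2_v}\bigl\|\lambda_k^{\rm Wick}\Lambda_{\delta_1}\hat f_{m,\delta}(t,k)\bigr\|_{L^2_v}.
\end{equation*}
The contraction estimate \eqref{ubdo} gives $\|\Lambda_{\delta_1}h\|_{L^2_v}\leq\|h\|_{L^2_v}$, and the uniform-in-$k$ $L^2_v$-boundedness \eqref{ube} of the Wick quantization of $\lambda_k$ yields $\|\lambda_k^{\rm Wick}h\|_{L^2_v}\leq C_{\gamma,s}\|h\|_{L^2_v}$. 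Combining these two facts (and applying $\Lambda_{\delta_1}$ once more inside the second factor) produces the pointwise bound
\begin{equation*}
\bigl|\bigl(\Lambda_{\delta_1}\hat f_{m,\delta}(t,k),\;\lambda_k^{\rm Wick}\Lambda_{\delta_1}\hat f_{m,\delta}(t,k)\bigr)_{L^2_v}\bigr|
\leq C\,\|\hat f_{m,\delta}(t,k)\|_{L^2_v}^2,
\end{equation*}
with $C$ independent of $k$ and $\delta_1$.

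Evaluating this at $t=t_1$ and $t=t_2$ and using the triangle inequality yields $|J_{m,1}|\leq C\bigl(\|\hat f_{m,\delta}(t_1,k)\|_{L^2_v}^2+\|\hat f_{m,\delta}(t_2,k)\|_{L^2_v}^2\bigr)$. Invoking the subadditivity of $x\mapsto x^{1/2}$ on $\mathbb{R}_+$, i.e.\ $(a+b)^{1/2}\leq a^{1/2}+b^{1/2}$ for $a,b\geq 0$, gives
\begin{equation*}
|J_{m,1}|^{1/2}\leq C\bigl(\|\hat f_{m,\delta}(t_1,k)\|_{L^2_v}+\|\hat f_{m,\delta}(t_2,k)\|_{L^2_v}\bigr)
\end{equation*}
pointwise in $k$. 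Integrating against $d\Sigma(k)$ then produces the asserted inequality, and this completes the plan. The only thing worth emphasizing in the write-up is that the constants coming from \eqref{ube} and \eqref{ubdo} are uniform in $k$ and $\delta_1$, so the final bound is uniform in these parameters as well.
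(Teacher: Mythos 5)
Your proposal is correct and follows exactly the route the paper takes: the paper's proof is the one-line remark that the bound ``just follows from \eqref{ubdo} and \eqref{ube},'' and you have simply written out in full the Cauchy--Schwarz step, the two uniform operator bounds, the subadditivity of the square root, and the integration in $k$ that this remark implicitly invokes. No gaps.
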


\begin{proof}
	This just follows from \eqref{ubdo} and \eqref{ube}.
\end{proof}

\begin{lemma}\label{lemj2}
With  $J_{m,2}$  defined in terms of  \eqref{+t1t2}, that is,
\begin{eqnarray*}
	J_{m,2} =\int_{t_1}^{t_2} {\rm Re}\big(\Lambda_{\delta_1} \mathcal L \hat f_{m,\delta},\ \lambda^{\rm Wick}_k\Lambda_{\delta_1}   \hat f_{m,\delta}\big)_{L_v^2} dt,
\end{eqnarray*}
 it holds that for any $0<t_1<t_2\leq 1$ and any $m\geq 0,$  	\begin{eqnarray*}
	\begin{aligned}
   \int_{\mathbb Z^3} \abs{J_{m,2}}^{1/ 2}d\Sigma(k) 
 \leq  C\left(\int_0^1    \norm{(a^{1/2})^w\hat f_{m,\delta}(t,k) }^2_{L_v^2}  d t\right)^{1/2}d \Sigma(k). \end{aligned}	
\end{eqnarray*} 
\end{lemma}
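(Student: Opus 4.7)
The plan is to reduce the lemma to a pointwise-in-$(t,k)$ bound
\begin{equation*}
	\big|\big(\Lambda_{\delta_1}\mathcal L\hat f_{m,\delta},\ \lambda^{\rm Wick}_k\Lambda_{\delta_1}\hat f_{m,\delta}\big)_{L_v^2}\big|\leq C\norm{(a^{1/2})^w\hat f_{m,\delta}}_{L_v^2}^2,
\end{equation*}
after which the claimed inequality follows by integrating over $t\in[t_1,t_2]\subset(0,1]$, taking square roots, and integrating over $k\in\mathbb Z^3$.

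To produce this pointwise bound I would first move $\Lambda_{\delta_1}$ to the right factor via its $L_v^2$-adjoint, rewriting the inner product as $(\mathcal L\hat f_{m,\delta},\,\Lambda_{\delta_1}^*\lambda^{\rm Wick}_k\Lambda_{\delta_1}\hat f_{m,\delta})_{L_v^2}$. Next I would exploit the invertibility of $(a^{-1/2})^w$ on $L_v^2$: from Proposition~\ref{estaa}(iii) one has $(a^{1/2})^wG_1(a^{-1/2})^w=I$ and $(a^{-1/2})^wG_2(a^{1/2})^w=I$, so $(a^{-1/2})^w$ admits both a bounded left inverse $(a^{1/2})^wG_1$ and a bounded right inverse $G_2(a^{1/2})^w$, hence is invertible with $[(a^{-1/2})^w]^{-1}=G_2(a^{1/2})^w$. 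Combining this with the self-adjointness of $(a^{-1/2})^w$ (real symbol) yields the identity $(u,v)_{L_v^2}=((a^{-1/2})^wu,\,[(a^{-1/2})^w]^{-1}v)_{L_v^2}$, and Cauchy--Schwarz reduces the task to controlling the two factors $\|(a^{-1/2})^w\mathcal L\hat f_{m,\delta}\|_{L_v^2}$ and $\|[(a^{-1/2})^w]^{-1}\Lambda_{\delta_1}^*\lambda^{\rm Wick}_k\Lambda_{\delta_1}\hat f_{m,\delta}\|_{L_v^2}$.

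The first factor is bounded by $C\|(a^{1/2})^w\hat f_{m,\delta}\|_{L_v^2}$ directly by Corollary~\ref{corupp}. For the second, using $[(a^{-1/2})^w]^{-1}=G_2(a^{1/2})^w$ and the $L_v^2$-boundedness of $G_2$, it suffices to bound $\|(a^{1/2})^w\Lambda_{\delta_1}^*\lambda^{\rm Wick}_k\Lambda_{\delta_1}\hat f_{m,\delta}\|_{L_v^2}$. The key structural observation is that $\Lambda_{\delta_1}^*\lambda^{\rm Wick}_k\Lambda_{\delta_1}$ is the Weyl quantization of an $S(1,\ \abs{dv}^2+\abs{d\eta}^2)$ symbol uniformly in $\delta_1$ and $k$: this follows from the representation $\Lambda_{\delta_1}=q^w$ with $q\in S(1,\ldots)$ uniformly, as in \eqref{ew+}--\eqref{ew}, the Wick-to-Weyl identity $\lambda^{\rm Wick}_k=\tilde\lambda_k^w$ with $\tilde\lambda_k\in S(1,\ldots)$ uniformly, as in \eqref{wwyl}, and the composition formula \eqref{symbc}. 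Lemma~\ref{lemcomest} then delivers $\|(a^{1/2})^w\Lambda_{\delta_1}^*\lambda^{\rm Wick}_k\Lambda_{\delta_1}\hat f_{m,\delta}\|_{L_v^2}\leq C\|(a^{1/2})^w\hat f_{m,\delta}\|_{L_v^2}$, closing the pointwise estimate.

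The main difficulty I anticipate is purely bookkeeping: verifying that every constant produced along the way—from the composition formula, from the inversion of $(a^{-1/2})^w$, and from the applications of Lemma~\ref{lemcomest}—is genuinely uniform in the auxiliary parameters $\delta_1,\delta$, and $k$. The uniform statements in \eqref{bdness}, \eqref{symbc}, and Lemma~\ref{lemcomest} are tailored exactly for this, so once the pointwise bound is secured the concluding integrations in $t$ and $k$ are immediate.
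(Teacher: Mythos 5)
Your proposal is correct and follows essentially the same route as the paper: pass $\Lambda_{\delta_1}$ to the other factor by adjointness, split the pairing via the $(a^{-1/2})^w$–$(a^{1/2})^w$ duality from Proposition \ref{estaa}(iii), bound the first factor by Corollary \ref{corupp}, and bound $\norm{(a^{1/2})^w\Lambda_{\delta_1}^*\lambda^{\rm Wick}_k\Lambda_{\delta_1}\hat f_{m,\delta}}_{L_v^2}$ by Lemma \ref{lemcomest} together with \eqref{ew} and \eqref{wwyl}. The only difference is that you spell out the invertibility of $(a^{-1/2})^w$ explicitly, which the paper leaves implicit.
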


\begin{proof}
	Using the assertion (iii) in Proposition \ref{estaa} gives that   
%\begin{eqnarray*}
\begin{align*}
			\abs{J_{m,2}} 
			&\leq C \int_{t_1}^{t_2}  \norm{(a^{-1/2})^w\mathcal L \hat f_{m,\delta}}_{L_v^2}\norm{(a^{1/2})^w\Lambda_{\delta_1}^*\lambda^{\rm Wick}_k\Lambda_{\delta_1}   \hat f_{m,\delta}}_{L_v^2} dt\leq C \int_{0}^{1}  \norm{(a^{1/2})^w \hat f_{m,\delta}}_{L_v^2}^2 dt,
\end{align*}
%\end{eqnarray*}
where we have denoted by $\Lambda_{\delta_1}^*$ the adjoint operator of $\Lambda_{\delta_1}$ on $L_v^2$ and the last inequality follows from   Corollary \ref{corupp} and Lemma \ref{lemcomest}  in view of    \eqref{ew} and \eqref{wwyl}.  Thus the proof of Lemma \ref{lemj2} is completed. 
\end{proof}

\begin{lemma}\label{lemj3}
 Let $m\in\mathbb Z_+$ and let  $J_{m,3}$ be defined in terms of \eqref{+t1t2}, that is,
 \begin{eqnarray*}
	J_{m,3} =\int_{t_1}^{t_2} {\rm Re}\Big(\Lambda_{\delta_1}t^{\varsigma m}  (1+\delta\abs k^2)^{-1/2} \comi{k}^m\hat\Gamma \big({\hat f},~{\hat f}\big),\ \lambda^{\rm Wick}_k\Lambda_{\delta_1}   \hat f_{m,\delta}\Big)_{L_v^2} dt.
 \end{eqnarray*}
  Suppose that $f$ satisfies Assumption $\mathcal H_m$ given in Definition \ref{defas}.     Then, for any $\eps>0 $, the following things hold. 
 
 \noindent (i) For $m=0$, it holds that  
 \begin{eqnarray*}
 \begin{aligned}
 	  \int_{\mathbb Z^3} \abs{J_{m,3}}^{1/ 2}d\Sigma(k) 
 \leq C\norm{f_0}_{  L^1_kL^2_v}.
 \end{aligned}
 \end{eqnarray*}

 \noindent (ii) For $m=1$, it holds that 
 \begin{eqnarray*}
 \begin{aligned}
 	  \int_{\mathbb Z^3} \abs{J_{m,3}}^{1/ 2}d\Sigma(k) 
 &\leq  \left( \eps +  C \eps^{-1}\varepsilon_0   \right)\int_{\mathbb Z^3} \left(\int_0^1    \norm{(a^{1/2})^w\hat f_{m,\delta}(t,k) }^2_{L_v^2}  d t\right)^{1/2}d \Sigma(k)\\
  &\quad+C \eps^{-1}\varepsilon_0 \int_{\mathbb Z^3} \sup_{0<t\leq 1}     \norm{ \hat f_{m,\delta}(t,k )}_{L^2_v}    d \Sigma(k).
 \end{aligned}
 \end{eqnarray*}

 \noindent (iii)  For $m\geq 2$, it holds that   
 %\begin{eqnarray*}
\begin{align*}
   \int_{\mathbb Z^3} \abs{J_{m,3}}^{1/ 2}d\Sigma(k) 
&\leq  \left( \eps +  C \eps^{-1}\varepsilon_0   \right)\int_{\mathbb Z^3} \left(\int_0^1    \norm{(a^{1/2})^w\hat f_{m,\delta}(t,k) }^2_{L_v^2}  d t\right)^{1/2}d \Sigma(k)\\
&\quad+C \eps^{-1}\varepsilon_0 \int_{\mathbb Z^3} \sup_{0<t\leq 1}     \norm{ \hat f_{m,\delta}(t,k )}_{L^2_v}    d \Sigma(k)+C \eps^{-1} \tilde C_0^{m-2}  \com{(m-1)!} ^{\frac{1+2s}{2s}},
\end{align*}
%\end{eqnarray*}	
with $\tilde C_0$ the constant in the induction assumption \eqref{inassump}.
\end{lemma}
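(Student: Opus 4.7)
The plan is to reduce $|J_{m,3}|$ to a product $X_k(t)Y_k(t)$ integrated in time by the symbolic calculus, then distribute the weight $\comi{k}^m(1+\delta\abs{k}^2)^{-1/2}$ via Lemma~\ref{est} so that the resulting $k$-convolution can be separated by Lemma~\ref{lemtl} into pieces controlled by either \eqref{+1234} (via the smallness $\varepsilon_0$) or the induction hypothesis \eqref{inassump}. Concretely, using self-adjointness of $(a^{1/2})^w$ (as $a^{1/2}$ is real) and moving $\Lambda_{\delta_1}$ to its $L^2_v$-adjoint, I would insert $(a^{1/2})^w(a^{-1/2})^w$ (modulo the bounded correction in Proposition~\ref{estaa}(iii)) and apply Cauchy--Schwarz in $v$ to get $|J_{m,3}|\leq C\int_{t_1}^{t_2}X_k(t)Y_k(t)\,dt$, where
\[
X_k(t):=t^{\varsigma m}(1+\delta\abs{k}^2)^{-1/2}\comi{k}^m\norm{(a^{-1/2})^w\hat\Gamma(\hat f(t,k),\hat f(t,k))}_{L^2_v},
\]
\[
Y_k(t):=\norm{(a^{1/2})^w\Lambda_{\delta_1}^*\lambda^{\rm Wick}_k\Lambda_{\delta_1}\hat f_{m,\delta}(t,k)}_{L^2_v}.
\]
By \eqref{symbc}, \eqref{wwyl} and \eqref{ew}, the operator $\Lambda_{\delta_1}^*\lambda^{\rm Wick}_k\Lambda_{\delta_1}$ is the Weyl quantization of an $S(1,\abs{dv}^2+\abs{d\eta}^2)$-symbol uniformly in $(k,\delta_1)$, so Lemma~\ref{lemcomest} gives $Y_k(t)\leq C\norm{(a^{1/2})^w\hat f_{m,\delta}(t,k)}_{L^2_v}$; meanwhile, Lemma~\ref{lemtrip} yields $X_k(t)\leq Ct^{\varsigma m}(1+\delta\abs{k}^2)^{-1/2}\comi{k}^m\int_\ell\norm{\hat f(t,k-\ell)}_{L^2_v}\norm{(a^{1/2})^w\hat f(t,\ell)}_{L^2_v}\,d\Sigma(\ell)$.

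Now substitute Lemma~\ref{est} to rewrite the prefactor $\comi{k}^m(1+\delta\abs{k}^2)^{-1/2}$ as the sum $\sum_{j=1}^{m-1}\binom{m}{j}\comi{k-\ell}^j\comi{\ell}^{m-j}$ plus two boundary pieces where the whole weight $\comi{\cdot}^m(1+\delta\abs{\cdot}^2)^{-1/2}$ lands on a single argument, and correspondingly split $t^{\varsigma m}=t^{\varsigma j}t^{\varsigma(m-j)}$ inside the middle sum. Since $(a^{1/2})^w$ commutes with Fourier multipliers in $x$, the two boundary pieces collapse into convolutions of $\norm{\hat f_{m,\delta}}_{L^2_v}$ with $\norm{(a^{1/2})^w\hat f}_{L^2_v}$, and vice versa. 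After Cauchy--Schwarz in $t$ and Young's inequality $(XY)^{1/2}\leq\eps^{-1}X+\eps Y$, the quantity $\int_k|J_{m,3}|^{1/2}\,d\Sigma$ is reduced to $C\eps$ times $\int_k(\int_0^1\norm{(a^{1/2})^w\hat f_{m,\delta}}^2\,dt)^{1/2}\,d\Sigma$ plus $C\eps^{-1}$ times the $L^2_t\,d\Sigma$-integral of the trilinear bound on $X_k$. Applying Lemma~\ref{lemtl} to each piece: the two boundary pieces yield factors $\int_k\sup_t\norm{\hat f}_{L^2_v}\,d\Sigma$ or $\int_k(\int\norm{(a^{1/2})^w\hat f}^2\,dt)^{1/2}\,d\Sigma$, each bounded by $C\varepsilon_0$ via \eqref{+1234}, times the corresponding $\hat f_{m,\delta}$ norm; the middle pieces yield $\binom{m}{j}B_jB_{m-j}$ where $B_n$ is the right-hand side of \eqref{inassump} at order $n$, and a standard Gevrey-type combinatorial estimate sums this to at most $C\tilde C_0^{m-2}\com{(m-1)!}^\varsigma$. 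This proves (iii). For $m=0$, Lemma~\ref{est} is bypassed since $(1+\delta\abs{k}^2)^{-1/2}\leq 1$, and a single application of Lemma~\ref{lemtl} with both factors bounded by $\norm{f_0}_{L^1_kL^2_v}$ through \eqref{+1234} gives (i). For $m=1$, the middle sum in Lemma~\ref{est} is empty, leaving only the two boundary pieces, which give (ii).

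The main obstacle is the simultaneous allocation of the spatial weight $\comi{k}^m(1+\delta\abs{k}^2)^{-1/2}$ and the temporal weight $t^{\varsigma m}$ so that each middle term falls at orders $j,m-j\leq m-1$ accessible to the induction hypothesis, while the two boundary terms concentrate all of the weight onto a single convolution factor, enabling the identification with $\hat f_{m,\delta}$ and the extraction of $\varepsilon_0$ from the conjugate factor. The key mechanism is Lemma~\ref{est}: it crucially keeps the factor $(1+\delta\abs{\cdot}^2)^{-1/2}$ undistributed in the boundary pieces, which is exactly what permits reconstructing $\hat f_{m,\delta}$ on one side of the convolution and retaining a plain $\hat f$ (bounded by $\varepsilon_0$) on the other.
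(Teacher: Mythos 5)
Your proposal is correct and follows essentially the same route as the paper: bound the pairing by $\|(a^{1/2})^w\hat f_{m,\delta}\|_{L^2_v}$ times the trilinear convolution via Proposition \ref{estaa}(iii), Lemmas \ref{lemcomest} and \ref{lemtrip}, then distribute the weight $\comi{k}^m(1+\delta|k|^2)^{-1/2}$ by Lemma \ref{est}, apply Lemma \ref{lemtl} and Young's inequality, and close the middle sum with the induction hypothesis and the Gevrey combinatorial estimate. The treatment of the boundary versus interior terms and the special cases $m=0,1$ matches the paper's argument.
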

 
\begin{proof}
	 We first apply the assertion (iii) of Proposition  \ref{estaa} and then Lemmas \ref{lemcomest} and \ref{lemtrip},  to compute 
 \begin{equation*} 
 \begin{aligned}
 &\big|\big(\Lambda_{\delta_1}t^{\varsigma m}  (1+\delta\abs k^2)^{-1/2} \comi{k}^m\hat\Gamma \big({\hat f},~{\hat f}\big) ,\   \lambda_{k}^{\rm Wick}\Lambda_{\delta_1} f_{m,\delta} \big)_{L^2_v}\big|\\
& \leq    C \norm{(a^{1/2})^w  \Lambda_{\delta_1}^*  \lambda_{k}^{\rm Wick} \Lambda_{\delta_1}\hat  f_{m,\delta} (k)}_{L^2_v}  \times\Big(t^{\varsigma m}  (1+\delta\abs k^2)^{-1/2} \comi{k}^m\norm{(a^{-1/2})^w\hat\Gamma \big({\hat f},~{\hat f}\big)}_{L^2_v} \Big)\\
& \leq C  \norm{(a^{1/2})^w   \hat  f_{m,\delta} (k)}_{L^2_v} \int_{\mathbb Z^3}  t^{\varsigma m}\frac{\comi{k}^m}{(1+\delta\abs k^2)^{1/2}}\norm{ \hat f (k-\ell)}_{L^2_v}\norm{(a^{1/2})^w   \hat f  (\ell)}_{L^2_v} d\Sigma(\ell).
 \end{aligned}
 \end{equation*}
 This gives
 %\begin{equation}
 	\begin{align*}
 		&\int_{\mathbb Z^3}\abs{J_{m,3}}^{1/2}d\Sigma(k)\leq C \int_{\mathbb Z^3}\bigg[\int_0^1 \norm{(a^{1/2})^w   \hat  f_{m,\delta} (t,k)}_{L^2_v} \notag \\
 		&\qquad\qquad\qquad\quad \times \Big(\int_{\mathbb Z^3} \frac{ t^{\varsigma m}\comi{k}^m}{(1+\delta\abs k^2)^{1/2}}\norm{ \hat f (t,k-\ell)}_{L^2_v}\norm{(a^{1/2})^w   \hat f  (t,\ell)}_{L^2_v} d\Sigma(\ell)\Big)dt\bigg]^{1/2}d\Sigma(k).
 	\end{align*}
Hence it holds that
 %\begin{equation}
 	\begin{multline}
 		\int_{\mathbb Z^3}\abs{J_{m,3}}^{1/2}d\Sigma(k)\leq \eps      \int_{\mathbb Z^3}\ \Big(\int_0^1  \norm{(a^{1/2})^w  \hat  f_{m,\delta}(t,k)}_{L^2_v}^2  d t\Big)^{1/2} d \Sigma(k)  \\
 +C\eps^{-1} \int_{\mathbb Z^3} \bigg[\int_0^1\Big(\int_{\mathbb Z^3} \frac{ t^{\varsigma m}\comi{k}^m}{(1+\delta\abs k^2)^{1/2}}\norm{ \hat f (t,k-\ell)}_{L^2_v}\norm{(a^{1/2})^w   \hat f  (t,\ell)}_{L^2_v} d\Sigma(\ell)\Big)dt\bigg]^{1/2} d \Sigma(k).
		\label{jm3}
 	\end{multline}
 %\end{equation}
 
 \noindent  (a) We first consider the case when $m=0$. 
 In view of \eqref{jm3}  the first assertion for $m=0$ obviously holds as a result of \eqref{+1234} by observing that for $m=0$ we have $\norm{(a^{1/2})^w  \hat  f_{m,\delta}}_{L^2_v}\leq \norm{(a^{1/2})^w  \hat  f}_{L^2_v}$ and
 \begin{eqnarray*}
 \begin{aligned}
 	&\int_{\mathbb Z^3} \bigg[\int_0^1\Big(\int_{\mathbb Z^3} \frac{ t^{\varsigma m}\comi{k}^m}{(1+\delta\abs k^2)^{1/2}}\norm{ \hat f (t,k-\ell)}_{L^2_v}\norm{(a^{1/2})^w   \hat f  (t,\ell)}_{L^2_v} d\Sigma(\ell)\Big)dt\bigg]^{1/2} d \Sigma(k)\\
 &\leq C \Big(\int_{\mathbb Z^3} 	 \sup_{0<t\leq 1}\norm{
	\hat f(t,k)}_{L^{2}_v}  d\Sigma(k) \Big)\int_{\mathbb Z^3 } \Big(\int_0^1   \norm{ (a^{1/2})^w \hat f(t,k)}_{L^{2}_v}^2 dt\Big)^{1/2}d\Sigma(k),
 	\end{aligned}
 \end{eqnarray*}
where in the last inequality we have used Lemma \ref{lemtl}.  
 
 \medskip
  \noindent  (b) 
 Next consider the case when  $m=1$.    Applying  Lemma \ref{est} for $m=1$ gives
 \begin{eqnarray*}
 \frac{ t^{\varsigma  }\comi{k} }{(1+\delta\abs k^2)^{1/2}} \leq         \frac{2t^{\varsigma  }\comi{k-\ell}}{{(1+\delta|k-\ell|^2)^{1/2}}}  +  \frac{2t^{\varsigma  }\comi{\ell}}{{(1+\delta|\ell|^2)^{1/2}}}.
 \end{eqnarray*}
 Then,    using again Lemma \ref{lemtl} and \eqref{+1234},   
 \begin{eqnarray*}
 \begin{aligned}
 	&\int_{\mathbb Z^3} \bigg[\int_0^1\Big(\int_{\mathbb Z^3} \frac{ t^{\varsigma }\comi{k}}{(1+\delta\abs k^2)^{1/2}}\norm{ \hat f (t,k-\ell)}_{L^2_v}\norm{(a^{1/2})^w   \hat f  (t,\ell)}_{L^2_v} d\Sigma(\ell)\Big)dt\bigg]^{1/2} d \Sigma(k)\\
 &\leq C \eps_0 \int_{\mathbb Z^3} 	 \sup_{0<t\leq 1}\norm{
	\hat f_{1,\delta}(t,k)}_{L^{2}_v}  d\Sigma(k) +C\eps_0\int_{\mathbb Z^3 } \Big(\int_0^1   \norm{ (a^{1/2})^w \hat f_{1,\delta}(t,k)}_{L^{2}_v}^2 dt\Big)^{1/2}d\Sigma(k).
 	\end{aligned}
 \end{eqnarray*}
Substituting  the above estimate into \eqref{jm3} with $m=1$, we obtain the second assertion for $m=1$.

 \medskip
 \noindent (c) 
 It remains to consider the case when $m=2$.  In view of \eqref{jm3}
 the  desired assertion follows from showing that 
%\begin{equation}
 	\begin{align}
 		&\int_{\mathbb Z^3} \bigg[\int_0^1\Big(\int_{\mathbb Z^3} \frac{ t^{\varsigma m}\comi{k}^m}{(1+\delta\abs k^2)^{1/2}}\norm{ \hat f (t,k-\ell)}_{L^2_v}\norm{(a^{1/2})^w   \hat f  (t,\ell)}_{L^2_v} d\Sigma(\ell)\Big)dt\bigg]^{1/2} d \Sigma(k)\notag \\
 		&\leq C \tilde C_0^{m-2}  \com{(m-1)!} ^{\frac{1+2s}{2s}} +C  \varepsilon_0 \int_{\mathbb Z^3} \sup_{0<t\leq 1}     \norm{ \hat f_{m,\delta}(t,k )}_{L^2_v}    d \Sigma(k)\notag \\
		&\quad+C\varepsilon_0 \int_{\mathbb Z^3} \left(\int_0^1    \norm{(a^{1/2})^w\hat f_{m,\delta}(t,k) }^2_{L_v^2}  d t\right)^{1/2}d \Sigma(k).\label{estlae}
 	\end{align}
%\end{equation}
Indeed, we use Lemma \ref{est} to get  \begin{equation*} 
 \begin{split}
  &\int_{\mathbb Z_\ell^3} t^{\varsigma m}\frac{\comi{k}^m}{(1+\delta\abs k^2)^{1/2}} \norm{ \hat f (k-\ell)}_{L^2_v}\norm{(a^{1/2})^w   \hat f  (\ell)}_{L^2_v} d\Sigma(\ell) \\
 \leq& C  \sum_{j=1}^{m-1} \begin{pmatrix}
      m    \\
      j 
\end{pmatrix} t^{\varsigma m}\int_{\mathbb Z^3_\ell}\comi{k-\ell}^j\comi{\ell}^{m-j} \norm{ \hat f(k-\ell)}_{L^2_v}\norm{(a^{1/2})^w   f(\ell)}_{L^2_v}d\Sigma(\ell)\\
 &+C t^{\varsigma m}\int_{\mathbb Z^3_\ell} \frac{\comi{k-\ell}^m}{{(1+\delta|k-\ell|^2)^{1/2}}}   \norm{ \hat f(k-\ell)}_{L^2_v}\norm{(a^{1/2})^w  \hat  f(\ell)}d\Sigma(\ell)\\
  &+C t^{\varsigma m}\int_{\mathbb Z^3_\ell} \frac{\comi{\ell}^{m}}{{(1+\delta|\ell|^2)^{1/2}}} \norm{ \hat f(k-\ell)}_{L^2_v}\norm{(a^{1/2})^w  \hat  f(\ell)}d\Sigma(\ell).
 \end{split}
 \end{equation*}
Moreover, we may apply Lemma \ref{lemtl} for $j_0=m+1$, with 
\begin{eqnarray*}
	(f_j, g_j)=\big( t^{\varsigma j} \comi{D_x}^jf/j!,\ t^{\varsigma (m-j)} \comi{D_x}^{m-j}f/(m-j)!\big), \quad 1\leq j\leq m-1,
\end{eqnarray*}
and 
\begin{eqnarray*}
	(f_m, g_m)=\big( t^{\varsigma m} \comi{D_x}^m\comi{\delta D_x}^{-1}f,\ f\big), \quad 	(f_{m+1}, g_{m+1})=\big( f,\ t^{\varsigma m} \comi{D_x}^m\comi{\delta D_x}^{-1}f\big).\end{eqnarray*}
These, together with  \eqref{+1234},  give that 
%\begin{eqnarray*}
\begin{align*}
		&\int_{\mathbb Z^3} \bigg[\int_0^1\Big(\int_{\mathbb Z^3} \frac{ t^{\varsigma m}\comi{k}^m}{(1+\delta\abs k^2)^{1/2}}\norm{ \hat f (t,k-\ell)}_{L^2_v}\norm{(a^{1/2})^w   \hat f  (t,\ell)}_{L^2_v} d\Sigma(\ell)\Big)dt\bigg]^{1/2} d \Sigma(k)\\
		&  \leq   C\sum_{ j=1}^{m-1} \begin{pmatrix}
      m    \\
      j 
\end{pmatrix}    \int_{\mathbb Z^3}\comi{k}^j\sup_{0<t\leq 1}   t^{{\varsigma j}}   \norm{\hat f (t) }_{L_v^2}    d \Sigma(k)   \int_{\mathbb Z^3}\comi{k}^{m-j}\Big(\int_0^1 t^{2{\varsigma (m-j)}}   \norm{\hat f (t) }^2_{L_v^2}  d t\Big)^{\frac{1}{2}}d \Sigma(k) \\
		&\quad +C \eps_0\int_{\mathbb Z^3}  \sup_{0<t\leq 1}       \norm{ \hat f_{m,\delta}(t,k )}_{L^2_v}    d \Sigma(k)  +C\eps_0 \int_{\mathbb Z^3}  \left(\int_0^1     \norm{(a^{1/2})^w   \hat f_{m,\delta}(t, k )}_{L^2_v}  ^{2} d t \right)^{1/2} d \Sigma(k).	\end{align*}
%\end{eqnarray*}
Moreover, for the summation on the left hand side,    we  use  the induction assumption \eqref{inassump} to compute 
%\begin{equation} 
 \begin{align}
  &\sum_{1\leq j\leq m-1} \begin{pmatrix}
      m    \\
      j 
\end{pmatrix}\int_{\mathbb Z^3}\comi{k }^{j} \sup_{0<t\leq 1} t^{\varsigma j}    \norm{ \hat f(t,k )}_{L^2_v}    d \Sigma(k) \notag \\
  & \qquad \qquad  \qquad \qquad \qquad \quad \times  \int_{\mathbb Z^3}\comi{k }^{m-j}\left(\int_0^1t^{2\varsigma (m-j)}    \norm{(a^{1/2})^w   \hat f(t,k )}_{L^2_v}  ^{2} d t \right)^{1/2} d \Sigma(k) \notag  \\
  \leq&\sum_{1\leq j \leq m-1} \frac{m !} {j!(m-j)!}  \tilde C_0^{j-1}\com{\inner{ j-1}! }^{\frac{1+2s}{2s}}  \Big(  \tilde C_0^{m-j-1}\com{(m-j-1)!} ^{\frac{1+2s}{2s}}  \Big) \notag \\
  \leq &C \tilde C_0^{m-2}\sum_{1\leq j \leq m-1} \frac{m !} {j (m-j) }   \com{\inner{ j-1}! }^{\frac{1}{2s}}  \com{(m-j-1)!} ^{\frac{1}{2s}} \notag \\
  \leq & C  \tilde C_0^{m-2}\sum_{1\leq j \leq m-1} \frac{m !} {j (m-j) }      \com{(m-2)!} ^{\frac{1}{2s}} \notag \\
   \leq & C  \tilde C_0^{m-2}  \com{(m-1)!} ^{\frac{1+2s}{2s}} \sum_{1\leq j \leq m-1} \frac{m} { j (m-j) m^{\frac{1}{2s}}}   \leq   C  \tilde C_0^{m-2}  \com{(m-1)!} ^{\frac{1+2s}{2s}}, \label{tecalp1}
 \end{align}
 %\end{equation}   
where the last inequality holds because of the fact that
 \begin{equation*}%\label{tecal}
 \begin{aligned}
 	\sum_{1\leq j \leq m-1} \frac{m} { j (m-j) m^{\frac{1}{2s}}}  &= \bigg(
 		\sum_{1\leq j < (m-1)/2}+	\sum_{(m-1)/2\leq j \leq m-1}  \bigg)\frac{m} { j (m-j) m^{\frac{1}{2s}}} \\
 		&\leq   \frac{8}{m^{\frac{1}{2s}}}\sum_{1\leq j \leq  m-1} \frac{1} { j  }\leq C_s
 		\end{aligned} 
 \end{equation*}
with $C_s$ a constant depending only on  $s$.  As a result, combining these inequalities we obtain 
  the desired estimate \eqref{estlae}. The proof of Lemma \ref{lemj3} is thus completed. 
      \end{proof}

 	\begin{lemma}\label{lemindc}
Let $m\in\mathbb Z_+$ and let $J_{m,4}$ be defined in terms of  \eqref{+t1t2}, that is,
\begin{eqnarray*}
	J_{m,4} =\int_{t_1}^{t_2} {\rm Re}\Big(
 \varsigma mt^{-1} \Lambda_{\delta_1}\hat f_{m,\delta}+i[v\cdot k,\ \Lambda_{\delta_1}] \hat f_{m,\delta},\ \lambda^{\rm Wick}_k\Lambda_{\delta_1}   \hat f_{m,\delta}\Big)_{L_v^2} dt.
\end{eqnarray*} 
Suppose that $f$ satisfies Assumption $\mathcal H_m$ given in Definition \ref{defas}.      Then  for any $\tilde\eps>0$ the following estimates hold. 

\noindent (i) For $m=0$, it holds that 
\begin{eqnarray*}
	  \int_{\mathbb Z^3}	\abs{J_{m,4}}^{1/2}d\Sigma(k)\leq C\norm{f_0}_{  L^1_kL^2_v}.
\end{eqnarray*}

\noindent (ii) For $m=1$, it holds that
\begin{eqnarray*}
  \int_{\mathbb Z^3}	\abs{J_{m,4}}^{1/2}d\Sigma(k)\leq	 \tilde\varepsilon  \int_{\mathbb Z^3}\comi{k}^{\frac{s}{1+2s} }\left(   \int_0^1 \norm{  \Lambda_{\delta_1}\hat f_{m,\delta}(t,k)}_{L^2_v}^2d t\right)^{1/2}d \Sigma(k)\\
  +C_{\tilde\eps}\int_{\mathbb Z^3}\comi{k}^{\frac{s}{1+2s} }\left(   \int_0^1 \norm{  \Lambda_{\delta_1}\hat f_{m-1,\delta}(t,k)}_{L^2_v}^2d t\right)^{1/2}d \Sigma(k).
\end{eqnarray*}

\noindent (iii) For $m\geq 2$, it holds that
\begin{multline*}
  \int_{\mathbb Z^3}	\abs{J_{m,4}}^{1/2}d\Sigma(k)\leq  \tilde\varepsilon  \int_{\mathbb Z^3}\comi{k}^{\frac{s}{1+2s} }\left(   \int_0^1 \norm{  \Lambda_{\delta_1}\hat f_{m,\delta}(t,k)}_{L^2_v}^2d t\right)^{1/2}d \Sigma(k)\\
    +C_{\tilde \varepsilon}     \tilde C_0^{m-2}  \com{(m-1)!} ^{\frac{1+2s}{2s}}. 	
\end{multline*}
 \end{lemma}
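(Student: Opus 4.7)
I would split $J_{m,4}=J_{m,4}^{(1)}+J_{m,4}^{(2)}$ according to its two summands: the time-singular piece
\[J_{m,4}^{(1)}=\varsigma m\int_{t_1}^{t_2}t^{-1}\big(\Lambda_{\delta_1}\hat f_{m,\delta},\ \lambda^{\rm Wick}_k\Lambda_{\delta_1}\hat f_{m,\delta}\big)_{L^2_v}dt\]
and the commutator piece $J_{m,4}^{(2)}$ involving $i[v\cdot k,\Lambda_{\delta_1}]\hat f_{m,\delta}$. For the commutator, the explicit factorisation \eqref{comtes} gives $[v\cdot k,\Lambda_{\delta_1}]=-2\Lambda_{\delta_1}\circ(1+\delta_1\abs k^2-\delta_1\Delta_v)^{-1}\delta_1 k\cdot\nabla_v$, and a direct Fourier-side estimate shows the rightmost operator has $L^2_v\to L^2_v$ norm bounded by $C\delta_1^{1/2}\abs k/(1+\delta_1\abs k^2)^{1/2}\leq C$ uniformly in $k,\delta_1$. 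Combined with the uniform $L^2_v$-boundedness \eqref{ube} of $\lambda^{\rm Wick}_k$ and Cauchy-Schwarz, this gives $\abs{J_{m,4}^{(2)}}\leq C\int_0^1\norm{\Lambda_{\delta_1}\hat f_{m,\delta}}_{L^2_v}^2\,dt$. Assertion (i) then follows because the $t^{-1}$-piece vanishes when $m=0$, and the residual commutator term is controlled by $C\norm{f_0}_{L^1_kL^2_v}$ through \eqref{+1234} together with the coercivity $\norm{\cdot}_{L^2_v}\lesssim\norm{(a^{1/2})^w\cdot}_{L^2_v}$ implicit in \eqref{rela}. For $m\geq 1$, the commutator piece is absorbed into the $\tilde\eps$-term of (ii)--(iii).

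For the principal piece $J_{m,4}^{(1)}$ with $m\geq 1$, the pivotal identity is
\[\varsigma m\,t^{-1}\hat f_{m,\delta}(t,k)=\varsigma m\,\comi k\,t^{\varsigma-1}\hat f_{m-1,\delta}(t,k),\]
valid since $\varsigma-1=1/(2s)>0$ so $t^{\varsigma-1}$ is bounded on $(0,1]$. This trades the singular factor $t^{-1}$ for the lower-order quantity $\hat f_{m-1,\delta}$ at the price of $m\comi k$. Substituting and applying Cauchy-Schwarz in $t$ together with the boundedness of $\lambda^{\rm Wick}_k$, I get
\[\abs{J_{m,4}^{(1)}}\leq Cm\comi k\,A_k B_k,\quad A_k:=\Big(\int_0^1\norm{\Lambda_{\delta_1}\hat f_{m-1,\delta}}_{L^2_v}^2dt\Big)^{1/2},\ B_k:=\Big(\int_0^1\norm{\Lambda_{\delta_1}\hat f_{m,\delta}}_{L^2_v}^2dt\Big)^{1/2}.\]
Taking the square root pointwise in $k$ and applying Young's inequality $XY\leq\tilde\eps X^2+C_{\tilde\eps}Y^2$ with the weight $\comi k$ suitably distributed between $A_k$ and $B_k$ produces the pointwise bound $\abs{J_{m,4}^{(1)}}^{1/2}\leq\tilde\eps\comi k^{s/(1+2s)}B_k+C_{\tilde\eps}\,m\,\comi k^{s/(1+2s)}A_k$.

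Integrating in $k$ gives assertion (ii) directly for $m=1$. For $m\geq 2$ I invoke the induction hypothesis \eqref{inassump} at level $n=m-1$: dropping the harmless factor $(1+\delta\abs k^2)^{-1/2}\leq 1$ and using $\norm{\Lambda_{\delta_1}h}\leq\norm{h}$ gives $\comi k^{s/(1+2s)}A_k\leq\comi k^{m-1+s/(1+2s)}(\int\phi^{2\varsigma(m-1)}\norm{\hat f}^2dt)^{1/2}$, so \eqref{inassump} yields $\int\comi k^{s/(1+2s)}A_k\,d\Sigma(k)\leq\tilde C_0^{m-2}[(m-2)!]^\varsigma$. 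Multiplying by the combinatorial factor $m$ and using that $m\cdot[(m-2)!]^\varsigma\leq C[(m-1)!]^\varsigma$ (since $\varsigma=(1+2s)/(2s)>1$) gives assertion (iii).

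The main technical obstacle is the Young-inequality step producing the weight $\comi k^{s/(1+2s)}$ on both $A_k$ and $B_k$: the naive splitting $\comi k=\comi k^{s/(1+2s)}\comi k^{(1+s)/(1+2s)}$ would leave an excess $\comi k^{1/(1+2s)}$ on the $A_k$-side. Absorbing this excess requires exploiting the regularisation $(1+\delta\abs k^2)^{-1/2}$ built into $\hat f_{m-1,\delta}$ (which behaves like $\comi k^{-1}$ for $\abs k\gtrsim\delta^{-1/2}$) together with a careful retention of $t^{\varsigma-1}$ in the time Cauchy-Schwarz rather than the crude estimate $t^{\varsigma-1}\leq 1$; once this balance is carried out, the residual contributions combine with the induction hypothesis to yield the Gevrey-type constants in (ii) and (iii).
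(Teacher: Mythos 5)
Your decomposition of $J_{m,4}$, the treatment of the commutator piece via \eqref{comtes}, and the $m=0$ case all match the paper. The gap is in the principal piece for $m\geq 1$. After writing $\varsigma m t^{-1}\hat f_{m,\delta}=\varsigma m\comi k t^{\varsigma-1}\hat f_{m-1,\delta}$ and applying Cauchy--Schwarz, your Young inequality on the product $(m\comi k A_k)^{1/2}B_k^{1/2}$ can only yield
$\tilde\eps\comi k^{\frac{s}{1+2s}}B_k+C_{\tilde\eps}\,m\,\comi k^{\frac{1+s}{1+2s}}A_k$,
not the displayed bound with $\comi k^{\frac{s}{1+2s}}A_k$; the induction hypothesis \eqref{inassump} at level $n=m-1$ controls exactly $\int\comi k^{m-1+\frac{s}{1+2s}}(\cdots)^{1/2}d\Sigma(k)$ and nothing more, so the excess $\comi k^{\frac{1}{1+2s}}$ is genuinely uncontrolled. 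You acknowledge this obstacle, but neither of your proposed remedies works: the gain from $(1+\delta\abs k^2)^{-1/2}$ is not uniform in $\delta$ (and the whole scheme requires letting $\delta\to 0$ at the end, cf.\ the proofs of Propositions \ref{prpsub} and \ref{prpeng}), and retaining $t^{\varsigma-1}$ produces no gain in $k$ whatsoever.

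The paper closes this by never forming the mixed product $A_kB_k$ in the first place. It applies the parameter-dependent Young inequality
$1\leq\vartheta\comi k^{\frac{2s}{1+2s}}+\vartheta^{-\frac{1+s}{s}}\comi k^{-2+\frac{2s}{1+2s}}$
with $\vartheta=\tilde\eps^2 t/m$ directly to the factor $mt^{-1}$ multiplying the \emph{square} $\norm{\Lambda_{\delta_1}\hat f_{m,\delta}}_{L^2_v}^2$ (see \eqref{jmest} and \eqref{interp1}). The dual exponent $-\frac{1+s}{s}$ is calibrated so that the second term carries exactly $m^{\frac{1+2s}{s}}t^{-2\varsigma}\comi k^{-2}$, and $\comi k^{-2}t^{-2\varsigma}\norm{\Lambda_{\delta_1}\hat f_{m,\delta}}^2=\norm{\Lambda_{\delta_1}\hat f_{m-1,\delta}}^2$ by \eqref{fmdel}; hence \emph{both} resulting terms carry the full subelliptic weight $\comi k^{\frac{2s}{1+2s}}$, with no excess, and the $(m-1)$-level term is then bounded by \eqref{inassump} (for $m\geq2$) or kept as is (for $m=1$). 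You would need to replace your Cauchy--Schwarz/Young step by this quadratic-form version to make the argument close.
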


\begin{proof} 
We write
	\begin{eqnarray*}
	\begin{aligned}
	\int_{\mathbb Z^3}	\abs{J_{m,4}}^{1/2}d\Sigma(k)&\leq 
	C \int_{\mathbb Z^3} \Big(\int_{0}^{1}\big|\big(
  mt^{-1} \Lambda_{\delta_1}\hat f_{m,\delta} ,\ \lambda^{\rm Wick}_k\Lambda_{\delta_1}   \hat f_{m,\delta}\Big)_{L_v^2} dt\Big)^{1/2}d\Sigma(k)\\
 &\quad+
	C \int_{\mathbb Z^3} \Big(\int_{0}^{1}\big|\big(
   [v\cdot k,\ \Lambda_{\delta_1}] \hat f_{m,\delta},\ \lambda^{\rm Wick}_k\Lambda_{\delta_1}   \hat f_{m,\delta}\Big)_{L_v^2} dt\Big)^{1/2}d\Sigma(k).
  	\end{aligned}
	\end{eqnarray*}
	Using \eqref{ube} gives
	\begin{multline*}
	 	\int_{\mathbb Z^3} \Big(\int_{0}^{1}\big|\big(
  mt^{-1} \Lambda_{\delta_1}\hat f_{m,\delta} ,\ \lambda^{\rm Wick}_k\Lambda_{\delta_1}   \hat f_{m,\delta}\Big)_{L_v^2} dt\Big)^{1/2}d\Sigma(k)\\
   \leq C \int_{\mathbb Z^3} \Big(\int_{0}^{1} mt^{-1}\norm{  \Lambda_{\delta_1}\hat f_{m,\delta}(t,k)}_{L_v^2} ^2dt\Big)^{1/2}d\Sigma(k).
	\end{multline*}
	It follows from \eqref{comtes} that  $[v\cdot k,\ \Lambda_{\delta_1}]\Lambda_{\delta_1}^{-1}$ is bounded on $L_v^2$ uniformly with respect to $k$ and  $\delta_1$. Thus, writing 
	$[v\cdot k,\ \Lambda_{\delta_1}]=[v\cdot k,\ \Lambda_{\delta_1}]\Lambda_{\delta_1}^{-1}\Lambda_{\delta_1}$, we have
	\begin{multline*}
		\int_{\mathbb Z^3} \Big(\int_{0}^{1}\big|\big(
  [v\cdot k,\ \Lambda_{\delta_1}] \hat f_{m,\delta},\ \lambda^{\rm Wick}_k\Lambda_{\delta_1}   \hat f_{m,\delta}\Big)_{L_v^2} dt\Big)^{1/2}d\Sigma(k)\\
  \leq C\int_{\mathbb Z^3} \Big(\int_{0}^{1}\norm{  \Lambda_{\delta_1}\hat f_{m,\delta}(t,k)}_{L_v^2} ^2dt\Big)^{1/2}d\Sigma(k).
	\end{multline*}
	Combining these estimates yields 
 that 
	\begin{equation}\label{jmest}
		\int_{\mathbb Z^3}	\abs{J_{m,4}}^{1/2}d\Sigma(k)\leq  C\int_{\mathbb Z^3} \Big(\int_{0}^{1}mt^{-1}\norm{  \Lambda_{\delta_1}\hat f_{m,\delta}(t,k)}_{L_v^2} ^2dt\Big)^{1/2}d\Sigma(k) 	\end{equation}
	for $m\geq 1 $,  
	and that
\begin{equation*}
		\int_{\mathbb Z^3}	\abs{J_{m,4}}^{1/2}d\Sigma(k)\leq  C\int_{\mathbb Z^3} \Big(\int_{0}^{1} \norm{  \hat f_{m,\delta}}_{L_v^2} ^2dt\Big)^{\frac{1}{2}}d\Sigma(k)\leq C\int_{\mathbb Z^3} \Big(\int_{0}^{1} \norm{(a^{1/2})^w\hat f}_{L_v^2} ^2dt\Big)^{\frac{1}{2}}d\Sigma(k)
\end{equation*}
	for $m=0$.   So, from \eqref{+1234}, the assertion for $m=0$ follows. It remains to consider the case of $m\geq 1$.
	We use  Young inequality  with an arbitrary parameter $\vartheta>0$ that 
    \begin{eqnarray*}
    	 1 \leq \vartheta  \comi{k}^{\frac{2s}{1+2s}}+\vartheta^{-\frac{1+s}{s}} \comi{k}^{-2+\frac{2s}{1+2s}},
    \end{eqnarray*} 
      to obtain, choosing  in particular  $\vartheta=\tilde\eps^2t/m$ with arbitrarily small number $\tilde \eps>0$ and recalling $\varsigma=\frac{1+2s}{2s}$, 
     \begin{equation*}%\label{inter}
   \begin{split}
   m t^{-1} \leq \tilde\varepsilon^2 \comi{k}^{\frac{2s}{1+2s}} + \tilde\eps^{-\frac{2(1+s)}{s}} m^{\frac{1+2s}{s}} t^{-2\varsigma } \comi{k}^{-2+\frac{2s}{1+2s} }.
   \end{split}
   \end{equation*}
   This  with \eqref{jmest} yield
\begin{multline} 
%\begin{split}
\int_{\mathbb Z^3}	\abs{J_{m,4}}^{1/2}d\Sigma(k)\leq \tilde\varepsilon  \int_{\mathbb Z^3}\comi{k}^{\frac{s}{1+2s} }\left(   \int_0^1 \norm{  \Lambda_{\delta_1}\hat f_{m,\delta}(t,k)}_{L^2_v}^2d t\right)^{1/2}d \Sigma(k)\\
   +C_{\tilde \varepsilon} m^{\frac{1+2s}{2s}}  \int_{\mathbb Z^3}\comi{k}^{\frac{s}{1+2s} }\left(   \int_0^1\comi{k}^{-2}t^{-2 \varsigma  }\norm{  \Lambda_{\delta_1}\hat f_{m,\delta}(t,k)}_{L^2_v}^2d t\right)^{1/2}d \Sigma(k). \label{interp1}   
%\end{split}
\end{multline}
Observe for $m\geq 1,$
 \begin{eqnarray*}
 	\comi{k}^{-2}t^{-2 \varsigma  }\norm{  \Lambda_{\delta_1}\hat f_{m,\delta}(t,k)}_{L^2_v}^2= \norm{  \Lambda_{\delta_1}\hat f_{m-1,\delta}(t,k)}_{L^2_v}^2\leq t^{2\varsigma(m-1)}\comi k^{2(m-1)}\norm{\hat f(t,k)}_{L_v^2}^2
 \end{eqnarray*} 
due to the definition \eqref{fmdel} of $\hat f_{m,\delta}$. Thus we combine the above two inequalities to conclude
\begin{multline*}
		 \int_{\mathbb Z^3}	\abs{J_{m,4}}^{1/2}d\Sigma(k)\leq  \tilde\varepsilon  \int_{\mathbb Z^3}\comi{k}^{\frac{s}{1+2s} }\left(   \int_0^1 \norm{  \Lambda_{\delta_1}\hat f_{m,\delta}(t,k)}_{L^2_v}^2d t\right)^{1/2}d \Sigma(k)\\ +C_{\tilde \varepsilon} \int_{\mathbb Z^3}\comi{k}^{\frac{s}{1+2s} }\left(   \int_0^1 \norm{  \Lambda_{\delta_1}\hat f_{m-1,\delta}(t,k)}_{L^2_v}^2d t\right)^{1/2}d \Sigma(k)
	\end{multline*}
for $m=1$, and meanwhile  
\begin{eqnarray*}
	\begin{aligned}
		& \int_{\mathbb Z^3}	\abs{J_{m,4}}^{1/2}d\Sigma(k)\leq  \tilde\varepsilon  \int_{\mathbb Z^3}\comi{k}^{\frac{s}{1+2s} }\left(   \int_0^1 \norm{  \Lambda_{\delta_1}\hat f_{m,\delta}(t,k)}_{L^2_v}^2d t\right)^{1/2}d \Sigma(k)\\
   &\qquad\qquad\qquad\qquad+C_{\tilde \varepsilon} m^{\frac{1+2s}{2s}}   \tilde C_0^{m-2}  \com{(m-2)!} ^{\frac{1+2s}{2s}}\\
   &\leq \tilde\varepsilon  \int_{\mathbb Z^3}\comi{k}^{\frac{s}{1+2s} }\left(   \int_0^1 \norm{  \Lambda_{\delta_1}\hat f_{m,\delta}(t,k)}_{L^2_v}^2d t\right)^{1/2}d \Sigma(k) +C_{\tilde \varepsilon}    \tilde C_0^{m-2}  \com{(m-1)!} ^{\frac{1+2s}{2s}} 	
   \end{aligned}
\end{eqnarray*} 
for $m\geq 2$, where we have used  the  induction assumption \eqref{inassump} when $m\geq2$.
Thus, we have proved all the assertions in Lemma \ref{lemindc}, completing the proof.  
\end{proof}

\begin{proof}
	[Ending the proof of Proposition \ref{prpsub}.] 

(a) We   prove the first assertion in Proposition \ref{prpsub} under the estimate     \eqref{+1234}.   
Combining the estimates for $m=0$ in  Lemmas \ref{sd}-\ref{lemindc}  with \eqref{sumj} gives 
\begin{multline*}
		  \int_{\mathbb Z^3} \comi k^{\frac{s}{1+2s}} 	\Big(\int_{t_1}^{t_2}\norm{\Lambda_{\delta_1}   \hat f_{0,\delta}(t,k)}_{L_v^2} ^2dt\Big)^{1/2} d\Sigma(k)\leq C\norm{f_0}_{  L^1_kL^2_v}+C  \int_{\mathbb Z^3}  \norm{ \hat f_{0,\delta} (t_1,k)}_{L_v^2}   d\Sigma(k) \\
		 + C\int_{\mathbb Z^3}  \norm{      \hat f_{0,\delta} (t_2,k)}_{L_v^2}   d\Sigma(k)+C \int_{\mathbb Z^3} \left(\int_0^1    \norm{(a^{1/2})^w\hat f_{0,\delta}(t,k) }^2_{L_v^2}  d t\right)^{1/2}d \Sigma(k)\\
		 \leq C\norm{f_0}_{  L^1_kL^2_v}+C  \int_{\mathbb Z^3}  \sup_{0<t\leq 1}\norm{ \hat f (t,k)}_{L_v^2}   d\Sigma(k)  
		 +C \int_{\mathbb Z^3} \left(\int_0^1    \norm{(a^{1/2})^w\hat f (t,k) }^2_{L_v^2}  d t\right)^{1/2}d \Sigma(k),
%\leq C\eps_0,
		\end{multline*}
that further can be bounded by $C\norm{f_0}_{  L^1_kL^2_v}$, where we have used the estimate \eqref{+1234} by observing
\begin{eqnarray*}
	\norm{ \hat f_{0,\delta}}_{L_v^2}\leq \norm{ \hat f}_{L_v^2},\quad \norm{(a^{1/2})^w\hat f_{0,\delta}}_{L_v^2}\leq \norm{(a^{1/2})^w\hat f}_{L_v^2}.
\end{eqnarray*}
Letting  $t_1\rightarrow 0$ and    $t_2\rightarrow 1 $, and further
letting  $\delta_1,\delta\rightarrow 0$, we conclude by the Fatou Lemma that 
\begin{equation}\label{frg}
	 \int_{\mathbb Z^3} \comi k^{\frac{s}{1+2s}} 	\Big(\int_{0}^{1}\norm{ \hat f(t,k)}_{L_v^2} ^2dt\Big)^{1/2} d\Sigma(k) \leq  C \norm{f_0}_{  L^1_kL^2_v}.
\end{equation}	
We have proved the assertion (i) in Proposition \ref{prpsub}.

\medskip
  (b)  Next, we deal with the case of $m=1$.   Combining the estimates for $m=1$ in   Lemmas \ref{sd}-\ref{lemindc} with \eqref{sumj}, we get, letting the constant  $\tilde\eps$ in Lemma \ref{lemindc} be small enough, 
\begin{align*}
&\int_{\mathbb Z^3} \comi k^{\frac{s}{1+2s}} 	\Big(\int_{t_1}^{t_2}\norm{\Lambda_{\delta_1}   \hat f_{1,\delta}(t,k)}_{L_v^2} ^2dt\Big)^{1/2} d\Sigma(k) 
		  \leq C  \int_{\mathbb Z^3}\comi{k}^{\frac{s}{1+2s} }\left(   \int_0^1 \norm{  \Lambda_{\delta_1}\hat f_{0,\delta}}_{L^2_v}^2d t\right)^{\frac{1}{2}}d \Sigma(k)\\
&\quad+ C\int_{\mathbb Z^3} \sup_{0<t\leq 1} \norm{      \hat f_{1,\delta} (t,k)}_{L_v^2}   d\Sigma(k)
		+C \int_{\mathbb Z^3} \left(\int_0^1    \norm{(a^{1/2})^w\hat f_{1,\delta}(t,k) }^2_{L_v^2}  d t\right)^{1/2}d \Sigma(k)\\
&\leq C\norm{f_0}_{  L^1_kL^2_v}+ C\int_{\mathbb Z^3} \sup_{0<t\leq 1} \norm{      \hat f_{1,\delta} (t,k)}_{L_v^2}   d\Sigma(k)\\
&\quad+C \int_{\mathbb Z^3} \left(\int_0^1    \norm{(a^{1/2})^w\hat f_{1,\delta}(t,k) }^2_{L_v^2}  d t\right)^{1/2}d \Sigma(k),
\end{align*}
where in the last inequality we have used   \eqref{frg} since $\norm{  \Lambda_{\delta_1}\hat f_{0,\delta}}_{L^2_v}\leq \norm{  \hat f}_{L^2_v}$.   
Thus, letting $t_1\rightarrow 0$ and $t_2\rightarrow 1 $, and further
letting  $\delta_1 \rightarrow 0$, we obtain the assertion (ii) in Proposition \ref{prpsub}.

\medskip
 
 (c) It remains to treat the case of $m\geq 2$.  We combine the estimates for $m\geq 2$ in  
 Lemmas \ref{sd}-\ref{lemindc} with the estimate \eqref{sumj},  and then let the constant  $\tilde \eps$ in Lemma \ref{lemindc} be small enough; this gives that 
%\begin{eqnarray*}
\begin{align*}
				& \int_{\mathbb Z^3} \comi k^{\frac{s}{1+2s}} 	\Big(\int_{t_1}^{t_2}\norm{\Lambda_{\delta_1}   \hat f_{m,\delta}(t,k)}_{L_v^2} ^2dt\Big)^{1/2} d\Sigma(k)  \leq  C \tilde C_0^{m-2}  \com{(m-1)!} ^{\frac{1+2s}{2s}} 
\\ &\quad\qquad+C \int_{\mathbb Z^3} \sup_{0<t\leq 1}     \norm{ \hat f_{m,\delta}(t,k )}_{L^2_v}    d \Sigma(k) 
				 +C \int_{\mathbb Z^3} \left(\int_0^1    \norm{(a^{1/2})^w\hat f_{m,\delta}(t,k) }^2_{L_v^2}  d t\right)^{1/2}d \Sigma(k).
\end{align*}
%\end{eqnarray*}
 Similarly as above, letting $t_1\rightarrow 0$ and   $t_2\rightarrow 1$  and then letting $\delta_1\rightarrow 0$, we get  the assertion (iii) as desired in Proposition \ref{prpsub}.  The proof of Proposition \ref{prpsub} is thus completed. 
  \end{proof}

\subsection{Energy estimates for regularized solutions}\label{subeng}   This part is devoted to   proving the following energy estimate for $f_{m,\delta}$. 

\begin{proposition}\label{prpeng}
Let $m\in\mathbb Z_+$  with  $m\geq 1$ and let   $f(t,x,v)$ satisfy the Assumption $\mathcal H_m$ given in Definition \ref{defas}.   The the following estimates hold.

\noindent (i) For $m=1$, it holds that
 \begin{eqnarray*}
	\begin{aligned}
\int_{\mathbb Z^3} \sup_{0<t\leq 1}\norm{ \hat f_{m,\delta}(t,k)}_{L_v^2} d\Sigma(k) +\int_{\mathbb Z^3}\Big(\int_{0}^{1} \norm{(a^{1/2})^w  \hat f_{m,\delta}}_{L_v^2}^2 dt\Big)^{\frac{1}{2}}
 	d\Sigma(k) \leq C\norm{f_0}_{  L^1_kL^2_v}.
 \end{aligned}
\end{eqnarray*}

	\noindent (ii)
If    $m\geq 2 $, it holds that  
%\begin{eqnarray*}
\begin{equation*}
  		\int_{\mathbb Z^3} \sup_{0<t\leq 1}\norm{ \hat f_{m,\delta}(t,k)}_{L_v^2} d\Sigma(k) +\int_{\mathbb Z^3}\Big(\int_{0}^{1} \norm{(a^{1/2})^w  \hat f_{m,\delta}}_{L_v^2}^2 dt\Big)^{\frac{1}{2}}
 	d\Sigma(k)     
    \leq   C  \tilde C_0^{m-2}  \com{(m-1)!} ^{\frac{1+2s}{2s}},
\end{equation*}
%\end{eqnarray*}
where $\tilde C_0$ is the constant in the induction assumption \eqref{inassump}.
\end{proposition}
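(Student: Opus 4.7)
The plan is to run an $L^2_v$ energy estimate on the regularized equation \eqref{equreg} for $\Lambda_{\delta_1}\hat f_{m,\delta}$, then to sum up the resulting control in $L^1_k$, and finally pass to the limit $\delta_1\to 0$. Concretely, I would take the inner product of \eqref{equreg} with $\Lambda_{\delta_1}\hat f_{m,\delta}$ in $L^2_v$ and take real parts. The transport term $iv\cdot k$ is skew-adjoint and disappears; the time derivative produces $\tfrac12 \tfrac{d}{dt}\norm{\Lambda_{\delta_1}\hat f_{m,\delta}}_{L^2_v}^2$; and since $m\geq 1$ the factor $t^{\varsigma m}$ in the definition \eqref{fmdel} ensures the initial value $\hat f_{m,\delta}(0,k)=0$, so after integrating from $0$ to some $t\in(0,1]$ no initial contribution enters. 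The coercivity of $-\mathcal L$ together with \eqref{rela} and the commutator estimate \eqref{smacomest} will give a dissipation lower bound of the form $c\int_0^t\norm{(a^{1/2})^w\Lambda_{\delta_1}\hat f_{m,\delta}}_{L^2_v}^2\,d\tau$, up to harmless lower order terms controlled by $\norm{\hat f_{m,\delta}}_{L^2_v}^2$.

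The four driving terms on the right-hand side of \eqref{equreg} will be handled as follows. The commutator $i[v\cdot k,\Lambda_{\delta_1}]\hat f_{m,\delta}$ is uniformly $L^2_v$-bounded by \eqref{comtes}, so pairing it with $\Lambda_{\delta_1}\hat f_{m,\delta}$ gives a contribution controlled by $\norm{\Lambda_{\delta_1}\hat f_{m,\delta}}_{L^2_v}^2$. For the nonlinear piece, I would use the adjoint trick and Lemma \ref{lemtrip} to estimate
\[
\big|\big(\Lambda_{\delta_1} t^{\varsigma m}(1+\delta|k|^2)^{-1/2}\comi{k}^m\hat\Gamma(\hat f,\hat f),\ \Lambda_{\delta_1}\hat f_{m,\delta}\big)_{L^2_v}\big|
\]
by $\eps\norm{(a^{1/2})^w\Lambda_{\delta_1}\hat f_{m,\delta}}_{L^2_v}^2$ plus the convolution bound produced by Lemma \ref{est}, exactly as in the derivation of \eqref{jm3}. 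For the dangerous $\varsigma m t^{-1}\Lambda_{\delta_1}\hat f_{m,\delta}$ term, I would use the same Young-type splitting as in the proof of Lemma \ref{lemindc},
\[
mt^{-1}\leq \tilde\eps^2\comi{k}^{\tfrac{2s}{1+2s}}+C_{\tilde\eps}\,m^{\tfrac{1+2s}{s}} t^{-2\varsigma}\comi{k}^{-2+\tfrac{2s}{1+2s}},
\]
so that the first piece becomes $\tilde\eps^2\comi{k}^{2s/(1+2s)}\norm{\Lambda_{\delta_1}\hat f_{m,\delta}}_{L^2_v}^2$, a quantity directly controlled via the subelliptic estimate of Proposition \ref{prpsub} and absorbable into the dissipation on the left, and the second piece becomes (up to the commutation by constants) $C_{\tilde\eps}m^{(1+2s)/s}\norm{\Lambda_{\delta_1}\hat f_{m-1,\delta}}_{L^2_v}^2$, i.e.\ a \emph{lower-order} object already controlled either by \eqref{+1234} (if $m=1$) or by the induction hypothesis \eqref{inassump} (if $m\geq 2$).

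Taking the supremum over $t\in(0,1]$ on the resulting energy identity, then taking a square root and integrating in $d\Sigma(k)$, I would apply Lemma \ref{lemtl} with the same choice of pairs as in the proof of Lemma \ref{lemj3} to split the $\comi{k}^m$-weighted convolution into mixed terms $(f_j,g_j)=(\comi{D_x}^j f/j!\cdot t^{\varsigma j},\comi{D_x}^{m-j}f/(m-j)!\cdot t^{\varsigma(m-j)})$ plus the endpoint pieces involving $\hat f_{m,\delta}$ itself. The endpoint pieces carry a factor $\eps_0$ and are absorbed by choosing $\varepsilon_0$ small, whereas the mixed pieces are summed exactly as in \eqref{tecalp1} and produce the factorial prefactor $\tilde C_0^{m-2}[(m-1)!]^{(1+2s)/2s}$. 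Combining everything, choosing $\eps,\tilde\eps$ small enough to absorb the dissipation terms into the left-hand side, substituting the subelliptic bound of Proposition \ref{prpsub} for the leftover $\comi{k}^{s/(1+2s)}\norm{\Lambda_{\delta_1}\hat f_{m,\delta}}_{L^2_v}$ contribution, and finally sending $\delta_1\to 0$ via Fatou, gives the claimed bounds for $m=1$ and $m\geq 2$.

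The main obstacle is the delicate interplay between the singular factor $mt^{-1}$, the subelliptic gain of only $\comi{k}^{s/(1+2s)}$, and the full dissipation $\norm{(a^{1/2})^w\cdot}_{L^2_v}$. The Young splitting above is the right way to trade one against the other, but one must verify that the residual term $C_{\tilde\eps}m^{(1+2s)/s}\norm{\Lambda_{\delta_1}\hat f_{m-1,\delta}}_{L^2_v}^2$ is tame enough that after taking square root and summing in $L^1_k$, the extra factor $m^{\varsigma}=m^{(1+2s)/(2s)}$ combines cleanly with the induction hypothesis to yield the final $[(m-1)!]^{(1+2s)/2s}$ growth without degrading the Gevrey index; this is exactly the role of the combinatorial estimate \eqref{tecalp1} and is the step I would expect to require the most care.
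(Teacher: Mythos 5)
Your overall strategy coincides with the paper's: energy estimate on \eqref{equreg} tested against $\Lambda_{\delta_1}\hat f_{m,\delta}$, the Young splitting of $mt^{-1}$ traded against the subelliptic gain $\comi{k}^{s/(1+2s)}$ from Proposition \ref{prpsub}, the convolution estimates via Lemmas \ref{est} and \ref{lemtl}, the combinatorial sum \eqref{tecalp1}, and Fatou as $\delta_1\to0$. One step, however, is treated incorrectly, and it is precisely the step the paper spends most of its effort on.

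You claim that extracting the dissipation from ${\rm Re}\big(\Lambda_{\delta_1}\mathcal L\hat f_{m,\delta},\Lambda_{\delta_1}\hat f_{m,\delta}\big)_{L^2_v}$ costs only ``harmless lower order terms controlled by $\norm{\hat f_{m,\delta}}_{L^2_v}^2$.'' This is not so: to pass from $\Lambda_{\delta_1}\mathcal L$ to $\mathcal L\Lambda_{\delta_1}$ (which is what the coercivity \eqref{rela} applies to) you pay the commutator $\big([\mathcal L,\Lambda_{\delta_1}]\hat f_{m,\delta},\Lambda_{\delta_1}\hat f_{m,\delta}\big)_{L^2_v}$, and this term is of the \emph{same} order as the dissipation itself. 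Writing $[\mathcal L,\Lambda_{\delta_1}]=[\mathcal L,\Lambda_{\delta_1}-{\rm Id}]$ and using Corollary \ref{corupp} together with \eqref{smacomest}, the best available bound is of the form
\begin{equation*}
C\norm{(\Lambda_{\delta_1}-{\rm Id})(a^{1/2})^w\hat f_{m,\delta}}_{L^2_v}\,\norm{(a^{1/2})^w\hat f_{m,\delta}}_{L^2_v}+C\delta_1^{1/2}\norm{(a^{1/2})^w\hat f_{m,\delta}}_{L^2_v}^2,
\end{equation*}
whose first term carries no small prefactor for fixed $\delta_1$ and therefore cannot be absorbed into the left-hand side; the estimate \eqref{smacomest} you invoke only controls $[\Lambda_{\delta_1},(a^{1/2})^w]$, not $[\mathcal L,\Lambda_{\delta_1}]$. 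The paper resolves this by showing separately (the limit \eqref{lim}) that the time-integrated commutator contribution tends to zero as $\delta_1\to0$, via the Dominated Convergence Theorem applied under the $\delta$-dependent but $\delta_1$-uniform bound \eqref{L2conv}. This forces the order of operations: one must send $\delta_1\to0$ \emph{first} to eliminate the commutator, and only then choose $\eps$ small to close the absorption — the reverse of the order you propose. Without this argument your energy inequality does not close for any fixed $\delta_1$, so you should add the DCT step for \eqref{lim} to make the plan complete. (A minor further point: for $m=1$ the residual term $\comi{k}^{s/(1+2s)}\big(\int_0^1\norm{\Lambda_{\delta_1}\hat f_{0,\delta}}_{L^2_v}^2dt\big)^{1/2}$ is not controlled by \eqref{+1234} alone, because of the extra $\comi{k}^{s/(1+2s)}$ weight; it requires assertion (i) of Proposition \ref{prpsub}.)
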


\begin{proof}
The argument of the proof is quite similar as in the previous Subsection \ref{secsub}, so we only sketch it for brevity.  
	
Taking the $L_v^2$-product on both sides of \eqref{equreg} with $\Lambda_{\delta_1} \hat f_{m,\delta}$ and then integrating the real parts of  the resulting equation over $[t_1,t_2]$ for any $0<t_1<t_2\leq 1$, we have 
 \begin{equation*} 
 \begin{aligned}
 	 &\frac{1}{2}\norm{\Lambda_{\delta_1} \hat f_{m,\delta}(t_2,k)}_{L_v^2}^2+\int_{t_1}^{t_2} {\rm Re}\big(\Lambda_{\delta_1} \mathcal L \hat f_{m,\delta},\ \Lambda_{\delta_1}   \hat f_{m,\delta}\big)_{L_v^2} dt\\
 & 	 = \frac{1}{2}\norm{\Lambda_{\delta_1} \hat f_{m,\delta}(t_1,k)}_{L_v^2}^2 +
 	\int_{t_1}^{t_2} {\rm Re}\Big(\Lambda_{\delta_1}t^{\varsigma m}  (1+\delta\abs k^2)^{-1/2} \comi{k}^m\hat\Gamma \big({\hat f},~{\hat f}\big),\ \Lambda_{\delta_1}   \hat f_{m,\delta}\Big)_{L_v^2} dt\\
  &\quad 	+
 	\int_{t_1}^{t_2} {\rm Re}\Big(
 \varsigma mt^{-1} \Lambda_{\delta_1}\hat f_{m,\delta}+i[v\cdot k,\ \Lambda_{\delta_1}] \hat f_{m,\delta},\ \Lambda_{\delta_1}   \hat f_{m,\delta}\Big)_{L_v^2} dt.
 \end{aligned}
 \end{equation*}
 This, along with
\begin{multline*}
\norm{(a^{1/2})^w\Lambda_{\delta_1} \hat f_{m,\delta}}_{L_v^2}^2\leq  C   \big( \mathcal L  \Lambda_{\delta_1}\hat f_{m,\delta},\ \Lambda_{\delta_1}   \hat f_{m,\delta}\big)_{L_v^2}   +C\norm{\Lambda_{\delta_1} \hat f_{m,\delta}}_{L_v^2}^2\\  \leq  C  \Big(	{\rm Re}\big(\Lambda_{\delta_1} \mathcal L \hat f_{m,\delta},\ \Lambda_{\delta_1}   \hat f_{m,\delta}\big)_{L_v^2} +    	{\rm Re}\big([\mathcal L ,\ \Lambda_{\delta_1}] \hat f_{m,\delta},\ \Lambda_{\delta_1}   \hat f_{m,\delta}\big)_{L_v^2} \Big) +C\norm{\Lambda_{\delta_1} \hat f_{m,\delta}}_{L_v^2}^2
\end{multline*}
due to \eqref{rela}, yield that
 \begin{equation*} 
 \begin{aligned}
 & \norm{\Lambda_{\delta_1} \hat f_{m,\delta}(t_2,k)}_{L_v^2} d\Sigma(k) + \Big(\int_{t_1}^{t_2} \norm{(a^{1/2})^w\Lambda_{\delta_1} \hat f_{m,\delta}}_{L_v^2}^2 dt\Big)^{1/2}
 	   \\
  &  \leq  C  \norm{\Lambda_{\delta_1}\hat f_{m,\delta}(t_1,k)}_{L_v^2}      +C \Big(\int_{0}^{1} \norm{ \Lambda_{\delta_1} \hat f_{m,\delta}}_{L_v^2}^2 dt\Big)^{\frac{1}{2}}  + C \Big(\int_{0}^{1} \Big|\big([\mathcal L ,\ \Lambda_{\delta_1}] \hat f_{m,\delta},\ \Lambda_{\delta_1}   \hat f_{m,\delta}\big)_{L_v^2} \Big| dt\Big)^{\frac{1}{2}}
  \\
  &\quad +C \bigg[ 
 	\int_{0}^1\Big|\Big(\Lambda_{\delta_1}t^{\varsigma m}  (1+\delta\abs k^2)^{-1/2} \comi{k}^m\hat\Gamma \big({\hat f},~{\hat f}\big),\ \Lambda_{\delta_1}   \hat f_{m,\delta}\Big)_{L_v^2} \Big|dt\bigg]^{1/2} \\
 & \qquad 	+C \bigg[
  	\int_{0}^1 \Big|\Big( \varsigma mt^{-1} \Lambda_{\delta_1}\hat f_{m,\delta}+i[v\cdot k,\ \Lambda_{\delta_1}] \hat f_{m,\delta},\ \Lambda_{\delta_1}   \hat f_{m,\delta}\Big)_{L_v^2} \Big| dt\bigg]^{1/ 2}.
 \end{aligned}
 \end{equation*}
 Letting $t_1\rightarrow 0$, observing that  the first term on the left side vanishes because of  \eqref{vans},  and then taking the supremum for $0<t_2\leq 1$,  we conclude that after integrating for $k\in\mathbb Z^3$,
%\begin{equation}
\begin{align}
 &	\int_{\mathbb Z^3} \sup_{0<t\leq 1}\norm{\Lambda_{\delta_1} \hat f_{m,\delta}(t,k)}_{L_v^2} d\Sigma(k) +\int_{\mathbb Z^3}\Big(\int_{0}^{1} \norm{(a^{1/2})^w\Lambda_{\delta_1} \hat f_{m,\delta}}_{L_v^2}^2 dt\Big)^{1/2}
 	d\Sigma(k)  \notag \\
  &  \leq   C\int_{\mathbb Z^3}\Big(\int_{0}^{1} \norm{ \Lambda_{\delta_1} \hat f_{m,\delta}}_{L_v^2}^2 dt\Big)^{1/2}
 	d\Sigma(k)  + C\int_{\mathbb Z^3}\Big(\int_{0}^{1} \Big|\big([\mathcal L ,\ \Lambda_{\delta_1}] \hat f_{m,\delta},\ \Lambda_{\delta_1}   \hat f_{m,\delta}\big)_{L_v^2} \Big| dt\Big)^{1/2}
 	d\Sigma(k)\notag \\
  &\quad +C \int_{\mathbb Z^3} \bigg[ 
 	\int_{0}^1\Big|\Big(\Lambda_{\delta_1}t^{\varsigma m}  (1+\delta\abs k^2)^{-1/2} \comi{k}^m\hat\Gamma \big({\hat f},~{\hat f}\big),\ \Lambda_{\delta_1}   \hat f_{m,\delta}\Big)_{L_v^2} \Big|dt\bigg]^{1/2}d\Sigma(k)\notag \\
 & \qquad 	+C\int_{\mathbb Z^3}\bigg[
  	\int_{0}^1 \Big|\Big( \varsigma mt^{-1} \Lambda_{\delta_1}\hat f_{m,\delta}+i[v\cdot k,\ \Lambda_{\delta_1}] \hat f_{m,\delta},\ \Lambda_{\delta_1}   \hat f_{m,\delta}\Big)_{L_v^2} \Big| dt\bigg]^{1/ 2}d\Sigma(k).\label{t1t2}
 \end{align}
 %\end{equation} 
 
 \medskip
 (a) We consider the case when $m\geq 2$.   Just repeating the proof of Lemma \ref{lemj3}, we have that for any $\eps>0$,
 \begin{eqnarray*}
	\begin{aligned}
 & \int_{\mathbb Z^3} \bigg[ 
 	\int_{0}^1\Big|\Big(\Lambda_{\delta_1}t^{\varsigma m}  (1+\delta\abs k^2)^{-1/2} \comi{k}^m\hat\Gamma \big({\hat f},~{\hat f}\big),\ \Lambda_{\delta_1}   \hat f_{m,\delta}\Big)_{L_v^2} \Big|dt\bigg]^{1/ 2}d\Sigma(k)\\
&\leq  \left( \eps +  C \eps^{-1}\varepsilon_0   \right)\int_{\mathbb Z^3} \left(\int_0^1    \norm{(a^{1/2})^w\hat f_{m,\delta}(t,k) }^2_{L_v^2}  d t\right)^{1/2}d \Sigma(k)\\
&\quad+C \eps^{-1}\varepsilon_0 \int_{\mathbb Z^3} \sup_{0<t\leq 1}     \norm{ \hat f_{m,\delta}(t,k )}_{L^2_v}    d \Sigma(k)+C \eps^{-1} \tilde C_0^{m-2}  \com{(m-1)!} ^{\frac{1+2s}{2s}}.
\end{aligned}	
\end{eqnarray*}
Following the same argument as for proving Lemma \ref{lemindc} gives
\begin{multline*}
 \int_{\mathbb Z^3}\Big(\int_{0}^{1} \norm{ \Lambda_{\delta_1} \hat f_{m,\delta}}_{L_v^2}^2 dt\Big)^{1/2}
 	d\Sigma(k) \\
 	+
\int_{\mathbb Z^3}\bigg[
  	\int_{0}^1 \Big|\Big( \varsigma mt^{-1} \Lambda_{\delta_1}\hat f_{m,\delta}+i[v\cdot k,\ \Lambda_{\delta_1}] \hat f_{m,\delta},\ \Lambda_{\delta_1}   \hat f_{m,\delta}\Big)_{L_v^2} \Big| dt\bigg]^{1/ 2}d\Sigma(k)\\
  	\leq   \varepsilon  \int_{\mathbb Z^3}\comi{k}^{\frac{s}{1+2s} }\Big(   \int_0^1 \norm{  \Lambda_{\delta_1}\hat f_{m,\delta}(t,k)}_{L^2_v}^2d t\Big)^{1/2}d \Sigma(k)+C_{ \eps}\tilde C_0^{m-2}  \com{(m-1)!} ^{\frac{1+2s}{2s}}, 	
\end{multline*}
for   any $ \eps>0.$ Now, we substitute these inequalities into \eqref{t1t2} to obtain that for any $\eps  >0,$
\begin{eqnarray*}
	\begin{aligned}
 &		\int_{\mathbb Z^3} \sup_{0<t\leq 1}\norm{\Lambda_{\delta_1} \hat f_{m,\delta}(t,k)}_{L_v^2} d\Sigma(k) +\int_{\mathbb Z^3}\Big(\int_{0}^{1} \norm{(a^{1/2})^w\Lambda_{\delta_1} \hat f_{m,\delta}}_{L_v^2}^2 dt\Big)^{1/2}
 	d\Sigma(k)    \\
  &  \leq   \left( \eps +  C \eps^{-1}\varepsilon_0   \right)\int_{\mathbb Z^3} \left(\int_0^1    \norm{(a^{1/2})^w\hat f_{m,\delta}(t,k) }^2_{L_v^2}  d t\right)^{1/2}d \Sigma(k)\\
&\quad+C \eps^{-1}\varepsilon_0 \int_{\mathbb Z^3} \sup_{0<t\leq 1}     \norm{ \hat f_{m,\delta}(t,k )}_{L^2_v}    d \Sigma(k)+C_\eps  \tilde C_0^{m-2}  \com{(m-1)!} ^{\frac{1+2s}{2s}}\\
 & \quad 	+ \varepsilon  \int_{\mathbb Z^3}\comi{k}^{\frac{s}{1+2s} }\Big(   \int_0^1 \norm{  \Lambda_{\delta_1}\hat f_{m,\delta}(t,k)}_{L^2_v}^2d t\Big)^{1/2}d \Sigma(k)\\
    &\quad +C	\int_{\mathbb Z^3}\Big(\int_{0}^{1} \Big|\big([\mathcal L ,\ \Lambda_{\delta_1}] \hat f_{m,\delta},\ \Lambda_{\delta_1}   \hat f_{m,\delta}\big)_{L_v^2} \Big| dt\Big)^{1/2}
 	d\Sigma(k).
 \end{aligned}
\end{eqnarray*}
This with the assertion (iii) in Proposition \ref{prpsub} give that  for any $\eps>0,$ 
\begin{eqnarray*}
	\begin{aligned}
 &		\int_{\mathbb Z^3} \sup_{0<t\leq 1}\norm{\Lambda_{\delta_1} \hat f_{m,\delta}(t,k)}_{L_v^2} d\Sigma(k) +\int_{\mathbb Z^3}\Big(\int_{0}^{1} \norm{(a^{1/2})^w\Lambda_{\delta_1} \hat f_{m,\delta}}_{L_v^2}^2 dt\Big)^{1/2}
 	d\Sigma(k)    \\
  &  \leq   C \inner{\eps+\eps^{-1}\varepsilon_0    }\int_{\mathbb Z^3} \sup_{0<t\leq 1}     \norm{ \hat f_{m,\delta}(t,k )}_{L^2_v}    d \Sigma(k) \\
&\quad+C\left( \eps +    \eps^{-1}\varepsilon_0  \right)\int_{\mathbb Z^3} \left(\int_0^1    \norm{(a^{1/2})^w\hat f_{m,\delta}(t,k) }^2_{L_v^2}  d t\right)^{1/2}d \Sigma(k)+C_ \eps  \tilde C_0^{m-2}  \com{(m-1)!} ^{\frac{1+2s}{2s}} \\
    &\quad + C	\int_{\mathbb Z^3}\Big(\int_{0}^{1} \Big|\big([\mathcal L ,\ \Lambda_{\delta_1}] \hat f_{m,\delta},\ \Lambda_{\delta_1}   \hat f_{m,\delta}\big)_{L_v^2} \Big| dt\Big)^{1/2}d\Sigma(k).
 \end{aligned}
\end{eqnarray*}
Thus, letting  $\delta_1\rightarrow 0$  first  and then choosing  $\eps>0$ suitably small, we obtain the assertion (ii) in 
 Proposition \ref{prpeng},   provided  that   $\eps_0$ is  small enough and that 
\begin{equation}\label{lim}
	\lim_{\delta_1\rightarrow 0}\int_{\mathbb Z^3}\Big(\int_{0}^{1} \Big|\big([\mathcal L ,\ \Lambda_{\delta_1}] \hat f_{m,\delta},\ \Lambda_{\delta_1}   \hat f_{m,\delta}\big)_{L_v^2} \Big| dt\Big)^{1/2}d\Sigma(k)=0.
\end{equation}
It remains to prove \eqref{lim}. 
To do so we write
\begin{eqnarray*}
	 [\mathcal L ,\ \Lambda_{\delta_1}]= [\mathcal L ,\ \Lambda_{\delta_1}-{\rm Id}]= \mathcal L  \inner{\Lambda_{\delta_1}-{\rm Id}} - \inner{\Lambda_{\delta_1}-{\rm Id} }\mathcal L 
\end{eqnarray*} 
with $\textrm{Id}$    the identity operator on $L_v^2$.  Moreover,  
applying Corollary \ref{corupp} as well as the assertion (iii) in Proposition \ref{estaa} gives
\begin{eqnarray*}
	\begin{aligned}
	 & \Big|\big(\mathcal L  \inner{\Lambda_{\delta_1}-{\rm Id}} \hat f_{m,\delta},\ \Lambda_{\delta_1}   \hat f_{m,\delta}\big)_{L_v^2} \Big|  
	  \leq C   \norm{(a^{1/2})^w \inner{\Lambda_{\delta_1}-{\rm Id}}  \hat f_{m,\delta}}_{L_v^2}  \norm{(a^{1/2})^w\Lambda_{\delta_1} \hat f_{m,\delta}}_{L_v^2} \\
	& \leq C   \norm{\inner{\Lambda_{\delta_1}-{\rm Id}} (a^{1/2})^w   \hat f_{m,\delta}}_{L_v^2}  \norm{(a^{1/2})^w \hat f_{m,\delta}}_{L_v^2}  + C\delta_1^{1/2}    \norm{(a^{1/2})^w \hat f_{m,\delta}}_{L_v^2}^2
	 	\end{aligned}
\end{eqnarray*}
and   
\begin{eqnarray*}
	\begin{aligned}
	  & \Big|\big(\inner{\Lambda_{\delta_1}-{\rm Id}}  \mathcal L  \hat f_{m,\delta},\ \Lambda_{\delta_1}   \hat f_{m,\delta}\big)_{L_v^2} \Big|  
	\leq C   \norm{(a^{1/2})^w \inner{\Lambda_{\delta_1}^*-{\rm Id}}  \Lambda_{\delta_1} \hat f_{m,\delta}}_{L_v^2}  \norm{(a^{1/2})^w\hat f_{m,\delta}}_{L_v^2} \\
	& \leq C   \norm{ \inner{\Lambda_{\delta_1}^*-{\rm Id}}  (a^{1/2})^w  \hat f_{m,\delta}}_{L_v^2}  \norm{(a^{1/2})^w\hat f_{m,\delta}}_{L_v^2}+ C\delta_1^{1/2}    \norm{(a^{1/2})^w \hat f_{m,\delta}}_{L_v^2}^2,    
	 	\end{aligned}
\end{eqnarray*}
 where  we have used \eqref{smacomest}.  Thus, we conclude
 %\begin{equation}
	\begin{align}
	& \Big(\int_{0}^{1} \Big|\big([\mathcal L ,\ \Lambda_{\delta_1}] \hat f_{m,\delta},\ \Lambda_{\delta_1}   \hat f_{m,\delta}\big)_{L_v^2} \Big| dt\Big)^{1/2} \notag \\
	&\leq C  	 \Big(\int_{0}^{1}  \norm{\inner{\Lambda_{\delta_1}-{\rm Id}}(a^{1/2})^w\hat f_{m,\delta}}_{L_v^2} ^2 dt\Big)^{1/4} \Big(\int_{0}^{1}  \norm{(a^{1/2})^w\hat f_{m,\delta}}_{L_v^2} ^2 dt\Big)^{1/4}\notag\\
	& \quad  + C  	 \Big(\int_{0}^{1}  \norm{\inner{\Lambda_{\delta_1}^*-{\rm Id}}(a^{1/2})^w\hat f_{m,\delta}}_{L_v^2} ^2 dt\Big)^{1/4} \Big(\int_{0}^{1}  \norm{(a^{1/2})^w\hat f_{m,\delta}}_{L_v^2} ^2 dt\Big)^{1/4}\notag\\
	& \quad  + C\delta_1^{1/4}  	  \Big(\int_{0}^{1}  \norm{(a^{1/2})^w\hat f_{m,\delta}}_{L_v^2} ^2 dt\Big)^{1/2}. 	\label{cl}
\end{align}
%\end{equation}
We claim  that
\begin{equation}\label{dct}
\begin{aligned}
	 \lim_{\delta_1\rightarrow 0} \int_{0}^{1}  \norm{\inner{\Lambda_{\delta_1}-{\rm Id}}(a^{1/2})^w\hat f_{m,\delta}}_{L_v^2} ^2 dt= \lim_{\delta_1\rightarrow 0} \int_{0}^{1}  \norm{\inner{\Lambda_{\delta_1}^*-{\rm Id}}(a^{1/2})^w\hat f_{m,\delta}}_{L_v^2} ^2 dt  =0.
	\end{aligned}
\end{equation}
In fact, by Fubini theorem, 
\begin{multline*}
\int_ {[0,1]\times \mathbb R^3} \big|\inner{\Lambda_{\delta_1}-{\rm Id}}(a^{1/2})^w\hat f_{m,\delta} \big|^2  dtdv=	\int_{0}^{1}  \norm{\inner{\Lambda_{\delta_1}-{\rm Id}}(a^{1/2})^w\hat f_{m,\delta}}_{L_v^2} ^2    dt \\
	  \leq   \int_{0}^{1}  \norm{(a^{1/2})^w\hat f_{m,\delta}}_{L_v^2} ^2 dt = \int_{[0,1]\times\mathbb R^3}  \big|(a^{1/2})^w\hat f_{m,\delta}\big|^2dt dv   \leq C_\delta
\end{multline*}
with $C_\delta$ depending on $\delta$ but independent of $\delta_1$, where in the last inequality we have used  \eqref{L2conv}. It then follows from the Dominated Convergence Theorem    
that 
\begin{eqnarray*}
	 \lim_{\delta_1\rightarrow 0} \int_{0}^{1}  \norm{\inner{\Lambda_{\delta_1}-{\rm Id}}(a^{1/2})^w\hat f_{m,\delta}}_{L_v^2} ^2 dt= \int_ {[0,1]\times \mathbb R^3} \lim_{\delta_1\rightarrow 0}\big|\inner{\Lambda_{\delta_1}-{\rm Id}}(a^{1/2})^w\hat f_{m,\delta} \big|^2  dtdv =0.
\end{eqnarray*}
It is likewise for $\inner{\Lambda_{\delta_1}^*-{\rm Id}}$. Thus we have proved \eqref{dct}.    Combining \eqref{cl} and \eqref{dct}   yields
\begin{eqnarray*}
	\lim_{\delta_1\rightarrow 0}\Big(\int_{0}^{1} \Big|\big([\mathcal L ,\ \Lambda_{\delta_1}] \hat f_{m,\delta},\ \Lambda_{\delta_1}   \hat f_{m,\delta}\big)_{L_v^2} \Big| dt\Big)^{1/2}=0,
\end{eqnarray*}
which along with the Dominated Convergence Theorem give the desired estimate \eqref{lim}  by observing that 
\begin{equation*}
	\int_{\mathbb Z^3} \Big(\int_{0}^{1} \Big|\big([\mathcal L , \Lambda_{\delta_1}] \hat f_{m,\delta}, \Lambda_{\delta_1}   \hat f_{m,\delta}\big)_{L_v^2} \Big| dt\Big)^{\frac{1}{2}} d\Sigma(k) \leq  C	\int_{\mathbb Z^3}\Big(\int_{0}^{1}  \norm{(a^{1/2})^w\hat f_{m,\delta}}_{L_v^2} ^2 dt\Big)^{\frac{1}{2}} d\Sigma(k)\leq C_\delta.
\end{equation*}
Therefore, the assertion (ii)  in Proposition \ref{prpeng} is proved. 

\medskip
(b) It remains to consider the case when $m=1$. In fact, repeating the proof of  the assertion (ii)  for $m=1$ in Lemma \ref{lemj3} gives that for any $\eps>0,$
 \begin{eqnarray*}
	\begin{aligned}
 & \int_{\mathbb Z^3} \bigg[ 
 	\int_{0}^1\Big|\Big(\Lambda_{\delta_1}t^{\varsigma m}  (1+\delta\abs k^2)^{-1/2} \comi{k}^m\hat\Gamma \big({\hat f},~{\hat f}\big),\ \Lambda_{\delta_1}   \hat f_{m,\delta}\Big)_{L_v^2} \Big|dt\bigg]^{1/ 2}d\Sigma(k)\\
&\leq  \left( \eps +  C \eps^{-1}\varepsilon_0   \right)\int_{\mathbb Z^3} \left(\int_0^1    \norm{(a^{1/2})^w\hat f_{m,\delta}(t,k) }^2_{L_v^2}  d t\right)^{1/2}d \Sigma(k)\\
&\quad+C \eps^{-1}\varepsilon_0 \int_{\mathbb Z^3} \sup_{0<t\leq 1}     \norm{ \hat f_{m,\delta}(t,k )}_{L^2_v}    d \Sigma(k).
\end{aligned}	
\end{eqnarray*}
Similarly for showing the assertion (ii) for $m=1$ in 
  Lemma \ref{lemindc} and by using the fact that $\norm{  \Lambda_{\delta_1}\hat f_{0,\delta}}_{L^2_v}\leq \norm{\hat f}_{L^2_v}^2$, we have
%\begin{eqnarray*}
\begin{align*}
& \int_{\mathbb Z^3}\Big(\int_{0}^{1} \norm{ \Lambda_{\delta_1} \hat f_{m,\delta}}_{L_v^2}^2 dt\Big)^{1/2}
 	d\Sigma(k) \\
 &\qquad\qquad	+
\int_{\mathbb Z^3}\bigg[
  	\int_{0}^1 \Big|\Big( \varsigma mt^{-1} \Lambda_{\delta_1}\hat f_{m,\delta}+i[v\cdot k,\ \Lambda_{\delta_1}] \hat f_{m,\delta},\ \Lambda_{\delta_1}   \hat f_{m,\delta}\Big)_{L_v^2} \Big| dt\bigg]^{1/ 2}d\Sigma(k)\\
  	&\leq   \varepsilon  \int_{\mathbb Z^3}\comi{k}^{\frac{s}{1+2s} }\Big(   \int_0^1 \norm{  \Lambda_{\delta_1}\hat f_{m,\delta}(t,k)}_{L^2_v}^2d t\Big)^{\frac{1}{2}}d \Sigma(k)+C_{ \eps}\int_{\mathbb Z^3}\comi{k}^{\frac{s}{1+2s} }\Big(   \int_0^1 \norm{\hat f(t,k)}_{L^2_v}^2d t\Big)^{\frac{1}{2}}d \Sigma(k)\\
  	&	\leq   \varepsilon  \int_{\mathbb Z^3}\comi{k}^{\frac{s}{1+2s} }\Big(   \int_0^1 \norm{  \Lambda_{\delta_1}\hat f_{m,\delta}(t,k)}_{L^2_v}^2d t\Big)^{1/2}d \Sigma(k)+C_{ \eps}\norm{f_0}_{  L^1_kL^2_v},
\end{align*}	
%\end{eqnarray*}
for   any $ \eps>0,$ where in the last line we have used the  assertion (i) in Proposition   \ref{prpsub}.   Now, we substitute these inequalities into \eqref{t1t2} to obtain that for any $\eps  >0,$
\begin{eqnarray*}
	\begin{aligned}
 &		\int_{\mathbb Z^3} \sup_{0<t\leq 1}\norm{\Lambda_{\delta_1} \hat f_{m,\delta}(t,k)}_{L_v^2} d\Sigma(k) +\int_{\mathbb Z^3}\Big(\int_{0}^{1} \norm{(a^{1/2})^w\Lambda_{\delta_1} \hat f_{m,\delta}}_{L_v^2}^2 dt\Big)^{1/2}
 	d\Sigma(k)    \\
  &  \leq   \left( \eps +  C \eps^{-1}\varepsilon_0   \right)\int_{\mathbb Z^3} \left(\int_0^1    \norm{(a^{1/2})^w\hat f_{m,\delta}(t,k) }^2_{L_v^2}  d t\right)^{1/2}d \Sigma(k)\\
&\quad+C \eps^{-1}\varepsilon_0 \int_{\mathbb Z^3} \sup_{0<t\leq 1}     \norm{ \hat f_{m,\delta}(t,k )}_{L^2_v}    d \Sigma(k)+C_\eps\norm{f_0}_{  L^1_kL^2_v}\\
 & \quad 	+ \varepsilon  \int_{\mathbb Z^3}\comi{k}^{\frac{s}{1+2s} }\Big(   \int_0^1 \norm{  \Lambda_{\delta_1}\hat f_{m,\delta}(t,k)}_{L^2_v}^2d t\Big)^{1/2}d \Sigma(k)\\
    &\quad +C	\int_{\mathbb Z^3}\Big(\int_{0}^{1} \Big|\big([\mathcal L ,\ \Lambda_{\delta_1}] \hat f_{m,\delta},\ \Lambda_{\delta_1}   \hat f_{m,\delta}\big)_{L_v^2} \Big| dt\Big)^{1/2}
 	d\Sigma(k).
 \end{aligned}
\end{eqnarray*}
The rest argument  to obtain the assertion (i) in Proposition \ref{prpeng} is the same as that  in the case of $m\geq 2$,  so we omit it for brevity.   The proof of Proposition \ref{prpeng} is thus completed. 
\end{proof}

\subsection{Completing  the proofs} 

 Now we are ready to prove the main results in this section. 
 
 \begin{proof} 	[Proof of Proposition \ref{prplow}] Let $f(t,x,v)$ be the global mild solution to \eqref{eqforper} obtained in Proposition \ref{prop.exis} satisfying  the estimate  \eqref{+1234}.  Then, by the assertions (i) in Propositions \ref{prpsub} and \ref{prpeng}, we have
\begin{equation}\label{m0}
 		 \int_{\mathbb Z^3} \comi k^{\frac{s}{1+2s}} 	\Big(\int_{0}^{1} \norm{ \hat f(t,k)}_{L_v^2} ^2dt\Big)^{1/2} d\Sigma(k)\leq  C \norm{f_0}_{  L^1_kL^2_v},
 	\end{equation}
 	and
 	\begin{eqnarray*}
\int_{\mathbb Z^3} \sup_{0<t\leq 1}\norm{ \hat f_{m,\delta}(t,k)}_{L_v^2} d\Sigma(k) +\int_{\mathbb Z^3}\Big(\int_{0}^{1} \norm{(a^{1/2})^w  \hat f_{m,\delta}}_{L_v^2}^2 dt\Big)^{\frac{1}{2}}
 	d\Sigma(k) \leq  C \norm{f_0}_{  L^1_kL^2_v},
 	\end{eqnarray*}
with  $m=1$. These together with the assertion (ii) in Proposition \ref{prpsub} yield that with $m=1$, 
 	\begin{multline*}
\int_{\mathbb Z^3} \sup_{0<t\leq 1} \norm{      \hat f_{m,\delta} (t,k)}_{L_v^2}   d\Sigma(k)+ \int_{\mathbb Z^3} \comi k^{\frac{s}{1+2s}} 	\Big(\int_{0}^{1}\norm{ \hat f_{m,\delta}(t,k)}_{L_v^2} ^2dt\Big)^{1/2} d\Sigma(k)\\
		+  \int_{\mathbb Z^3} \left(\int_0^1    \norm{(a^{1/2})^w\hat f_{m,\delta}(t,k) }^2_{L_v^2}  d t\right)^{1/2}d \Sigma(k) 
	 \leq  C\norm{f_0}_{  L^1_kL^2_v}.
  \end{multline*}
  Combining the above estimate  with \eqref{m0} and \eqref{+1234} and letting the parameter $\delta$ above tend to $0$,  we conclude  that  for any $m\in\mathbb Z_+$ with $0\leq m\leq 1$,  
\begin{multline}\label{esmt}
\int_{\mathbb Z^3}  \comi{k}^{m}\Big( \sup_{0<t\leq 1} t^{ \varsigma m }  \norm{ {\hat f (t,k )}}_{L^2_v}\Big) d \Sigma(k) +  \int_{\mathbb Z^3}  \comi{k}^{m+\frac{s}{1+2s}}\Big( \int_0^1 t^{ 2\varsigma m }  \norm{ \hat f (t,k )}_{L^2_v}^2\Big)^{1/2} d \Sigma(k)\\
 + \int_{\mathbb Z^3}  \comi{k}^{m}\Big( \int_0^1 t^{ 2\varsigma m }  \norm{(a^{1/2})^w {\hat f (t,k )}}_{L^2_v}^2\Big)^{1/2} d \Sigma(k) \leq 4^{-1}C_1\norm{f_0}_{L^1_kL^2_v},	  	
\end{multline}
provided that the constant  $C_1$ is chosen large enough.  
We then have proved Proposition \ref{prplow} for $0<t\leq 1$.  As mentioned at the beginning, the treatment for  $1\leq t< T$ is similar. By combining the $L_k^1L_v^2$-norm of $ f|_{t=1}$  established in \eqref{esmt}, we see that all the estimates in Subsections \ref{secsub}-\ref{subeng}  are also true with $0<t\leq 1$ therein replaced by $1\leq t<T$. It follows that for  $T>1$,
\begin{multline*}
	\int_{\mathbb Z^3}  \comi{k}^{m}\Big( \sup_{1\leq t< T}  \norm{ {\hat f (t,k )}}_{L^2_v}\Big) d \Sigma(k) +  \int_{\mathbb Z^3}  \comi{k}^{m+\frac{s}{1+2s}}\Big( \int_1^T  \norm{ \hat f (t,k )}_{L^2_v}^2\Big)^{1/2} d \Sigma(k)\\
 + \int_{\mathbb Z^3}  \comi{k}^{m}\Big( \int_1^T    \norm{(a^{1/2})^w {\hat f (t,k )}}_{L^2_v}^2\Big)^{1/2} d \Sigma(k) \leq 2^{-1}C_1\norm{f_0}_{  L^1_kL^2_v}.
\end{multline*}
This together with \eqref{esmt} give the desired result in Proposition \ref{prplow}.  The proof is therefore completed. 
 \end{proof}
 
\begin{proof}[Proof of Proposition \ref{prpind}]
	The estimate for $0<t\leq 1$ just follows from the assertions for $m\geq 2$  in Propositions \ref{prpsub} and \ref{prpeng}.     Following the similar argument and using the estimate for $f|_{t=1}$, we then obtain  the desired estimate for $1\leq t\leq T$. The proof of Proposition \ref{prpind} is thus completed.  
\end{proof}   

\section{Gevrey regularity in velocity variables} \label{sec4}

This section  is devoted to proving the Gevrey smoothness  in velocity variables. Compared with the result for space variables in the previous section,  the main difference arises from the treatment of  two commutators with one  between the regularization operator and the collision operator and the other one between the velocity derivative and the transport operator.

\begin{theorem}\label{thm3.2}
Let $f(t,x,v)$ be the global mild solution to \eqref{eqforper} obtained in Proposition \ref{prop.exis} and  all the results as in Theorem \ref{thm3.1} be satisfied. Recall that $\tilde C_0$ is the constant constructed in Proposition \ref{prpind}. Let $\eps_0>0$ be further small, then there is  a positive constant $\tilde C_* \geq (\tilde C_0+1)^2$,  depending only on $s,\gamma$  in \eqref{kern} and \eqref{angu},   such that  for any $0<T<\infty$  and multi-index $\beta$ with   $|\beta|\geq 0$, the solution $f(t,x,v)$ satisfies
\begin{equation}
\label{thm3.2r1}
\partial_v^\beta f\in L^1_kL^\infty_{\tau,T}L^2_v,\quad (a^{1/2})^w\partial_v^\beta f\in L^1_kL^2_{\tau, T}L^2_v
\end{equation}
for any small $\tau>0$, with the quantitative estimate 
	\begin{multline*}%\label{thm3.2r2}
	\int_{\mathbb Z^3}	 \sup_{0<t< T} \phi(t) ^{\varsigma \abs\beta }\norm{  \partial_v^\beta \hat  f(t,k)}_{ L^2_v} d \Sigma(k) \\
	+ \int_{\mathbb Z^3}	\left(\int_0^T  \phi(t) ^{2\varsigma \abs\beta }\norm{(a^{1/2})^w  \partial_v^\beta \hat  f(t,k)}_{ L^2_v}^2d t\right)^{1/2}d \Sigma(k) 
	\leq   \tilde C_*^{|\beta| +1} ( |\beta|!)   ^{\frac{1+2s}{2s}}.
	\end{multline*}
%	provided  the number $\eps_0$ in \eqref{+1234} is small enough. 
Here we recall  that $\phi(t)=\min\{t,  1\}$ and $\varsigma=\frac{1+2s}{2s}$. 
\end{theorem}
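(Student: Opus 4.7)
The plan is to prove the theorem by induction on $|\beta|$, in parallel with the strategy of Section \ref{sec3} but with velocity derivatives in place of spatial ones. The base case $|\beta|=0$ is the $m=0$ case of Theorem \ref{thm3.1}. For the inductive step, I fix $\beta$ with $|\beta|\geq 1$, assume the stated bound for all $\beta'$ with $|\beta'|\leq |\beta|-1$, and apply $\partial_v^\beta$ to the Fourier-transformed equation \eqref{eqfour}:
\begin{equation*}
\bigl(\partial_t+iv\cdot k-\mathcal L\bigr)\partial_v^\beta \hat f
= \partial_v^\beta\hat\Gamma(\hat f,\hat f) - \bigl[\partial_v^\beta,\,iv\cdot k\bigr]\hat f + \bigl[\partial_v^\beta,\,\mathcal L\bigr]\hat f.
\end{equation*}
The transport commutator $[\partial_v^\beta, iv\cdot k] = i\sum_{j:\beta_j\geq 1}\beta_j k_j\,\partial_v^{\beta-e_j}$ costs exactly one spatial factor $|k|$; this is why the Gevrey regularity in $x$ from Theorem \ref{thm3.1} is indispensable at every induction step, and why the velocity Gevrey index cannot improve upon the spatial one. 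The operator commutator $[\partial_v^\beta,\mathcal L]$ is lower order in the sense of Proposition \ref{estaa} and is controlled by $(a^{1/2})^w$ applied to $\partial_v^{\beta'}\hat f$ with $|\beta'|\leq|\beta|-1$.

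To render the formal energy estimate rigorous on the low-regularity solution I would introduce
\begin{equation*}
\hat f_{\beta,\delta}(t,k,v):=\phi(t)^{\varsigma|\beta|}\,(1-\delta\Delta_v)^{-1}\,\partial_v^\beta \hat f(t,k,v),\qquad 0<\delta\ll 1,
\end{equation*}
derive its evolution equation, test against $\hat f_{\beta,\delta}$ in $L^2_v$, integrate on $[0,T']$ with $T'\leq 1$, and take the $L^1_k$-norm. The coercivity \eqref{coer}--\eqref{rela} supplies the dissipative quantity $\|(a^{1/2})^w\hat f_{\beta,\delta}\|_{L^2_v}^2$ on the left. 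The right-hand side splits into four contributions: (a) the time-weight-derivative term $\varsigma|\beta|t^{-1}\|\hat f_{\beta,\delta}\|_{L^2_v}^2$, absorbed by a Young-type interpolation with the dissipative norm at the cost of a factor $|\beta|^\varsigma$ that fits the factorial budget, just as in Lemma \ref{lemindc}; (b) the transport-commutator term, bounded via the $m=1$ case of Theorem \ref{thm3.1} combined with the induction hypothesis on $\partial_v^{\beta-e_j}\hat f$; (c) the $[\partial_v^\beta,\mathcal L]\hat f$ term, controlled by Lemma \ref{lemcomest} and Corollary \ref{corupp}; and (d) the nonlinear term.

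The nonlinear term is the combinatorial heart of the argument. I would expand via the Leibniz rule,
\begin{equation*}
\partial_v^\beta \hat\Gamma(\hat f,\hat f)(k)
= \sum_{\beta'\leq \beta}\binom{\beta}{\beta'}\int_{\mathbb Z^3_\ell}\hat\Gamma\bigl(\partial_v^{\beta-\beta'}\hat f(k-\ell),\,\partial_v^{\beta'}\hat f(\ell)\bigr)\,d\Sigma(\ell),
\end{equation*}
and treat the bulk contributions $1\leq|\beta'|\leq|\beta|-1$ by the trilinear inequality \eqref{orup}, Lemmas \ref{lemtrip}--\ref{lemtl} and a combinatorial bound in the spirit of \eqref{tecalp1}. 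For the endpoint terms $\beta'=\beta$ and $\beta'=0$ I apply the device sketched in the introduction by rewriting
\begin{equation*}
(1-\delta\Delta_v)^{-1}\hat\Gamma\bigl(\hat f,\,\partial_v^\beta \hat f\bigr)
=(1-\delta\Delta_v)^{-1}\hat\Gamma\bigl(\hat f,\,(1-\delta\Delta_v)(1-\delta\Delta_v)^{-1}\partial_v^\beta \hat f\bigr),
\end{equation*}
then expanding $(1-\delta\Delta_v)$ by the Leibniz rule on the bilinear $\hat\Gamma$ and absorbing the resulting extra $\delta$-factor against the regularization; this circumvents any pseudodifferential analysis of $[(1-\delta\Delta_v)^{-1},\hat\Gamma]$. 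The smallness $\varepsilon_0\ll 1$ is then used to absorb the remaining self-term. The main obstacle is the bookkeeping required to close the induction with a clean Gevrey bound $\tilde C_*^{|\beta|+1}(|\beta|!)^\varsigma$: the binomial coefficients from Leibniz, the $|\beta|^\varsigma$ loss from the interpolation, and the one-derivative spatial loss from the transport commutator must combine consistently with the factorial growth, in the same spirit as \eqref{tecalp1}. Once the estimate on $(0,1]$ is obtained, passing $\delta\to 0$ via Fatou's lemma and propagating from $t=1$ to $t<T$ by the same energy scheme without time weight completes the proof.
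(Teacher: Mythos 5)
Your overall strategy — induction on $|\beta|$, a time weight $\phi(t)^{\varsigma|\beta|}$, a velocity regularization, the Leibniz device for the commutator with $(1-\delta\Delta_v)^{-1}$, and absorption of the self-term by smallness of $\varepsilon_0$ — is the paper's strategy. But there are two genuine gaps. The first is structural: your induction cannot be launched from the base case $|\beta|=0$ alone. The very Leibniz trick you invoke, expanding $(1-\delta\Delta_v)^{-1}\hat\Gamma(\hat f,(1-\delta\Delta_v)(1-\delta\Delta_v)^{-1}\partial_v^\beta\hat f)$, produces correction terms of the form $\delta\,\hat{\mathcal T}(\partial_v^{\beta'}\hat f,\ \partial_v^{\beta''}(1-\delta\Delta_v)^{-1}\partial_v^\beta\hat f,\ \cdot)$ with $|\beta'|$ up to $2$ in the first slot, and the trilinear estimate measures that slot in $L^\infty_T L^2_v$; so closing the step at any $|\beta|$ requires a priori control of $\sup_t\phi(t)^{\varsigma|\beta'|}\|\partial_v^{\beta'}\hat f\|_{L^2_v}$ for all $|\beta'|\le 2$, which is not available at the steps $|\beta|=1,2$ if you only know the $|\beta|=0$ case. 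This is precisely why the paper splits the proof into Proposition \ref{lem} ($|\beta|\le 2$, proved by an entirely separate route: a linearized Cauchy problem \eqref{lincau}, its $H^2_v$-smoothing in Proposition \ref{lemlin}, and a convergent iteration $f^n\to f$) and the genuine induction of Proposition \ref{progevofv} for $|\beta|\ge 3$. Your proposal contains no substitute for this low-order step.

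The second gap is your Leibniz expansion of the nonlinear term. The operator $\Gamma(g,h)=\mathcal T(g,h,\mu^{1/2})$ is \emph{not} translation invariant in $v$ in the two-slot sense because of the weight $\mu^{1/2}(v_*)$ inside the collision integral; the correct formula distributes derivatives over three slots, $\partial_{v}^{\beta}\hat\Gamma(\hat f,\hat f)=\sum\binom{\beta}{\beta'}\binom{\beta'}{\beta''}\hat{\mathcal T}(\partial^{\beta-\beta'}\hat f,\partial^{\beta'-\beta''}\hat f,\partial^{\beta''}\mu^{1/2})$. The omitted terms carry factors $\|\partial_v^{\beta''}\mu^{1/2}\|\lesssim 8^{|\beta''|}|\beta''|!$ whose summation against the binomial coefficients is a nontrivial part of keeping the final bound at $\tilde C_*^{|\beta|+1}(|\beta|!)^{\varsigma}$; the paper handles this explicitly in the estimate of the term $I$ in Lemma \ref{lemke}. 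Relatedly, your treatment of the transport commutator is too optimistic: paying ``one spatial factor $|k|$'' against the $m=1$ case of Theorem \ref{thm3.1} does not close, since the inductive hypothesis controls $\partial_v^{\beta-e_j}\hat f$ with no $\comi{k}$ weight; the paper instead interpolates $\comi{k}\,|\eta|^{|\beta|-1}\lesssim\comi{k}^{|\beta|}+|\eta|^{|\beta|}$, which is why the full $m=|\beta|$ spatial Gevrey bound of Theorem \ref{thm3.1} enters at every step and why the constant must satisfy $\tilde C_*\ge(\tilde C_0+1)^2$.
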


We will prove the above result by using induction on  the  order $\abs\beta$ of velocity derivatives.  Precisely, Theorem \ref{thm3.2} follows from the  following two propositions.

\begin{proposition}[$H_v^2$-regularity]\label{lem}
Under all the assumptions of Theorem \ref{thm3.2}, there is a positive constant $ C_2\leq 1$,  depending only on $s$ and $\gamma$,   such that   for any $0<T<\infty$ and  for any $\beta\in\mathbb Z_+^3$ with $0\leq |\beta|\leq 2$, we have \eqref{thm3.2r1} with any small $\tau>0$ as well as the estimate
	 \begin{multline*} 
	 %\begin{split}
	 \int_{\mathbb Z^3} \sup_{0<t< T} \phi(t) ^{\varsigma \abs\beta }\norm{  \partial_v^\beta \hat  f(t,k)}_{ L^2_v} d \Sigma(k)\\
	 + \int_{\mathbb Z^3}	\left(\int_0^T  \phi(t) ^{2\varsigma \abs\beta }\norm{(a^{1/2})^w  \partial_v^\beta \hat  f(t,k)}_{ L^2_v}^2d t\right)^{\frac12}d \Sigma(k) 
	 \leq   C_2.
	 %\end{split}
	 \end{multline*} 
\end{proposition}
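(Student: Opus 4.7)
The plan is to mimic the Fourier-space energy argument of Section \ref{sec3}, but replacing the multiplier $\comi k^m$ by velocity derivatives $\partial_v^\beta$ and arguing by induction on $|\beta|$ from $0$ up to $2$. The base case $|\beta|=0$ is contained in \eqref{+1234} (with the smallness $\norm{f_0}_{L^1_kL^2_v}\leq \eps_0$ ensuring that the resulting constant is below $1$). For the inductive step, I would differentiate the Fourier-transformed equation \eqref{eqfour} to get
\begin{equation*}
(\partial_t + iv\cdot k - \mathcal L)\partial_v^\beta \hat f = \partial_v^\beta \hat\Gamma(\hat f,\hat f) + [\partial_v^\beta,\mathcal L]\hat f - i\sum_{j:\beta_j\geq 1}\beta_j k_j\,\partial_v^{\beta-e_j}\hat f,
\end{equation*}
weight by $\phi(t)^{\varsigma|\beta|}$, regularize in velocity by $\Xi_{\delta_2}:=(1+\delta_2|v|^2)^{-N}(1-\delta_2\Delta_v)^{-1}$ with $N$ large enough (the $v$-decay factor is needed to absorb the weight $\comi v^{\gamma+2s}$ appearing in $\tilde a$), take the $L^2_v$-inner product with $\Xi_{\delta_2}\partial_v^\beta\hat f$, integrate in $t$, take the $L^1_k$-norm, and pass to the limit $\delta_2\to 0$ via Fatou. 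This mirrors the role of $\Lambda_{\delta_1}$ in Section \ref{sec3}.

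Four contributions must be controlled. The transport commutator $i\beta_jk_j\partial_v^{\beta-e_j}\hat f$ costs one factor of $\comi k$, which is paid for by the $\comi k^{|\beta|}$-bound provided by Theorem \ref{thm3.1} combined with the previous induction step on $\partial_v^{\beta-e_j}\hat f$; this is precisely where the spatial Gevrey estimate enters and forces the same Gevrey index $\varsigma=(1+2s)/(2s)$. The singular time factor $\varsigma|\beta|\phi^{-1}$ produced by the weight is handled as in Lemma \ref{lemindc}: Young's inequality trades $\phi^{-1}$ against the subelliptic gain $\comi k^{2s/(1+2s)}\lesssim \tilde a(v,k)^{1/(1+2s)}$ hidden in the dissipation, so that $\phi^{-1}\norm{\Xi_{\delta_2}\partial_v^\beta\hat f}_{L^2_v}^2$ is absorbed into $\norm{(a^{1/2})^w\Xi_{\delta_2}\partial_v^\beta\hat f}_{L^2_v}^2$ plus a lower-order term bounded by the induction hypothesis. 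The commutator $[\partial_v^\beta,\mathcal L]\hat f$ is handled via the symbolic calculus of Proposition \ref{estaa} and the composition formulas \eqref{symbc}, \eqref{comsym}: with $\mathcal L=-a^w-\mathcal R$, $[\partial_v^\beta,a^w]$ is a Weyl quantization of a symbol in $S(\tilde a,|dv|^2+|d\eta|^2)$, so Lemma \ref{lemcomest} controls it by $\norm{(a^{1/2})^w\partial_v^{\beta'}\hat f}_{L^2_v}$ with $|\beta'|\leq|\beta|-1$. Finally, the nonlinear term $\partial_v^\beta\hat\Gamma(\hat f,\hat f)$ is distributed by Leibniz and estimated via \eqref{orup}; to move the regularization $\Xi_{\delta_2}$ past $\hat\Gamma$ without invoking any symbolic calculus on the collision operator, I would exploit the authors' identity
\begin{equation*}
\Xi_{\delta_2}\hat\Gamma(\hat f,\,\partial_v^\beta\hat f)=\Xi_{\delta_2}\hat\Gamma\bigl(\hat f,\,(1-\delta_2\Delta_v)(1-\delta_2\Delta_v)^{-1}\partial_v^\beta\hat f\bigr),
\end{equation*}
so that Leibniz on the second slot gives back $\Xi_{\delta_2}\partial_v^\beta\hat f$ plus strictly lower-order commutators, and the resulting top-order piece is absorbable thanks to the smallness $\norm{\hat f}_{L^1_kL^\infty_TL^2_v}\leq C_0\eps_0$.

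The step I expect to be the main obstacle is this last one: organizing the $\Xi_{\delta_2}$-$\hat\Gamma$ interaction so that nothing is lost at top order in $\beta$, while simultaneously controlling the commutator $[\mathcal L,\Xi_{\delta_2}]$ in the limit $\delta_2\to 0$, just as \eqref{smacomest} and the dominated convergence argument culminating in \eqref{lim} did for $\Lambda_{\delta_1}$. Once these are assembled, summing the four contributions with $\eps$ and $\eps_0$ small and invoking a velocity-variable analogue of the subelliptic estimate of Proposition \ref{prpsub} (to absorb the $\phi^{-1}$ term against the dissipation $(a^{1/2})^w$) closes the induction with a constant $C_2\leq 1$ after a final shrinking of $\eps_0$. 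Propagation from $t=1$ to general $T>1$ contains no singular time weight and is strictly easier, so the full estimate on $[0,T]$ follows by gluing as at the end of Section \ref{sec3}.
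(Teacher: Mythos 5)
Your architecture differs genuinely from the paper's: you run a direct induction on $|\beta|$ for the nonlinear equation itself (mirroring Proposition \ref{progevofv}), whereas the paper first proves an $H^2_v$-smoothing theorem for the \emph{linear} problem $(\partial_t+v\cdot\nabla_x-\mathcal L)h=\Gamma(g,h)$ with a given $g$ already enjoying the $H^2_v$ bounds (Propositions \ref{lemexi}--\ref{lemlin}), and then obtains Proposition \ref{lem} by a Picard-type iteration $f^0\equiv 0$, $\partial_tf^n+v\cdot\partial_xf^n-\mathcal Lf^{n+1}=\Gamma(f^{n-1},f^n)$, passing to the limit by a contraction estimate and uniqueness. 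The linearization is precisely what replaces the missing induction hypothesis \eqref{asonv} for $|\beta|\le 2$: all first-slot derivatives $\mathcal T(\partial_v^{\beta}\hat g,\cdot,\cdot)$ land on the given $g=f^{n-1}$ and are bounded by assumption, so only second-slot top-order terms need absorption. In your direct scheme the term $\hat{\mathcal T}(\partial_v^\beta\hat f,\hat f,\mu^{1/2})$ carries the top-order derivative on the first slot of the solution itself; you correctly identify this as "the main obstacle" but do not resolve it, and resolving it requires redistributing $(1-\delta_2\Delta_v)$ through the first slot by translation invariance and then absorbing the resulting top-order sup-norm, which is delicate (though not obviously impossible) at the level of a priori justification.

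The concrete gap is in your treatment of the singular factor $\phi(t)^{-1}$. You propose to trade it, as in Lemma \ref{lemindc}, against a gain $\comi{k}^{2s/(1+2s)}$ coming from a "velocity-variable analogue of the subelliptic estimate of Proposition \ref{prpsub}". This fails for two reasons. First, no such subelliptic estimate for $\partial_v^\beta\hat f$ is available; proving one would require redoing the entire Wick-quantization argument of Subsection \ref{secsub} for the velocity-differentiated equation. Second, and fatally, even granting the gain, the Young trade $t^{-1}\le\vartheta\comi{k}^{2s/(1+2s)}+\vartheta^{-(1+s)/s}\comi{k}^{-2+2s/(1+2s)}$ with $\vartheta\sim t$ leaves a remainder of the form $\comi{k}^{-2}t^{2\varsigma(|\beta|-1)}\norm{\partial_v^\beta\hat f}_{L^2_v}^2$: it loses two powers of $\comi{k}$ but retains \emph{all} $|\beta|$ velocity derivatives with a time weight that is one factor $t^{\varsigma}$ short of the rate $t^{-\varsigma|\beta|}$ at which $\partial_v^\beta\hat f$ actually blows up as $t\to0$. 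In the spatial case this step closes only because the derivative order and the power of $\comi{k}$ coincide, so $\comi{k}^{-1}$ genuinely lowers the order; for velocity derivatives it does not. The correct device, used in Lemma \ref{lemla} and in Step 3 of the proof of Lemma \ref{le2}, is the interpolation $\norm{u}_{L^2_v}^2\le\tau\norm{\comi{D_v}^{s/(1+2s)}u}_{L^2_v}^2+\tau^{-(1+s)/s}\norm{\comi{D_v}^{s/(1+2s)-1}u}_{L^2_v}^2$ in the velocity Fourier variable: the positive-order piece is dominated by the dissipation $(a^{1/2})^w$ directly (no hypoellipticity in $k$ needed), and the negative-order piece genuinely removes one velocity derivative, reducing to order $|\beta|-1$ with the matching weight $t^{\varsigma(|\beta|-1)}$, which is what the induction hypothesis controls. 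Without replacing your mechanism by this one, the induction does not close.
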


\begin{proposition}[Inductive Gevrey regularity]\label{progevofv} 
Suppose that all the assumptions of Theorem \ref{thm3.2} hold. Let $C_2\leq 1$ and $\tilde C_0$ be constants constructed in Proposition \ref{lem}  and  Proposition \ref{prpind}, respectively. Fix an arbitrary integer $m\geq 3$. Then, there is a positive constant  $ \tilde C_* \geq   (1+\tilde C_0)^2$,  depending only on $s$ and $\gamma$  but independent of $m,$ such that if for   any $\beta\in\mathbb Z_+^3$ with $0\leq \abs\beta\leq m-1 $ \eqref{thm3.2r1} holds for any small $\tau>0$ with the estimate
  \begin{eqnarray}\label{asonv}
 \begin{aligned}
	  &\int_{\mathbb Z^3}	 \sup_{0<t<T} \phi(t) ^{\varsigma \abs\beta }\norm{  \partial_v^\beta \hat  f(t)}_{ L^2_v} d \Sigma(k) +\int_{\mathbb Z^3}	 \inner{\int_0^T \phi(t) ^{2\varsigma \abs\beta } \norm{ (a^{1/2})^w\partial_v^\beta \hat f(t)}_{L^2_v}^2 d t}^{1/ 2} d \Sigma(k)\\
	&  \leq  
	\left\{
	\begin{aligned}  
	&C_2, %\tilde C_*,
	\quad \textrm{ if } \abs\beta\leq 2\\
	&\tilde C_*^{  \abs\beta-1}\com{ ( \abs\beta-1) !}^{\frac{1+2s}{2s}},\quad \textrm{ if } \abs\beta\geq 3,
	  \end{aligned}
	  \right.
	 \end{aligned}
	\end{eqnarray}
then we have the same thing for any  $\beta\in\mathbb Z_+^3$ with  $\abs\beta=m$.    
\end{proposition}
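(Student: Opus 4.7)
The plan is to transplant the scheme of Section~\ref{sec3} into the velocity direction, working with the regularized iterate
\[
\hat f_{\beta,\delta}(t,k,v):=\phi(t)^{\varsigma|\beta|}(1-\delta\Delta_v)^{-1}\partial_v^\beta\hat f(t,k,v),\qquad 0<\delta\ll 1,\quad|\beta|=m.
\]
I would apply $\partial_v^\beta$ to the Fourier equation \eqref{eqfour}, compose with $(1-\delta\Delta_v)^{-1}$, and multiply by $\phi^{\varsigma m}$ to derive an evolution equation of the same shape as \eqref{equreg}; besides the transport term $iv\cdot k\,\hat f_{\beta,\delta}$ and the rearranged dissipative part $\mathcal L\hat f_{\beta,\delta}$, it carries four sources: (a) the time-weight contribution $\varsigma m\,t^{-1}\hat f_{\beta,\delta}\mathbf{1}_{t\le 1}$; (b) the symbol commutator $i[v\cdot k,(1-\delta\Delta_v)^{-1}]\phi^{\varsigma m}\partial_v^\beta\hat f$; (c) the transport--velocity-derivative commutator $-i\sum_{j=1}^3\beta_jk_j(1-\delta\Delta_v)^{-1}\phi^{\varsigma m}\partial_v^{\beta-e_j}\hat f$, where $e_j\in\mathbb Z_+^3$ is the $j$-th unit vector; and (d) the nonlinear piece $\phi^{\varsigma m}(1-\delta\Delta_v)^{-1}\partial_v^\beta\hat\Gamma(\hat f,\hat f)$. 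Two subordinate commutators $[(1-\delta\Delta_v)^{-1},\mathcal L]\partial_v^\beta\hat f$ and $(1-\delta\Delta_v)^{-1}[\mathcal L,\partial_v^\beta]\hat f$ from rearranging the linear part are absorbed into the dissipation via Proposition~\ref{estaa}, Corollary~\ref{corupp}, and Lemma~\ref{lemtrip}, since the $v$-derivatives of $\mathcal L$ produce $\Gamma$-type pieces with polynomial-in-$v$ weights subordinate to $\tilde a$.

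The $L^2_v$ energy estimate at each $k$, followed by the $L^1_k$ integration as in Subsection~\ref{subeng}, produces the coercivity $\|(a^{1/2})^w\hat f_{\beta,\delta}\|_{L^2_v}^2$ via \eqref{rela}. Sources (a) and (b) are treated verbatim as in Lemma~\ref{lemindc}: Young's inequality with exponent $\varsigma$ trades $mt^{-1}$ for a small fraction of the dissipation plus a contribution at $v$-order $m-1$ controlled by \eqref{asonv}. Source (d) is handled through the Leibniz-type expansion
\[
\partial_v^\beta\hat\Gamma(\hat f,\hat f)=\sum_{\beta_0+\beta_1+\beta_2=\beta}C^{\beta_0}_{\beta_1,\beta_2}\,\hat\Gamma_{\beta_0}(\partial_v^{\beta_1}\hat f,\partial_v^{\beta_2}\hat f),
\]
where the modified $\hat\Gamma_{\beta_0}$ still satisfies \eqref{orup} after the polynomial weights from differentiated $\sqrt\mu$-factors are absorbed into $\tilde a$. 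On each top-order summand I apply the identity
\[
(1-\delta\Delta_v)^{-1}\hat\Gamma(g,\partial_v^{\beta_2}h)=(1-\delta\Delta_v)^{-1}\hat\Gamma\bigl(g,(1-\delta\Delta_v)(1-\delta\Delta_v)^{-1}\partial_v^{\beta_2}h\bigr)
\]
and expand with the classical Leibniz rule for $(1-\delta\Delta_v)$, so that no pseudo-differential calculus is performed on the collision integral; the trilinear bound \eqref{orup}, Lemma~\ref{lemtl}, and a combinatorial sum patterned on \eqref{tecalp1} then reduce each summand to a product of $v$-iterates controlled by \eqref{asonv}, yielding the expected factorial $[(|\beta|-1)!]^{\varsigma}$.

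The principal obstacle is source (c), which genuinely couples the $x$- and $v$-directions and cannot be controlled by the $v$-only hypothesis \eqref{asonv} alone. My plan is to split $\phi^{\varsigma m}|k|=(\phi^\varsigma|k|)\cdot\phi^{\varsigma(m-1)}$, use \eqref{asonv} at order $m-1$ to bound $\phi^{\varsigma(m-1)}\|\partial_v^{\beta-e_j}\hat f\|_{L^2_v}$ in $L^\infty_TL^1_k$, and invoke the spatial Gevrey estimate of Theorem~\ref{thm3.1} at $x$-order $1$ to supply the $L^1_k$ summability with the $|k|$-weight, at the cost of a factor $\tilde C_0$; the remaining $\phi^\varsigma$ is then interpolated by Young's inequality against the dissipation $\|(a^{1/2})^w\hat f_{\beta,\delta}\|_{L^2_v}$ on the left-hand side, mirroring the trick used for $mt^{-1}$ in Lemma~\ref{lemindc}. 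Choosing $\tilde C_*\ge(1+\tilde C_0)^2$ large enough absorbs the accumulated constants and closes the induction at order $m$. Finally, the passage $\delta\to 0$ is carried out by Fatou's lemma exactly as in Subsection~\ref{subeng}, removing the regularization and yielding both \eqref{thm3.2r1} and the quantitative Gevrey bound for all $|\beta|=m$.
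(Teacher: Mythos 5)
Your architecture matches the paper's: the regularization $(1-\delta\Delta_v)^{-1}$ composed with $\Lambda_{\delta_1}$-type smoothing, the Leibniz expansion of $\partial_v^\beta\hat\Gamma$ into $\mathcal T$-terms with differentiated $\sqrt\mu$-weights, the key identity of inserting $(1-\delta\Delta_v)(1-\delta\Delta_v)^{-1}$ inside the top-order collision term (this is exactly \eqref{leib}), the interpolation in $D_v$ to trade $mt^{-1}$ for dissipation plus an order-$(m-1)$ term (Lemma \ref{lemla}), and the final $\delta\to0$ passage by Fatou. (One cosmetic difference: the paper works with the one-directional iterates $\partial_{v_1}^m$, $(1-\delta\partial_{v_1}^2)^{-1}$ and recovers general $\beta$ at the end via $\norm{\partial_v^\beta\hat f}\le C\sum_j\norm{\partial_{v_j}^m\hat f}$, which keeps the Leibniz bookkeeping one-dimensional.)

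However, your treatment of source (c), the commutator $[iv\cdot k,\partial_v^\beta]$, has a genuine gap. You propose to bound $\comi{k}\,\phi^{\varsigma m}\norm{\partial_v^{\beta-e_j}\hat f}_{L^2_v}$ by using \eqref{asonv} at order $m-1$ for the factor $\phi^{\varsigma(m-1)}\norm{\partial_v^{\beta-e_j}\hat f}_{L^2_v}$ and Theorem \ref{thm3.1} at $x$-order $1$ "to supply the $L^1_k$ summability with the $|k|$-weight." But this term is linear, not bilinear: the weight $\comi{k}$ and the velocity derivative sit on the \emph{same} function $\hat f(t,k,\cdot)$ at the same $k$, so there is no convolution or product structure that would let you pay for the $\comi k$-weight with one estimate and for the $v$-derivatives with another. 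What is actually needed is a mixed bound on $\int\comi{k}\,\phi^{\varsigma m}\norm{\partial_v^{\beta-e_j}\hat f}_{L^2_v}\,d\Sigma(k)$, which neither hypothesis provides directly. The paper's resolution (see \eqref{comest} in Lemma \ref{lemcom}) is a pointwise Young inequality in frequency, $\comi{k}\,|\eta|^{m-1}\le\comi{k}^m+|\eta|^m$, which splits the commutator into a pure-$x$ piece $m\,t^{\varsigma m}\comi{k}^m\norm{\hat f}_{L^2_v}$ — controlled by the \emph{full order-$m$} spatial Gevrey estimate of Proposition \ref{prpind}, at the cost $\tilde C_0^{m-2}[(m-1)!]^{\varsigma}$, not a single factor $\tilde C_0$ as you claim — plus a pure-$v$ piece $m\norm{\hat F_{m,\delta}}_{L^2_v}$ that is absorbed exactly like the $mt^{-1}$ term. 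This is also the reason the constant must satisfy $\tilde C_*\ge(1+\tilde C_0)^2$: the spatial constant enters at geometric rate $\tilde C_0^{m}$, not additively. Your argument for (c) needs to be replaced by this frequency-splitting step; as written it does not close.
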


For convenience of presentation, we first focus on the proof of Proposition \ref{progevofv} in  the following Subsection \ref{secvelh}. The proof of Proposition \ref{lem}  for $H^2_v$ regularity will be postponed to Subsection \ref{prph2} since we will treat it in a similar way with the more straightforward  argument.

\subsection{Inductive Gevrey regularity} \label{secvelh}
We are going to first give the proof of Proposition \ref{progevofv}. As in the previous section for treating smoothness in space variables, we only consider the case of $0<t\leq 1$ and perform estimates for  the regularization     
	 \begin{equation}\label{Fmd}
		   \hat F_{m,\delta}(t,k,v)=t^{m\varsigma}  (1-\delta\partial_{v_1}^2)^{-1}  \partial_{v_1} ^m\hat f(t,k,v), 	\quad 0<\delta\ll 1, \ m\geq 1.
\end{equation}
Here we are first concerned with the velocity derivative in the first component $v_1$ and the same thing can be done for $v_2$ and $v_3$ as well.   
Then,  in view of \eqref{eqfour} and recalling that $\Lambda_{\delta_1}$ is defined by \eqref{regoper}, 
we have 
\begin{multline*}
		\big(\partial_t+iv\cdot k-\mathcal L \big) \Lambda_{\delta_1} \hat F_{m,\delta}= \Lambda_{\delta_1}t^{m\varsigma}  (1-\delta\partial_{v_1}^2)^{-1}  \partial_{v_1} ^m\hat\Gamma \big({\hat f},~{\hat f}\big) \\
		+
		\big[\Lambda_{\delta_1}t^{m\varsigma}  (1-\delta\partial_{v_1}^2)^{-1}  \partial_{v_1} ^m,\ \mathcal L \big ]  \hat f 
		+\varsigma mt^{-1} \Lambda_{\delta_1} \hat F_{m,\delta}+ \big[iv\cdot k,\ \Lambda_{\delta_1}t^{m\varsigma}  (1-\delta\partial_{v_1}^2)^{-1}  \partial_{v_1} ^m\big] \hat f.
			\end{multline*}
Note that similar results as in  Lemma \ref{verf} also hold for $ \hat F_{m,\delta}$.  This enables us to take the $L_v^2$ inner product of the above equation with $\hat F_{m,\delta}$  and then integrate the resulting result over $[0,t]$ for any $0<t\leq 1$, so as to obtain by following the argument in the previous Subsection \ref{subeng} that   
%\begin{equation}
\begin{align}
   & \int_{Z^3 } \sup_{0<t\leq 1}  \norm{\Lambda_{\delta_1}  \hat F_{m,\delta}(t,k)}_{ L_v^2}d \Sigma(k)   + \int_{\mathbb Z^3}\left(\int_0^1 \norm{(a^{1/2})^w \Lambda_{\delta_1} \hat F_{m,\delta}(t)}_{L^2_v}^2d t\right)^{1/2}d \Sigma(k) \notag \\
   & \leq C\int_{\mathbb Z^3}\left(\int_0^1   \big|\big(\Lambda_{\delta_1}t^{m\varsigma}  (1-\delta\partial_{v_1}^2)^{-1}  \partial_{v_1} ^m\hat\Gamma \big({\hat f},~{\hat f}\big)  ,\   \Lambda_{\delta_1} \hat F_{m,\delta}\big)_{L^2_v}\big| d t\right)^{1/2}d \Sigma(k)\notag \\
   &\quad+ C\int_{\mathbb Z^3}\left(\int_0^1  \big|\big(\big[\Lambda_{\delta_1}t^{m\varsigma}  (1-\delta\partial_{v_1}^2)^{-1}  \partial_{v_1} ^m, \ \mathcal L\big] \hat f,\   \Lambda_{\delta_1} \hat F_{m,\delta}\big)_{L^2_v}\big| d t\right)^{1/2}d \Sigma(k)\notag \\
   &\quad + C \int_{\mathbb Z^3}\left( \int_0^1 mt^{-1} \norm{ \Lambda_{\delta_1} \hat F_{m,\delta}(t)}_{L^2_v}^2d t\right)^{1/2}d \Sigma(k)\notag \\
   &\quad+ C\int_{\mathbb Z^3}\left(\int_0^1   \big|\big(\big[iv\cdot k,\ \Lambda_{\delta_1}t^{m\varsigma}  (1-\delta\partial_{v_1}^2)^{-1}  \partial_{v_1} ^m\big] \hat f,\   \Lambda_{\delta_1} \hat F_{m,\delta}\big)_{L^2_v}\big| d t\right)^{1/2}d \Sigma(k). 
   \label{tildereterm}
\end{align}
%\end{equation}
We proceed  through series of  lemmas as below  to estimate those  terms on the right.

\begin{lemma}\label{lemke}
Let all the assumptions of Proposition \ref{progevofv} hold. Then, assuming the induction assumption specified in  Proposition \ref{progevofv}, it holds that for any $\eps>0,$
\begin{multline*}
\int_{\mathbb Z^3}\left(\int_0^1   \big|\big(\Lambda_{\delta_1}t^{m\varsigma}  (1-\delta\partial_{v_1}^2)^{-1}  \partial_{v_1} ^m\hat\Gamma \big({\hat f},~{\hat f}\big)  ,\   \Lambda_{\delta_1} \hat F_{m,\delta}\big)_{L^2_v}\big| d t\right)^{1/2}d \Sigma(k)\\
    		\leq \inner{\eps+C \eps^{-1} \eps_0}\int_{\mathbb Z^3} \Big(\int_0^1    \norm{(a^{1/2})^w\hat F_{m,\delta}(t,k) }^2_{L_v^2}  d t\Big)^{1/2}d \Sigma(k)\\  + C \eps^{-1} \eps_0 \int_{\mathbb Z^3} \sup_{0<t\leq 1}      \norm{ \hat F_{m,\delta}(t,k)}_{L^2_v}    d \Sigma(k) 
    		+C_\eps \tilde C_*^{m-2}  [(m-1)!]^{\frac{1+2s }{2s}}.
\end{multline*}
\end{lemma}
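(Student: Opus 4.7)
The plan is to expand $\partial_{v_1}^m \hat\Gamma(\hat f,\hat f)$ by a Leibniz formula, dispose of the two top-order pieces through the regularization trick flagged in the introduction, and bound the remaining intermediate terms by the trilinear estimate together with the induction hypothesis \eqref{asonv}.

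Since $\hat\Gamma$ is translation-invariant in $v$ up to polynomial weights coming from differentiating $\sqrt\mu$, a generalized Leibniz formula gives
\begin{equation*}
\partial_{v_1}^m\hat\Gamma(\hat f,\hat f)=\sum_{\alpha+\beta+\gamma=m}c_{\alpha\beta\gamma}\hat\Gamma^{(\gamma)}(\partial_{v_1}^\alpha\hat f,\partial_{v_1}^\beta\hat f),
\end{equation*}
where every $\hat\Gamma^{(\gamma)}$ still obeys the trilinear bound of Lemma \ref{lemtrip} because the extra $v_1$-weights are tamed by the Gaussian inside $Q$. I split the sum into the intermediate range $\alpha,\beta\le m-1$ and the two boundary contributions $(\alpha,\beta)\in\{(m,0),(0,m)\}$ with $\gamma=0$.

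For the intermediate range, after inserting the uniformly $L^2_v$-bounded operators $t^{m\varsigma}\Lambda_{\delta_1}(1-\delta\partial_{v_1}^2)^{-1}$ and testing against $\Lambda_{\delta_1}\hat F_{m,\delta}$, the trilinear inequality together with Lemma \ref{lemtl} and the splitting $t^{m\varsigma}=t^{\alpha\varsigma}t^{\beta\varsigma}t^{\gamma\varsigma}$ (using $t^{\gamma\varsigma}\le 1$) converts the bound into a product of factors directly controlled by \eqref{asonv}, with the sub-cases $\alpha\le 2$ or $\beta\le 2$ handled by the bound $C_2\le 1$. The resulting combinatorial series is dominated exactly as in \eqref{tecalp1} by $C\tilde C_*^{m-2}[(m-1)!]^\varsigma$, using $\tilde C_*\ge(1+\tilde C_0)^2$ to absorb the universal constants. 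For each boundary term, say $\hat\Gamma(\hat f,\partial_{v_1}^m\hat f)$, I rewrite
\begin{equation*}
\partial_{v_1}^m\hat f=(1-\delta\partial_{v_1}^2)(1-\delta\partial_{v_1}^2)^{-1}\partial_{v_1}^m\hat f=t^{-m\varsigma}(1-\delta\partial_{v_1}^2)\hat F_{m,\delta}
\end{equation*}
and expand the factor $(1-\delta\partial_{v_1}^2)$ through $\hat\Gamma$ by Leibniz. Telescoping with the outer $(1-\delta\partial_{v_1}^2)^{-1}$ through the identity $(1-\delta\partial_{v_1}^2)^{-1}(1-\delta\partial_{v_1}^2)=\mathrm{Id}$ reduces the contribution to a leading piece $\hat\Gamma(\hat f,\hat F_{m,\delta})$ plus residuals of the shape $\delta(1-\delta\partial_{v_1}^2)^{-1}\hat\Gamma(\partial_{v_1}^a\hat f,\partial_{v_1}^b\hat F_{m,\delta})$ with $a\in\{1,2\}$, $b\in\{0,1\}$. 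Testing the leading piece against $\Lambda_{\delta_1}\hat F_{m,\delta}$ and invoking Lemma \ref{lemtl} and the trilinear bound isolates a factor $\norm{\hat f}_{L^1_kL^\infty_TL^2_v}\le C\eps_0$ from \eqref{+1234}, and a Young split of the two remaining $(a^{1/2})^w\hat F_{m,\delta}$-norms yields precisely the announced coefficients $\eps+C\eps^{-1}\eps_0$ on the dissipative term and $C\eps^{-1}\eps_0$ on the supremum term. Each residual involves only $\partial_{v_1}^a\hat f$ with $a\le 2$, controlled by Proposition \ref{lem}, and so contributes to $C_\eps\tilde C_*^{m-2}[(m-1)!]^\varsigma$; the potentially dangerous factor $\delta\partial_{v_1}\hat F_{m,\delta}$ is tamed via the uniform $L^2_v$-boundedness of $\delta^{1/2}\partial_{v_1}(1-\delta\partial_{v_1}^2)^{-1}$ and of its commutators against $(a^{1/2})^w$.

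The main obstacle will be the bookkeeping in the boundary-term step, and specifically verifying that the residual carrying $\delta\partial_{v_1}\hat F_{m,\delta}$---which formally conceals an $(m+1)$-th velocity derivative of $\hat f$---can be absorbed uniformly in both $\delta$ and $\delta_1$, either into the dissipative $(a^{1/2})^w\hat F_{m,\delta}$-norm through an $\eps$-fraction or into the $H^2_v$-control of $\hat f$ supplied by Proposition \ref{lem}. This is precisely the role of the telescoping identity introduced in the algebraic trick, which exchanges the fictitious top-order derivative for a lower-order one covered by the induction hypothesis.
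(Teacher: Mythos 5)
Your proposal is correct and follows essentially the same route as the paper's proof: the same Leibniz splitting into intermediate and boundary pieces, the same telescoping identity $(1-\delta\partial_{v_1}^2)(1-\delta\partial_{v_1}^2)^{-1}=\mathrm{Id}$ producing the leading term $\hat\Gamma(\hat f,\hat F_{m,\delta})$ plus $\delta$-weighted residuals, and the same combination of the trilinear estimate, Lemma \ref{lemtl}, Young's inequality and the combinatorial bound \eqref{tecalp1}. The only point to tighten is the residual carrying $\delta\partial_{v_1}\hat F_{m,\delta}$: rather than splitting off only $\delta^{1/2}\partial_{v_1}(1-\delta\partial_{v_1}^2)^{-1}$ (which would still leave an uncontrolled $m$-th order derivative with a mere $\delta^{1/2}$ prefactor), one should absorb $j\geq 1$ of the $m$ derivatives into the uniformly bounded symbol $\delta\eta_1^{2-j}(1+\delta\eta_1^2)^{-1}\eta_1^{j}$, reducing the residual to $\partial_{v_1}^{m-j}\hat f$, which the induction hypothesis covers --- exactly as the paper does.
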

    
\begin{proof}
Recall that $\Gamma(g,h)=\mathcal T(g,h, \mu^{1/2})$ with 
\begin{equation}\label{matht}
\mathcal T (g,h, w): =   \int_{\mathbb{R}^3}\int_{\mathbb{S}^2} B(v-v_*,\sigma) w(v_*) \inner{g(v_*')h(v')-g(v_*)h}\,d\sigma dv_*.
\end{equation}
Since $\Gamma $ is invariant under translation with respect to $v$, one can see that   the Leibniz formula in $v$ can be applied to obtain 
\begin{equation*}%\label{sumtrilinear}
\partial_{v_1}^m \hat \Gamma (\hat {f}, \hat {f} )=\sum_{ j=0}^{m}\sum_{  p =0}^{j} \begin{pmatrix}
      m    \\
      j 
\end{pmatrix}\begin{pmatrix}
      j    \\
      p  
\end{pmatrix} \hat  {\mathcal T}(\partial_{v_1}^{m-j} \hat {f}, \ \partial_{v_1}^{j- p } \hat {f},  \ \partial_{v_1}^{ p } \mu^{1/2}).
\end{equation*} 
Then,
\begin{equation}\label{ijk}
	\int_{\mathbb Z^3}\left(\int_0^1   \big|\big(\Lambda_{\delta_1}t^{m\varsigma}  (1-\delta\partial_{v_1}^2)^{-1}  \partial_{v_1} ^m\hat\Gamma \big({\hat f},~{\hat f}\big)  ,\   \Lambda_{\delta_1} \hat F_{m,\delta}\big)_{L^2_v}\big| d t\right)^{1/2}d \Sigma(k)\leq I+J+K,
\end{equation}
with
\begin{align}
		I&=  \int_{\mathbb Z^3}\left(\int_0^1   \big|\big(\Lambda_{\delta_1}t^{m\varsigma}  (1-\delta\partial_{v_1}^2)^{-1} \hat  {\mathcal T}( \hat {f}, \ \partial_{v_1}^{m} \hat {f},  \  \mu^{1/2})
 ,\   \Lambda_{\delta_1} \hat F_{m,\delta}\big)_{L^2_v}\big| d t\right)^{1/2}d \Sigma(k)\notag \\
 &\ + \sum_{ p =1}^m \begin{pmatrix}
      m    \\
      p 
\end{pmatrix}\int_{\mathbb Z^3}\left(\int_0^1   \big|\big(\Lambda_{\delta_1}t^{m\varsigma}  (1-\delta\partial_{v_1}^2)^{-1} \hat  {\mathcal T}( \hat {f}, \ \partial_{v_1}^{m- p } \hat {f},  \  \partial_{v_1}^{ p }\mu^{\frac{1}{2}})
 ,\   \Lambda_{\delta_1} \hat F_{m,\delta}\big)_{L^2_v}\big| d t\right)^{\frac{1}{2}}d \Sigma(k),\label{vei}
 \end{align}
 %\begin{eqnarray}
	\begin{align}
	J	&=  \int_{\mathbb Z^3} \sum_{ j=1}^{m-1}\sum_{  p =0}^{j} \begin{pmatrix}
      m    \\
      j 
\end{pmatrix}\begin{pmatrix}
      j    \\
      p  
\end{pmatrix} \notag \\
	&\qquad\quad\times\Big(\int_0^1   \big|\big(\Lambda_{\delta_1}t^{m\varsigma}  (1-\delta\partial_{v_1}^2)^{-1}\hat  {\mathcal T}(\partial_{v_1}^{m-j} \hat {f}, \ \partial_{v_1}^{j- p } \hat {f},  \ \partial_{v_1}^{ p } \mu^{1/2})
 ,\   \Lambda_{\delta_1} \hat F_{m,\delta}\big)_{L^2_v}\big| d t\Big)^{1/2}d \Sigma(k),\label{vej}
 \end{align}
%\end{eqnarray}
and 
\begin{eqnarray}
	\begin{aligned}
  K &= \int_{\mathbb Z^3}\left(\int_0^1   \big|\big(\Lambda_{\delta_1}t^{m\varsigma}  (1-\delta\partial_{v_1}^2)^{-1} \hat  {\mathcal T}( \partial_{v_1}^{m} \hat {f}, \ \hat {f},  \  \mu^{1/2})
 ,\   \Lambda_{\delta_1} \hat F_{m,\delta}\big)_{L^2_v}\big| d t\right)^{1/2}d \Sigma(k).	\label{vek}
 \end{aligned}
\end{eqnarray}
We estimate $I,J$ and $K$ as follows.
\medskip

\noindent\underline{\bf Estimate on  $I$}.  
For $I$ in \eqref{vei}, we claim that
for any $\eps>0$,
\begin{equation}\label{esi}
	I\leq (\eps+C\eps^{-1}\eps_0)    \int_{\mathbb Z^3}\ \Big(\int_0^1  \norm{(a^{1/2})^w    \hat  F_{m,\delta}(k)}_{L^2_v}^2  d t\Big)^{1/2} d \Sigma(k) + C_\eps   \tilde  C_*^{m-2}[(m-1)!]^{\frac{1+2s}{2s}}.
\end{equation}
In fact, in order to estimate the first term on the right hand side of $I$, we use  the fact that $(1-\delta\partial_{v_1}^2)(1-\delta\partial_{v_1}^2)^{-1}=1$ and then apply the Leibniz formula to write    
% \begin{eqnarray}
 	\begin{align}
 		&\hat {\mathcal T}( \hat f,\ \partial_{v_1}^{m} \hat f, \mu^{1/2})= \hat {\mathcal T}( \hat f,\ (1-\delta\partial_{v_1}^2)(1-\delta\partial_{v_1}^2)^{-1}\partial_{v_1}^{m} \hat f, \mu^{1/2})\notag \\
&\quad=(1-\delta\partial_{v_1}^2)\hat {\mathcal T}( \hat  f,\ (1-\delta\partial_{v_1}^2)^{-1}\partial_{v_1}^{m}  \hat f, \mu^{1/2})\notag \\
&\qquad\quad+\delta \sum_{ j=1}^{2}  \sum_{p=0}^{j}\begin{pmatrix}
      2    \\
      j  
\end{pmatrix} \begin{pmatrix}
      j   \\
      p  
\end{pmatrix}\hat {\mathcal T}(\partial_{v_1}^{j- p } \hat  f,\ \partial_{v_1}^{2-j}(1-\delta\partial_{v_1}^2)^{-1}\partial_{v_1}^{m}  \hat f, \ \partial_{v_1}^{ p }\mu^{1/2}).
\label{leib}
 \end{align}
% \end{eqnarray}
 As a result, recalling the definition  \eqref{Fmd} of $\hat F_{m,\delta}$, one has 
 \begin{equation}\label{i1i2}
 \int_{\mathbb Z^3}\left(\int_0^1   \big|\big(\Lambda_{\delta_1}t^{m\varsigma}  (1-\delta\partial_{v_1}^2)^{-1} \hat  {\mathcal T}( \hat {f}, \ \partial_{v_1}^{m} \hat {f},  \  \mu^{1/2})
 ,\   \Lambda_{\delta_1} \hat F_{m,\delta}\big)_{L^2_v}\big| d t\right)^{1/2}d \Sigma(k)
\leq C\big(I_{1}+I_{2}\big)
 \end{equation}
 with
\begin{equation*}
%\label{ }
I_{1}= \int_{\mathbb Z^3}\left(\int_0^1   \big|\big(\Lambda_{\delta_1}    \hat  {\mathcal T}( \hat {f}, \ \hat F_{m,\delta},  \  \mu^{1/2})
 ,\   \Lambda_{\delta_1} \hat F_{m,\delta}\big)_{L^2_v}\big| d t\right)^{1/2}d \Sigma(k),
\end{equation*}
and 
 %\begin{eqnarray*}
 \begin{align*}
 I_{2}=  \sum_{ j=1}^{2}  \sum_{p=0}^{j} \int_{\mathbb Z^3}\delta \bigg[\int_0^1 \big|&\big(\Lambda_{\delta_1}t^{m\varsigma}  (1-\delta\partial_{v_1}^2)^{-1} \hat  {\mathcal T}(\partial_{v_1}^{j- p } \hat  f,  \partial_{v_1}^{2-j}(1-\delta\partial_{v_1}^2)^{-1}\partial_{v_1}^{m}  \hat f, \partial_{v_1}^{ p }\mu^{\frac{1}{2}})
 ,\\
 &\quad \Lambda_{\delta_1} \hat F_{m,\delta}\big)_{L^2_v}\big| d t\bigg]^{\frac{1}{2}}d \Sigma(k).
 \end{align*}
%\end{eqnarray*}
Note that $\Gamma(g,h)=\mathcal T(g,h,\mu^{1/2})$. Thus, using Lemmas \ref{lemtrip} and \ref{lemtl} as well as \eqref{+1234}, we compute to have that for any $\eps>0,$ 
%\begin{equation}
 \begin{align}
	I_{1} &\leq  \eps\int_{\mathbb Z^3} \left(\int_0^1    \norm{(a^{1/2})^w\hat F_{m,\delta}(t,k) }^2_{L_v^2}  d t\right)^{1/2}d \Sigma(k)\notag \\
	&\quad+C\eps^{-1}  \int_{\mathbb Z^3} \sup_{0<t\leq 1}     \norm{ \hat f(t,k)}_{L^2_v}    d \Sigma(k) \int_{\mathbb Z^3} \left(\int_0^1    \norm{(a^{1/2})^w\hat F_{m,\delta}(t,k) }^2_{L_v^2}  d t\right)^{1/2}d \Sigma(k)\notag \\
	&\leq  \inner{\eps+C\eps^{-1}  \eps_0}\int_{\mathbb Z^3} \left(\int_0^1    \norm{(a^{1/2})^w\hat F_{m,\delta}(t,k) }^2_{L_v^2}  d t\right)^{1/2}d \Sigma(k). 
\label{esi1}
	\end{align}
%\end{equation}
Next we treat  the term $I_{2}$.  Notice that  the terms $\mathcal T(g,h,  \partial_{v_1}^{ p } \mu^{1/2})$ for $p\leq 2$  enjoy  the same properties  as those in Lemma \ref{lemtrip}  for  $\Gamma(g,h)=\mathcal T(g,h,    \mu^{1/2})$.  Consequently, using again Lemmas \ref{lemtrip} and \ref{lemtl}, it holds that for any  $\eps>0,$
  \begin{equation*}%\label{i12}
  	\begin{aligned}
  		I_{2}&\leq \eps    \int_{\mathbb Z^3}\ \Big(\int_0^1  \norm{(a^{1/2})^w    (1-\delta\partial_{v_1}^2)^{-1} \Lambda_{\delta_1}^*\Lambda_{\delta_1} \hat  F_{m,\delta}(k)}_{L^2_v}^2  d t\Big)^{1/2} d \Sigma(k) \\
  		&\quad+C\eps^{-1}  \sum_{1\leq j\leq 2} \, \sum_{0\leq  p \leq j} \bigg[  \int_{\mathbb Z^3} \sup_{0<t\leq 1}   t^{ \varsigma j}   \norm{\partial_{v_1}^{j- p } \hat  f}_{L_v^2}    d \Sigma(k) \\
& \qquad  \qquad \qquad \qquad    \times \int_{\mathbb Z^3} \Big(\int_0^1 t^{2{\varsigma (m-j)}}   \norm{(a^{1/2})^w\delta\partial_{v_1}^{2-j} (1-\delta\partial_{v_1}^2)^{-1}\partial_{v_1}^m\hat  f }^2_{L_v^2}  d t\Big)^{1/2}d \Sigma(k)\bigg].
  	\end{aligned}
  \end{equation*}
As for the first term on the right hand side,  we  use   Lemma \ref{lemcomest}  to obtain 
  \begin{eqnarray*}
  	\norm{(a^{1/2})^w    (1-\delta\partial_{v_1}^2)^{-1} \Lambda_{\delta_1}^*\Lambda_{\delta_1} \hat  F_{m,\delta}(k)}_{L^2_v}\leq \norm{(a^{1/2})^w   \hat  F_{m,\delta}(k)}_{L^2_v}.
  \end{eqnarray*}
Meanwhile, for the second term, we notice that  $\delta\partial_{v_1}^{2-j} (1-\delta\partial_{v_1}^2)^{-1}\partial_{v_1}^j$  is the Weyl quantization of  symbol  $\delta\eta_1^{2-j} (1+\delta\abs\eta^2)^{-1}\eta_1^{j}$, which belongs to 
  $S(1, \ \abs{dv}^2+\abs{d\eta}^2) $ uniformly with respect to $\delta$. This with  Lemma \ref{lemcomest} yield that
  \begin{eqnarray*}
  	\norm{(a^{1/2})^w\delta\partial_{v_1}^{2-j} (1-\delta\partial_{v_1}^2)^{-1}\partial_{v_1}^m\hat  f }_{L_v^2}\leq \norm{(a^{1/2})^w\partial_{v_1}^{m-j}\hat  f }_{L_v^2}.
  \end{eqnarray*}
 Then, combining these inequalities gives
   \begin{eqnarray*}
   \begin{aligned}
   	I_{2}&\leq  \eps    \int_{\mathbb Z^3}\ \Big(\int_0^1  \norm{(a^{1/2})^w    \hat  F_{m,\delta}(k)}_{L^2_v}^2  d t\Big)^{1/2} d \Sigma(k)  \\
   	&\qquad+C \eps^{-1} \Big(\sum_{\abs\beta\leq 2}    \int_{\mathbb Z^3} \sup_{0<t\leq 1}   t^{ \varsigma \abs\beta}   \norm{\partial_{v}^{\beta} \hat  f}_{L_v^2}    d \Sigma(k) \Big)\\
   	&\qquad\qquad\qquad\qquad\qquad\quad\times\sum_{1\leq j\leq 2}  \int_{\mathbb Z^3} \Big(\int_0^1 t^{2{\varsigma (m-j)}}   \norm{(a^{1/2})^w \partial_{v_1}^{m-j}\hat  f }^2_{L_v^2}  d t\Big)^{1/2}d \Sigma(k) \\
   	&\leq  \eps    \int_{\mathbb Z^3}\ \Big(\int_0^1  \norm{(a^{1/2})^w    \hat  F_{m,\delta}(k)}_{L^2_v}^2  d t\Big)^{1/2} d \Sigma(k)  +C_\eps   \tilde C_*^{m-2}\com{ (m-1) !}^{\frac{1+2s}{2s}},
\end{aligned}
   \end{eqnarray*}
where the last inequality holds because of the induction assumption specified in  Proposition \ref{progevofv}. Now, we substitute the above estimate and 
  \eqref{esi1} into \eqref{i1i2} to conclude
%\begin{equation}
 	\begin{align}
 		&\int_{\mathbb Z^3}\left(\int_0^1   \big|\big(\Lambda_{\delta_1}t^{m\varsigma}  (1-\delta\partial_{v_1}^2)^{-1} \hat  {\mathcal T}( \hat {f}, \ \partial_{v_1}^{m} \hat {f},  \  \mu^{1/2})
 ,\   \Lambda_{\delta_1} \hat F_{m,\delta}\big)_{L^2_v}\big| d t\right)^{1/2}d \Sigma(k)\notag \\
   	&\leq \inner{\eps+C\eps^{-1}  \eps_0}   \int_{\mathbb Z^3}\ \Big(\int_0^1  \norm{(a^{1/2})^w    \hat  F_{m,\delta}(k)}_{L^2_v}^2  d t\Big)^{1/2} d \Sigma(k)  +C_\eps   \tilde C_*^{m-2}\com{ (m-1) !}^{\frac{1+2s}{2s}}.\label{fite}
 	\end{align}
%\end{equation}
We have completed the estimate on the first term on the right hand side of $I$ given in \eqref{vei}.
       
  It remains to estimate the second term  on the right hand side of $I$.  In fact, following the above argument for treating $I_{2}$ and observing that
\begin{equation*}%\label{anfordis}
\norm{ 	 \partial_{v_1}^{ p } \mu^{1/2}}_{L_v^2} \leq 8^{ p+1 } p !,  \quad \forall\  p \geq 0,
\end{equation*} 
we have 
%\begin{eqnarray*}
 \begin{align*}
 		& \sum_{ p =1}^m \begin{pmatrix}
      m    \\
      p 
\end{pmatrix}\int_{\mathbb Z^3}\left(\int_0^1   \big|\big(\Lambda_{\delta_1}t^{m\varsigma}  (1-\delta\partial_{v_1}^2)^{-1} \hat  {\mathcal T}( \hat {f}, \ \partial_{v_1}^{m- p } \hat {f},  \  \partial_{v_1}^{ p }\mu^{\frac{1}{2}})
 ,\   \Lambda_{\delta_1} \hat F_{m,\delta}\big)_{L^2_v}\big| d t\right)^{\frac{1}{2}}d \Sigma(k)\\
 &\leq  \eps    \int_{\mathbb Z^3}\ \Big(\int_0^1  \norm{(a^{1/2})^w    \hat  F_{m,\delta}(k)}_{L^2_v}^2  d t\Big)^{1/2} d \Sigma(k)  \\
   	&\ +C_\eps \int_{\mathbb Z^3} \sup_{0<t\leq 1}       \norm{  \hat  f}_{L_v^2}    d \Sigma(k)   \sum_{ p =1}^m \begin{pmatrix}
      m    \\
      p 
\end{pmatrix} 8^{ p+1 }  p ! \int_{\mathbb Z^3} \Big(\int_0^1 t^{2{\varsigma (m- p )}}   \norm{(a^{1/2})^w \partial_{v_1}^{m- p }\hat  f }^2_{L_v^2}  d t\Big)^{\frac{1}{2}}d \Sigma(k).
 \end{align*}
 %\end{eqnarray*} 
Moreover, using   \eqref{asonv}  gives 
%\begin{eqnarray*}
\begin{align*}
		&\sum_{ p =1}^m\begin{pmatrix}
      m    \\
      p 
\end{pmatrix} 8^{ p+1 }  p ! \int_{\mathbb Z^3} \Big(\int_0^1 t^{2{\varsigma (m- p )}}   \norm{(a^{1/2})^w \partial_{v_1}^{m- p }\hat  f }^2_{L_v^2}  d t\Big)^{1/ 2}d \Sigma(k)\\
		&\leq  \sum_{ p =1}^{m-1} \frac{m!}{(m- p )!}   8^{ p +1}   \tilde  C_*^{m- p -1}\com{ (m- p -1) !}^{\frac{1+2s}{2s}} +   m! 8^{m+1}\int_{\mathbb Z^3} \Big(\int_0^1    \norm{(a^{1/2})^w  \hat  f }^2_{L_v^2}  d t\Big)^{\frac{1}{2}}d \Sigma(k)  \\
		&\leq \tilde  C_*^{m-2}[(m-1)!]^{\frac{1+2s}{2s}}
\sum_{ p =1}^{m-1} \frac{m \com{ (m- p-1 ) !}^{\frac{1}{2s}}
}{(m-p)[(m-1)!]^{\frac{1}{2s}}}  \tilde C_*^2 ( 8/ \tilde C_*)^{ p +1}+ \eps_0 m! 8^{m+1}\\
&\leq C   \tilde C_*^{m-2}[(m-1)!]^{\frac{1+2s}{2s}},
\end{align*}
%\end{eqnarray*}
where the last inequality holds true since we have chosen the constant $C_*>0$ large enough. Thus, combining these inequalities, we have
 \begin{multline*}
 	 \sum_{ p =1}^m \begin{pmatrix}
      m    \\
      p 
\end{pmatrix}\int_{\mathbb Z^3}\left(\int_0^1   \big|\big(\Lambda_{\delta_1}t^{m\varsigma}  (1-\delta\partial_{v_1}^2)^{-1} \hat  {\mathcal T}( \hat {f}, \ \partial_{v_1}^{m- p } \hat {f},  \  \partial_{v_1}^{ p }\mu^{\frac{1}{2}})
 ,\   \Lambda_{\delta_1} \hat F_{m,\delta}\big)_{L^2_v}\big| d t\right)^{1/ 2}d \Sigma(k)
 \\
 \leq \eps    \int_{\mathbb Z^3}\ \Big(\int_0^1  \norm{(a^{1/2})^w    \hat  F_{m,\delta}(k)}_{L^2_v}^2  d t\Big)^{1/2} d \Sigma(k) + C_\eps  \tilde  C_*^{m-2}[(m-1)!]^{\frac{1+2s}{2s}}.
 \end{multline*}
 This with \eqref{fite} yield the desired upper bound \eqref{esi} of $I$.
 
 \medskip

\noindent\underline{\bf Estimate on  $J$}.     
Recall that $J$ is given in \eqref{vej}.  Using the similar argument for treating $I$ above with slight modifications,  we arrive at
 \begin{multline*}
 	J \leq    \eps    \int_{\mathbb Z^3}\ \Big(\int_0^1  \norm{(a^{1/2})^w    \hat  F_{m,\delta}(k)}_{L^2_v}^2  d t\Big)^{1/2} d \Sigma(k)  \\
 +C_\eps  \sum_{ j=1}^{m-1}\sum_{  p =0}^{j} \begin{pmatrix}
      m    \\
      j 
\end{pmatrix}\begin{pmatrix}
      j    \\
      p  
\end{pmatrix}    8^{ p+1}  p !\Big( \int_{\mathbb Z^3} \sup_{0<t\leq 1}      t^{\varsigma(m-j)} \norm{  \partial_{v_1}^{m-j}  \hat  f}_{L_v^2}    d \Sigma(k) \Big) \\
   	 \times \int_{\mathbb Z^3} \Big(\int_0^1 t^{2{\varsigma (j- p )}}   \norm{(a^{1/2})^w \partial_{v_1}^{j- p }\hat  f }^2_{L_v^2}  d t\Big)^{1/2}d \Sigma(k).
\end{multline*}
Moreover, for any $1\leq j\leq m-1$, it holds that 
    \begin{equation*}
\begin{split}
& \sum_{p=0}^{j}  \begin{pmatrix}
      j    \\
      p  
\end{pmatrix}8^{ p+1} p !\int_{\mathbb Z^3} \Big(\int_0^1 t^{2{\varsigma (j- p )}}   \norm{(a^{1/2})^w \partial_{v_1}^{j- p }\hat  f }^2_{L_v^2}  d t\Big)^{1/ 2}d \Sigma(k)
\\
&\leq  \varepsilon_0 8^{j+1}j!+   \sum_{p=0}^{j-1}  \frac{j!}{(j-p)!}  8^{ p +1} \tilde  C_*^{j- p -1} [(j- p -1)!]^{\frac{1+2s}{2s}} \leq C \tilde C_*^{j-1}  [(j-1)!]^{\frac{1+2s }{2s}},
\end{split}
\end{equation*}
where in the second inequality we have used   the induction assumption \eqref{asonv} specified in  Proposition \ref{progevofv}. 
  As a result, combining the above inequality with the inductive assumption \eqref{asonv} and following the same argument as for deriving \eqref{tecalp1}, we have    
\begin{eqnarray*}
	\begin{aligned}
	&	\sum_{ j=1}^{m-1}\sum_{  p =0}^{j} \begin{pmatrix}
      m    \\
      j 
\end{pmatrix}\begin{pmatrix}
      j    \\
      p  
\end{pmatrix}    8^{p+1 }  p !\Big( \int_{\mathbb Z^3} \sup_{0<t\leq 1}      t^{\varsigma(m-j)} \norm{  \partial_{v_1}^{m-j}  \hat  f}_{L_v^2}    d \Sigma(k) \Big) \\
   &\qquad\qquad\qquad\qquad	 \times \int_{\mathbb Z^3} \Big(\int_0^1 t^{2{\varsigma (j- p )}}   \norm{(a^{1/2})^w \partial_{v_1}^{j- p }\hat  f }^2_{L_v^2}  d t\Big)^{\frac{1}{2}}d \Sigma(k)\\
   &\leq  C \sum_{  j=1}^{ m-1} \frac{m!}{j!(m-j)!}  \tilde C_*^{m-j-1}  [(m-j-1)!]^{\frac{1+2s }{2s}} \tilde C_*^{j-1}  [(j-1)!]^{\frac{1+2s }{2s}}  \leq C \tilde C_*^{m-2}  [(m-1)!]^{\frac{1+2s }{2s}}.
	\end{aligned}
\end{eqnarray*}
Thus we conclude by combining these inequalities that 
\begin{equation}\label{j}
	\begin{aligned}
		J \leq \eps    \int_{\mathbb Z^3}\ \Big(\int_0^1  \norm{(a^{1/2})^w    \hat  F_{m,\delta}(k)}_{L^2_v}^2  d t\Big)^{1/2} d \Sigma(k) +C_\eps \tilde C_*^{m-2}  [(m-1)!]^{\frac{1+2s }{2s}}.
	\end{aligned}
\end{equation}

\medskip

\noindent\underline{\bf Estimate on  $K$}.  Recall $K$ in \eqref{vek}.   The estimate on $K$ is similar to that on $I$ shown before. In fact, we can follow the argument for proving  \eqref{fite} to conclude 
 \begin{multline}\label{k}
	K  \leq   \eps\int_{\mathbb Z^3} \Big(\int_0^1    \norm{(a^{1/2})^w\hat F_{m,\delta}(t,k) }^2_{L_v^2}  d t\Big)^{1/2}d \Sigma(k)  \\
	  + C \eps^{-1} \eps_0 \int_{\mathbb Z^3} \sup_{0<t\leq 1}      \norm{ \hat F_{m,\delta}(t,k)}_{L^2_v}    d \Sigma(k)+C_\eps  \tilde  C_*^{m-2}\com{ (m-1) !}^{\frac{1+2s}{2s}}.
	\end{multline}
The details are omitted for brevity.  

\medskip
Now, we substitute all the above estimates  \eqref{esi}, \eqref{j} and \eqref{k} back to \eqref{ijk} and hence complete the proof of Lemma \ref{lemke}.  
\end{proof}

\begin{lemma}\label{lemke++}
Let all the assumptions of Proposition \ref{progevofv} hold. Then, assuming the induction assumption specified in  Proposition \ref{progevofv}, it holds that for any $\eps>0,$
\begin{multline*}
    	\varlimsup_{\delta_1\rightarrow 0}	\int_{\mathbb Z^3}\left(\int_0^1  \big|\big(\big[\Lambda_{\delta_1}t^{m\varsigma}  (1-\delta\partial_{v_1}^2)^{-1}  \partial_{v_1} ^m, \ \mathcal L\big] \hat f,\   \Lambda_{\delta_1} \hat F_{m,\delta}\big)_{L^2_v}\big| d t\right)^{1/2}d \Sigma(k)\\
    		\leq  \eps \int_{\mathbb Z^3} \Big(\int_0^1    \norm{(a^{1/2})^w\hat F_{m,\delta}(t,k) }^2_{L_v^2}  d t\Big)^{1/2}d \Sigma(k) +C_\eps \tilde C_*^{m-2}  [(m-1)!]^{\frac{1+2s }{2s}}.
    	\end{multline*}
    \end{lemma}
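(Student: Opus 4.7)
The plan is to decompose the commutator into three pieces that can each be handled by an adaptation of arguments already developed in the paper, with the $\Lambda_{\delta_1}$-piece vanishing in the limit $\delta_1 \to 0$ (which is why the statement only gives $\varlimsup$). Since $t^{m\varsigma}$ is scalar, I would first write
\begin{equation*}
[\Lambda_{\delta_1}(1-\delta\partial_{v_1}^2)^{-1}\partial_{v_1}^m,\ \mathcal L]
= A_{\delta_1} + B_{\delta_1} + C_{\delta_1},
\end{equation*}
with $A_{\delta_1}=[\Lambda_{\delta_1},\mathcal L](1-\delta\partial_{v_1}^2)^{-1}\partial_{v_1}^m$, $B_{\delta_1}=\Lambda_{\delta_1}(1-\delta\partial_{v_1}^2)^{-1}[\partial_{v_1}^m,\mathcal L]$, and $C_{\delta_1}=\Lambda_{\delta_1}[(1-\delta\partial_{v_1}^2)^{-1},\mathcal L]\partial_{v_1}^m$.

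For $A_{\delta_1}$, I would mimic the argument used to establish the limit \eqref{lim} in Subsection \ref{subeng}. Writing $[\Lambda_{\delta_1},\mathcal L]=\mathcal L(\Lambda_{\delta_1}-\mathrm{Id})-(\Lambda_{\delta_1}-\mathrm{Id})\mathcal L$, applying Corollary \ref{corupp} and Lemma \ref{lemcomest} together with \eqref{smacomest} reduces the corresponding integral to $\|(\Lambda_{\delta_1}-\mathrm{Id})(a^{1/2})^w\hat F_{m,\delta}\|_{L^2_v}$-type terms plus a $\delta_1^{1/2}$ remainder; the Dominated Convergence Theorem, combined with the analog of \eqref{L2conv} for $\hat F_{m,\delta}$ (which follows from the induction assumption \eqref{asonv}), kills this contribution as $\delta_1\to 0$.

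For $B_{\delta_1}$, I would write $\mathcal L=\mathcal T(\sqrt\mu,\cdot,\sqrt\mu)+\mathcal T(\cdot,\sqrt\mu,\sqrt\mu)$ (cf.~\eqref{matht}) and use translation invariance in $v$ of $\mathcal T$ to apply Leibniz:
\begin{equation*}
[\partial_{v_1}^m,\mathcal L]\hat f
= \sum_{p=1}^{m}\binom{m}{p}\Big[\mathcal T(\partial_{v_1}^p\sqrt\mu,\partial_{v_1}^{m-p}\hat f,\sqrt\mu)+\mathcal T(\partial_{v_1}^{m-p}\hat f,\partial_{v_1}^p\sqrt\mu,\sqrt\mu)\Big].
\end{equation*}
Crucially, all velocity derivatives on $\hat f$ are of order $\leq m-1$. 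Pairing with $\Lambda_{\delta_1}\hat F_{m,\delta}$ and using the boundedness of $(1-\delta\partial_{v_1}^2)^{-1}$ combined with \eqref{bd}, exactly the machinery used for $J$ and the second half of $I$ in Lemma \ref{lemke} applies, with the Gaussian factor controlled by $\|\partial_{v_1}^p\sqrt\mu\|\leq 8^{p+1}p!$. The combinatorial sum $\sum_{p=1}^m\binom{m}{p}\,p!\,\tilde C_*^{m-p-1}[(m-p-1)!]^{(1+2s)/(2s)}$ is absorbed into $\tilde C_*^{m-2}[(m-1)!]^{(1+2s)/(2s)}$ for $\tilde C_*$ large, as in \eqref{tecalp1}.

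For $C_{\delta_1}$, I would use the identity
$[(1-\delta\partial_{v_1}^2)^{-1},\mathcal L]=-\delta(1-\delta\partial_{v_1}^2)^{-1}[\mathcal L,\partial_{v_1}^2](1-\delta\partial_{v_1}^2)^{-1}$
and expand $[\mathcal L,\partial_{v_1}^2]$ by Leibniz as above, producing at most two $v_1$-derivatives to be redistributed. The prefactor $\delta$ combined with $\partial_{v_1}^{2-j}(1-\delta\partial_{v_1}^2)^{-1}$ yields a Weyl symbol in $S(1,|dv|^2+|d\eta|^2)$ uniformly in $\delta$, so that all $m+2-j$ velocity derivatives on $\hat f$ effectively reduce to at most $m$, and the contribution is controlled by the same trilinear/combinatorial argument that bounds $I_2$ in Lemma \ref{lemke}. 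The hardest step will be $A_{\delta_1}$: the bookkeeping required to push the $\delta_1\to 0$ limit through, because unlike in Subsection \ref{subeng} the operand $(1-\delta\partial_{v_1}^2)^{-1}\partial_{v_1}^m$ itself carries $m$ derivatives, and one must carefully use the $L^2_{t,v}$ bound on $(a^{1/2})^w\hat F_{m,\delta}$ (uniform in $\delta_1$) to invoke dominated convergence.
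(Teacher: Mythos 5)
Your proposal is correct and follows essentially the same route as the paper: the paper splits the commutator into $[\Lambda_{\delta_1},\mathcal L]\,t^{m\varsigma}(1-\delta\partial_{v_1}^2)^{-1}\partial_{v_1}^m$ (shown to vanish as $\delta_1\to 0$ exactly as in \eqref{lim}) plus $\Lambda_{\delta_1}t^{m\varsigma}[(1-\delta\partial_{v_1}^2)^{-1}\partial_{v_1}^m,\mathcal L]$ (handled by writing $\mathcal L\hat f=\Gamma(\hat f,\mu^{1/2})+\Gamma(\mu^{1/2},\hat f)$ and rerunning the Leibniz/trilinear machinery of Lemma \ref{lemke}), and your $A_{\delta_1}$ is the first piece while $B_{\delta_1}+C_{\delta_1}$ is algebraically the second. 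Your identification of where the induction assumption \eqref{asonv}, the uniform $S(1,|dv|^2+|d\eta|^2)$ bound on $\delta\partial_{v_1}^{2-j}(1-\delta\partial_{v_1}^2)^{-1}\partial_{v_1}^{j}$, and dominated convergence enter matches the paper's argument.
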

    
\begin{proof}
Observe that we have the following identity
    	\begin{eqnarray*}
    		\begin{aligned}
    			\big[\Lambda_{\delta_1}t^{m\varsigma}  (1-\delta\partial_{v_1}^2)^{-1}  \partial_{v_1} ^m, \ \mathcal L\big] =\big[\Lambda_{\delta_1}, \ \mathcal L\big] t^{m\varsigma}  (1-\delta\partial_{v_1}^2)^{-1}  \partial_{v_1} ^m+\Lambda_{\delta_1}t^{m\varsigma}\big[  (1-\delta\partial_{v_1}^2)^{-1}  \partial_{v_1} ^m, \ \mathcal L\big], 
\end{aligned}
\end{eqnarray*}
where for the commutator in the last term we can further write
\begin{multline*}
    			\big[  (1-\delta\partial_{v_1}^2)^{-1}  \partial_{v_1} ^m, \ \mathcal L\big]\hat f=    (1-\delta\partial_{v_1}^2)^{-1}  \partial_{v_1} ^m  \Gamma (\hat f, \ \mu^{1/2})-  \Gamma ((1-\delta\partial_{v_1}^2)^{-1}  \partial_{v_1} ^m \hat f, \ \mu^{1/2})\\
    			+ (1-\delta\partial_{v_1}^2)^{-1}  \partial_{v_1} ^m  \Gamma ( \mu^{1/2},\ \hat f) 
    			-   \Gamma ( \mu^{1/2},\ (1-\delta\partial_{v_1}^2)^{-1}  \partial_{v_1} ^m \hat f).
\end{multline*}
This enables us to follow the argument in the proof of  Lemma \ref{lemke} so as to conclude
    			\begin{multline*}
    		\int_{\mathbb Z^3}\left(\int_0^1  \big|\big(\Lambda_{\delta_1}t^{m\varsigma}\big[  (1-\delta\partial_{v_1}^2)^{-1}  \partial_{v_1} ^m, \ \mathcal L\big] \hat f,\   \Lambda_{\delta_1} \hat F_{m,\delta}\big)_{L^2_v}\big| d t\right)^{1/2}d \Sigma(k)\\
    		\leq \eps \int_{\mathbb Z^3} \Big(\int_0^1    \norm{(a^{1/2})^w\hat F_{m,\delta}(t,k) }^2_{L_v^2}  d t\Big)^{1/2}d \Sigma(k)       		+C_\eps \tilde C_*^{m-2}  [(m-1)!]^{\frac{1+2s }{2s}}.
\end{multline*}
Meanwhile, similarly as for showing \eqref{lim}, we have   
%repeating  the argument  after \eqref{lim}, 
    	\begin{eqnarray*}
    		\lim_{\delta_1\rightarrow 0}	\int_{\mathbb Z^3}\left(\int_0^1  \big|\big(\big[\Lambda_{\delta_1}, \ \mathcal L\big] t^{m\varsigma}  (1-\delta\partial_{v_1}^2)^{-1}  \partial_{v_1} ^m \hat f,\   \Lambda_{\delta_1} \hat F_{m,\delta}\big)_{L^2_v}\big| d t\right)^{1/2}d \Sigma(k) =0.
\end{eqnarray*}
Combining those estimates gives the conclusion of Lemma \ref{lemke++}. Therefore, the proof is  completed. 
\end{proof}

\begin{lemma} \label{lemla}  
Let all the assumptions of Proposition \ref{progevofv} hold. Then, assuming the induction assumption specified in  Proposition \ref{progevofv}, it holds that for any $\eps>0,$
    	\begin{eqnarray*}
    		\begin{aligned}
    			&\int_{\mathbb Z^3}\left( \int_0^1 mt^{-1} \norm{ \Lambda_{\delta_1} \hat F_{m,\delta}(t)}_{L^2_v}^2d t\right)^{1/2}d \Sigma(k)\\
    			&\leq \eps       \int_{\mathbb Z^3}\left(\int_0^1  \norm{(a^{1/2})^w \hat F_{m,\delta}}_{L^2_v}^2d t\right)^{\frac12}d \Sigma(k) +C_{ \eps}  \tilde C_*^{m-2}     \com{ (m-1) !}^{\frac{1+2s}{2s}}.
    		\end{aligned}
\end{eqnarray*}
\end{lemma}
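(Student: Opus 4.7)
The plan is to mimic the proof of the space-variable counterpart, Lemma \ref{lemindc}, replacing the subelliptic weight $\comi k^{2s/(1+2s)}$ there by the velocity dissipation $(a^{1/2})^w$, and to lift a pointwise symbol inequality to an operator inequality via the positivity of the Wick quantization (cf.\ \eqref{posit}).

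First I would derive, from elementary Young with conjugate exponents $p=(1+2s)/s$ and $q=(1+2s)/(1+s)$, the pointwise bound
\begin{equation*}
mt^{-1}\le C\tilde\eps^2\,\tilde a(v,\eta)+C\tilde\eps^{-2(1+s)/s}(mt^{-1})^{(1+2s)/s}\,\tilde a(v,\eta)^{-(1+s)/s}
\end{equation*}
valid for every $\tilde\eps>0$ and $(v,\eta)\in\mathbb R^6$, with $\tilde a$ as in \eqref{defofa}. Both weights being nonnegative, Wick positivity \eqref{posit} produces the corresponding operator inequality on $L^2_v$. Testing against $\Lambda_{\delta_1}\hat F_{m,\delta}$, converting each Wick to the Weyl quantization via \eqref{ww}, invoking the equivalences $((\tilde a)^{\rm Wick}h,h)\sim\norm{(a^{1/2})^wh}^2_{L^2_v}$ (from Proposition \ref{estaa}(i) combined with \eqref{rela}) and $((\tilde a^{-(1+s)/s})^{\rm Wick}h,h)\sim\norm{(a^{-(1+s)/(2s)})^wh}^2_{L^2_v}$, and finally using Lemma \ref{lemcomest} to swap $(a^{1/2})^w$ past $\Lambda_{\delta_1}$, one arrives at
\begin{equation*}
mt^{-1}\norm{\Lambda_{\delta_1}\hat F_{m,\delta}}^2_{L^2_v}\le C\tilde\eps^2\norm{(a^{1/2})^w\hat F_{m,\delta}}^2_{L^2_v}+C\tilde\eps^{-\frac{2(1+s)}{s}}(mt^{-1})^{\frac{1+2s}{s}}\norm{(a^{-\frac{1+s}{2s}})^w\Lambda_{\delta_1}\hat F_{m,\delta}}^2_{L^2_v}.
\end{equation*}

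Taking $L^2$ in $t$, summing in $k$, and choosing $\tilde\eps$ small, the first term yields precisely the $\eps$-small subelliptic piece of the lemma. For the second, substitute $\hat F_{m,\delta}=t^{m\varsigma}(1-\delta\partial_{v_1}^2)^{-1}\partial_{v_1}^m\hat f$ and use $2\varsigma=(1+2s)/s$ to cancel the $t^{-(1+2s)/s}$ factor against $t^{2m\varsigma}$, leaving $m^{2\varsigma}\int_0^1 t^{2(m-1)\varsigma}\norm{(a^{-(1+s)/(2s)})^w\Lambda_{\delta_1}(1-\delta\partial_{v_1}^2)^{-1}\partial_{v_1}^m\hat f}^2_{L^2_v}\,dt$. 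The crux is then the uniform (in $\delta,\delta_1$) operator bound
\begin{equation*}
\norm{(a^{-\frac{1+s}{2s}})^w\Lambda_{\delta_1}(1-\delta\partial_{v_1}^2)^{-1}\partial_{v_1}^m\hat f}_{L^2_v}\le C_m\bigl(\norm{\partial_{v_1}^{m-1}\hat f}_{L^2_v}+\norm{\hat f}_{L^2_v}\bigr),
\end{equation*}
with $C_m$ at most polynomial in $m$. I would derive this from the composition formula \eqref{symbc} after observing that the full symbol is dominated by $C(1+|\eta_1|^{m-1})$ uniformly (using $\tilde a\ge(1+|\eta|^2)^s$ from \eqref{defofa} together with $\gamma\ge 0$), and checking that its derivatives lie in the admissible class $S(1+|\eta_1|^{m-1},\abs{dv}^2+\abs{d\eta}^2)$ with constants polynomial in $m$.

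Finally, the inductive assumption \eqref{asonv} at $|\beta|=m-1$ (plus \eqref{+1234} at $|\beta|=0$) bounds the remainder by $Cm^{\varsigma}\tilde C_*^{m-2}[(m-2)!]^{\varsigma}$; invoking $m^{\varsigma}\le C(m-1)^{\varsigma}$ for $m\ge 2$ converts this to $C_{\tilde\eps}\tilde C_*^{m-2}[(m-1)!]^{\varsigma}$, as required. The principal obstacle is the pseudo-differential reduction highlighted above: the composed symbol has fractional $\eta_1$-order $m-1-s$, so the argument must carefully track the $\delta,\delta_1$-uniformity of all commutator remainders produced by \eqref{symbc} and verify that the $m$-dependent constants grow only polynomially, which is precisely what allows their absorption into the Gevrey factorial $[(m-1)!]^{\varsigma}$.
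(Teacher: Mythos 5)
Your argument is essentially correct and reaches the stated bound, but it follows a noticeably heavier route than the paper. The paper does not interpolate at the level of the anisotropic symbol $\tilde a$ at all: it first drops $\Lambda_{\delta_1}$ by \eqref{ubdo} and then applies the flat Fourier-multiplier interpolation
$\norm{h}_{L^2_v}^2\leq \tau\norm{\comi{D_v}^{\frac{s}{1+2s}}h}_{L^2_v}^2+\tau^{-\frac{1+s}{s}}\norm{\comi{D_v}^{\frac{s}{1+2s}-1}h}_{L^2_v}^2$ with $\tau=t\eps^2/m$ (pure Plancherel, no Wick quantization and no positivity argument needed), only afterwards comparing $\comi{D_v}^{\frac{s}{1+2s}}$ with $(a^{1/2})^w$ and $\comi{D_v}^{\frac{s}{1+2s}-1}$ with $(a^{1/2})^w\comi{D_v}^{-1}$ via Proposition \ref{estaa}. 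This buys two simplifications over your scheme. First, you do not need the equivalence $((\tilde a^{-(1+s)/s})^{\rm Wick}h,h)\sim\norm{(a^{-(1+s)/(2s)})^wh}^2_{L^2_v}$, whose useful direction is not among the stated results (Proposition \ref{estaa}(iii) covers only $a^w$ and $(a^{1/2})^w$) and would itself require an extra composition/invertibility argument. Second, and more importantly, the $m$-dependent pseudo-differential reduction you single out as the principal obstacle disappears: the paper writes the remainder as $(a^{1/2})^w\comi{D_v}^{-1}(1-\delta\partial_{v_1}^2)^{-1}\partial_{v_1}$ applied to $\partial_{v_1}^{m-1}\hat f$, so the only symbol entering Lemma \ref{lemcomest} is the $m$-independent $\eta_1(1+\delta\eta_1^2)^{-1}\comi{\eta}^{-1}\in S(1)$; you could do the same in your framework by factoring out $\partial_{v_1}^{m-1}$ before composing, rather than estimating the full symbol of fractional order $m-1-s$. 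A further genuine difference is which half of the induction hypothesis \eqref{asonv} you invoke: your remainder carries the plain $L^2_v$ norm of $\partial_{v_1}^{m-1}\hat f$ and is closed through the $L^\infty_t$ part (via $\norm{\cdot}_{L^2(0,1)}\leq\norm{\cdot}_{L^\infty(0,1)}$), whereas the paper keeps the $(a^{1/2})^w$-weighted norm and uses the $L^2_t$ dissipation part; both are available, so this is a matter of taste. With the factoring remark above, your proof closes without the delicate tracking of $m$-dependent symbolic constants.
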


\begin{proof}
We start with the Sobolev interpolation inequality of the form
\begin{equation*}%\label{interp}
\begin{split}
\norm{ \hat F_{m,\delta}}_{L^2_v}^2\leq  \tau \norm{\comi{D_{v}}^{\frac{s}{1+2s}}  \hat F_{m,\delta}}_{L^2_v}^2+   \tau^{-\frac{1+s}{s}} \norm{\comi{D_{v}}^{\frac{s}{1+2s}-1} \hat F_{m,\delta}}_{L^2_v}^2,
\end{split}
\end{equation*}
for any $\tau>0$. In particular, choosing $\tau=t\eps^2/m$  for $\eps>0$ and recalling the notation $\varsigma=\frac{1+2s}{2s}$, it follows that
%\begin{equation}
\begin{align}
& \int_{\mathbb Z^3}\left( \int_0^1 mt^{-1} \norm{ \Lambda_{\delta_1} \hat F_{m,\delta}}_{L^2_v}^2d t\right)^{\frac{1}{2}}d \Sigma(k) 
  \leq     \eps       \int_{\mathbb Z^3}\left(\int_0^1  \norm{\comi{D_{v}}^{\frac{s}{1+2s}} \hat F_{m,\delta}}_{L^2_v}^2d t\right)^{\frac12}d \Sigma(k)\notag \\
 &\qquad\qquad\qquad+C_{ \eps} m^{\frac{1+2s}{2s}} \int_{\mathbb Z^3}\left(\int_0^1 t^{-2 \varsigma }\norm{\comi{D_{v}}^{\frac{s}{1+2s}-1} \hat F_{m,\delta}}_{L^2_v}^2d t\right)^{\frac12}d \Sigma(k)\notag \\
&\leq   \eps       \int_{\mathbb Z^3}\left(\int_0^1  \norm{(a^{1/2})^w \hat F_{m,\delta}}_{L^2_v}^2d t\right)^{\frac12}d \Sigma(k)\notag \\
 &\quad+C_{ \eps} m^{\frac{1+2s}{2s}} \int_{\mathbb Z^3}\left(\int_0^1 t^{-2 \varsigma }\norm{(a^{1/2})^w\comi{D_{v}}^{-1} \hat F_{m,\delta}}_{L^2_v}^2d t\right)^{\frac12}d \Sigma(k),
\label{interpp2}
\end{align}
%\end{equation}
where  in the last inequality we have used the assertion (iii) in Proposition \ref{estaa}.   Moreover,  in view of \eqref{Fmd},  we use the induction assumption \eqref{asonv} as well as Lemma \ref{lemcomest} to compute
\begin{equation*}
\begin{aligned}
&m^{\frac{1+2s}{2s}} \int_{\mathbb Z^3}\left(\int_0^1 t^{-2 \varsigma }\norm{(a^{1/2})^w \comi{D_{v}}^{-1} \hat F_{m,\delta}}_{L^2_v}^2d t\right)^{\frac12}d \Sigma(k) \\
&\leq  C m^{\frac{1+2s}{2s}} \int_{\mathbb Z^3}\left(\int_0^1 t^{2(m-1)\varsigma}  \norm{(a^{1/2})^w \partial_{v_1}^{m-1} \hat f(t,k)}_{L^2_v}^2d t\right)^{\frac12}d \Sigma(k)  \leq
 C \tilde  C_*^{m-2}     \com{ (m-1) !}^{\frac{1+2s}{2s}}.
\end{aligned}
\end{equation*}
Thus the desired estimate follows by combining those inequalities.  The proof of Lemma  \ref{lemla} is then completed.  	
\end{proof}

\begin{lemma}\label{lemcom}
Let all the assumptions of Proposition \ref{progevofv} hold. Then, assuming the induction assumption specified in  Proposition \ref{progevofv}, it holds that for any $\eps>0,$
\begin{eqnarray*}
    		\begin{aligned}
    			&\int_{\mathbb Z^3}\left(\int_0^1   \big|\big(\big[iv\cdot k,\ \Lambda_{\delta_1}t^{m\varsigma}  (1-\delta\partial_{v_1}^2)^{-1}  \partial_{v_1} ^m\big] \hat f,\   \Lambda_{\delta_1} \hat F_{m,\delta}\big)_{L^2_v}\big| d t\right)^{1/2}d \Sigma(k)\\
    			&\leq  \varepsilon\int_{\mathbb Z^3}   \left(  \int_0^1  t^{2\varsigma m}\norm{ (a^{1/2})^w  \hat F_{m,\delta}}_{L^2_v}^2\ d t\right)^{\frac12}d \Sigma(k) +  C_\varepsilon \tilde  C_*^{m-2}     \com{ (m-1) !}^{\frac{1+2s}{2s}}.
    		\end{aligned}
    	\end{eqnarray*}
    \end{lemma}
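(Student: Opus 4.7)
The plan is to expand the commutator via the Leibniz rule and isolate a principal piece. Writing $P := 1-\delta\partial_{v_1}^2$ and using $[iv\cdot k,\partial_{v_1}^m] = -imk_1\partial_{v_1}^{m-1}$ together with the identity $t^{m\varsigma}P^{-1}\partial_{v_1}^{m-1}\hat f = t^\varsigma\hat F_{m-1,\delta}$, one decomposes
\begin{equation*}
\big[iv\cdot k,\Lambda_{\delta_1}t^{m\varsigma}P^{-1}\partial_{v_1}^m\big]\hat f = [iv\cdot k,\Lambda_{\delta_1}]\hat F_{m,\delta} + \Lambda_{\delta_1}t^{m\varsigma}[iv\cdot k,P^{-1}]\partial_{v_1}^m\hat f - imk_1\,t^\varsigma\Lambda_{\delta_1}\hat F_{m-1,\delta}.
\end{equation*}

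The two auxiliary commutators carry built-in small factors. By \eqref{comtes}, $[v\cdot k,\Lambda_{\delta_1}]$ is $L^2_v$-bounded uniformly in $(k,\delta_1)$ with norm tending to zero as $\delta_1\to 0$ on fixed elements (dominated convergence at the symbol level). Similarly $[iv\cdot k,P^{-1}] = -2i\delta k_1 P^{-1}\partial_{v_1}P^{-1}$ carries an explicit $\delta$-factor, and the composition $\delta k_1 P^{-1}\partial_{v_1}P^{-1}\partial_{v_1}^m\hat f$ can be bounded through Fourier-multiplier considerations by $\|\partial_{v_1}^m\hat f\|_{L^2_v}$ up to a factor vanishing as $\delta\to 0$. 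Pairing with $\Lambda_{\delta_1}\hat F_{m,\delta}$, applying Cauchy-Schwarz in $t$ together with $\|h\|_{L^2_v}\leq\|(a^{1/2})^w h\|_{L^2_v}$, and arguing as in \eqref{dct}, one absorbs these two contributions into $\varepsilon\int_{\mathbb Z^3}(\int_0^1\|(a^{1/2})^w\hat F_{m,\delta}\|_{L^2_v}^2\,dt)^{1/2}d\Sigma(k)$ for $\delta_1,\delta$ small.

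For the main term $-imk_1\,t^\varsigma\Lambda_{\delta_1}\hat F_{m-1,\delta}$, pairing with $\Lambda_{\delta_1}\hat F_{m,\delta}$, Cauchy-Schwarz in $v$, and Young's inequality $ab\leq\varepsilon a^2 + C_\varepsilon b^2$ give
\begin{equation*}
\big|\big(-imk_1 t^\varsigma\Lambda_{\delta_1}\hat F_{m-1,\delta},\,\Lambda_{\delta_1}\hat F_{m,\delta}\big)_{L^2_v}\big| \leq \varepsilon\|(a^{1/2})^w\hat F_{m,\delta}\|_{L^2_v}^2 + C_\varepsilon m^2|k|^2 t^{2\varsigma}\|\hat F_{m-1,\delta}\|_{L^2_v}^2.
\end{equation*}
Integrating in $t$, using the monotonicity $t^{2\varsigma}\|\hat F_{m-1,\delta}\|_{L^2_v}^2\leq t^{2(m-1)\varsigma}\|(a^{1/2})^w\partial_{v_1}^{m-1}\hat f\|_{L^2_v}^2$, taking a square root and summing in $k$, the first term is absorbable while the residual reduces to
\begin{equation*}
C_\varepsilon\,m\int_{\mathbb Z^3}|k|\Big(\int_0^1 t^{2(m-1)\varsigma}\|(a^{1/2})^w\partial_{v_1}^{m-1}\hat f\|_{L^2_v}^2\,dt\Big)^{1/2}d\Sigma(k).
\end{equation*}

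Bounding this residual by $C\tilde C_*^{m-2}[(m-1)!]^{(1+2s)/(2s)}$ is the core obstacle, since the $|k|$ weight couples a spatial derivative to a pure $v$-derivative of $\hat f$ whereas \eqref{asonv} controls only unweighted $L^1_k$ norms of $v$-derivatives. My plan is to apply a Sobolev interpolation in $v$ mirroring the device of Lemma \ref{lemla}, namely $\|h\|^2\leq\tau\|\langle D_v\rangle^{s/(1+2s)}h\|^2+\tau^{-(1+s)/s}\|\langle D_v\rangle^{s/(1+2s)-1}h\|^2$ applied to $h = \Lambda_{\delta_1}\hat F_{m-1,\delta}$ with the choice $\tau\sim\varepsilon^2/(m|k|)^2$. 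The first piece, controlled by $(a^{1/2})^w$ via Proposition \ref{estaa}(iii), is absorbable into the $\varepsilon$-term; the second piece produces a factor $(m|k|)^\varsigma$ (after the square root) multiplied by a $\langle D_v\rangle^{-(1+s)/(1+2s)}$-norm of $\hat F_{m-1,\delta}$, which is essentially a $t^{(m-1)\varsigma}\partial_{v_1}^{m-2}\hat f$-norm. Combining the velocity induction \eqref{asonv} at $|\beta|=m-1$ with the $\langle k\rangle$-weighted spatial Gevrey bound of Theorem \ref{thm3.1} at $n=1$ should then absorb the $|k|$ weight, the resulting $(m-1)^{(1+2s)/(2s)}$ growth fitting cleanly into $[(m-1)!]^{(1+2s)/(2s)}=(m-1)^{(1+2s)/(2s)}[(m-2)!]^{(1+2s)/(2s)}$; making this interpolation rigorous with the correct combinatorial matching is the main technical difficulty.
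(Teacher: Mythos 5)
Your decomposition of the commutator and your treatment of the main term up to the residual
\begin{equation*}
C_\varepsilon\, m\int_{\mathbb Z^3}|k|\Big(\int_0^1 t^{2(m-1)\varsigma}\big\|(a^{1/2})^w\partial_{v_1}^{m-1}\hat f\big\|_{L^2_v}^2\,dt\Big)^{1/2}d\Sigma(k)
\end{equation*}
match the paper's starting point, but the step you yourself flag as "the main technical difficulty" is a genuine gap, and the interpolation you propose cannot close it. If you interpolate $m^2|k|^2\|h\|^2\leq\tau\|\langle D_v\rangle^{s/(1+2s)}h\|^2+\tau^{-(1+s)/s}\|\langle D_v\rangle^{s/(1+2s)-1}h\|^2$ with $\tau\sim\varepsilon^2/(m|k|)^2$, the conjugate exponent produces $(m|k|)^{2+2(1+s)/s}=(m|k|)^{4\varsigma}$, i.e.\ a weight $(m|k|)^{2\varsigma}\geq |k|^{3}$ after the square root (not $(m|k|)^{\varsigma}$ as you state), attached to a norm of roughly $m-1-\tfrac{1+s}{1+2s}$ velocity derivatives of $\hat f$ (not $\partial_{v_1}^{m-2}\hat f$). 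No induction hypothesis controls such a $\langle k\rangle$-weighted $L^1_k$ norm of high-order velocity derivatives: \eqref{asonv} is unweighted in $k$, Theorem \ref{thm3.1} carries no velocity derivatives, and $L^1_k$ norms do not interpolate multiplicatively between the two. The paper's device is much more elementary: by Young's inequality on the Fourier side, $\langle k\rangle\,\|(1-\delta\partial_{v_1}^2)^{-1}\partial_{v_1}^{m-1}\hat f\|\leq\langle k\rangle^{m}\|\hat f\|+\|(1-\delta\partial_{v_1}^2)^{-1}\partial_{v_1}^{m}\hat f\|$ (see \eqref{comest}), so the mixed term splits into a \emph{pure spatial} piece $\langle k\rangle^{m}\|\hat f\|$ — controlled by the already established spatial Gevrey bound of Proposition \ref{prpind} with constant $\tilde C_0$, whence the requirement $\tilde C_*\geq(\tilde C_0+1)^2$ — and a \emph{pure velocity} piece $m\|\hat F_{m,\delta}\|$ of top order, which is then handled exactly like the $mt^{-1}$ term via the interpolations \eqref{interp1} and \eqref{interpp2}. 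This splitting is the missing idea.

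Two secondary points. First, your claim that $\delta k_1P^{-1}\partial_{v_1}P^{-1}\partial_{v_1}^m\hat f$ is bounded by $\|\partial_{v_1}^m\hat f\|_{L^2_v}$ "up to a factor vanishing as $\delta\to0$" is not correct: the uniform bound one gets is $|k_1|\,\|P^{-1}\partial_{v_1}^{m-1}\hat f\|_{L^2_v}$ (write the operator as $k_1(\delta\partial_{v_1}^2P^{-1})(P^{-1}\partial_{v_1}^{m-1})$), i.e.\ this term is of exactly the same type as the main term and should be lumped with it, not discarded as small; moreover $\|\partial_{v_1}^m\hat f\|$ itself is not under control at this stage. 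Second, the lemma is stated for fixed $\delta_1,\delta$, so an argument that only works "for $\delta_1,\delta$ small" (or in the limit, as in Lemma \ref{lemke++}) proves a weaker statement; for the $\delta_1$-commutator the paper simply uses the uniform boundedness from \eqref{comtes} rather than any smallness.
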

    
    \begin{proof}
    	Note that 
\begin{multline*} 
	\com{ iv\cdot k,   \  \Lambda_{\delta_1}t^{m\varsigma}  (1-\delta\partial_{v_1}^2)^{-1}\partial_{v_1}^m} \\
	=-i \Lambda_{\delta_1}t^{m\varsigma}  (1-\delta\partial_{v_1}^2)^{-1} m  k_1 \partial_{v_1}^{m-1}-2i \Lambda_{\delta_1}t^{m\varsigma}  (1-\delta\partial_{v_1}^2)^{-2}(\delta k_1\partial_{v_1}) \partial_{v_1}^m
	\\
	-2i\big  (1+\delta_1  \abs v^2 \big)^{-1-\gamma} \big(1+\delta_1 \abs k^2-\delta_1\Delta_v\big)^{-2} (\delta_1 k\cdot \partial_v)t^{m\varsigma}  (1-\delta\partial_{v_1}^2)^{-1}\partial_{v_1}^m
	.
\end{multline*}
This implies
%\begin{equation}
\begin{align}
\norm{[iv\cdot k,\ \Lambda_{\delta_1}t^{m\varsigma}  (1-\delta\partial_{v_1}^2)^{-1}  \partial_{v_1} ^m] \hat f}_{L_v^2}&\leq C m t^{m\varsigma}\comi k\norm{   (1-\delta\partial_{v_1}^2)^{-1}  \partial_{v_1} ^{m-1}  \hat f}_{L_v^2}\notag\\
	&\leq C m t^{m\varsigma}\comi k^m\norm{       \hat f}_{L_v^2}+C m t^{m\varsigma} \norm{    (1-\delta\partial_{v_1}^2)^{-1}  \partial_{v_1} ^{m}  \hat f}_{L_v^2} \notag\\
	&=C m t^{m\varsigma}\comi k^m\norm{       \hat f}_{L_v^2}+C m \norm{  \hat F_{m,\delta}}_{L_v^2}.
\label{comest}
\end{align}
%\end{equation}
Consequently, via the similar arguments as for deriving \eqref{interp1} and  \eqref{interpp2}, we have
%\begin{eqnarray*}
\begin{align*}
    			&\int_{\mathbb Z^3}\left(\int_0^1   \big|\big(\big[iv\cdot k,\ \Lambda_{\delta_1}t^{m\varsigma}  (1-\delta\partial_{v_1}^2)^{-1}  \partial_{v_1} ^m\big] \hat f,\   \Lambda_{\delta_1} \hat F_{m,\delta}\big)_{L^2_v}\big| d t\right)^{1/2}d \Sigma(k)\\
    			& \leq C  \int_{\mathbb Z^3}  \comi{k}^{m+\frac{s}{1+2s}} \left( \int_0^1  t^{2\varsigma m}\norm{  \hat      f}_{L^2_v}^2\ d t\right)^{\frac12}d \Sigma(k) +\eps \int_{\mathbb Z^3}   \left(  \int_0^1  \norm{  (a^{1/2})^w \hat F_{m,\delta}}_{L^2_v}^2\ d t\right)^{\frac12}d \Sigma(k)\\
    			&\quad+ C \tilde C_0^{m-2}     \com{ (m-1) !}^{\frac{1+2s}{2s}}+  C_\eps   \tilde C_*^{m-2}     \com{ (m-1) !}^{\frac{1+2s}{2s}}\\
    			&\leq  \varepsilon\int_{\mathbb Z^3}   \left(  \int_0^1  t^{2\varsigma m}\norm{ (a^{1/2})^w  \hat F_{m,\delta}}_{L^2_v}^2\ d t\right)^{\frac12}d \Sigma(k) +  C_\eps \tilde  C_*^{m-2}     \com{ (m-1) !}^{\frac{1+2s}{2s}},
\end{align*}
%\end{eqnarray*}
where we have used Proposition \ref{prpind} with the constant $\tilde C_0$ therein and also we have chosen $ \tilde C_*$ such that $ \tilde C_* \geq (\tilde C_0+1)^2$. The proof of Lemma \ref{lemcom} is thus completed.	
\end{proof}

\begin{proof}
[Ending the proof of  Proposition \ref{progevofv}] 
We substitute all the estimates in Lemmas \ref{lemke}-\ref{lemcom} into \eqref{tildereterm} and further let  $\delta_1\rightarrow 0$. It thus follows from the Fatou Lemma that
    \begin{eqnarray*}
    	\begin{aligned}
    			 & \int_{Z^3 } \sup_{0<t\leq 1}  \norm{   \hat F_{m,\delta}(t,k)}_{ L_v^2}d \Sigma(k)   + \int_{\mathbb Z^3}\left(\int_0^1 \norm{(a^{1/2})^w  \hat F_{m,\delta}(t,k)}_{L^2_v}^2d t\right)^{1/2}d \Sigma(k)\\
       		 & \leq \inner{\eps+C \eps^{-1} \eps_0}\int_{\mathbb Z^3} \Big(\int_0^1    \norm{(a^{1/2})^w\hat F_{m,\delta}(t,k) }^2_{L_v^2}  d t\Big)^{1/2}d \Sigma(k)\\  
       		 &\quad+ C \eps^{-1} \eps_0 \int_{\mathbb Z^3} \sup_{0<t\leq 1}      \norm{ \hat F_{m,\delta}(t,k)}_{L^2_v}    d \Sigma(k) 
    		+C_\eps \tilde C_*^{m-2}  [(m-1)!]^{\frac{1+2s }{2s}}.
    		    	\end{aligned}
    \end{eqnarray*}
In the above estimate, we let $\eps>0$ and further $\eps_0>0$ be suitably small so that those integral terms on the right can be absorbed.  Furthermore, leting $\delta\rightarrow 0$, in view of the definition \eqref{Fmd} of $\hat F_{m,\delta}$, it  follows again from the Fatou Lemma that
    \begin{eqnarray*}
    	\begin{aligned}
    			 &\int_{Z^3 } \sup_{0<t\leq 1}  t^{\varsigma m}\norm{\partial_{v_1}^m  \hat f(t,k)}_{ L_v^2}d \Sigma(k)   + \int_{\mathbb Z^3}\left(\int_0^1 t^{2\varsigma m}\norm{(a^{1/2})^w \partial_{v_1}^m \hat f(t,k)}_{L^2_v}^2d t\right)^{1/2}d \Sigma(k) \\
       		 & \leq C \tilde  C_*^{m-2}     \com{ (m-1) !}^{\frac{1+2s}{2s}}.
    	\end{aligned}
    \end{eqnarray*}
Note that the above estimate still holds with $\partial_{v_1}^m$ replaced by $\partial_{v_j}^m, j=2,3$.  Thus for any $\beta\in\mathbb Z_+^3$ with $\abs\beta=m\geq 3$,  using the fact that 
    \begin{eqnarray*}
    	\norm{\partial_v^{\beta}\hat f}_{L_v^2}\leq  C\sum_{1\leq j\leq 3} \norm{\partial_{v_j}^{m}\hat f}_{L_v^2},
    \end{eqnarray*}
it holds that 
     \begin{eqnarray*}
     	 \begin{aligned}
	  &\int_{\mathbb Z^3}	 \sup_{0<t\leq 1} t ^{\varsigma \abs\beta }\norm{  \partial_v^\beta \hat  f(t,k)}_{ L^2_v} d \Sigma(k) +\int_{\mathbb Z^3}	 \inner{\int_0^1 t ^{2\varsigma \abs\beta } \norm{ (a^{1/2})^w\partial_v^\beta \hat f(t,k)}_{L^2_v}^2 d t}^{1/ 2} d \Sigma(k)\\
	&  \leq   C \tilde   C_*^{  \abs\beta-2}\com{ ( \abs\beta-1) !}^{\frac{1+2s}{2s}} \leq  \frac{1}{4} \tilde C_*^{  \abs\beta-1}\com{ ( \abs\beta-1) !}^{\frac{1+2s}{2s}},
\end{aligned}
\end{eqnarray*}
where we have chosen $ \tilde C_*>4C$ in the second line. Thus, we have proved Proposition \ref{progevofv} for $0<t\leq 1$. The estimates for  $1\leq t\leq T$ are similar and here we omit them for brevity.  The proof of  Proposition \ref{progevofv} is therefore completed.  
\end{proof} 
        
\subsection{Low-order regularity}\label{prph2}
 
In this part we are going to show Proposition \ref{lem} for the low-order $H_v^2$-regularity. As mentioned before, the key argument is similar to that for the proof of  Proposition \ref{progevofv} in the previous subsection. The main differences between them arise from the absence of the induction assumption \eqref{asonv}.   Instead   we will prove the desired result for $\abs\beta\leq 2$  starting from the global existence result of low-regularity solutions in Proposition \ref{prop.exis}, in particular the estimate \eqref{+1234}.  
 
To begin with we study the smoothness in $H_v^2$ for solutions to the following linear Cauchy problem with initial data in $L^1_kL^2_v$:
 \begin{equation}\label{lincau}
      \inner{\partial_t+v\cdot\nabla_x-\mathcal L}h=\Gamma(g,h), \quad h(0,x,v)=f_0(x,v),
  \end{equation}
where $g$ is a  given function satisfying certain conditions listed as below.     

\begin{proposition}
 [Linear problem: existence and spatial regulairty] \label{lemexi}
Let $ f_0\in L^1_kL^2_v$. There is a constant $c_0>0$ such that if $g$ satisfies 
\begin{eqnarray*} 
	 \int_{\mathbb Z^3} \sup_{0<t< T}  \norm{    \hat  g(t,k)}_{ L^2_v} d \Sigma(k) 
	 + \int_{\mathbb Z^3}	\left(\int_0^T   \norm{(a^{1/2})^w   \hat  g(t,k)}_{ L^2_v}^2d t\right)^{1/2}d \Sigma(k) 
	 \leq   c_0,
\end{eqnarray*}
for any $T>0$,  then the linear Cauchy problem \eqref{lincau} admits a unique global-in-time mild solution  $h\in L^1_kL^\infty_TL^2_v$ for any $T>0$. Moreover,  there is a constant $C_3>0$, depending only on the parameters $s, \gamma$ in \eqref{kern} and \eqref{angu} but independent of the constant $c_0$ above,    such that it holds for any $T>0$ that
\begin{equation}
\label{1234}
\norm{h}_{L_k^1L_T^\infty L_v^2}	+ \norm{(a^{1/2})^wh}_{L_k^1L_T^2 L_v^2}	 \leq     C_3   \norm{f_0}_{L_k^1L^2_v} 
\end{equation} 
and
\begin{multline}\label{spreg}
 	 \int_{\mathbb Z^3} \comi k^N\sup_{0<t<T} \phi(t)^{N\varsigma} \norm{    \hat  h(t,k)}_{ L^2_v} d \Sigma(k) 
	 \\
	 + \int_{\mathbb Z^3}	\comi k^N \left(\int_0^T  \phi(t)^{2N\varsigma} \norm{(a^{1/2})^w   \hat  h(t,k)}_{ L^2_v}^2d t\right)^{1/2}d \Sigma(k) \leq C_3  \norm{f_0}_{L_k^1L^2_v},
\end{multline}
with any integer $0\leq N\leq 2$. Here we recall  that $\phi(t)=\min\{t,  1\}$ and $\varsigma=\frac{1+2s}{2s}$. 
 \end{proposition}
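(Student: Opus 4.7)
The plan is to adapt the arguments of Section \ref{sec3} to the linear setting of \eqref{lincau}, treating $g$ as given data with $\hat\Gamma(\hat g,\hat h)$ playing the role of the nonlinear source. The proof splits naturally into two stages: existence/uniqueness together with the baseline bound \eqref{1234}, and the propagation of weighted spatial derivatives up to order $N=2$ giving \eqref{spreg}. Note that \eqref{spreg} does not require any subelliptic gain in $x$, so only the linear analog of the energy part of Proposition \ref{prpeng} is needed.

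For stage one, I would perform the partial Fourier transform in $x$, obtaining $(\partial_t+iv\cdot k-\mathcal L)\hat h(k,v)=\hat\Gamma(\hat g,\hat h)(k,v)$, and take the $L^2_v$ scalar product with $\hat h(t,k)$. The coercivity \eqref{coer} together with the trilinear bound \eqref{orup} yields, after time integration and application of Lemma \ref{lemtl},
\begin{equation*}
\norm{h}_{L^1_kL^\infty_TL^2_v}+\norm{(a^{1/2})^wh}_{L^1_kL^2_TL^2_v}\leq C\norm{f_0}_{L^1_kL^2_v}+Cc_0\norm{(a^{1/2})^wh}_{L^1_kL^2_TL^2_v}.
\end{equation*}
Taking $c_0$ small enough absorbs the last term and gives \eqref{1234}. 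Existence and uniqueness then follow from a standard Picard iteration in the space $L^1_kL^\infty_TL^2_v\cap\{(a^{1/2})^w\cdot\in L^1_kL^2_TL^2_v\}$, the contraction being ensured by the same smallness of $c_0$.

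For stage two, I would introduce the regularization $\hat h_{N,\delta}(t,k,v)=\phi(t)^{N\varsigma}(1+\delta|k|^2)^{-1/2}\comi{k}^N\hat h(t,k,v)$ together with the velocity regularizer $\Lambda_{\delta_1}$ from \eqref{regoper}, and derive the evolution equation satisfied by $\Lambda_{\delta_1}\hat h_{N,\delta}$ in analogy with \eqref{equreg}, whose source is now $\Lambda_{\delta_1}\phi(t)^{N\varsigma}(1+\delta|k|^2)^{-1/2}\comi{k}^N\hat\Gamma(\hat g,\hat h)$ together with the weight-derivative terms. Pairing with $\Lambda_{\delta_1}\hat h_{N,\delta}$ in $L^2_v$, integrating in $t$ and summing in $k$ in $L^1_k$, I would control the source by combining Lemma \ref{est} with Lemmas \ref{lemtrip} and \ref{lemtl}: the elementary inequality \eqref{lem.ei} distributes the weight $\comi{k}^N/(1+\delta|k|^2)^{1/2}$ either onto $\hat g$ (producing a factor $c_0$) or onto $\hat h$ (producing a lower-order regularized quantity $\hat h_{j,\delta}$ with $j\leq N-1$, controlled by the previous step of the cascade). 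The term $Nt^{-1}\Lambda_{\delta_1}\hat h_{N,\delta}$ coming from $\partial_t\phi(t)^{N\varsigma}$ is handled via the Sobolev interpolation of Lemma \ref{lemindc}, again reducing to $\hat h_{N-1,\delta}$. Limit passages $\delta_1\to0$ and $\delta\to0$ are justified exactly as in the proof of Proposition \ref{prpeng} using Fatou's lemma together with \eqref{smacomest}.

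The main obstacle is the cascade $N=0\Rightarrow N=1\Rightarrow N=2$: at each step the bilinear source produces a term of the form $c_0\norm{(a^{1/2})^w\hat h_{N,\delta}}_{L^1_kL^2_TL^2_v}$ plus terms bounded by lower-order quantities. To obtain the linear-in-$f_0$ estimate \eqref{spreg} with constant $C_3$ independent of $c_0$, at every level the ``$c_0\times$highest-order'' contribution must be absorbed into the left-hand side and the lower-order pieces must be controlled inductively by $C\norm{f_0}_{L^1_kL^2_v}$. Since only three levels are involved, a single smallness threshold for $c_0$ depending only on $s,\gamma$ and the universal constants from Lemmas \ref{lemcomest}–\ref{lemtl} and Proposition \ref{estaa} suffices to close the argument.
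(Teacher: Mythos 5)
Your overall outline matches the paper's (deliberately terse) proof: \eqref{1234} is obtained by the strategy of \cite{MR4230064} for the nonlinear problem, and \eqref{spreg} by rerunning the argument of Proposition \ref{prplow}. However, your explicit claim that ``\eqref{spreg} does not require any subelliptic gain in $x$, so only the linear analog of the energy part of Proposition \ref{prpeng} is needed'' is wrong, and it is inconsistent with your own treatment of the term $N\varsigma t^{-1}\Lambda_{\delta_1}\hat h_{N,\delta}$. The relevant interpolation is that of Lemma \ref{lemindc}, not a velocity-side interpolation as in Lemma \ref{lemla} (applying $\comi{D_v}^{-1}$ does not reduce the weight $\comi{k}^{N}$ or the singularity $t^{-1}$); it splits $Nt^{-1}\leq \tilde\eps^2\comi k^{2s/(1+2s)}+C_{\tilde\eps}\,t^{-2\varsigma}\comi k^{-2+2s/(1+2s)}$. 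The first piece produces $\tilde\eps\int_{\mathbb Z^3}\comi k^{s/(1+2s)}\big(\int_0^1\norm{\Lambda_{\delta_1}\hat h_{N,\delta}}_{L^2_v}^2dt\big)^{1/2}d\Sigma(k)$, a quantity that a pure energy estimate does not put on its left-hand side and that can only be absorbed because the Wick-multiplier subelliptic estimate (the linear analogue of Proposition \ref{prpsub}) supplies it; the second piece reduces to the subelliptic norm $\comi k^{(N-1)+s/(1+2s)}$ of the previous level, not its energy norm. So the subelliptic estimate of Subsection \ref{secsub} must be established at each level $N=0,1,2$ in tandem with the energy estimate; this is precisely what the paper means by ``similarly as in Proposition \ref{prplow}''.

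A second concrete gap: when Lemma \ref{est} distributes $\comi k^{N}/(1+\delta|k|^2)^{1/2}$ over the convolution in $\hat\Gamma(\hat g,\hat h)$, the cross terms $\comi{k-\ell}^{j}\comi{\ell}^{N-j}$ with $j\geq1$ place a weight $\comi{k-\ell}^{j}$ on $\hat g(k-\ell)$, and the stated hypothesis on $g$ controls only the unweighted norms; such terms do not ``produce a factor $c_0$''. In the nonlinear argument these pieces are closed by the induction hypothesis \eqref{inassump} on $f$ itself, whereas in the linear setting one must carry the corresponding weighted bounds on $g$ (as happens in the actual application, where $g=f^{n-1}$ satisfies \eqref{spreg} from the previous iteration step). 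Finally, a smaller point on stage one: the displayed inequality does not follow from \eqref{rela} alone, since $-(\mathcal L\hat h,\hat h)_{L^2_v}$ controls $\norm{(a^{1/2})^w\hat h}_{L^2_v}^2$ only up to an additive $C\norm{\hat h}_{L^2_v}^2$, which over an unbounded time interval cannot be absorbed by Gr\"onwall; obtaining the uniform-in-$T$ bound \eqref{1234} requires the macro--micro (hypocoercivity) machinery of \cite{MR4230064}, which is exactly what the paper invokes by citation.
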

 
 \begin{proof}
 The existence and uniqueness of solutions satisfying \eqref{1234} follow from the same strategy as in \cite{MR4230064},  where the corresponding results were established  for the nonlinear rather than linear problem.   And the spatial regularity \eqref{spreg} can be achieved   similarly as in Proposition \ref{prplow} by virtue of \eqref{1234}.   We omit the details here for brevity. 
 \end{proof}

\begin{proposition}[Linear problem: $H_v^2$-smoothing effect]\label{lemlin}
There are constants $\varepsilon_0>0$ and $C_4>0$ such that if  $f_0\in L^1_kL^2_v$ with 
%such that 
\begin{equation}\label{smacon}
 \norm{f_0}_{  L^1_kL^2_v} \leq
 \varepsilon_0 
\end{equation}
holds and  
%for some $\eps_0>0$.  Suppose $g$ is given satisfying that a constant $C_4>0$ exists such that 
$g$ satisfies that for any $T>0$
 and for any $ \beta\in\mathbb Z_+^3$ with $\abs\beta\leq 2$,
\begin{multline} \label{inong}
	 \int_{\mathbb Z^3} \sup_{0<t<T} \phi(t) ^{\varsigma \abs\beta }\norm{  \partial_v^\beta \hat  g(t,k)}_{ L^2_v} d \Sigma(k)\\
	 + \int_{\mathbb Z^3}	\left(\int_0^T  \phi(t) ^{2\varsigma \abs\beta }\norm{(a^{1/2})^w  \partial_v^\beta \hat  g(t,k)}_{ L^2_v}^2d t\right)^{1/2}d \Sigma(k) 
	 \leq    C_4 \norm{f_0}_{L_k^1L^2_v},
\end{multline} 
then the global solution $h\in L^1_kL^\infty_TL^2_v$ to \eqref{lincau}  constructed in Proposition \ref{lemexi} satisfies  the estimates \eqref{1234} and \eqref{spreg}, and it further holds that  for any $T>0$ and any $\beta\in\mathbb Z_+^3$ with $\abs\beta\leq 2 $,
\begin{multline} \label{dees}
 		 \int_{\mathbb Z^3} \sup_{0<t< T} \phi(t) ^{\varsigma \abs\beta }\norm{  \partial_v^\beta \hat  h(t,k)}_{ L^2_v} d \Sigma(k)\\
	 + \int_{\mathbb Z^3}	\left(\int_0^T  \phi(t) ^{2\varsigma \abs\beta }\norm{(a^{1/2})^w  \partial_v^\beta \hat  h(t,k)}_{ L^2_v}^2d t\right)^{1/2}d \Sigma(k) 
	 \leq    C_4 \norm{f_0}_{L_k^1L^2_v}.
\end{multline} 
%with the same constant $C_4$ as in \eqref{inong} by increasing $C_4$ if necessary.  
\end{proposition}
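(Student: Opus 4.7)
The plan is to adapt the regularization-plus-energy scheme of Subsection \ref{secvelh} to the linear equation \eqref{lincau}, but with only two values $m=1,2$ of the derivative order and no Gevrey factorials to track. Since the existence and spatial regularity bounds \eqref{1234}--\eqref{spreg} for $h$ are already provided by Proposition \ref{lemexi}, what remains is to propagate the $H^m_v$-smoothing through a short induction on $m\in\{1,2\}$.

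For each $m$ and each coordinate $v_j$ I would introduce the regularization
\[
\hat H_{m,\delta}(t,k,v):=\phi(t)^{m\varsigma}(1-\delta\partial_{v_j}^2)^{-1}\partial_{v_j}^m\hat h(t,k,v),
\]
combine it with the spatial regularizer $\Lambda_{\delta_1}$ from \eqref{regoper}, apply $\Lambda_{\delta_1}\phi^{m\varsigma}(1-\delta\partial_{v_j}^2)^{-1}\partial_{v_j}^m$ to \eqref{lincau}, and derive the obvious analogue of the equation preceding \eqref{tildereterm}, whose source terms are (a) the regularized $\partial_{v_j}^m\hat\Gamma(\hat g,\hat h)$, (b) the commutator with $\mathcal{L}$, (c) the time-weight term $\varsigma m\phi^{-1}\phi'\Lambda_{\delta_1}\hat H_{m,\delta}$, and (d) the transport commutator with $iv\cdot k$. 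Taking the $L^2_v$ inner product with $\Lambda_{\delta_1}\hat H_{m,\delta}$, integrating in time, using \eqref{rela} to extract dissipation, and taking square roots before summing in $k$ as in Subsection \ref{subeng}, I would treat source (b) exactly as in Lemma \ref{lemke++}; source (c) by the Sobolev interpolation of Lemma \ref{lemla}, trading $\phi^{-1}\|\hat H_{m,\delta}\|$ against a lower-order velocity derivative controlled by \eqref{spreg} (for $m=1$) or by the just-proved $m=1$ bound (for $m=2$); and source (d) by the analogue of \eqref{comest} combined with \eqref{spreg}. The nonlinear source (a) is handled by the Leibniz expansion of $\partial_{v_j}^m\hat\Gamma(\hat g,\hat h)$ as in Lemma \ref{lemke}: the leading term $\hat{\mathcal T}(\hat g,\partial_{v_j}^m\hat h,\mu^{1/2})$ is rewritten through the identity \eqref{leib} and absorbed using the smallness $\|g\|_{L^1_kL^\infty_T L^2_v}\leq C_4\varepsilon_0$ implied by \eqref{inong} with $\beta=0$ together with Lemmas \ref{lemtrip}--\ref{lemtl}; the remaining Leibniz terms carry at least one $\partial_{v_j}$ on $\hat g$ or on $\mu^{1/2}$, so they are estimated directly from hypothesis \eqref{inong} on $g$ combined with spatial regularity \eqref{spreg} of $h$.

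The main obstacle is the ordering of the induction: in the $m=2$ expansion the split where both $\hat g$ and $\hat h$ carry exactly one derivative produces a term requiring a bound on $\phi^{\varsigma}\|(a^{1/2})^w\partial_{v_j}\hat h\|$, which is precisely the $m=1$ conclusion \eqref{dees}. Thus the $m=1$ step must be closed cleanly first, by choosing $\varepsilon_0$ small enough that the prefactor $\eps+C\eps^{-1}\varepsilon_0$ generated from the $g$-smallness absorbs the dissipation of $\hat H_{1,\delta}$ on the left, leaving a clean bound by a universal constant times $\|f_0\|_{L^1_kL^2_v}$ on the right; the $m=2$ step then closes in the same manner with the $m=1$ bound inserted on the right-hand side. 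Finally, letting $\delta_1\to 0$ and $\delta\to 0$ and applying Fatou's lemma recovers the bound for $\partial_{v_j}^m\hat h$ itself; since $\|\partial_v^\beta\hat h\|_{L^2_v}\leq C\sum_{j=1}^3\|\partial_{v_j}^{|\beta|}\hat h\|_{L^2_v}$, summing over $j$ yields \eqref{dees} for every $\beta\in\mathbb Z_+^3$ with $|\beta|\leq 2$. The constant $C_4$ in \eqref{dees} is then determined, once $\varepsilon_0$ has been fixed, by the universal constants appearing in Lemmas \ref{lemtrip}--\ref{lemcom} and by the spatial-regularity constant $C_3$ of Proposition \ref{lemexi}.
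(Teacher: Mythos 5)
Your scheme is built from the right ingredients (the regularizers $\Lambda_{\delta_1}$ and $(1-\delta\partial_{v_j}^2)^{-1}$, the identity \eqref{leib}, Lemmas \ref{lemtrip}--\ref{lemtl}, and the interpolation of Lemma \ref{lemla}), but your induction runs in the opposite direction from the paper's, and this difference matters. The paper proves the $\abs\beta=2$ estimate \emph{first} (Lemma \ref{le2}) and then obtains $\abs\beta=1$ essentially for free by interpolating $\|\phi^{\varsigma}\partial_v^{\beta}\hat h\|^2\leq \tfrac12(\|\phi^{2\varsigma}\partial_v^{2\beta}\hat h\|^2+\|\hat h\|^2)$ (Lemma \ref{le1}). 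The obstacle you identify for $m=2$ --- that the cross term $\hat{\mathcal T}(\partial_{v_1}\hat g,\partial_{v_1}\hat h,\mu^{1/2})$ seems to need the $m=1$ conclusion --- is in fact avoidable: the paper controls the intermediate quantity via
\begin{equation*}
\norm{(a^{1/2})^w t^{\varsigma}(1-\delta\partial_{v_1}^2)^{-1}\partial_{v_1}\hat h}_{L^2_v}^2\leq C\norm{(a^{1/2})^w\hat h_{\delta}}_{L^2_v}^2+C\norm{(a^{1/2})^w\hat h}_{L^2_v}^2,
\end{equation*}
i.e.\ by interpolating between the order-two regularized unknown being estimated and the order-zero quantity from \eqref{1234}. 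This is precisely what lets the top-order case close autonomously.

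Your bottom-up route ($m=1$ first, then $m=2$) can be made to work, but you have not addressed its one genuine difficulty: the identity \eqref{leib} always inserts the second-order operator $(1-\delta\partial_{v_1}^2)$, so even in the $m=1$ step the correction terms contain $\delta\,\hat{\mathcal T}(\partial_{v_1}^{2}\hat g,\,(1-\delta\partial_{v_1}^2)^{-1}\partial_{v_1}\hat h,\,\mu^{1/2})$, which carries \emph{two} derivatives on $\hat g$ while only the weight $t^{\varsigma}$ is available; hypothesis \eqref{inong} controls $\sup_t t^{2\varsigma}\|\partial_v^2\hat g\|$, not $\sup_t t^{\varsigma}\|\partial_v^2\hat g\|$, and redistributing the weight onto $\hat h$ produces a non-integrable factor $t^{-2\varsigma}$. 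The fix is the integration-by-parts manipulation the paper uses at the end of its $\tilde J$ estimate: rewrite $\delta\,\hat{\mathcal T}(\partial_{v_1}^{2}\hat g,\cdot,\cdot)$ as $\delta^{1/2}\partial_{v_1}\hat{\mathcal T}(\partial_{v_1}\hat g,\cdot,\cdot)$ minus lower-order remainders, move the outer $\partial_{v_1}$ onto the test function where it is absorbed by the uniformly bounded operator $\delta^{1/2}\partial_{v_1}(1-\delta\partial_{v_1}^2)^{-1}$, and bound what remains using only $\sup_t t^{\varsigma}\|\partial_{v_1}\hat g\|$ and \eqref{1234}. With that step supplied, your $m=1$ estimate closes and the $m=2$ step then follows as you describe; without it, the $m=1$ energy estimate does not close under the stated hypotheses on $g$.
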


As for the proof of Proposition \ref{lemlin} above, in terms of Proposition \ref{lemexi} it suffices to focus on the proof of \eqref{dees} with $|\beta|=1$ and $2$. We proceed it through the following two lemmas.
%to prove Proposition \ref{lemlin}.

\begin{lemma}[$\abs\beta=2$] \label{le2}
Under the same conditions on $f_0$ and $g$ as in Proposition \ref{lemlin}, the estimate \eqref{dees}  holds for any $\beta\in\mathbb Z_+^3$ with  $\abs\beta=2$,  provided that $\eps_0>0$  is small enough.  
 \end{lemma}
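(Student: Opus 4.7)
The plan is to adapt the regularization-and-energy strategy of Subsection~\ref{secvelh} to the linear Cauchy problem \eqref{lincau}, but with the induction hypothesis \eqref{asonv} replaced by the combination of the low-regularity existence bound \eqref{1234}, the spatial smoothing \eqref{spreg} (for $0\leq N\leq 2$), the assumed bound \eqref{inong} on $g$ with $|\beta|\leq 2$, and the $|\beta|=1$ version of \eqref{dees}, which I will treat as having been established by a parallel but strictly simpler argument in a preceding lemma.

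By symmetry between coordinates $v_1,v_2,v_3$, it suffices to treat $\beta=(2,0,0)$. First I introduce the regularization
\[
\hat H_{2,\delta}(t,k,v) := t^{2\varsigma}(1-\delta\partial_{v_1}^2)^{-1}\partial_{v_1}^2 \hat h(t,k,v), \quad 0<\delta\ll 1,
\]
together with the second regularizer $\Lambda_{\delta_1}=\Lambda_{\delta_1,k}$ from \eqref{regoper}. Applying $\Lambda_{\delta_1} t^{2\varsigma}(1-\delta\partial_{v_1}^2)^{-1}\partial_{v_1}^2$ to the Fourier-transformed equation $(\partial_t+iv\cdot k-\mathcal L)\hat h=\hat\Gamma(\hat g,\hat h)$ yields
\begin{align*}
(\partial_t+iv\cdot k-\mathcal L)\Lambda_{\delta_1}\hat H_{2,\delta} &= \Lambda_{\delta_1} t^{2\varsigma}(1-\delta\partial_{v_1}^2)^{-1}\partial_{v_1}^2\hat\Gamma(\hat g,\hat h) \\
&\quad + [\Lambda_{\delta_1} t^{2\varsigma}(1-\delta\partial_{v_1}^2)^{-1}\partial_{v_1}^2,\ \mathcal L]\hat h \\
&\quad + 2\varsigma t^{-1}\Lambda_{\delta_1}\hat H_{2,\delta} + [iv\cdot k,\ \Lambda_{\delta_1} t^{2\varsigma}(1-\delta\partial_{v_1}^2)^{-1}\partial_{v_1}^2]\hat h.
\end{align*}
Taking the $L^2_v$-inner product with $\Lambda_{\delta_1}\hat H_{2,\delta}$, integrating in $t$, taking square roots, and integrating $d\Sigma(k)$, I obtain the linear analogue of \eqref{tildereterm}, with coercivity $\|(a^{1/2})^w\Lambda_{\delta_1}\hat H_{2,\delta}\|_{L^2_v}^2$ extracted from $\mathcal L$ via \eqref{rela} (up to a commutator $[\mathcal L,\Lambda_{\delta_1}]$ handled exactly as in the limit argument \eqref{lim}).

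The key step is to control the four right-hand side terms. For the $\Gamma(\hat g,\hat h)$-contribution, I expand by the Leibniz rule as in the proof of Lemma~\ref{lemke}, writing
\[
\partial_{v_1}^2\hat\Gamma(\hat g,\hat h)=\sum_{j=0}^{2}\sum_{p=0}^{j}\binom{2}{j}\binom{j}{p}\hat{\mathcal T}(\partial_{v_1}^{2-j}\hat g,\partial_{v_1}^{j-p}\hat h,\partial_{v_1}^p\mu^{1/2}).
\]
The boundary term $j=0$ (all derivatives on $\hat h$) is treated by the trick \eqref{leib}: one writes $\partial_{v_1}^2\hat h=(1-\delta\partial_{v_1}^2)(1-\delta\partial_{v_1}^2)^{-1}\partial_{v_1}^2\hat h$, transfers $(1-\delta\partial_{v_1}^2)$ onto $\mathcal T$ by the Leibniz rule and absorbs the resulting bounded operators via Lemmas~\ref{lemtrip}, \ref{lemtl}, and \ref{lemcomest}; the smallness of $\|g\|$ (bounded by $C_4\|f_0\|_{L^1_kL^2_v}\lesssim \varepsilon_0$ via \eqref{inong} and \eqref{smacon}) yields an absorbable coefficient $\varepsilon+C\varepsilon^{-1}\varepsilon_0$ in front of $\|(a^{1/2})^w\hat H_{2,\delta}\|_{L^2_TL^2_v}$. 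The boundary term $j=2$ (all derivatives on $\hat g$) produces $\hat{\mathcal T}(\partial_{v_1}^2\hat g,\hat h,\mu^{1/2})$ which is closed off by \eqref{inong} for $|\beta|=2$ on $g$ and \eqref{1234} for $h$. The intermediate term $j=1$ similarly uses \eqref{inong} with $|\beta|=1$ on $g$ and a bound on $\partial_{v_1}\hat h$, which is supplied by the (assumed) prior $|\beta|=1$ version of \eqref{dees}. The collision commutator $[\,\cdot\,,\mathcal L]\hat h$ is treated exactly as in Lemma~\ref{lemke++}, after replacing $\hat\Gamma(\hat f,\hat f)$ therein by $\Gamma(\sqrt\mu,\cdot)+\Gamma(\cdot,\sqrt\mu)$ applied to appropriate derivatives of $\hat h$.

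The remaining two terms are the genuinely $t$-singular ones. For $2\varsigma t^{-1}\Lambda_{\delta_1}\hat H_{2,\delta}$ I use the Sobolev interpolation as in Lemma~\ref{lemla}, absorbing a small multiple of $\|(a^{1/2})^w\hat H_{2,\delta}\|_{L^2_TL^2_v}$ and leaving a term controlled by $\int_0^T t^{-2\varsigma}\|(a^{1/2})^w\langle D_v\rangle^{-1}\hat H_{2,\delta}\|_{L^2_v}^2\,dt$, which reduces to the $\partial_{v_1}\hat h$-norm with weight $t^{2\varsigma}$, i.e.\ the $|\beta|=1$ estimate. For the transport commutator $[iv\cdot k,\Lambda_{\delta_1} t^{2\varsigma}(1-\delta\partial_{v_1}^2)^{-1}\partial_{v_1}^2]$, direct computation as in \eqref{comest} produces a term bounded by $Ct^{2\varsigma}\langle k\rangle^2\|\hat h\|_{L^2_v}+C\|\hat H_{2,\delta}\|_{L^2_v}$; the first piece is absorbed by the spatial regularity \eqref{spreg} with $N=2$, and the second by the $|\beta|=1$ bound via interpolation as in Lemma~\ref{lemcom}. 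I expect this transport commutator, which ties the velocity regularity to the spatial regularity (and thus forces the use of the spatial Gevrey index $\varsigma=(1+2s)/2s$), to be the main obstacle, since unlike the nonlinear analysis it must be closed without an induction hypothesis and using only the finite spatial regularity $N\leq 2$ available from \eqref{spreg}.

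Putting everything together and first sending $\delta_1\to 0$ (using the Fatou lemma and the limit identity \eqref{dct} for $[\mathcal L,\Lambda_{\delta_1}]$), then choosing $\varepsilon>0$ small and $\varepsilon_0>0$ small enough to absorb all terms carrying $\|(a^{1/2})^w\hat H_{2,\delta}\|$ or $\sup_t\|\hat H_{2,\delta}\|$ into the left-hand side, and finally letting $\delta\to 0$, one obtains the bound \eqref{dees} for $\partial_{v_1}^2\hat h$. Summing over the three coordinate directions and possibly enlarging $C_4$ yields the conclusion for all $|\beta|=2$.
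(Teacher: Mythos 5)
Your proposal follows the same regularization-and-energy strategy as the paper (the same $\hat h_{\delta}=t^{2\varsigma}(1-\delta\partial_{v_1}^2)^{-1}\partial_{v_1}^2\hat h$, the same $\Lambda_{\delta_1}$, the same four-term decomposition and the same Leibniz/$(1-\delta\partial_{v_1}^2)$ trick for the collision terms), but it is organized in the opposite logical order, and this is the one point you must be careful about. In the paper, Lemma \ref{le2} ($|\beta|=2$) is proved \emph{first}, without any $|\beta|=1$ input: every place where a first derivative of $\hat h$ appears (the intermediate Leibniz terms in $\tilde I$ and $\tilde J$, the leftover of the $t^{-1}$ interpolation, the transport commutator) is closed by interpolating the symbol $\eta_1(1+\delta\eta_1^2)^{-1}$ between $\eta_1^2(1+\delta\eta_1^2)^{-1}$ and $1$, i.e.\ by bounds of the form $\|(a^{1/2})^w t^{2\varsigma}(1-\delta\partial_{v_1}^2)^{-1}\partial_{v_1}\hat h\|_{L^2_v}^2\le C\|(a^{1/2})^w\hat h_{\delta}\|_{L^2_v}^2+C\|(a^{1/2})^w\hat h\|_{L^2_v}^2$, so that only \eqref{1234}, \eqref{spreg} and \eqref{inong} are used; the $|\beta|=1$ statement (Lemma \ref{le1}) is then obtained \emph{afterwards} as a one-line interpolation consequence of the $|\beta|=2$ and $|\beta|=0$ bounds. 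Your proposal instead invokes a previously established $|\beta|=1$ estimate in three places. As stated this would be circular if you meant the paper's Lemma \ref{le1}, since that lemma is downstream of the present one; however, the $|\beta|=1$ case can indeed be proved directly by the same energy scheme with $m=1$ (its $t^{-1}$ term and transport commutator close using only \eqref{1234} and \eqref{spreg} with $N=1$), so your ordering is logically sound provided you actually supply that separate first-order argument rather than cite it. The trade-off is that your route duplicates the energy argument at order one, whereas the paper's self-interpolation of $\hat h_{\delta}$ against $\hat h$ gets both orders from a single computation; otherwise the two proofs are term-by-term equivalent.
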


\begin{proof}
As in the previous discussions it suffices to  consider the case of $T\leq 1$. In such case, one has $\phi(t)=t.$   We introduce the regularization  $  \hat h_{\delta}$ by setting 
	\begin{equation}\label{hel}
		   \hat h_{\delta}(t,k,v)=t^{2\varsigma}  (1-\delta\partial_{v_1}^2)^{-1}  \partial_{v_1}^2  \hat h(t,k,v), 	\quad 0<\delta\ll 1,
\end{equation}
and  let $\Lambda_{\delta_1}$ be the regularization operator defined by  \eqref{regoper}. 
Observe   
		\begin{multline*}
		\big(\partial_t+iv\cdot k\big) \Lambda_{\delta_1} \hat h_{\delta}-\Lambda_{\delta_1}t^{ 2\varsigma}  (1-\delta\partial_{v_1}^2)^{-1}  \partial_{v_1} ^2  \mathcal L \hat h=\Lambda_{\delta_1}t^{ 2\varsigma}  (1-\delta\partial_{v_1})^{-1}  \partial_{v_1}^2\hat\Gamma \big({\hat g},~{\hat f}\big) \\
		+ 2\varsigma t^{-1} \Lambda_{\delta_1} \hat h_{\delta}+ [iv\cdot k,\ \Lambda_{\delta_1}  t^{2\varsigma}(1-\delta\partial_{v_1}^2)^{-1}  \partial_{v_1}^2]\hat h. 
			\end{multline*}
Then, we perform the similar energy estimates for 	$\Lambda_{\delta_1} \hat h_{\delta}$ as in the previous parts, to get 
 %\begin{equation}
 	\begin{align}
   & \int_{Z^3 } \sup_{0<t\leq 1}  \norm{\Lambda_{\delta_1}  \hat h_{\delta}(t,k)}_{ L_v^2}d \Sigma(k)   + \int_{\mathbb Z^3}\left(\int_0^1 \norm{(a^{1/2})^w \Lambda_{\delta_1} \hat h_{\delta}(t,k)}_{L^2_v}^2d t\right)^{1/2}d \Sigma(k) \notag \\
   & \leq C\int_{\mathbb Z^3}\left(\int_0^1   \big|\big(\Lambda_{\delta_1}t^{ 2\varsigma}  (1-\delta\partial_{v_1}^2)^{-1}  \partial_{v_1}^2  \hat\Gamma \big({\hat g},~{\hat h}\big)  ,\   \Lambda_{\delta_1} \hat h_{\delta}\big)_{L^2_v}\big| d t\right)^{1/2}d \Sigma(k)\notag \\
   &\quad+ C\int_{\mathbb Z^3}\left(\int_0^1  \big|\big(\big[\Lambda_{\delta_1}t^{2\varsigma}  (1-\delta\partial_{v_1})^{-1}  \partial_{v_1}^2, \ \mathcal L\big] \hat h,\   \Lambda_{\delta_1} \hat h_{\delta}\big)_{L^2_v}\big| d t\right)^{1/2}d \Sigma(k)\notag \\
   &\quad + C \int_{\mathbb Z^3}\left( \int_0^1 t^{-1} \norm{ \Lambda_{\delta_1} \hat h_{\delta}(t)}_{L^2_v}^2d t\right)^{1/2}d \Sigma(k)\notag \\
   &\quad+ C\int_{\mathbb Z^3}\left(\int_0^1   \big|\big(\big[iv\cdot k,\ \Lambda_{\delta_1}t^{ 2\varsigma}  (1-\delta\partial_{v_1}^2)^{-1}  \partial_{v_1}^2  \big] \hat h,\   \Lambda_{\delta_1} \hat h_{\delta}\big)_{L^2_v}\big| d t\right)^{1/2}d \Sigma(k).
   \label{eqhde} 
   \end{align}
 %\end{equation} 
In what follows we proceed through five steps to derive the upper bound for  those terms  on the right hand side.

\medskip
\noindent \underline{\it Step 1}.  We begin with the estimate of   the first term  on the right hand side of \eqref{eqhde}. We claim that
%\begin{equation}
	\begin{align}
		&\int_{\mathbb Z^3}\left(\int_0^1   \big|\big(\Lambda_{\delta_1}t^{ 2\varsigma}  (1-\delta\partial_{v_1}^2)^{-1}  \partial_{v_1}^2  \hat\Gamma \big({\hat g},~{\hat h}\big)  ,\   \Lambda_{\delta_1} \hat h_{\delta}\big)_{L^2_v}\big| d t\right)^{1/2}d \Sigma(k)\notag \\
		& \leq   \inner{\eps+CC_4\eps^{-1}\eps_0} \int_{\mathbb Z^3} \Big(\int_0^1    \norm{(a^{1/2})^w\hat h_{\delta}(t,k) }^2_{L_v^2}  d t\Big)^{1/2}d \Sigma(k)  + C  C_3C_4\eps^{-1} \eps_0  \norm{f_0}_{L_k^1L^2_v},\label{frest}
	\end{align}
%\end{equation}
where $C_3$ and $C_4$ are  the constants given in \eqref{spreg} and \eqref{inong}, respectively.   
Indeed, we follow the similar argument as for proving Lemma \ref{lemke} by letting    $m=2$ therein.   Precisely, with the notation $\mathcal T(g,h,w)$ defined by \eqref{matht},  similar to obtain \eqref{ijk}, 
we apply the Leibniz formula to write 
\begin{equation}\label{tildeijk}
	\int_{\mathbb Z^3}\left(\int_0^1   \big|\big(\Lambda_{\delta_1}t^{ 2\varsigma}  (1-\delta\partial_{v_1}^2)^{-1}  \partial_{v_1}^2  \hat\Gamma \big({\hat g},~{\hat h}\big)  ,\   \Lambda_{\delta_1} \hat h_{\delta}\big)_{L^2_v}\big| d t\right)^{1/2}d \Sigma(k)\leq \tilde I +\tilde J+\tilde K,
\end{equation}
with
 %\begin{eqnarray}
	\begin{align}
		\tilde I&=  \int_{\mathbb Z^3}\left(\int_0^1   \big|\big(\Lambda_{\delta_1}t^{2\varsigma}  (1-\delta\partial_{v_1}^2)^{-1} \hat  {\mathcal T}( \hat {g}, \ \partial_{v_1}^{2} \hat {h},  \  \mu^{1/2})
 ,\   \Lambda_{\delta_1} \hat h_{\delta}\big)_{L^2_v}\big| d t\right)^{1/2}d \Sigma(k)\notag\\
 &\quad  + \sum_{ p =1}^2 \begin{pmatrix}
      2    \\
      p  
\end{pmatrix}\int_{\mathbb Z^3}\left(\int_0^1   \big|\big(\Lambda_{\delta_1}t^{2\varsigma}  (1-\delta\partial_{v_1}^2)^{-1} \hat  {\mathcal T}( \hat {g}, \ \partial_{v_1}^{2- p } \hat {h},  \  \partial_{v_1}^{ p }\mu^{\frac{1}{2}})
 ,\   \Lambda_{\delta_1} \hat h_{\delta}\big)_{L^2_v}\big| d t\right)^{\frac{1}{2}}d \Sigma(k),\label{veti}\\
	\tilde J	&=  \int_{\mathbb Z^3}  \sum_{  0\leq p\leq 1}   \Big(\int_0^1   \big|\big(\Lambda_{\delta_1}t^{2\varsigma}  (1-\delta\partial_{v_1}^2)^{-1}\hat  {\mathcal T}(\partial_{v_1} \hat {g}, \ \partial_{v_1}^{1- p } \hat {h},  \ \partial_{v_1}^{ p } \mu^{\frac{1}{2}})
 ,\   \Lambda_{\delta_1} \hat h_{\delta}\big)_{L^2_v}\big| d t\Big)^{\frac{1}{2}}d \Sigma(k),\label{vetj}
 \end{align}
%\end{eqnarray}
and 
\begin{eqnarray}
	\begin{aligned}
 \tilde  K &= \int_{\mathbb Z^3}\left(\int_0^1   \big|\big(\Lambda_{\delta_1}t^{2\varsigma}  (1-\delta\partial_{v_1}^2)^{-1} \hat  {\mathcal T}( \partial_{v_1}^{2} \hat {g}, \ \hat {h},  \  \mu^{1/2})
 ,\   \Lambda_{\delta_1} \hat h_{\delta}\big)_{L^2_v}\big| d t\right)^{1/2}d \Sigma(k).	
 \label{vetk}
 \end{aligned}
\end{eqnarray}
We are going to estimate $\tilde I$, $\tilde J$ and $\tilde K$ as follows.

\smallskip
\noindent\underline{\bf Estimate on  $\tilde I$}. We first consider the first term on the right hand side of  $\tilde I$ in \eqref{veti}.   By using the formula \eqref{leib} for $m=2$, repeating the same argument as to estimate $I_1$ in \eqref{i1i2} and using the estimates \eqref{1234} and \eqref{inong}, we have
%\begin{eqnarray*}
\begin{align*}
		&\int_{\mathbb Z^3}\left(\int_0^1   \big|\big(\Lambda_{\delta_1}t^{ 2\varsigma}  (1-\delta\partial_{v_1}^2)^{-1}    \hat  {\mathcal T}( \hat {g}, \ \partial_{v_1}^{2} \hat {h},  \  \mu^{1/2})  ,\   \Lambda_{\delta_1} \hat h_{\delta}\big)_{L^2_v}\big| d t\right)^{1/2}d \Sigma(k)\\
		&\leq  \eps    \int_{\mathbb Z^3}\ \Big(\int_0^1  \norm{(a^{1/2})^w    \hat  h_{\delta}(t,k)}_{L^2_v}^2  d t\Big)^{1/2} d \Sigma(k)  +C C_4\eps^{-1} \eps_0\int_{\mathbb Z^3} \Big(\int_0^1  \norm{(a^{1/2})^w \hat  h_\delta }^2_{L_v^2}  d t\Big)^{\frac{1}{2}}d \Sigma(k) \\
   	&\quad+C C_3C_4  \eps^{-1} \eps_0\norm{f_0}_{L_k^1L_v^2}   + C  \sum_{0\leq p\leq 1}\int_{\mathbb Z^3}\bigg[\int_0^1 \delta \\
   	&\qquad\quad \times  \big|\big(\Lambda_{\delta_1}t^{ 2\varsigma}  (1-\delta\partial_{v_1}^2)^{-1} \hat {\mathcal T}(\partial_{v_1}^{1-p}  \hat  g,\ \partial_{v_1}(1-\delta\partial_{v_1}^2)^{-1}\partial_{v_1}^{2}  \hat h, \ \partial_{v_1}^{ p }\mu^{\frac{1}{2}})
 ,\   \Lambda_{\delta_1} \hat h_{\delta}\big)_{L^2_v}\big| d t\bigg]^{\frac{1}{2}}d \Sigma(k),
\end{align*}
%\end{eqnarray*}
where $\eps>0$ is an arbitrarily small constant. 
Furthermore, as for the last term on the right hand side of  the above estimate, we use
\begin{multline*}
	 \hat {\mathcal T}(\partial_{v_1}^{1-p}  \hat  g,\ \partial_{v_1}(1-\delta\partial_{v_1}^2)^{-1}\partial_{v_1}^{2}  \hat h, \ \partial_{v_1}^{ p }\mu^{\frac{1}{2}})=\partial_{v_1}\hat {\mathcal T}(\partial_{v_1}^{1-p}  \hat  g,\  (1-\delta\partial_{v_1}^2)^{-1}\partial_{v_1}^{2}  \hat h, \ \partial_{v_1}^{ p }\mu^{\frac{1}{2}})\\
 - \hat {\mathcal T}(\partial_{v_1}^{2-p}  \hat  g,\  (1-\delta\partial_{v_1}^2)^{-1}\partial_{v_1}^{2}  \hat h, \ \partial_{v_1}^{ p }\mu^{\frac{1}{2}})- \hat {\mathcal T}(\partial_{v_1}^{1-p}  \hat  g,\  (1-\delta\partial_{v_1}^2)^{-1}\partial_{v_1}^{2}  \hat h, \ \partial_{v_1}^{ p+1 }\mu^{\frac{1}{2}}),
\end{multline*}
so as to obtain, by observing that $\delta^{1/2} (1-\delta\partial_{v_1}^2)^{-1}\partial_{v_1}$ is uniformly bounded on $L_v^2$ with respect to $\delta$, 
%\begin{eqnarray*}
\begin{align*}
		& \sum_{0\leq p\leq 1}\int_{\mathbb Z^3}\bigg[\int_0^1 \delta \\
   	&\qquad\quad \times  \big|\big(\Lambda_{\delta_1}t^{ 2\varsigma}  (1-\delta\partial_{v_1}^2)^{-1} \hat {\mathcal T}(\partial_{v_1}^{1-p}  \hat  g,\ \partial_{v_1}(1-\delta\partial_{v_1}^2)^{-1}\partial_{v_1}^{2}  \hat h, \ \partial_{v_1}^{ p }\mu^{\frac{1}{2}})
 ,\   \Lambda_{\delta_1} \hat h_{\delta}\big)_{L^2_v}\big| d t\bigg]^{\frac{1}{2}}d \Sigma(k)\\
		&\leq    \eps    \int_{\mathbb Z^3}\ \Big(\int_0^1  \norm{(a^{1/2})^w    \hat  h_{\delta}(t,k)}_{L^2_v}^2  d t\Big)^{1/2} d \Sigma(k)+   C C_3C_4  \eps^{-1} \eps_0 \norm{f_0}_{L_k^1L_v^2} \\
		&\quad  +C  C_4 \eps^{-1}  \eps_0 \int_{\mathbb Z^3} \Big(\int_0^1  t^{2\varsigma}\norm{(a^{1/2})^w \delta^{1/2}(1-\delta\partial_{v_1}^2)^{-1}\partial_{v_1}^{2}  \hat h}^2_{L_v^2}  d t\Big)^{1/2}d \Sigma(k) \\
   	%&\quad \\
   	&\leq  \big(  \eps+CC_4  \eps^{-1}  \eps_0\big)    \int_{\mathbb Z^3}\ \Big(\int_0^1  \norm{(a^{1/2})^w    \hat  h_{\delta}(t,k)}_{L^2_v}^2  d t\Big)^{1/2} d \Sigma(k)+   CC_3C_4 \eps^{-1} \eps_0 \norm{f_0}_{L_k^1L_v^2} ,
\end{align*}
%\end{eqnarray*}
where the last inequality holds true because
\begin{multline*}
		t^{2\varsigma}\norm{(a^{1/2})^w \delta^{1/2}(1-\delta\partial_{v_1}^2)^{-1}\partial_{v_1}^{2}  \hat h}^2_{L_v^2}\\
		=  \norm{(a^{1/2})^w t^{2\varsigma} (1-\delta\partial_{v_1}^2)^{-1}\partial_{v_1}^{2}  \hat h}_{L_v^2}  \times \norm{(a^{1/2})^w \delta (1-\delta\partial_{v_1}^2)^{-1}\partial_{v_1}^{2}  \hat h}_{L_v^2}\\
		\leq    C\norm{(a^{1/2})^w   \hat h_{\delta}}_{L_v^2}   \norm{(a^{1/2})^w   \hat h}_{L_v^2}.
	\end{multline*}
Now, we combine the above estimates to conclude, for any $\eps>0,$ 
%\begin{equation}
	\begin{align}
		&\int_{\mathbb Z^3}\left(\int_0^1   \big|\big(\Lambda_{\delta_1}t^{ 2\varsigma}  (1-\delta\partial_{v_1}^2)^{-1}  \partial_{v_1}^2  \hat  {\mathcal T}( \hat {g}, \ \partial_{v_1}^{2} \hat {h},  \  \mu^{1/2})  ,\   \Lambda_{\delta_1} \hat h_{\delta}\big)_{L^2_v}\big| d t\right)^{1/2}d \Sigma(k)\notag \\
		&\leq   \big( \eps+ CC_4 \eps^{-1}  \eps_0\big)      \int_{\mathbb Z^3}\ \Big(\int_0^1  \norm{(a^{1/2})^w    \hat  h_{\delta}(t,k)}_{L^2_v}^2  d t\Big)^{1/2} d \Sigma(k)   +CC_3C_4 \eps^{-1} \eps_0 \norm{f_0}_{L_k^1L_v^2}.
		\label{tii1e}
	\end{align}
%\end{equation}
Next, we deal with the second term on the right hand side of  $\tilde I$ in \eqref{veti}. By direct computations,  it holds that
%\begin{equation}
\begin{align}
	 &\sum_{ p =1}^2 \begin{pmatrix}
      2    \\
      p  
\end{pmatrix}\int_{\mathbb Z^3}\left(\int_0^1   \big|\big(\Lambda_{\delta_1}t^{2\varsigma}  (1-\delta\partial_{v_1}^2)^{-1} \hat  {\mathcal T}( \hat {g}, \ \partial_{v_1}^{2- p } \hat h,  \  \partial_{v_1}^{ p }\mu^{\frac{1}{2}})
 ,\   \Lambda_{\delta_1} \hat h_{\delta}\big)_{L^2_v}\big| d t\right)^{\frac{1}{2}}d \Sigma(k)\notag\\
& \leq   \eps\int_{\mathbb Z^3} \Big(\int_0^1    \norm{(a^{1/2})^w\hat h_{\delta}(t,k) }^2_{L_v^2}  d t\Big)^{1/2}d \Sigma(k)  + C C_3C_4 \eps^{-1} \eps_0   \norm{f_0}_{L_k^1L^2_v}\notag \\
 &\quad +C \int_{\mathbb Z^3}\left(\int_0^1   \big|\big(\Lambda_{\delta_1}t^{2\varsigma}  (1-\delta\partial_{v_1}^2)^{-1} \hat  {\mathcal T}( \hat {g}, \ \partial_{v_1}  \hat h,  \  \partial_{v_1} \mu^{\frac{1}{2}})
 ,\   \Lambda_{\delta_1} \hat h_{\delta}\big)_{L^2_v}\big| d t\right)^{\frac{1}{2}}d \Sigma(k).
 \label{seti}
 \end{align}
%\end{equation}
To control the last term in the above inequality, we use the Leibniz formula to write, similar as in \eqref{leib}, 
\begin{multline*}
	\hat  {\mathcal T}(\hat {g}, \ \partial_{v_1}  \hat {h},  \   \partial_{v_1} \mu^{\frac{1}{2}})=(1-\delta\partial_{v_1}^2) \hat  {\mathcal T}(\hat {g}, \ (1-\delta\partial_{v_1}^2)^{-1}\partial_{v_1}  \hat {h},  \  \partial_{v_1}  \mu^{\frac{1}{2}})\\+
	 \delta \sum_{ j=1}^{2}  \sum_{p=0}^{j}\begin{pmatrix}
      2    \\
      j  
\end{pmatrix} \begin{pmatrix}
      j    \\
      p  
\end{pmatrix}\hat {\mathcal T}(\partial_{v_1}^{j- p } \hat  g,\ \partial_{v_1}^{2-j}(1-\delta\partial_{v_1}^2)^{-1}\partial_{v_1}   \hat h, \ \partial_{v_1}^{ p+1 }\mu^{1/2}).
\end{multline*}
As a result,   we may repeat the same argument  as for treating \eqref{leib} with $m=1$ by observing that the operator $\delta \partial_{v_1}^{2-j}(1-\delta\partial_{v_1}^2)^{-1}\partial_{v_1}$, $1\leq j\leq 2$, is uniformly bounded on $L_v^2$ with respect to $\delta$. Therefore,  by virtue of \eqref{1234}, \eqref{smacon} and \eqref{inong}, we conclude that  
%\begin{equation}
\begin{align}
&	\int_{\mathbb Z^3}\left(\int_0^1   \big|\big(\Lambda_{\delta_1}t^{2\varsigma}  (1-\delta\partial_{v_1}^2)^{-1} \hat  {\mathcal T}( \hat {g}, \ \partial_{v_1}  \hat h,  \  \partial_{v_1} \mu^{\frac{1}{2}})
 ,\   \Lambda_{\delta_1} \hat h_{\delta}\big)_{L^2_v}\big| d t\right)^{\frac{1}{2}}d \Sigma(k) \notag \\
 &\leq \eps\int_{\mathbb Z^3} \Big(\int_0^1    \norm{(a^{1/2})^w\hat h_{\delta}(t,k) }^2_{L_v^2}  d t\Big)^{1/2}d \Sigma(k) +C C_3C_4 \eps^{-1}\eps_0  \norm{f_0}_{L_k^1L^2_v} \notag \\
 &\quad +CC_4\eps^{-1}\eps_0\int_{\mathbb Z^3} \Big(\int_0^1    \norm{(a^{1/2})^w t^{2\varsigma} (1-\delta\partial_{v_1}^2)^{-1}\partial_{v_1}\hat h (t,k) }^2_{L_v^2}  d t\Big)^{1/2}d \Sigma(k) \notag \\
& \leq \inner{\eps+CC_4\eps^{-1}\eps_0}\int_{\mathbb Z^3} \Big(\int_0^1    \norm{(a^{1/2})^w\hat h_{\delta}(t,k) }^2_{L_v^2}  d t\Big)^{1/2}d \Sigma(k) +C C_3C_4 \eps^{-1}\eps_0  \norm{f_0}_{L_k^1L^2_v},
\label{fe2}
 \end{align}
%\end{equation}
where in the last inequality we have used the fact that 
\begin{equation*}%\label{lies}
	\begin{aligned}
		 &\norm{(a^{1/2})^w t^{2\varsigma} (1-\delta\partial_{v_1}^2)^{-1}\partial_{v_1}\hat h  }^2_{L_v^2}\leq C\norm{(a^{1/2})^w  \hat h_{\delta}  }^2_{L_v^2}+C \norm{(a^{1/2})^w  \hat h  }^2_{L_v^2}.
	\end{aligned}
\end{equation*}
Substituting \eqref{fe2} into \eqref{seti}, we obtain
%\begin{eqnarray*}
\begin{align*}
	 &\sum_{ p =1}^2 \begin{pmatrix}
      2    \\
      p  
\end{pmatrix}\int_{\mathbb Z^3}\left(\int_0^1   \big|\big(\Lambda_{\delta_1}t^{2\varsigma}  (1-\delta\partial_{v_1}^2)^{-1} \hat  {\mathcal T}( \hat {g}, \ \partial_{v_1}^{2- p } \hat h,  \  \partial_{v_1}^{ p }\mu^{\frac{1}{2}})
 ,\   \Lambda_{\delta_1} \hat h_{\delta}\big)_{L^2_v}\big| d t\right)^{\frac{1}{2}}d \Sigma(k)\\
& \leq \inner{\eps+CC_4\eps^{-1}\eps_0}\int_{\mathbb Z^3} \Big(\int_0^1    \norm{(a^{1/2})^w\hat h_{\delta}(t,k) }^2_{L_v^2}  d t\Big)^{1/2}d \Sigma(k) +C C_3C_4 \eps^{-1}\eps_0  \norm{f_0}_{L_k^1L^2_v}.
 \end{align*}
%\end{eqnarray*}
This together with \eqref{tii1e} give the estimate on $\tilde I$ in \eqref{veti} as   
\begin{equation*}%\label{tildei}
	\tilde I\leq \inner{\eps+CC_4\eps^{-1}\eps_0}\int_{\mathbb Z^3} \Big(\int_0^1    \norm{(a^{1/2})^w\hat h_{\delta}(t,k) }^2_{L_v^2}  d t\Big)^{1/2}d \Sigma(k) +C C_3C_4 \eps^{-1}\eps_0  \norm{f_0}_{L_k^1L^2_v},
\end{equation*}
where  $\eps>0$ is an arbitrarily small constant.

\medskip
\noindent\underline{\bf Estimate on  $\tilde J$}. Recall that $\tilde J$ is given in \eqref{vetj}. Following a similar argument as that for proving \eqref{j}, we can verify directly  that
%\begin{equation}
		\begin{align}
		\tilde  J & \leq \eps    \int_{\mathbb Z^3}\ \Big(\int_0^1  \norm{(a^{1/2})^w    \hat  h_{\delta}(t,k)}_{L^2_v}^2  d t\Big)^{1/2} d \Sigma(k) + CC_3C_4 \eps^{-1} \eps_0   \norm{f_0}_{L_k^1L^2_v}\notag \\
		&\quad +\int_{\mathbb Z^3}    \Big(\int_0^1   \big|\big(\Lambda_{\delta_1}t^{2\varsigma}  (1-\delta\partial_{v_1}^2)^{-1}\hat  {\mathcal T}(\partial_{v_1} \hat {g}, \ \partial_{v_1}  \hat {h},  \   \mu^{\frac{1}{2}})
 ,\   \Lambda_{\delta_1} \hat h_{\delta}\big)_{L^2_v}\big| d t\Big)^{\frac{1}{2}}d \Sigma(k).
 \label{j1}
	\end{align}
%\end{equation}
It remains to treat   the last term in \eqref{j1}.   Repeating  the argument for proving  \eqref{fe2} gives
%\begin{eqnarray*}
\begin{align*}
	&	\int_{\mathbb Z^3}    \Big(\int_0^1   \big|\big(\Lambda_{\delta_1}t^{2\varsigma}  (1-\delta\partial_{v_1}^2)^{-1}\hat  {\mathcal T}(\partial_{v_1} \hat {g}, \ \partial_{v_1}  \hat {h},  \   \mu^{\frac{1}{2}})
 ,\   \Lambda_{\delta_1} \hat h_{\delta}\big)_{L^2_v}\big| d t\Big)^{\frac{1}{2}}d \Sigma(k)\\
 &\leq\eps\int_{\mathbb Z^3} \Big(\int_0^1    \norm{(a^{1/2})^w\hat h_{\delta}(t,k) }^2_{L_v^2}  d t\Big)^{1/2}d \Sigma(k) +C C_3C_4 \eps^{-1}\eps_0  \norm{f_0}_{L_k^1L^2_v}\\
 &\quad +CC_4\eps^{-1}\eps_0\int_{\mathbb Z^3} \Big(\int_0^1    \norm{(a^{1/2})^w t^{\varsigma} (1-\delta\partial_{v_1}^2)^{-1}\partial_{v_1}\hat h (t,k) }^2_{L_v^2}  d t\Big)^{1/2}d \Sigma(k)\\
 &\quad + C\int_{\mathbb Z^3}    \Big(\int_0^1  \delta  \big|\big(\Lambda_{\delta_1}t^{2\varsigma}  (1-\delta\partial_{v_1}^2)^{-1} \hat {\mathcal T}(\partial_{v_1}^{3} \hat  g,\   (1-\delta\partial_{v_1}^2)^{-1}\partial_{v_1}   \hat h, \  \mu^{1/2})
 ,\   \Lambda_{\delta_1} \hat h_{\delta}\big)_{L^2_v}\big| d t\Big)^{\frac{1}{2}}d \Sigma(k)\\
&\leq \inner{\eps+CC_4\eps^{-1}\eps_0}\int_{\mathbb Z^3} \Big(\int_0^1    \norm{(a^{1/2})^w\hat h_{\delta}(t,k) }^2_{L_v^2}  d t\Big)^{1/2}d \Sigma(k) +C C_3C_4 \eps^{-1}\eps_0  \norm{f_0}_{L_k^1L^2_v},
\end{align*}
%\end{eqnarray*}
where in the last inequality we have used the following two  estimates that
\begin{eqnarray*}
	\begin{aligned}
		  \norm{(a^{1/2})^w t^{\varsigma} (1-\delta\partial_{v_1}^2)^{-1}\partial_{v_1}\hat h  }^2_{L_v^2}\leq C\norm{(a^{1/2})^w  \hat h_{\delta}  }^2_{L_v^2}+C \norm{(a^{1/2})^w  \hat h  }^2_{L_v^2}, 	\end{aligned}
\end{eqnarray*}
 and that
\begin{multline*}
	\int_{\mathbb Z^3}    \Big(\int_0^1  \delta  \big|\big(\Lambda_{\delta_1}t^{2\varsigma}  (1-\delta\partial_{v_1}^2)^{-1} \hat {\mathcal T}(\partial_{v_1}^{3} \hat  g,\   (1-\delta\partial_{v_1}^2)^{-1}\partial_{v_1}   \hat h, \  \mu^{1/2})
 ,\   \Lambda_{\delta_1} \hat h_{\delta}\big)_{L^2_v}\big| d t\Big)^{\frac{1}{2}}d \Sigma(k)\\
 \leq  \eps    \int_{\mathbb Z^3}\ \Big(\int_0^1  \norm{(a^{1/2})^w    \hat  h_{\delta}(t,k)}_{L^2_v}^2  d t\Big)^{1/2} d \Sigma(k) + C C_3C_4\eps^{-1} \eps_0 \norm{f_0}_{L_k^1L^2_v},
\end{multline*}
which follows from the formula that
\begin{multline*}
	\delta\hat {\mathcal T}(\partial_{v_1}^{3} \hat  g,\   (1-\delta\partial_{v_1}^2)^{-1}\partial_{v_1}   \hat h, \  \mu^{1/2})=\delta^{1/2}	\partial_{v_1}\hat {\mathcal T}(\partial_{v_1}^{2} \hat  g,\  \delta^{1/2} (1-\delta\partial_{v_1}^2)^{-1}\partial_{v_1}   \hat h, \  \mu^{1/2})\\
	-	\hat {\mathcal T}(\partial_{v_1}^{2} \hat  g,\  \delta (1-\delta\partial_{v_1}^2)^{-1}\partial_{v_1}^2   \hat h, \  \mu^{1/2})-	\hat {\mathcal T}(\partial_{v_1}^{2} \hat  g,\   \delta(1-\delta\partial_{v_1}^2)^{-1}\partial_{v_1}   \hat h, \ \partial_{v_1}  \mu^{1/2}).
\end{multline*}
Now, we substitute the above estimate into \eqref{j1} to conclude 
\begin{equation*}
%\label{tildej}
\tilde J\leq 	\inner{\eps+CC_4\eps^{-1}\eps_0}\int_{\mathbb Z^3} \Big(\int_0^1    \norm{(a^{1/2})^w\hat h_{\delta}(t,k) }^2_{L_v^2}  d t\Big)^{1/2}d \Sigma(k) +C C_3C_4 \eps^{-1}\eps_0  \norm{f_0}_{L_k^1L^2_v}.
\end{equation*}

\medskip
\noindent\underline{\bf Estimate on  $\tilde K$}.  Recall that $\tilde K$ is given in \eqref{vetk}.   Similar to \eqref{k}, we have, using the estimates  \eqref{smacon}-\eqref{inong}, 
\begin{eqnarray*}
	\begin{aligned}
	\tilde K  \leq \eps\int_{\mathbb Z^3} \Big(\int_0^1    \norm{(a^{1/2})^w\hat h_{\delta}(t,k) }^2_{L_v^2}  d t\Big)^{1/2}d \Sigma(k)  + CC_3C_4 \eps^{-1} \eps_0   \norm{f_0}_{L_k^1L^2_v}. 
	\end{aligned}
\end{eqnarray*}
Finally, we can substitute all the above estimates on $\tilde I$, $\tilde J$ and $\tilde K$ to \eqref{tildeijk} so as to conclude the desired estimate \eqref{frest}.
 
 \medskip
 \noindent \underline{\it Step 2}.  In this step we treat the second term on the right hand side of \eqref{eqhde}. We claim that  
 %\begin{equation}
 	\begin{align}
 		&\varlimsup_{\delta_1\rightarrow 0}\int_{\mathbb Z^3}\left(\int_0^1  \big|\big(\big[\Lambda_{\delta_1}t^{2\varsigma}  (1-\delta\partial_{v_1})^{-1}  \partial_{v_1}^2, \ \mathcal L\big] \hat h,\   \Lambda_{\delta_1} \hat h_{\delta}\big)_{L^2_v}\big| d t\right)^{1/2}d \Sigma(k)\notag \\
 		& \leq  \eps \int_{\mathbb Z^3} \Big(\int_0^1    \norm{(a^{1/2})^w\hat h_{\delta}(t,k) }^2_{L_v^2}  d t\Big)^{1/2}d \Sigma(k)  + C   C_3\eps^{-1}    \norm{f_0}_{L_k^1L^2_v},\label{lide}
 	\end{align}
%\end{equation}
with the constant $C_3$ given in \eqref{spreg}. Indeed,  the above estimate follows from the similar arguments as those in the proof of Lemma \ref{lemke++} by letting $m=2$ therein  as well as in the previous step 1 with slight modifications.  We omit the details for brevity. 
 
  \medskip
\noindent \underline{\it Step 3.}  As for the third term on the right hand side of \eqref{eqhde}, we use the argument for proving    Lemma \ref{lemla} to get
%\begin{equation}
 \begin{align}
 	&\int_{\mathbb Z^3}\left( \int_0^1 t^{-1} \norm{ \Lambda_{\delta_1} \hat h_{\delta}(t)}_{L^2_v}^2d t\right)^{1/2}d \Sigma(k)\notag \\
    			& \leq \eps       \int_{\mathbb Z^3}\left(\int_0^1  \norm{ (a^{1/2})^w  \hat h_{\delta}}_{L^2_v}^2d t\right)^{\frac12}d \Sigma(k) +
 		\int_{\mathbb Z^3}\left(\int_0^1 t^{-2 \varsigma }\norm{\comi{D_{v}}^{\frac{s}{1+2s}-1} \hat h_{\delta}}_{L^2_v}^2d t\right)^{\frac12}d \Sigma(k)\notag \\
 		  &\leq  \eps \int_{\mathbb Z^3} \Big(\int_0^1    \norm{(a^{1/2})^w\hat h_{\delta}(t,k) }^2_{L_v^2}  d t\Big)^{1/2}d \Sigma(k)  + C   C_3\eps^{-1}    \norm{f_0}_{L_k^1L^2_v},\label{tmin}
 		  \end{align}
 %\end{equation}
where in the last inequality we have used the fact that,  in view of the representation \eqref{hel} of $\hat h_\delta,$  \begin{eqnarray*}
 	\begin{aligned}
 		&\int_{\mathbb Z^3}\left(\int_0^1 t^{-2 \varsigma }\norm{\comi{D_{v}}^{\frac{s}{1+2s}-1} \hat h_{\delta}}_{L^2_v}^2d t\right)^{\frac12}d \Sigma(k)\\
 		&\leq \eps  \int_{\mathbb Z^3}\left(\int_0^1  \norm{(a^{1/2})^w\hat h_{\delta}}_{L^2_v}^2d t\right)^{\frac12}d \Sigma(k)+C\eps^{-1}\int_{\mathbb Z^3}\left(\int_0^1  \norm{(a^{1/2})^w \hat h}_{L^2_v}^2d t\right)^{\frac12}d \Sigma(k).
 	\end{aligned}
 \end{eqnarray*}

\medskip
\noindent\underline{\it Step 4.}  It remains to deal with the last term on the right hand side of \eqref{eqhde}.  Using \eqref{comest} for $m=2$ gives
 \begin{multline*}
 	\norm{[iv\cdot k,\ \Lambda_{\delta_1}t^{2\varsigma}  (1-\delta\partial_{v_1}^2)^{-1}  \partial_{v_1} ^2] \hat h}_{L_v^2}\leq C   t^{2\varsigma}\comi k\norm{   (1-\delta\partial_{v_1}^2)^{-1}  \partial_{v_1}    \hat h}_{L_v^2}\\
 \leq C   t^{2\varsigma}\comi k^2\norm{       \hat h}_{L_v^2}+C  \norm{ \hat h_\delta}_{L_v^2}.
 \end{multline*}
 Then, it holds that
% \begin{equation}
 	\begin{align}
 		&\int_{\mathbb Z^3}\left(\int_0^1  \big|\big(\big[\Lambda_{\delta_1}t^{\varsigma}  (1-\delta\partial_{v_1})^{-1}  \partial_{v_1}, \ \mathcal L\big] \hat h,\   \Lambda_{\delta_1} \hat h_{\delta}\big)_{L^2_v}\big| d t\right)^{1/2}d \Sigma(k)\notag \\
 		& \leq  C \int_{\mathbb Z^3}\comi k^2 \left(\int_0^1  t^{4\varsigma}\norm{ (a^{1/2})^w\hat h (t,v)}_{L^2_v}^2  d t\right)^{\frac{1}{2}}d \Sigma(k)+ C \int_{\mathbb Z^3}\left(\int_0^1  \norm{\hat h_{\delta}(t,k)}_{L^2_v}^2  d t\right)^{\frac{1}{2}}d \Sigma(k)\notag \\
 		&\leq  \eps \int_{\mathbb Z^3} \Big(\int_0^1    \norm{(a^{1/2})^w\hat h_{\delta}(t,k) }^2_{L_v^2}  d t\Big)^{1/2}d \Sigma(k)  + C   C_3\eps^{-1}    \norm{f_0}_{L_k^1L^2_v},  \label{covk}
 	\end{align}
%\end{equation}
where in the last inequality we have used  \eqref{spreg} and \eqref{tmin}. 
 
\medskip
\noindent\underline{\it Step 5.} 
 Finally we substitute the estimates \eqref{frest},  \eqref{lide}, \eqref{tmin} and \eqref{covk} into \eqref{eqhde} and let $\delta_1\rightarrow 0$. Choosing  $\eps>0$ small enough, it follows that  
\begin{multline*}
 	\int_{Z^3 } \sup_{0<t\leq 1}  \norm{   \hat h_{\delta}(t,k)}_{ L_v^2}d \Sigma(k)   \\
 	+ \int_{\mathbb Z^3}\left(\int_0^1 \norm{(a^{1/2})^w \hat h_{\delta}(t,k)}_{L^2_v}^2d t\right)^{1/2}d \Sigma(k)\leq  C  C_3 C_4  \eps_0 \norm{f_0}_{L_k^1L^2_v}+C  C_3   \norm{f_0}_{L_k^1L^2_v}.
 \end{multline*}
 Moreover, letting $\delta\rightarrow 0$, we further obtain that, in view of the definition \eqref{hel} of $\hat h_\delta,$
 \begin{multline*}
 		 \int_{\mathbb Z^3} \sup_{0<t\leq 1} t^{2\varsigma   }\norm{  \partial_{v_1}^2 \hat  h(t,k)}_{ L^2_v} d \Sigma(k) 
	 + \int_{\mathbb Z^3}	\left(\int_0^1 t ^{4\varsigma  }\norm{(a^{1/2})^w  \partial_{v_1}^2 \hat  h(t,k)}_{ L^2_v}^2d t\right)^{\frac12}d \Sigma(k) \\ 
	 \leq  C  C_3 C_4  \eps_0 \norm{f_0}_{L_k^1L^2_v}+C  C_3   \norm{f_0}_{L_k^1L^2_v}.
	 \end{multline*} 
Notice that  the above estimate still holds with $\partial_{v_1}^2$ replaced by $\partial_{v_2}^2$
  or $\partial_{v_3}^2$.  Then, we conclude that for any $\abs\beta=2$,
\begin{multline}\label{tes}
	 \int_{\mathbb Z^3} \sup_{0<t\leq 1} t^{ \varsigma \abs\beta  }\norm{  \partial_{v}^\beta \hat  h(t,k)}_{ L^2_v} d \Sigma(k)
	  + \int_{\mathbb Z^3}	\left(\int_0^1 t ^{2\varsigma\abs\beta  }\norm{(a^{1/2})^w  \partial_{v}^\beta \hat  h(t,k)}_{ L^2_v}^2d t\right)^{\frac12}d \Sigma(k)\\ 
	 \leq  C  C_3 C_4  \eps_0 \norm{f_0}_{L_k^1L^2_v}+C  C_3   \norm{f_0}_{L_k^1L^2_v}.
\end{multline}
Meanwhile, the case of $T\geq 1$ can be treated in a similar way by combining the  above estimate for $h|_{t=1}$. Thus the  desired estimate of Lemma  \ref{le2}  follows,  provided that $\eps_0>0$ is small enough and  $C_4$ is chosen large enough such that $C_4>4(C+1)C_3$ with $C$ the constant in \eqref{tes}.  The proof of Lemma \ref{le2} is completed.
\end{proof}

\begin{lemma}[$\abs\beta=1$]\label{le1}  
Under the same conditions on $f_0$ and $g$ as in Proposition \ref{lemlin}, the estimate \eqref{dees}  holds for any $\beta\in\mathbb Z_+^3$ with $\abs\beta=1$, provided that $\eps_0$  is small enough.  
 \end{lemma}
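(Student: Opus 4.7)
The plan is to imitate the argument of Lemma \ref{le2}, but with the regularization built on a single velocity derivative rather than two. As before, it suffices to treat $0<T\leq 1$ (so $\phi(t)=t$); the case $T\geq 1$ follows by restarting from the bound at $t=1$. For $|\beta|=1$ it is enough to handle $\beta=(1,0,0)$ (the other components being identical), so I introduce the regularization
\begin{equation*}
\hat h_{\delta}(t,k,v):=t^{\varsigma}\,(1-\delta\partial_{v_1}^2)^{-1}\partial_{v_1}\hat h(t,k,v),\qquad 0<\delta\ll 1,
\end{equation*}
together with the operator $\Lambda_{\delta_1}$ defined in \eqref{regoper}. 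Applying $\Lambda_{\delta_1}t^{\varsigma}(1-\delta\partial_{v_1}^2)^{-1}\partial_{v_1}$ to the linear equation \eqref{lincau} yields, analogously to \eqref{eqhde}, an energy identity for $\Lambda_{\delta_1}\hat h_{\delta}$ whose right-hand side splits into four contributions: (i) the nonlinear term $\Lambda_{\delta_1}t^{\varsigma}(1-\delta\partial_{v_1}^2)^{-1}\partial_{v_1}\hat\Gamma(\hat g,\hat h)$; (ii) the commutator $[\Lambda_{\delta_1}t^{\varsigma}(1-\delta\partial_{v_1}^2)^{-1}\partial_{v_1},\mathcal L]\hat h$; (iii) the singular factor $\varsigma t^{-1}\Lambda_{\delta_1}\hat h_{\delta}$; and (iv) the transport commutator $[iv\cdot k,\Lambda_{\delta_1}t^{\varsigma}(1-\delta\partial_{v_1}^2)^{-1}\partial_{v_1}]\hat h$.

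For (i) I apply the Leibniz rule $\partial_{v_1}\Gamma(\hat g,\hat h)=\mathcal T(\partial_{v_1}\hat g,\hat h,\mu^{1/2})+\mathcal T(\hat g,\partial_{v_1}\hat h,\mu^{1/2})+\mathcal T(\hat g,\hat h,\partial_{v_1}\mu^{1/2})$. In the middle piece I insert the identity $\partial_{v_1}\hat h=(1-\delta\partial_{v_1}^2)(1-\delta\partial_{v_1}^2)^{-1}\partial_{v_1}\hat h$ exactly as in \eqref{leib} to bring out the factor $(1-\delta\partial_{v_1}^2)$ (which cancels the outer regulariser and produces $\hat h_{\delta}$), plus lower-order remainders in which the operators $\delta^{1/2}(1-\delta\partial_{v_1}^2)^{-1}\partial_{v_1}$ and $\delta(1-\delta\partial_{v_1}^2)^{-1}\partial_{v_1}^{2}$ are uniformly bounded on $L^2_v$. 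Combining the trilinear bounds of Lemmas \ref{lemtrip} and \ref{lemtl} with the hypothesis \eqref{inong} on $g$, the smallness \eqref{smacon} of $f_0$, and the low-order estimates \eqref{1234}--\eqref{spreg} for $h$ gives
\begin{equation*}
\text{(i)}\leq \bigl(\eps+CC_4\eps^{-1}\eps_0\bigr)\!\!\int_{\mathbb Z^3}\!\!\Big(\!\int_0^1\!\!\|(a^{1/2})^w\hat h_{\delta}\|_{L^2_v}^2\,dt\Big)^{1/2}\!d\Sigma(k)+CC_3C_4\eps^{-1}\eps_0\|f_0\|_{L^1_kL^2_v}.
\end{equation*}
Term (ii) is treated by writing $[\Lambda_{\delta_1}t^{\varsigma}(1-\delta\partial_{v_1}^2)^{-1}\partial_{v_1},\mathcal L]=[\Lambda_{\delta_1},\mathcal L]t^{\varsigma}(1-\delta\partial_{v_1}^2)^{-1}\partial_{v_1}+\Lambda_{\delta_1}t^{\varsigma}[(1-\delta\partial_{v_1}^2)^{-1}\partial_{v_1},\mathcal L]$ and expanding $\mathcal L h=\Gamma(\sqrt\mu,h)+\Gamma(h,\sqrt\mu)$; this reduces to the same type of trilinear estimate as (i), and the term involving $[\Lambda_{\delta_1},\mathcal L]$ vanishes in the limit $\delta_1\to 0$ by the Dominated Convergence argument that produced \eqref{lim}.

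For (iii) I use the interpolation
\begin{equation*}
\|\hat h_{\delta}\|_{L^2_v}^2\leq \tau\|\langle D_v\rangle^{s/(1+2s)}\hat h_{\delta}\|_{L^2_v}^2+\tau^{-(1+s)/s}\|\langle D_v\rangle^{s/(1+2s)-1}\hat h_{\delta}\|_{L^2_v}^2
\end{equation*}
with $\tau=t\eps^2$; the first summand is absorbed after invoking the equivalence $(a^{1/2})^w\sim\langle D_v\rangle^s$ from Proposition \ref{estaa}, while the second becomes, after unwinding $\hat h_{\delta}=t^{\varsigma}(1-\delta\partial_{v_1}^2)^{-1}\partial_{v_1}\hat h$, a quantity dominated by $\|(a^{1/2})^w\hat h\|_{L^2_v}^2$ and hence by $C_3\|f_0\|_{L^1_kL^2_v}$ via \eqref{spreg} with $N=0$. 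For (iv), the commutator formula \eqref{comest} applied with $m=1$ gives $\|[iv\cdot k,\Lambda_{\delta_1}t^{\varsigma}(1-\delta\partial_{v_1}^2)^{-1}\partial_{v_1}]\hat h\|_{L^2_v}\leq Ct^{\varsigma}\langle k\rangle\|\hat h\|_{L^2_v}+C\|\hat h_{\delta}\|_{L^2_v}$, both pieces being controlled by \eqref{spreg} (with $N=1$) and by the $\hat h_{\delta}$ bound already obtained.

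Collecting the four estimates, letting $\delta_1\to 0$ and then $\delta\to 0$ via Fatou's lemma, and finally choosing $\eps$ small and $\eps_0$ small enough to absorb the $(a^{1/2})^w\hat h_{\delta}$-terms on the left, one obtains \eqref{dees} for $|\beta|=1$ provided $C_4$ is fixed sufficiently large (depending on $C_3$ and the generic constants). The step I expect to be most delicate is (i): the trilinear contribution of $\mathcal T(\hat g,\partial_{v_1}\hat h,\mu^{1/2})$ must be organised so that the extra derivative on $\hat h$ is absorbed into $\hat h_{\delta}$ without spawning a factor $\|(a^{1/2})^w\hat h_{\delta}\|$ with coefficient larger than $\eps+CC_4\eps^{-1}\eps_0$; the $(1-\delta\partial_{v_1}^2)$-insertion trick from \eqref{leib} is precisely what makes this book-keeping work, at the expense of producing lower-order remainders that are handled by \eqref{spreg}.
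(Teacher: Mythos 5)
Your argument is workable, but it takes the long way around: the paper disposes of the case $\abs\beta=1$ in two lines by interpolating between the endpoint cases that are already in hand. Writing $\norm{\partial_v^{\beta}\hat h}_{L^2_v}^2=-(\partial_v^{2\beta}\hat h,\hat h)_{L^2_v}\leq \norm{\partial_v^{2\beta}\hat h}_{L^2_v}\norm{\hat h}_{L^2_v}$ and splitting the time weight as $\phi(t)^{2\varsigma}=\phi(t)^{2\varsigma}\cdot 1$, one gets $\norm{\phi(t)^{\varsigma}\partial_v^{\beta}\hat h}_{L^2_v}^2\leq\frac12\big(\norm{\phi(t)^{2\varsigma}\partial_v^{2\beta}\hat h}_{L^2_v}^2+\norm{\hat h}_{L^2_v}^2\big)$, and similarly for the $(a^{1/2})^w$-weighted norm; the two quantities on the right are exactly those controlled by \eqref{tes} (the $\abs\beta=2$ output of Lemma \ref{le2}) and by \eqref{1234}. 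Your route instead reruns the whole regularization/energy machinery with the single-derivative mollifier $t^{\varsigma}(1-\delta\partial_{v_1}^2)^{-1}\partial_{v_1}$. That is self-contained (it does not consume the $\abs\beta=2$ result), but it costs several pages and contains one genuine book-keeping subtlety you gloss over in step (i): the $j=2$ remainder of the $(1-\delta\partial_{v_1}^2)$-insertion is $\delta\,\hat{\mathcal T}(\partial_{v_1}^{2}\hat g,\,(1-\delta\partial_{v_1}^2)^{-1}\partial_{v_1}\hat h,\,\mu^{1/2})$ carrying only the prefactor $t^{\varsigma}$, whereas \eqref{inong} controls $\partial_{v_1}^{2}\hat g$ only against the weight $t^{2\varsigma}$; pairing the available weight with either factor leaves either $t^{-\varsigma}$ on the $\hat h$ side or an uncontrolled $\sup_t\norm{\partial_{v_1}^2\hat g}_{L^2_v}$. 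You must first shift one derivative off $\hat g$ using the translation invariance of $\mathcal T$ (exactly the manipulation used for $\tilde J$ in the proof of Lemma \ref{le2}), after which only $\sup_t t^{\varsigma}\norm{\partial_{v_1}\hat g}_{L^2_v}$ is needed and the operators $\delta^{1/2}(1-\delta\partial_{v_1}^2)^{-1}\partial_{v_1}$ and $\delta(1-\delta\partial_{v_1}^2)^{-1}\partial_{v_1}^{2}$ acting on $\hat h$ remain uniformly bounded. With that adjustment your estimates close and terms (ii)--(iv) go through as you describe; but once Lemma \ref{le2} is proved, the interpolation argument renders the direct computation unnecessary.
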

 
\begin{proof}
The desired estimate is an immediate  consequence of \eqref{tes} and \eqref{1234} by   observing that for $\abs\beta=1$,
    \begin{eqnarray*}
  	\norm{\phi(t)^{\varsigma}\partial_v^{\beta}\hat h}_{L_v^2}^2 \leq  \frac{1}{2}\inner{ 	\norm{\phi(t)^{2\varsigma}\partial_v^{2\beta}\hat h}_{L_v^2}^2+ 	\norm{  \hat h}_{L_v^2}^2}   \end{eqnarray*}
  and
    \begin{eqnarray*}
  	 \norm{\phi(t)^{\varsigma}(a^{1/2})^w\partial_v^{\beta}\hat h}_{L_v^2}^2 \leq C \inner{ 	\norm{\phi(t)^{2\varsigma}(a^{1/2})^w\partial_v^{2\beta}\hat h}_{L_v^2}^2+ 	\norm{  (a^{1/2})^w\hat h}_{L_v^2}^2}.
  \end{eqnarray*} 
 We complete  the proof of Lemma \ref{le1}. 
 \end{proof}

\begin{proof}[Proof of Proposition \ref{lemlin}]
It directly follows from \eqref{1234} and Lemmas \ref{le2}-\ref{le1}. 
 \end{proof}

 With Proposition \ref{lemlin}, the rest is devoted to proving the main result on the $H_v^2$-smoothness of solutions to the Cauchy problem on the  nonlinear Boltzmann equation. 
 
 \begin{proof}[Proof of Proposition \ref{lem}] We consider the following iteration equations with $f^0\equiv 0$,
\begin{equation*}%\label{lin}
 	\partial_{t}f^{n}+v\cdot\partial_{x}f^{n}-\mathcal{L} f^{n+1}
 	=\Gamma(f^{n-1},f^{n}), \quad f^n|_{t=0}=f_0, \quad n\geq 1,
\end{equation*}
where $f_0$ is the initial datum to the Boltzmann equation \eqref{eqforper} satisfying the smallness condition \eqref{123}.   Then, Proposition \ref{lemlin} ensures the existence of   $\{f^n\}_{n\geq 1}$ satisfying the estimate that, for any $T>0$ and any $n\geq 1$,  	
 	  \begin{multline} \label{sees}
 		 \int_{\mathbb Z^3} \sup_{0<t< T} \phi(t) ^{\varsigma \abs\beta }\norm{  \partial_v^\beta \widehat {f^n}(t,k)}_{ L^2_v} d \Sigma(k)\\
	 + \int_{\mathbb Z^3}	\left(\int_0^T  \phi(t) ^{2\varsigma \abs\beta }\norm{(a^{1/2})^w  \partial_v^\beta\widehat {f^n}(t,k)}_{ L^2_v}^2d t\right)^{\frac12}d \Sigma(k) 
	 \leq    C_4 \norm{f_0}_{L_k^1L^2_v} .
\end{multline} 
 	 Consider the difference 
 	\begin{equation*}
 		w^n=f^{n+1}-f^{n},\quad n\geq 1. 
 	\end{equation*}
Then for any $n\geq 1,$
 	\begin{equation*}
 		\partial_{t}w^n+v\cdot\partial_{x}w^n-\mathcal{L} w^n
 	=\Gamma(f^n, w^{n})+\Gamma(w^{n-1},f^{n}),\quad w^n|_{t=0}=0, 
 	\end{equation*}
 	with $w^0=f^1$.  By virtue of \eqref{sees} as well as the smallness of $ \norm{f_0}_{L_k^1L^2_v}$,  we follow the argument in the proof of Proposition \ref{lemlin}  with minor modifications
to obtain, for any $\beta\in\mathbb Z_+^3$  with $\abs\beta\leq 2,$
 %\begin{eqnarray*}
 \begin{align*}
	& \int_{\mathbb Z^3} \sup_{0<t< T} \phi(t) ^{\varsigma \abs\beta }\norm{  \partial_v^\beta \widehat  { w^n} (t,k)}_{ L^2_v} d \Sigma(k)  + \int_{\mathbb Z^3}	\left(\int_0^T  \phi(t) ^{2\varsigma \abs\beta }\norm{(a^{1/2})^w  \partial_v^\beta \widehat  { w^n}(t,k)}_{ L^2_v}^2d t\right)^{\frac12}d \Sigma(k) \\
	&\leq  C \Big( \int_{\mathbb Z^3} \sup_{0<t<T} \phi(t) ^{\varsigma \abs\beta }\norm{  \partial_v^\beta \widehat  {w^{n-1}} (t,k)}_{ L^2_v} d \Sigma(k)    \Big)\\
	 &\qquad\qquad\qquad\qquad\qquad\times \int_{\mathbb Z^3}	\left(\int_0^T  \phi(t) ^{2\varsigma \abs\beta }\norm{(a^{1/2})^w  \partial_v^\beta\widehat  { f^n}(t,k)}_{ L^2_v}^2d t\right)^{\frac12}d \Sigma(k)\\
	 &\leq  C C_4\norm{f_0}_{L_k^1L_v^2} \int_{\mathbb Z^3} \sup_{0<t<T} \phi(t) ^{\varsigma \abs\beta }\norm{  \partial_v^\beta \widehat  {w^{n-1}} (t,k)}_{ L^2_v} d \Sigma(k), 
	 \end{align*}
%\end{eqnarray*} 	
 which implies that, for $\norm{f_0}_{L_k^1L_v^2}$ small enough, 	
 \begin{eqnarray*}
 	 \int_{\mathbb Z^3} \sup_{0<t<T} \phi(t) ^{\varsigma \abs\beta }\norm{  \partial_v^\beta \widehat  { w^n} (t,k)}_{ L^2_v} d \Sigma(k) \leq \big(C C_4\norm{f_0}_{L_k^1L_v^2} \big)^{n+1}\leq 2^{-n-1},\quad n\geq 1. 
 \end{eqnarray*}
 Thus,  combining the above estimates, it holds that
  \begin{multline*}%\label{dif}
  \int_{\mathbb Z^3} \sup_{0<t<T} \phi(t) ^{\varsigma \abs\beta }\norm{  \partial_v^\beta \widehat  { w^n} (t,k)}_{ L^2_v} d \Sigma(k)  \\+ \int_{\mathbb Z^3}	\left(\int_0^T  \phi(t) ^{2\varsigma \abs\beta }\norm{(a^{1/2})^w  \partial_v^\beta \widehat  { w^n}(t,k)}_{ L^2_v}^2d t\right)^{\frac12}d \Sigma(k) 
	 	 \leq  2^{-n-1}. 
	 \end{multline*}	
	This implies that for any $\abs\beta\leq 2$,   $\phi(t) ^{\varsigma \abs\beta } \partial_v^\beta f^n $  and  $\phi(t) ^{\varsigma \abs\beta } (a^{1/2})^w \partial_v^\beta f^n $ are the Cauchy sequences in   $L_k^1L_T^\infty L_v^2$ and $L_k^1L_T^\infty L_v^2$, respectively,  with the limit solving the nonlinear Boltzmann equation \eqref{eqforper} with initial datum $f_0$ and thus equal to $f$ by  the uniqueness of solutions in $L_k^1L_T^\infty L_v^2$.  Moreover, it follows from \eqref{sees} that
	\begin{multline*} 
 		 \int_{\mathbb Z^3} \sup_{0<t<T} \phi(t) ^{\varsigma \abs\beta }\norm{  \partial_v^\beta \hat {f}(t,k)}_{ L^2_v} d \Sigma(k)\\
	 + \int_{\mathbb Z^3}	\left(\int_0^T  \phi(t) ^{2\varsigma \abs\beta }\norm{(a^{1/2})^w  \partial_v^\beta\hat {f}(t,k)}_{ L^2_v}^2d t\right)^{\frac12}d \Sigma(k) 
	 \leq    C_4  \norm{f_0}_{L_k^1L^2_v}.
	 \end{multline*} 
We then have proved  the  desired result in Proposition \ref{lem}, completing the proof.  
 \end{proof}
 
 %%%%%%%%%%%%%%%%%%%%%%%%%%%%%%%%%%%%%%%%%%%%%%%%%%%%%%%%%%%%%%%%%%%%%%%%%%%%%%%%%%%%%%%%%%%%% 
 
 \section{Gevrey smoothing effect in  space and velocity variables }\label{sec5}
We are ready to prove the main result,  
Theorem \ref{maith}, which is just an immediate consequence of Theorems \ref{thm3.1} and  \ref{thm3.2}.  In fact, observe that 
for any $m\in\mathbb Z_+$ and any $\beta \in \mathbb{Z}_+^3$,
\begin{eqnarray*}
	\comi{k}^{m}	  \phi(t) ^{\varsigma (m+\abs\beta) }\norm{  \partial_v^\beta \hat  f(t,k)}_{ L^2_v} \leq \comi{k}^{m}	  \phi(t) ^{\varsigma (m+\abs\beta) }     \norm{    \hat  f(t,k)}_{ L^2_v}^{1/2}\norm{  \partial_v^{2\beta} \hat  f(t,k)}_{ L^2_v} ^{1/2}.
\end{eqnarray*}
Thus, it holds that
	 	\begin{equation*} 
	 \begin{aligned}
	& \int_{\mathbb Z^3} \comi{k}^{m}	 \sup_{0<t<T} \phi(t) ^{\varsigma (m+\abs\beta) }\norm{  \partial_v^\beta \hat  f(t,k)}_{ L^2_v} d \Sigma(k) \\
	& \leq  \left( \int_{\mathbb Z^3} \comi{k}^{2m}	 \sup_{0<t<T} \phi(t) ^{2m\varsigma  }\norm{   \hat  f(t,k)}_{ L^2_v} d \Sigma(k)\right)^{\frac12}  \left(\int_{\mathbb Z^3}  	 \sup_{0<t<T} \phi(t) ^{2\varsigma  \abs\beta }\norm{  \partial_v^{2\beta} \hat  f(t,k)}_{ L^2_v} d \Sigma(k)\right)^{\frac12}\\
	& \leq \left( \tilde C_0^{2m +1}   [(2m)!]   ^{\frac{1+2s}{2s}}\right)^{1/2} \left( \tilde C_*^{2|\beta| +1}   [(2|\beta|)!]   ^{\frac{1+2s}{2s}} \right)^{1/2} \\
	& \leq   \tilde C_0^{m+\frac12}  \tilde C_*^{ |\beta| +\frac12} 2^{  (m+\abs{\beta})\frac{1+2s}{2s} }  ( m !)^{\frac{1+2s}{2s}} ( |\beta| !)   ^{\frac{1+2s}{2s}}  \\
	 &\leq  C^{ m+\abs{\beta} +1}      [ (m+|\beta|) !] ^{\frac{1+2s}{2s}},
	 \end{aligned}
	 \end{equation*}
where in the second inequality we have used    Theorems \ref{thm3.1} and  \ref{thm3.2}, the third inequality follows from the fact that   
	 $(m+n)! \leq 2^{m+n }m! n!$, and the last inequality holds  if  we have chosen 
 $C=2^{   \frac{1+2s}{2s} } (\tilde C_0+ \tilde C_*) $  with the constants $\tilde C_0$ and $\tilde C_*$   given in Theorem \ref{thm3.1} and Theorem \ref{thm3.2}, respectively.  This gives the desired estimate \eqref{thm.gest} in Theorem \ref{maith}.  
 In the end, we briefly explain $f(t,\cdot,\cdot) \in \CG^{\frac{1+2s}{2s}}(\mathbb{T}^3\times\mathbb{R}^3)$ for any positive time $t>0$ by referring to the direct computations as 
	 	\begin{equation*} 
	 \begin{split}
	 &\sup_{0<t< T}  \phi(t) ^{\varsigma (|\alpha|+\abs\beta) }   	\norm{   \partial_x^\alpha\partial_v^\beta    f(t,x,v)}_{L^2_x L^2_v}    \\
	  \leq&\sup_{0<t< T}  \phi(t) ^{\varsigma (|\alpha|+\abs\beta) } \left(\int_{{\mathbb T^3} } \comi{k}^{2|\alpha|} 	\norm{  \partial_v^\beta \hat  f(t,k)}_{  L^2_v}^2 d \Sigma(k)\right) ^{\frac12}\\
	  \leq&\sup_{0<t< T}  \phi(t) ^{\varsigma (|\alpha|+\abs\beta) } \int_{{\mathbb T^3} } \comi{k}^{|\alpha|} 	\norm{  \partial_v^\beta \hat  f(t,k)}_{  L^2_v} d \Sigma(k) \\
	  \leq& \int_{{\mathbb T^3} } \comi{k}^{|\alpha|} \sup_{0<t< T}  \phi(t) ^{\varsigma (|\alpha|+\abs\beta) }	\norm{  \partial_v^\beta \hat  f(t,k)}_{  L^2_v}  ~ \,d\Sigma(k) \\
	  \leq& C^{ |\alpha|+\abs{\beta} +1}      [ (|\alpha|+|\beta|) !]   ^{\frac{1+2s}{2s}}.
	 \end{split}
	 \end{equation*}
	  Then the proof of Theorem \ref{maith} is completed.\qed
  
%%%%%%%%%%%%%%%%%%%%%%%%%%%%%%%%%%%%%%%%%%%%%%%%%%

%\appendix
%\section{Some facts on Symbolic calculus}\label{secapp}
\section{Appendix}\label{secapp}

We recall here  some  notations and basic facts  of  symbolic
calculus, and refer to
\cite[Chapter 18]{MR781536} or \cite{MR2599384}  for detailed discussions on the pseudo-differential
calculus. 

We consider the flat metric $ \abs{dv}^2+\abs{d\eta}^2$,
and let $M$ be an admissible weight function with respect to $ \abs{dv}^2+\abs{d\eta}^2$, that is, the weight function $M$ satisfies the following conditions:
\begin{enumerate}[(a)]
\item (slowly varying condition) there exists a constant $\delta$ such that
\begin{eqnarray*}
\abs{X-Y}\leq \delta, \quad M(X) \approx M(Y),\quad \forall \, X, Y\in \mathbb R^6;
\end{eqnarray*}
\item (temperance)  there exist two constants $C$ and $N$ such that
\begin{eqnarray*}
M(X)/M(Y) \leq C
\comi{X-Y}^N,\quad \forall\,X, Y\in \mathbb R^6.
\end{eqnarray*}
\end{enumerate}
Considering  symbols $q(k, v,\eta)$ as a function of $(v,\eta)$ with
parameter $k$,    we say that  $q\in S(M, \abs{dv}^2+\abs{d\eta}^2)$ uniformly
with respect to $k$, if
\[
\abs{\partial_v^\alpha\partial_\eta^\beta q(k,v,\eta)}\leq C_{\alpha,
\beta} M,\quad \forall\, \alpha, \beta\in\mathbb Z_+^3,~~\forall\,v,\eta\in\mathbb R^3,
\]
with $ C_{\alpha,\beta}$ a constant depending only on $\alpha$ and $\beta$, but
independent of $k$.   The space $S(M, \abs{dv}^2+\abs{d\eta}^2)$ endowed with the semi-norms
\begin{eqnarray*}
 \norm{q}_{N; S(M, \abs{dv}^2+\abs{d\eta}^2)}= \max_{0 \leq \abs\alpha+\abs\beta \leq N} \sup_{(v,\eta)\in \mathbb
 R^6} \abs{M(v,\eta)^{-1}\partial_v^\alpha\partial_\eta^\beta q (v,\eta)},
\end{eqnarray*}
becomes a Fr\'echet space.  
 Let $q\in \mathscr S'(\mathbb R_v^3\times\mathbb R_\eta^3)$ be a tempered distribution and let $t\in\mathbb R$,   the operator ${\rm op}_t q$ is an operator from  $ \mathscr S(\mathbb R_v^3)$ to $ \mathscr S'(\mathbb R_v^3),$  whose Schwartz kernel $K_t$ is defined by the oscillatory integral:    
 \[
    K_t  (z, z')= (2\pi)^{-3} \int_{\mathbb R^3} e^{i (z-z') \cdot\zeta}q((1-t)z+tz', \zeta) d\zeta.
  \]
In particular we denote $q^w={\rm op}_{1/2}q$.
 Here $q^w$ is called  the Weyl quantization of symbol $q$.   Note that if $q\in S(1, \ \abs{dv}^2+\abs{d\eta}^2 )$  and $q$ is real-valued then $q^w$ is bounded and self-adjoint in $L_v^2$. 

  Finally let us  recall some  basic properties of the Wick quantization.   The importance in studying the Wick quantization lies in the fact that  positive symbols give rise to positive operators.   We  refer
the readers to Lerner's
works   \cite{MR1957713,MR2599384} and references therein  for extensive presentations of this
quantization and  its applications in  mathematics and mathematical physics.  

 Let $Y = (v, \eta)$  be a point in $
\mathbb R^{6}$.  The Wick quantization of a symbol $q$  is  given by
\begin{eqnarray*}
	q^{{\rm Wick}} =(2\pi)^{-3}\int_{\mathbb R^6} q(Y) \Pi_{Y}\ dY,
\end{eqnarray*}
where  $\Pi_Y$ is the projector associated to the Gaussian $\varphi_Y$ which is defined by 
\[
   \varphi_Y(z)=\pi^{-3/4}e^{-\frac{1}{2}\abs{z-v}^2}e^{i z \cdot\eta /2},\quad z\in
\mathbb R^3.
\]
 The main property of the Wick quantization is its positivity, i.e.,
\begin{equation}\label{posit}
  q(v,\eta)\geq 0 ~~\,\textrm{for all}~ (v,\eta)\in\mathbb R^{6} ~{\rm implies}
~~\,q^{\rm Wick}\geq 0.
\end{equation}
According to    \cite[Proposition 2.4.3]{MR2599384}, the Wick and Weyl
quantizations of a symbol $q$ are linked by
the following identities
\begin{eqnarray}\label{ww}
  q^{\rm Wick}=\inner{q* 2^{3} e^{-2\pi
\abs{\cdot}^2 }}^w=q^w+r^w
\end{eqnarray}
with
\[
  r(Y)= 2^{3} \int_0^1 \int_{\mathbb R^{6}} (1-\theta)q''(Y+\theta Z )Z^2e^{-2\pi 
\abs{Z}^2}d Zd\theta.
\]
As a result,  $q^{\rm Wick} $ is a bounded operator in $L_v^2$ if $q\in S(1, g)$,  and   $q^{\rm Wick} $ is self-adjoint in $L_v^2$ if $q$ is real-valued. 
We also recall the following composition
formula obtained in the proof of Proposition 3.4 in \cite{MR1957713}
\begin{eqnarray} \label{11082406}
  q_1^{\rm Wick}q_2^{\rm Wick}=\com{q_1q_2- q_1'\cdot q_2'+
  \frac{1}{i}\big \{q_1,~q_2\big\}}^{\rm Wick}+\CT,
\end{eqnarray}
with $\CT$ being a bounded operator in $L^2(\mathbb R^{2n})$,  where $q_1\in L^{\infty}
(\mathbb R^{2n})$ and $q_2$ is a smooth symbol whose derivatives of order $\geq2$
are bounded on $\mathbb R^{6}$. The notation $\set{q_1,q_2}$ denotes the Poisson
bracket defined by
\begin{eqnarray} \label{11051505}
  \big\{q_1,~q_2\big\}=\frac{\partial q_1}{\partial\eta}\cdot\frac{\partial q_2}{\partial
v}-\frac{\partial q_1}{\partial v}\cdot\frac{\partial q_2}{\partial \eta}.
\end{eqnarray}

\medskip
\noindent {\bf Acknowledgements.} 
RJD was partially supported by the NSFC/RGC Joint Research Scheme (N\_CUHK409/19) from RGC in Hong Kong and the Direct Grant (4053452) from CUHK. WXL was supported by NSFC (Nos. 11961160716, 11871054, 11771342) and the Fundamental Research Funds for the Central Universities(No.~2042020kf0210).

\medskip

\noindent{\bf Conflict of Interest:} The authors declare that they have no conflict of interest.

\end{document}